\documentclass[11pt, a4paper]{article}

\usepackage[margin=2cm, bottom=3cm]{geometry}

\usepackage[utf8]{inputenc}    
\usepackage[T1]{fontenc}       
\usepackage{times}             

\usepackage{amsmath, amssymb, amsthm, mathtools}
\usepackage{stmaryrd}          
\usepackage{wasysym}           
\usepackage{cases}             
\usepackage{bigints}           

\usepackage{graphicx}          
\usepackage{float}             
\usepackage[labelfont=bf]{caption}  
\usepackage{placeins}          

\usepackage[colorlinks=true,
linkcolor=blue,
citecolor=blue,
urlcolor=blue]{hyperref}   
\usepackage{cleveref}                  

\usepackage{array, multirow, makecell}
\usepackage[shortlabels]{enumitem}
\usepackage[math]{cellspace}          
\setlength\cellspacetoplimit{4pt}
\setlength\cellspacebottomlimit{4pt}
\usepackage{longtable}

\usepackage[round, authoryear]{natbib}  

\usepackage{setspace}
\setstretch{1.5}                       

\newcommand*\dif{\mathop{}\!\mathrm{d}}
\renewcommand{\eqref}[1]{Equation~(\ref{#1})}

\theoremstyle{definition}
\newtheorem{thm}{Theorem}
\newtheorem{defn}{Definition}
\newtheorem{prop}{Proposition}

\newtheorem{rem}{Remark}
\newtheorem{exam}{Example}
\title{ 
	\textbf{Infinitely refinable generalization of quad-mesh rigid origami: from linear and equimodular couplings}
}
	
\author{Zeyuan He$^{1,2}$, Kentaro Hayakawa$^{2,3}$ and Makoto Ohsaki$^2$ 
	\\ \small $^1$Department of Engineering, University of Cambridge
	\\ \small $^2$Department of Architecture and Architectural Engineering, Kyoto University 
	\\ \small Department of Conceptual Design, College of Industrial Technology, Nihon University \\ \small zh299@cam.ac.uk, hayakawa.kentaro@nihon-u.ac.jp, ohsaki@archi.kyoto-u.ac.jp}
	
\date{}


\begin{document}
	
\maketitle

\begin{abstract}
A quad-mesh rigid origami is a continuously deformable panel-hinge structure where planar, rigid, zero-thickness quadrilateral panels are connected by rotational hinges in the combinatorics of a grid. This article provides a comprehensive exposition of two new families of infinitely refinable quad-mesh rigid origami, generated from linear and equimodular couplings. These constructions expand the current landscape beyond well-known variations such as the Miura-ori, V-hedron (discrete Voss surface or eggbox pattern), anti-V-hedron (flat-foldable pattern), and T-hedron (trapezoidal pattern). We conjecture that as the mesh is refined to infinity, these quad-mesh rigid origami converges to special ruled surfaces in the limit, supported by multiple lines of evidence.
\end{abstract}

\quad {\small \textbf{Keywords:} flexibility, rigid-foldability, Kokotsakis quadrilateral, surface approximation}

\section{Introduction}
\label{sec: introduction}

Quad-mesh rigid origami refers to a class of structures formed by planar and zero-thickness quadrilateral panels connected by rotational hinges in a grid-like configuration, enabling continuous isometric deformations while preserving the rigidity of each panel. Such deformations are commonly referred to as \textit{flexes}, \textit{flexions}, or \textit{folding motions} in the literature. An example of the most well-known quad-mesh rigid origami, the Miura-ori \citep{miura_method_1985}, is shown in Figure \ref{fig: quad mesh introduction}(a). The most commonly studied quad-mesh rigid origami structures are (anti-)V-hedra and T-hedra, as shown in Figure \ref{fig: quad mesh introduction}(b)--(f). In the origami community, research has traditionally focused on developable origami, where the sum of the sector angles around every interior vertex equals $2\pi$, ensuring that the discrete Gaussian curvature at each vertex is zero (details provided at the end of Section \ref{section: discrete curvature}). In our framework, however, the sum of sector angles at each interior vertex is \textbf{not necessarily} equal to $2\pi$, allowing for the inclusion of both developable and non-developable origami structures.

In this article, we focus on more generalized quad-mesh rigid origami that extend beyond (anti-)V-hedra and T-hedra, namely, those generated from linear and equimodular couplings. The mathematical description of these terms, originating from \citet{izmestiev_classification_2017}, are provided in Sections \ref{section: linear coupling} and \ref{section: equimodular coupling}. We present a detailed framework for generating these new variations, focusing particularly on those meshes that are infinitely refinable. We further conjecture that, under infinite refinement, these patterns converge to smooth surfaces, and we derive explicit forms of these limiting surfaces. This mesh of refinement and convergence gives rise to novel curved crease origami designs. All design examples and tools are made available in an open-source MATLAB application. 

To clarify the distinctions among these different types of quad-mesh rigid origami, we begin with a brief overview of (anti-)V-hedra and (anti-)T-hedra.

\subsection*{(Anti-)V-hedra and (anti-)T-hedra}

A V-hedron (Figure \ref{fig: quad mesh introduction}(b) and \ref{fig: quad mesh introduction}(c)) refers to a non-developable quad-mesh rigid origami where opposite sector angles are equal at every interior vertex ($\alpha = \gamma, ~\beta = \delta$ if the sector angles at a vertex are denoted by $\alpha, ~\beta, ~\gamma, ~\delta$ in counterclockwise order). It has a special state where the folding angle at every vertex is $\{\pm \pi, ~0, ~\pm \pi, ~0\}$ (in counterclockwise order), which can be folded to another special state where the folding angle at every vertex is $\{0, ~\pm \pi, ~0, ~\pm \pi\}$ (in the same counterclockwise order). The name V-hedron is from the early research on Voss surface \citep{voss_diejenigen_1888, bianchi_sopra_1890, eisenhart_transformations_1914} and discrete Voss surface \citep{sauer_uber_1931}, which is also called an eggbox pattern in the origami community \citep{tachi_freeform_2010-1}. An anti-V-hedron (Figure \ref{fig: quad mesh introduction}(d)) is a developable quad-mesh rigid origami where opposite sector angles are supplementary to $\pi$ at every interior vertex ($\alpha + \gamma = \pi, ~\beta + \delta = \pi$). This pattern is widely recognized as a developable and flat-foldable quad-mesh rigid origami \citep{tachi_generalization_2009}. It has a \textit{planar} state where all the folding angles are zero, which can be folded continuously to another \textit{flatly-folded} state where the folding angles are all $\pm \pi$. 

In \cite{he_rigid_2020} we showed that \textit{switching a strip} --- changing the sector angles on a row or column of quadrilateral panels to their supplements with respect to $\pi$ --- maps a quad-mesh rigid origami to another quad-mesh rigid origami and preserves the flexibility. A V-hedron becomes an anti-V-hedron after switching alternating strips, and becomes a hybrid V-hedron (also discussed in \cite{tachi_freeform_2010-1}) if only switching some strips. Details on the flexibility of quad-mesh rigid origami and operations generating quad-mesh rigid origami from an existing one are provided in Section \ref{section: loop condition}.

A T-hedron (Figure \ref{fig: quad mesh introduction}(e) and \ref{fig: quad mesh introduction}(f)) refers to a quad-mesh rigid origami whose vertices are  orthodiagonal ($\cos \alpha \cos \gamma = \cos \beta \cos \delta$) and every two vertices form an involutive coupling. These terminologies are special geometric requirements on the sector angles \citep[Section 3.1.1]{izmestiev_classification_2017}. A T-hedron can be either developable or non-developable. The name T-hedron is from \citet{sauer_uber_1931}. An anti-T-hedron refers to a quad-mesh rigid origami whose vertices are orthodiagonal and every two vertices form an anti-involutive coupling \citep[Section 3.1.2]{izmestiev_classification_2017}. A $3 \times 3$ anti-T-hedron, with nine quadrilateral panels, was studied in \cite{erofeev_orthodiagonal_2020}, yet there has not been reported progress on a larger mesh.

A more comprehensive introduction of (anti-)V-hedra and T-hedra is provided in Sections \ref{section: V-hedra} and \ref{section: T-hedra}.

\subsection*{Surface approximation}

\begin{figure*}[t]
	\centering
	\includegraphics[width=\linewidth]{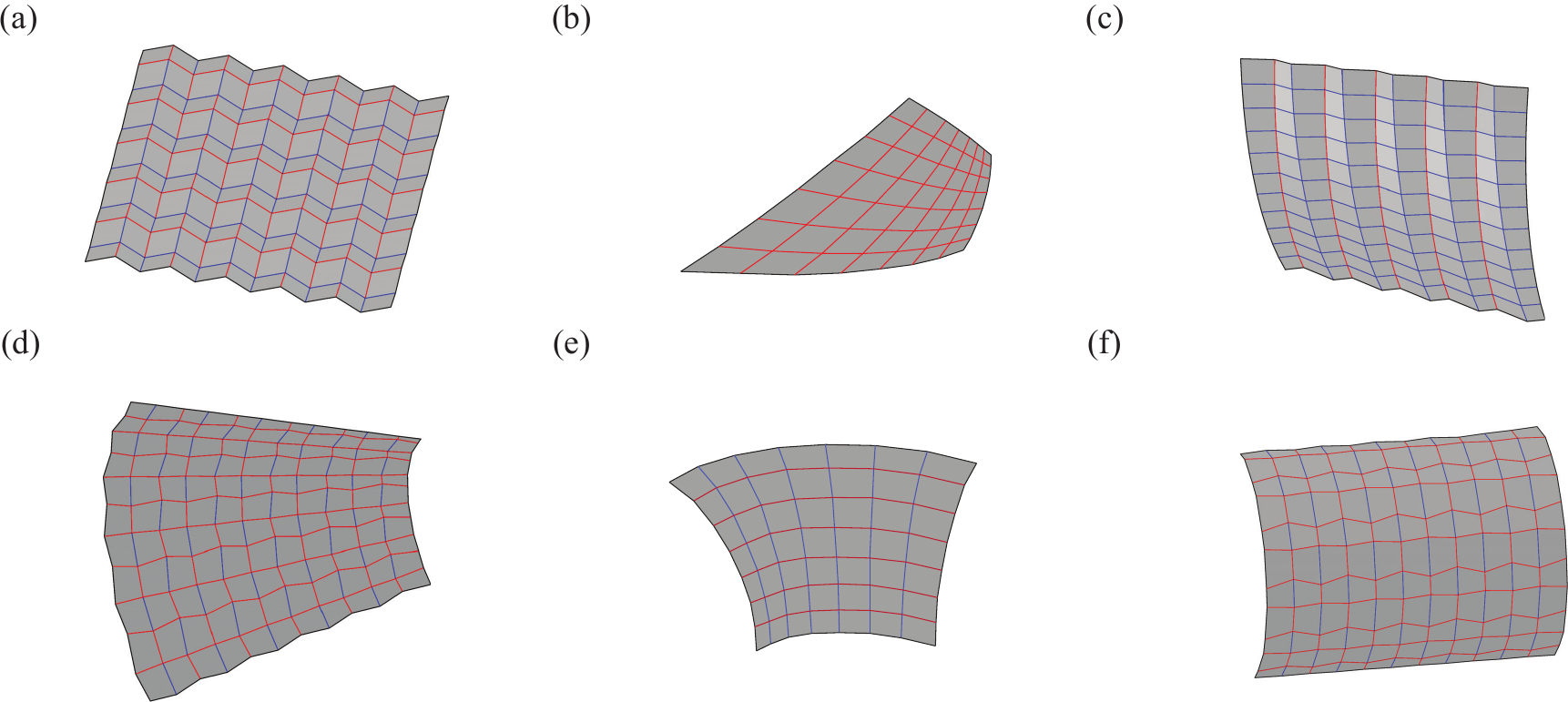}
	\caption{\label{fig: quad mesh introduction}A brief gallery of common quad-mesh rigid origami, including (a) the Miura-ori \citep{miura_method_1985}; (b) a non-developable V-hedron \citep{sauer_differenzengeometrie_1970}; (c) another non-developable V-hedron; (d) a developable anti-V-hedron; (e) a non-developable T-hedron \citep{izmestiev_isometric_2024} ; and (f) a developable T-hedron. Mountain creases are coloured red and valley creases are coloured blue.}
\end{figure*}

In addition to the variety of quad-mesh rigid origami, there has been a continuous effort within the origami research community to explore the surface a quad-mesh rigid origami can approximate. We are further motivated to explore how closely a quad-mesh rigid origami can approximate a smooth surface as the mesh is refined. In other words, for a series of quad-mesh rigid origami following a construction method that allows arbitrary mesh refinement, we aim to investigate the convergence toward a smooth surface in terms of Euclidean distance (detailed in Section \ref{section: ddg convergence}). In the simplest case, under a labelled correspondence between the discrete mesh points and points on the smooth surface, we seek to determine whether the average Euclidean distance between corresponding points can be made arbitrarily small.

The first level of surface approximation happens when a series of quad-mesh rigid origami converge to a smooth surface in distance, and they represent the discrete and smooth forms of the same coordinate net. Consequently, as the mesh is refined, their tangent planes, metric-related and curvature-related properties can become arbitrarily close. Furthermore, a quad-mesh rigid origami admits only a single degree of freedom in its folding motion \citep{schief_integrability_2008}. As the mesh is refined, this discrete motion converges to a one-parameter flex of the limiting smooth surface. As shown in Figure \ref{fig: quad mesh introduction}(b) and \ref{fig: quad mesh introduction}(e), certain V-hedra and T-hedra reach this level of approximation, with the resulting smooth surfaces referred to as V-surfaces \citep{sauer_differenzengeometrie_1970, izmestiev_voss_2025} and T-surfaces \citep{izmestiev_isometric_2024}. Due to this unique relationship, we refer to them as \textit{discrete} and \textit{smooth analogues} of one another.

The second level of surface approximation involves convergence only in terms of distance, without guaranteeing the convergence of tangent planes or properties related to metric and curvature. A limiting smooth surface can be reached with a series of quad-mesh rigid origami, although there is no guarantee that their deformation paths converge to a one-parameter flex of the smooth surface. Some other V-hedra and T-hedra fall into this category, as shown in Figure \ref{fig: quad mesh introduction}(a), (c), (d), and (f). Examples include the Miura-ori and the revolutionary Miura-ori \citep{song_design_2017, hu_design_2019}. In these examples, although we can design this pattern to be close to a plane or a surface of revolution, the origami structure deviates further from these target surfaces as it is folded flat. A common feature for them is they have a `zig-zag' mode --- we will explain this further in Section \ref{section: discussion}.

The third level of surface approximation is frequently employed in origami-based engineering design, such as pavilions, shelters and shells. It would be geometrically sufficient if the origami structure can exhibit desired curvature with limited number of grids. Numerous publications have explored such inverse design employing V-hedra, anti-V-hedra or T-hedra to construct three-dimensional structures. Notably, the number of free variables for an (anti-)V-hedron increases linearly with respected to the number of grids, hence there is sufficient space for shape optimization. The inputs for these inverse design methods include perturbation from an existing pattern \citep{tachi_freeform_2010-1}; an array of folding angles and crease lengths of boundary polylines \citep{lang_rigidly_2018}; `curved creases' \citep{jiang_curve-pleated_2019}; the outline of the mesh and a multivariable objective function \citep{hayakawa_form_2020}; a target surface \citep{dang_inverse_2022}; the discrete normal field (i.e., Gauss map) \citep{montagne_discrete_2022}; and control polylines or vertices \citep{kilian_interactive_2024}. T-hedra have less free variables and are less frequently applied yet, but showed great promise for highly accurate approximation of certain surfaces. The inputs include boundary or control polylines \citep{he_approximating_2018, sharifmoghaddam_using_2020}. Additionally, \cite{he_approximating_2018} demonstrated that a hybrid mesh composed of anti-V-hedra and developable T-hedra can be designed to construct developable quad-mesh rigid origami with a self-locking property — where motion halts at a desired configuration due to panel contact. 

\section{Result}

We introduce the \textit{repetitive stitching} construction method that enables infinite mesh refinement for two newly identified families of quad-mesh rigid origami, composed of linear and equimodular couplings. The method and related terminologies will be explained in Section \ref{section: method}. The accompanying open-source code \citep{he_sector-angle-periodic_2024} includes all relevant data and supports parametric design, mesh refinement, visualization, and user customization. A gallery of examples is presented in Figure~\ref{fig: example}.

\begin{figure*}[tbph]
	\centering
	\includegraphics[width=1\linewidth]{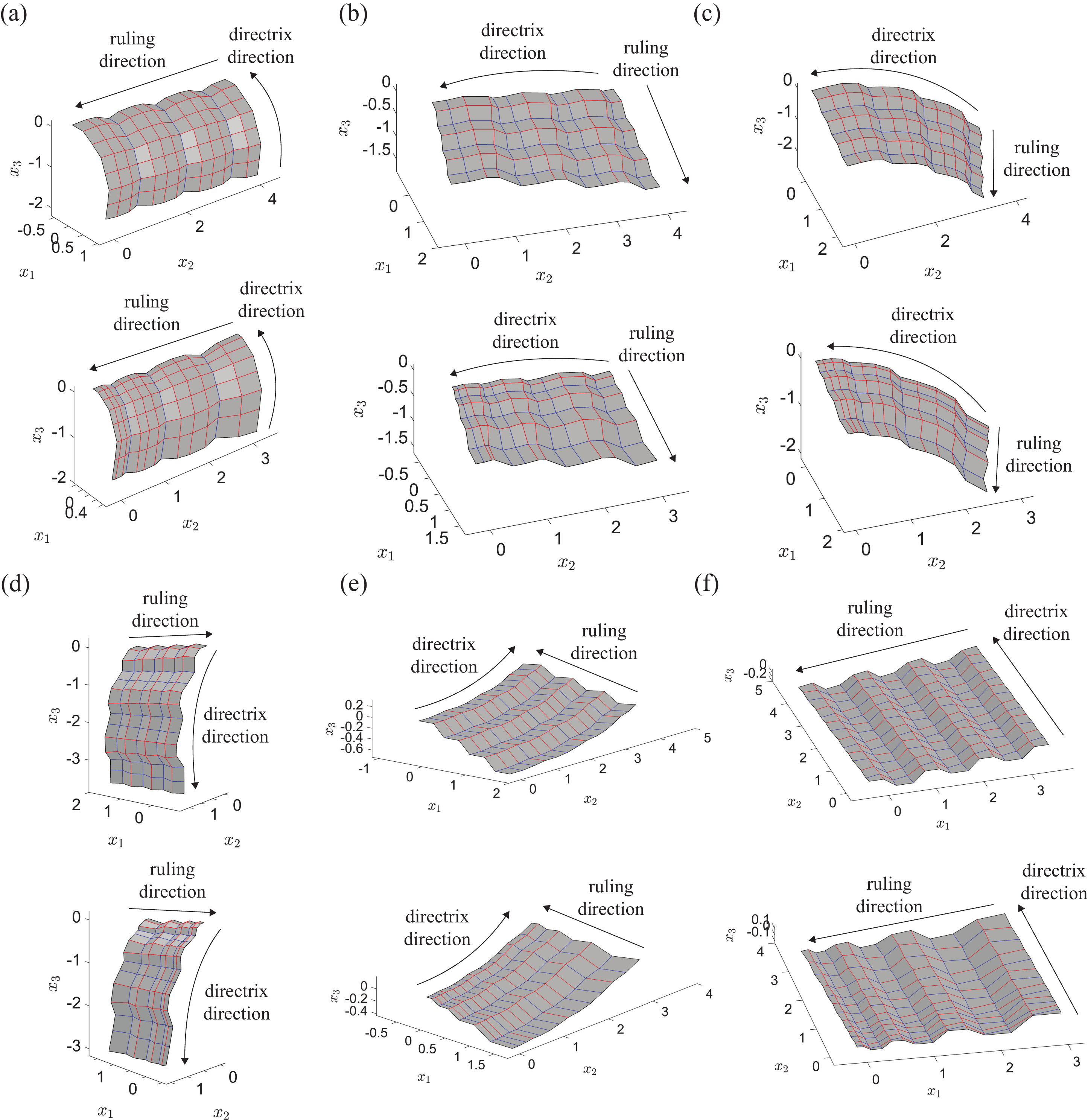}
	\caption{\label{fig: example} A gallery of quad-mesh rigid origami generated using the repetitive stitching method. These examples are symbolically distinguished from the (anti)-V-hedra and (anti)-T-hedra. Mountain folds are coloured red, while valley folds are coloured blue. For each example, we display two designs that share the same sector angles but differ in their crease length distributions, each consisting of nine units (for the concept of unit, see Figure \ref{fig: unit}). More refined meshes can be readily produced using the accompanying script \citep{he_sector-angle-periodic_2024} by increasing the number of units. All structures exhibit a characteristic zig-zag mode, in which the coordinate polylines along the row and column directions oscillate along a ruling line. Both the ruling and directrix directions are indicated for each example.}
\end{figure*} 

We conjecture that a special ruled surface $x(u_1, ~u_2)$ in the form below can be approximated, at the second level of surface approximation, by a series of quad-mesh rigid origami generated from the repetitive stitching method:
\begin{equation} \label{eq: special ruled surface}
	\begin{gathered}
		x(u_1, ~u_2) = \Gamma(u_2) + u_1\Phi(u_2), ~~u_1, ~u_2 \in \mathbb{R}, ~~x \in \mathbb{R}^3 \\
		\Gamma(u_2) = \Gamma(0) + \int \limits_{v = 0}^{u_2} f(v) \begin{bmatrix}
			-a \sin v \\
			a \cos v \\
			b 
		\end{bmatrix} \dif v, ~a>0, ~b \in \mathbb{R}, ~f(u_2) = \dfrac{\left \| \dfrac{\dif \Gamma}{ \dif u_2} (u_2) \right\|}{\left \| \dfrac{\dif \Gamma}{ \dif u_2} (0) \right\|} \\
		\Phi(u_2) \in \mathbb{R}^3,~||\Phi(u_2)|| \equiv 1, ~\dfrac{\dfrac{\dif \Gamma}{\dif u_2} \cdot \Phi}{ f\sqrt{a^2+b^2}} = \mathrm{Const} \in [0, 1)		
	\end{gathered}
\end{equation}
where $f(u_2)$ is a known input crease length distribution function, $\Gamma(u_2)$ is the directrix and $\Phi(u_2)$ is the direction of rulings. The directrix meets the rulings at a constant angle for all $u_2$. Evidence supporting this conjecture is provided in Section \ref{subsection: evidence}. \eqref{eq: special ruled surface} can be utilized to compute the apparent curvature of the origami structure and to formulate optimal inverse design algorithms.

\section{Method} \label{section: method}

\subsection{Repetitive stitching} \label{subsection: stitching} 

A large quad-mesh rigid origami is flexible (i.e. foldable) if and only if all its $3 \times 3$ quadrilaterals (Kokotsakis quadrilaterals) are flexible \citep{schief_integrability_2008}. Thus, by utilizing the classification of flexible Kokotsakis quadrilaterals provided in \cite{izmestiev_classification_2017}, it is possible to construct a large quad-mesh rigid origami by assembling these $3 \times 3$ building blocks. The terminology \textit{Kokotsakis quadrilateral} is named after Antonios Kokotsakis, who studied the flexibility of these polyhedral surfaces in his PhD thesis in 1930s and described several flexible classes \citep{kokotsakis_uber_1933}. At the same time, \cite{sauer_uber_1931} also found several classes. Recent works from \cite{karpenkov_flexibility_2010, stachel_kinematic_2010, nawratil_reducible_2011, nawratil_reducible_2012} made solid contribution to this topic.

Izmestiev describes each type of flexible Kokotsakis quadrilateral by a system of highly nonlinear equations on the sector angles, which is obtained from the calculation conducted in the complexified configuration space. The above limitation necessitates examining: (1) the existence of real solutions to these systems; and (2) the existence of an actual folding motion in $\mathbb{R}^3$. Additionally, to support mesh refinement to infinity, (3) the construction should be `infinitely extendable', rather than restricted in a finite grid. We select two types of quad-mesh rigid origami from \cite{he_rigid_2020} that are entirely distinct from the (anti)-V-hedra and T-hedra and satisfy requirements (1) to (3). 

The infinite mesh refinement is achieved by repetitively stitching \textit{units}. Figure~\ref{fig: unit}(a) shows such a unit of size $3 \times 5$, containing 8 interior vertices and 32 sector angles. The sequence from Figure~\ref{fig: unit}(a) to (c) illustrates how the construction progressively approximates a smooth surface through mesh refinement. Figure~\ref{fig: unit}(d) graphically explains the stitching method: a larger mesh is formed by arranging $m \times n$ `copies' of the unit, where $m, ~n \in \mathbb{Z}_+$ indicate the number of units along the longitudinal and transverse directions. All units share the same sector angles but may differ in crease lengths. For example, placing four $3 \times 5$ units together yields a larger mesh of size $5 \times 9$, in which the sector angles tessellate seamlessly. The resulting mesh contains $mn$ copies of the unit's sector angles. To fully determine the geometry of the entire pattern, one can adjust the crease lengths along a single row and a single column.

\begin{figure*}[tbph]
	\centering
	\includegraphics[width=0.8\linewidth]{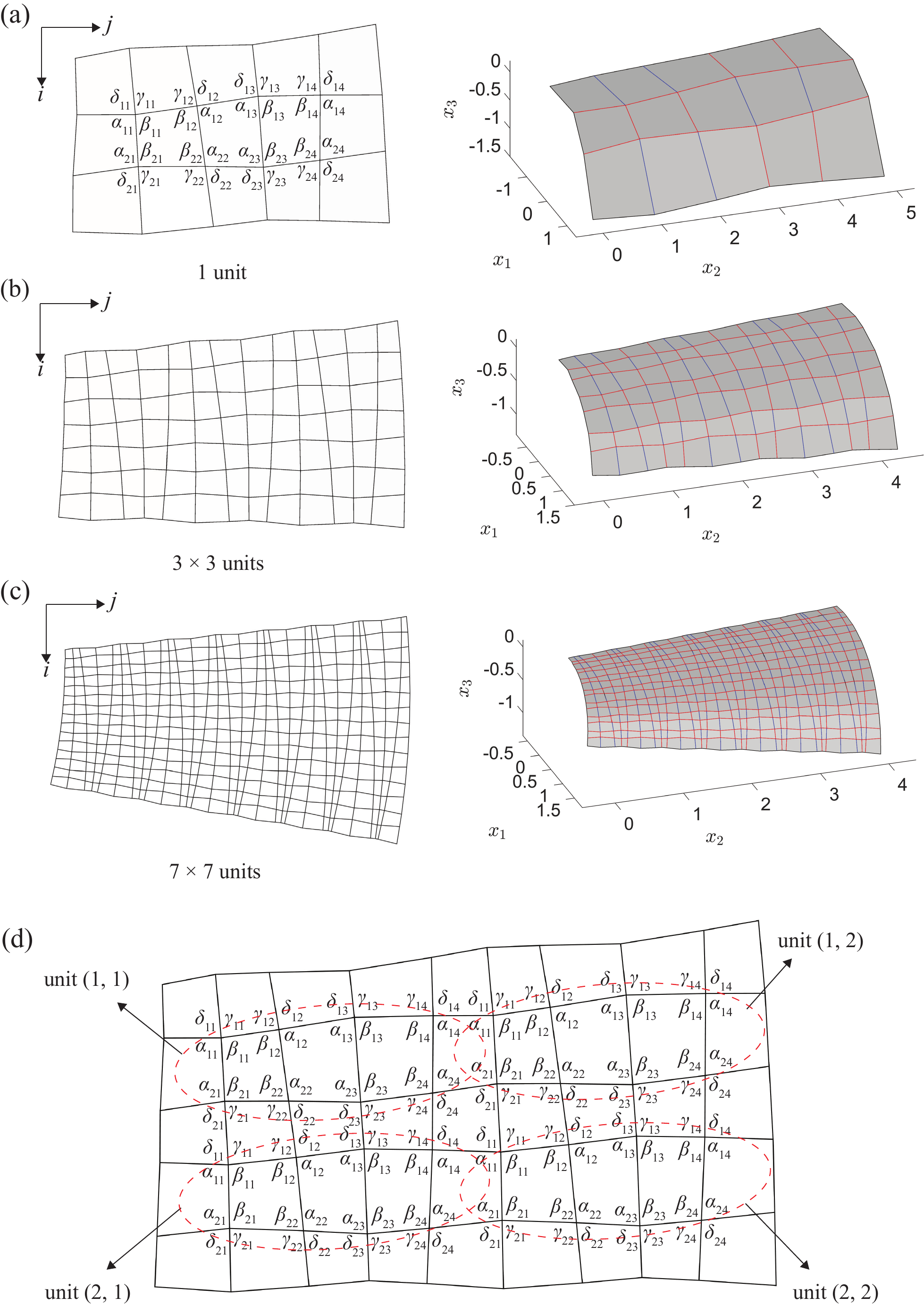}
	\caption{\label{fig: unit} Repetitive stitching of units. (a) shows a unit and our labelling of sector angles. (b) and (c) show the approximation to a smooth surface from refining the mesh. (d) illustrates the stitching process, where sector angles from one unit are repeated and stitched together to form the new pattern with 4 units. One row and one column of crease lengths can be adjusted.}
\end{figure*}

In Figure~\ref{fig: unit}(a), the sector angles $\alpha_{ij}, ~\beta_{ij}, ~\gamma_{ij}, ~\delta_{ij}$, $i, ~j \in \mathbb{Z}_+, ~i \le 2, ~j \le 4$ meet the constraints below, which ensure the flexibility of the entire pattern. There are 30 constraints for 32 sector angles, hence roughly speaking, allowing two independent input sector angles. Details on the derivation of constraints over the sector angles of a unit are provided in Sections \ref{section: linear coupling}, \ref{section: equimodular coupling} and \ref{section: examples}.

Vertex type condition (half are anti-isogram/flat-foldable vertices, half are anti-deltoid II/straight-line vertices):
\begin{equation}
	\begin{dcases}
		\gamma_{11} = \pi - \alpha_{11}, ~\delta_{11} = \pi - \beta_{11}, ~ \gamma_{12} = \pi - \alpha_{12}, ~\delta_{12} = \pi - \beta_{12} \\
		\gamma_{13} = \pi - \alpha_{13}, ~\delta_{13} = \pi - \beta_{13}, ~ \gamma_{14} = \pi - \alpha_{14}, ~\delta_{14} = \pi - \beta_{14} \\
		\gamma_{21} = \pi - \beta_{21}, ~\delta_{21} = \pi -\alpha_{21}, ~\gamma_{22} = \pi - \beta_{22}, ~\delta_{22} = \pi -\alpha_{22} \\
		\gamma_{23} = \pi - \beta_{23}, ~\delta_{23} = \pi -\alpha_{23}, ~\gamma_{24} = \pi - \beta_{24}, ~\delta_{24} = \pi -\alpha_{24} \\
	\end{dcases}
\end{equation} 
Planarity condition of quad panels considering the periodicity of sector angles:
\begin{equation}
	\begin{dcases}
		\beta_{11}+\beta_{21}+\beta_{12}+\beta_{12} = 2\pi, ~\gamma_{11}+\gamma_{21}+\gamma_{12}+\gamma_{12} = 2\pi \\
		\delta_{12}+\delta_{22}+\delta_{13}+\delta_{23} = 2\pi, ~\alpha_{12}+\alpha_{22}+\alpha_{13}+\alpha_{23} = 2\pi \\ \beta_{13}+\beta_{23}+\beta_{14}+\beta_{14} = 2\pi, ~\gamma_{13}+\gamma_{23}+\gamma_{14}+\gamma_{14} = 2\pi \\
		\delta_{14}+\delta_{24}+\delta_{11}+\delta_{21} = 2\pi, ~\alpha_{14}+\alpha_{24}+\alpha_{11}+\alpha_{21} = 2\pi \\
	\end{dcases}
\end{equation}
Condition for being linear couplings:
\begin{equation}
	\begin{dcases}
	\dfrac{\sin \alpha_{21}}{\sin \beta_{21}} = \dfrac{\sin \alpha_{22}}{\sin \beta_{22}} \\
	\dfrac{\sin \alpha_{22}}{\sin \beta_{22}} = \dfrac{\sin \alpha_{23}}{\sin \beta_{23}} \\
	\dfrac{\sin \alpha_{23}}{\sin \beta_{23}} = \dfrac{\sin \alpha_{24}}{\sin \beta_{24}} \\
	\end{dcases}
\end{equation}
Condition on equal ratio for linear couplings:
\begin{equation}
	\begin{gathered}
		\begin{split}
			& \dfrac{\sin \dfrac{\beta_{11} - \gamma_{11}}{2}\sin \dfrac{\beta_{12} + \gamma_{12}}{2}}{\sin \dfrac{\beta_{11} + \gamma_{11}}{2}\sin \dfrac{\beta_{12} - \gamma_{12}}{2}} =  \mathrm{sign}\left(\dfrac{\pi-\beta_{21}-\alpha_{21}}{\pi-\beta_{22}-\alpha_{22}}\right) \sqrt{\dfrac{\sin(\beta_{21}+\alpha_{21})\sin(\beta_{22}-\alpha_{22})}{\sin(\beta_{21}-\alpha_{21})\sin(\beta_{22}+\alpha_{22})}}
		\end{split} \\
		\begin{split}
			& \dfrac{\sin \dfrac{\beta_{12} - \gamma_{12}}{2}\sin \dfrac{\beta_{13} + \gamma_{13}}{2}}{\sin \dfrac{\beta_{12} + \gamma_{12}}{2}\sin \dfrac{\beta_{13} - \gamma_{13}}{2}} =  \mathrm{sign}\left(\dfrac{\pi-\beta_{22}-\alpha_{22}}{\pi-\beta_{23}-\alpha_{23}}\right) \sqrt{\dfrac{\sin(\beta_{22}+\alpha_{22})\sin(\beta_{23}-\alpha_{23})}{\sin(\beta_{22}-\alpha_{22})\sin(\beta_{23}+\alpha_{23})}}
		\end{split} \\
		\begin{split}
			& \dfrac{\sin \dfrac{\beta_{13} - \gamma_{13}}{2}\sin \dfrac{\beta_{14} + \gamma_{14}}{2}}{\sin \dfrac{\beta_{13} + \gamma_{13}}{2}\sin \dfrac{\beta_{14} - \gamma_{14}}{2}} =  \mathrm{sign}\left(\dfrac{\pi-\beta_{23}-\alpha_{23}}{\pi-\beta_{24}-\alpha_{24}}\right) \sqrt{\dfrac{\sin(\beta_{23}+\alpha_{23})\sin(\beta_{24}-\alpha_{24})}{\sin(\beta_{23}-\alpha_{23})\sin(\beta_{24}+\alpha_{24})}} 
		\end{split}
	\end{gathered}
\end{equation}
Note that the two equations below will be implied from the above conditions, which also contributes to the flexibility condition of the entire quad-mesh rigid origami:
\begin{equation}
	\begin{gathered}
		\dfrac{\sin \alpha_{24}}{\sin \beta_{24}} = \dfrac{\sin \alpha_{21}}{\sin \beta_{21}} \\ \dfrac{\sin \dfrac{\beta_{14} - \gamma_{14}}{2}\sin \dfrac{\beta_{11} + \gamma_{11}}{2}}{\sin \dfrac{\beta_{14} + \gamma_{14}}{2}\sin \dfrac{\beta_{11} - \gamma_{11}}{2}} =  \mathrm{sign}\left(\dfrac{\pi-\beta_{24}-\alpha_{24}}{\pi-\beta_{21}-\alpha_{21}}\right) \sqrt{\dfrac{\sin(\beta_{24}+\alpha_{24})\sin(\beta_{21}-\alpha_{21})}{\sin(\beta_{24}-\alpha_{24})\sin(\beta_{21}+\alpha_{21})}}
	\end{gathered}
\end{equation}
A large library of quad-mesh rigid origami can be constructed by repetitively stitching together units composed of linear and equimodular couplings, with variations in vertex types, input sector angles, and crease length distributions, as illustrated in Figure \ref{fig: example}. The diversity of designs can be further expanded by switching some strips, as introduced in Section \ref{sec: introduction}. Notably, switching alternating strips makes all developable vertices non-developable, and vice versa. The sector angles of these examples were solved numerically. Both the sector angle solutions and the resulting folding motion are validated with an accuracy $\sim$ \(10^{-15}\) using double-precision inputs, comparable to those achieved for the Miura-ori, (anti)-V-hedra, and T-hedra. The folding motion is reconstructed from the sector angles and crease lengths, and its validity is further confirmed by measuring the coplanarity of each quadrilateral throughout the motion. Additional details can be found in \citet{he_sector-angle-periodic_2024}.

\subsection{Evidence for the limiting smooth surface} \label{subsection: evidence} 

In this section, we compile and examine the available information concerning the limiting smooth surface associated with a series of arbitrarily refined quad-mesh rigid origami generated from repetitive stitching. To the best of our knowledge, existing methods --- including (1) the standard convergence theorem for conjugate nets \citep[Chapter 5]{bobenko_discrete_2008}, (2) the approach for V-hedra \citep{izmestiev_voss_2025}, (3) the approach for T-hedra \citep{izmestiev_isometric_2024}, and (4) the asymptotic analysis of finite isometries of periodic surfaces \citep[Theorem 2]{nassar_effective_2024} --- are not directly applicable. The inapplicability arises for the following reasons: (1) the newly constructed patterns are not governed by systems of first-order partial difference equations; (2) the sector angles, along with the more sparsely constructed quad-meshes obtained from vertices at identical relative periodic positions across units, do not conform to the definitions of V-hedra or T-hedra; and (3) although one row and one column of crease lengths may be uniformly distributed, the other crease lengths vary across units, rendering the resulting quad-mesh non-periodic. Furthermore, the approach in \citet{nassar_effective_2024} relies on the existence of a supporting plane, which is not available in our case.

To tackle this challenge, we first observed the existence of zig-zag mode along one coordinate direction among all the examples, which is the ruling direction illustrated in Figure \ref{fig: example}. By selecting one row and one column of crease lengths as the \textit{input} crease length, we conjecture that the resulting limiting surface will always be a ruled surface, regardless of the input crease length distribution, as long as it varies smoothly without large fluctuations (Figure~\ref{fig: limiting surface}). This observation is numerically examined from calculating the coefficient of determination $R^2$ of linear regression for each column or row of vertices on the ruling direction. Specifically, $R^2$ is the ratio of the explained variance to the total variance calculated from the steps below. Let $x(i) = [x_1(i);~x_2(i);~x_3(i)] \in \mathbb{R}^3, ~i \in \mathbb{Z}_+, ~i \le n, ~n \in \mathbb{Z}_+$ be a data set of $n$ points in $\mathbb{R}^3$,
\begin{equation*}
	X = \begin{bmatrix}
		x(1) & x(2) & \cdots & x(n)
	\end{bmatrix} \mathrm{~is~a~} 3 \times n \mathrm{~coordinate~matrix}
\end{equation*}
\begin{equation*}
	\overline{X} = \begin{bmatrix}
		\left. \sum \limits_{i = 1}^{n} x_1(i) \middle/ n \right. \\
		\left. \sum \limits_{i = 1}^{n} x_2(i) \middle/ n \right. \\
		\left. \sum \limits_{i = 1}^{n} x_3(i) \middle/ n \right. \\
	\end{bmatrix} \mathrm{~is~the~mean~value~of~coordinates}
\end{equation*}
\begin{equation*}
	\mathrm{d}X = \begin{bmatrix}
		x(1) - \overline{X}  & x(2) - \overline{X} & \cdots & x(n) - \overline{X}
	\end{bmatrix} \mathrm{~is~the~residual}
\end{equation*}
\begin{equation*}
	C = \dfrac{\mathrm{d}X \mathrm{d}X^\mathrm{T}}{n-1} \mathrm{~is~the~} 3 \times 3 \mathrm{~variance}\text{-}\mathrm{covariance~matrix}
\end{equation*}
We apply a spectral decomposition to $C$, writing it as $C = A D A^\mathrm{T}$. The direction of the best fit line is the first column $A(:, ~1)$ of $A$. The line of best fit is hence $\overline{X} + tA(:, ~1), ~t \in \mathbb{R}$. The coefficient of determination is:
\begin{equation*}
	R^2 = \dfrac{D(1, ~1)}{\mathrm{trace}(D)}
\end{equation*}
The closer $R^2$ is to 1, the more closely the data aligns with a perfect linear relation. Below, we report the \textbf{minimum} $R^2$ value across all rulings for the constructions shown in Figure~\ref{fig: example} and \ref{fig: unit} at a representative folded state. The magnitude and trend of $R^2$ at other folded states are similar and can be observed in the accompanying script~\citep{he_sector-angle-periodic_2024}.

\begin{figure}[t]
	\noindent \begin{centering}
		\includegraphics[width=1\linewidth]{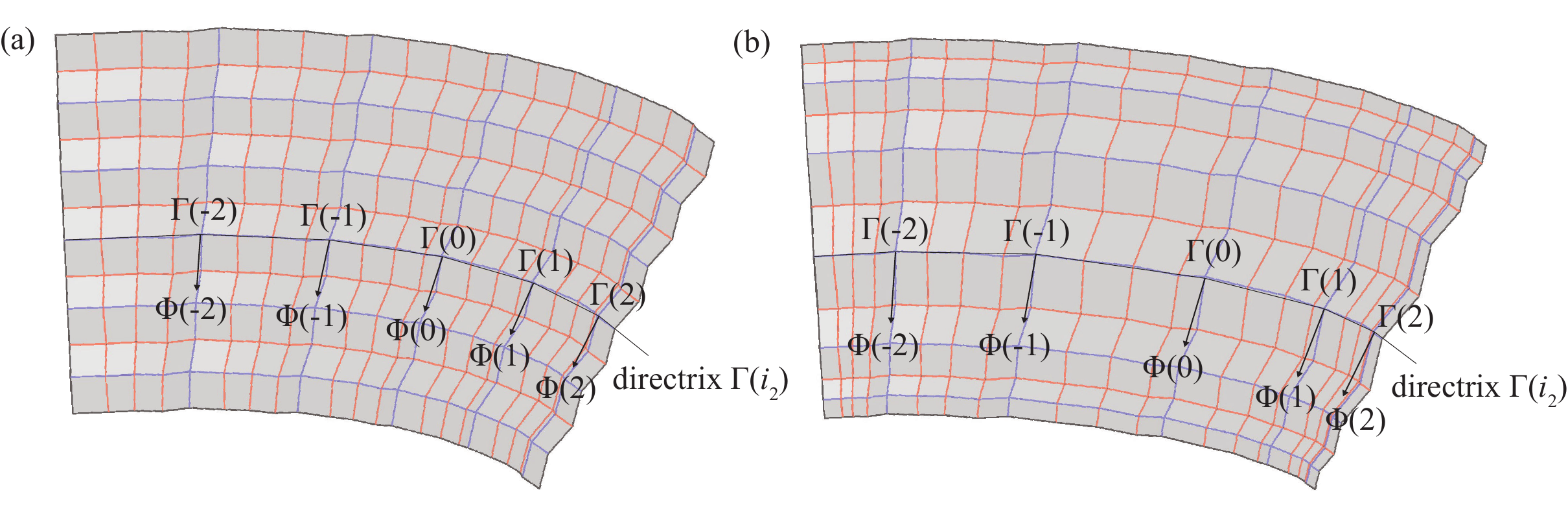}
		\par\end{centering}
	
	\caption{\label{fig: limiting surface} The structure of the discrete surface generated from repetitive stitching. (a) shows a uniform input crease length distribution; (b) shows a quadratic input crease length distribution, both with 25 units. As the number of units increases, the repeated pattern causes the hypothesized directrix $\Gamma$ to approach a pseudo-helix (see \eqref{eq: pesudo-helix}). In the longitudinal direction, the zig-zag configuration produces rulings that maintain a constant angle with the directrix (see \eqref{eq: constant angle}).}
\end{figure}

\begin{description}
	\item[Figure \ref{fig: example}(a)] at folding angle $-60^\circ$ 
	\begin{center}
		\begin{tabular}{ |c|c|c|c|c|c| } 
			\hline
			number of units & 1 & 4 & 9 & 25 & 49 \\ 
			\hline
			uniform input crease length  & 0.99636 & 0.99886 & 0.99945 & 0.99978 & 0.99988 \\ 
			\hline
			quadratic input crease length & 0.99700 & 0.99878 & 0.99939 & 0.99976 & 0.99987 \\ 
			\hline
		\end{tabular}
	\end{center}
	\item[Figure \ref{fig: example}(b)] at folding angle $-30^\circ$ 
	\begin{center}
		\begin{tabular}{ |c|c|c|c|c|c| } 
			\hline
			number of units & 1 & 4 & 9 & 25 & 49 \\ 
			\hline
			uniform input crease length & 0.97546 & 0.99103 & 0.99540 & 0.99813 & 0.99899 \\ 
			\hline
			quadratic input crease length & 0.98050 & 0.99052 & 0.99490 & 0.99788 & 0.99885 \\ 
			\hline
		\end{tabular}
	\end{center}
	\item[Figure \ref{fig: example}(c)] at folding angle $-30^\circ$ 
	\begin{center}
		\begin{tabular}{ |c|c|c|c|c|c| } 
			\hline
			number of units & 1 & 4 & 9 & 25 & 49 \\ 
			\hline
			uniform input crease length & 0.97569 & 0.99111 & 0.99545 & 0.99815 & 0.99900 \\ 
			\hline
			quadratic input crease length & 0.98063 & 0.99061 & 0.99495 & 0.99790 & 0.99886 \\ 
			\hline
		\end{tabular}
	\end{center}
	\item[Figure \ref{fig: example}(d)] at folding angle $-30^\circ$ 
	\begin{center}
		\begin{tabular}{ |c|c|c|c|c|c| } 
			\hline
			number of units & 1 & 4 & 9 & 25 & 49 \\ 
			\hline
			uniform input crease length & 0.97257 & 0.98935 & 0.99423 & 0.99740 & 0.99846 \\ 
			\hline
			quadratic input crease length & 0.97825 & 0.98880 & 0.99361 & 0.99705 & 0.99825 \\ 
			\hline
		\end{tabular}
	\end{center}
	\item[Figure \ref{fig: example}(e)] at folding angle $-30^\circ$ 
	\begin{center}
		\begin{tabular}{ |c|c|c|c|c|c| } 
			\hline
			number of units & 1 & 4 & 9 & 25 & 49 \\ 
			\hline
			uniform input crease length & 0.97576 & 0.99113 & 0.99545 & 0.99815 & 0.99900 \\ 
			\hline
			quadratic input crease length & 0.98068 & 0.99064 & 0.99496 & 0.99790 & 0.99886 \\ 
			\hline
		\end{tabular}
	\end{center}
	\item[Figure \ref{fig: example}(f)] at folding angle $-30^\circ$ 
	\begin{center}
		\begin{tabular}{ |c|c|c|c|c|c| } 
			\hline
			number of units & 1 & 4 & 9 & 25 & 49 \\ 
			\hline
			uniform input crease length & 0.96847 & 0.98892 & 0.99484 & 0.99800 & 0.99895 \\ 
			\hline
			quadratic input crease length & 0.98123 & 0.98913 & 0.99448 & 0.99776 & 0.99881 \\ 
			\hline
		\end{tabular}
	\end{center}
	\item[Figure \ref{fig: unit}] at folding angle $-30^\circ$ 
	\begin{center}
		\begin{tabular}{ |c|c|c|c|c|c| } 
			\hline
			number of units & 1 & 4 & 9 & 25 & 49 \\ 
			\hline
			uniform input crease length & 0.99852 & 0.99956 & 0.99979 & 0.99992 & 0.99996 \\ 
			\hline
			quadratic input crease length & 0.99869 & 0.99949 & 0.99976 & 0.99990 & 0.99995 \\ 
			\hline
		\end{tabular}
	\end{center}
\end{description}

On top of the ruled surface assumption, in Figure \ref{fig: limiting surface}(a), where the input crease length distribution is uniform, we write the discrete surface in coordinate $i = (i_1, ~i_2) \in \mathbb{Z}^2$:
\begin{equation*} 
	x(i_1, ~i_2) = \Gamma(i_2) + i_1\Phi(i_2) 
\end{equation*}
Each $x(i_1, ~i_2)$ is the location of vertex at the same relative position of unit $(i_1, ~i_2)$. $\Gamma(i_2)$ is the directrix. $\Phi(i_2)$ is along the ruling direction. There is no restriction on the choice of such representative vertex $x(i_1, ~i_2)$. Furthermore, let 
\begin{equation*}
	\triangle \Gamma(i_2) = \Gamma(i_2+1) - \Gamma(i_2)
\end{equation*}
The periodicity of sector angles implies the periodicity of folding angles. Since $\triangle \Gamma(i_2)$ is determined by the sector angles, folding angles, and the input crease lengths, it is a spatial vector with constant azimuthal rotation and translational span. Consequently, $\Gamma(i_2)$ forms a helical polyline. We could write $\triangle \Gamma(i_2)$ in the form below, up to a combination of rotation and translation:
\begin{equation} \label{eq: delta i_2}
	\triangle \Gamma(i_2) = \begin{bmatrix}
		a \cos (i_2+1) \theta  \\
		a \sin (i_2+1) \theta \\
		b (i_2+1)
	\end{bmatrix} - \begin{bmatrix}
		a \cos i_2 \theta  \\
		a \sin i_2 \theta \\
		b i_2
	\end{bmatrix} = \begin{bmatrix}
		-2a \sin \left(i_2\theta + \dfrac{\theta}{2}\right) \sin \dfrac{\theta}{2}  \\[8pt]
		2a \cos \left(i_2\theta + \dfrac{\theta}{2}\right) \sin \dfrac{\theta}{2} \\[8pt]
		b 
	\end{bmatrix}
\end{equation}
The parameter $a$ is the radius of the helix, $\theta$ and $b$ are the azimuthal rotation and the translational span of adjacent vertices on the helix. These parameters can be determined by the three-dimensional configuration of the origami structure at any folded state.

Since the ruled surface assumption remains valid even under a generic input crease length distribution, we hypothesize that the periodicity of sector and folding angles has a dominant influence on the overall shape of the limiting surface, compared to the effect of varying crease lengths. Despite the geometric complexity, our goal is to obtain a simple symbolic expression that captures the global features of the limiting surface as faithfully as possible. 

To this end, we assume that for a generic input crease length distribution, each vector segment $\triangle \Gamma(i_2)$ still progresses in a helical fashion, but is modulated by a scaling factor $f(i_2)$ to account for variations in crease length:
\begin{equation} \label{eq: pesudo-helix}
	f(i_2) = \dfrac{\| \triangle \Gamma(i_2) \|}{\| \triangle \Gamma(0) \|}, \quad \triangle \Gamma(i_2) = f(i_2) \begin{bmatrix}
		-2a \sin \left(i_2\theta + \dfrac{\theta}{2}\right) \sin \dfrac{\theta}{2}  \\[8pt]
		2a \cos \left(i_2\theta + \dfrac{\theta}{2}\right) \sin \dfrac{\theta}{2} \\[8pt]
		b 
	\end{bmatrix}
\end{equation}
where $f(i_2) \equiv 1$ corresponds to the case of uniform input crease lengths.

As a further consequence of the dominance of periodic sector and folding angles, we assume that at every point $x(i_1, ~i_2)$, the angle between the directrix and the ruling remains constant. This angle condition greatly simplifies the determination of all $\Phi(i_2)$.
\begin{equation} \label{eq: constant angle}
	\dfrac{\triangle \Gamma(i_2) \cdot \Phi(i_2)}{\|\triangle \Gamma(i_2)\| \|\Phi(i_2)\|} = \mathrm{Const.} \in [0, 1)
\end{equation}

In conclusion, for any input crease length, the discrete surface has the parametrization below:
\begin{equation}
	x(i_1, ~i_2) = \Gamma(0) + \sum \limits_{j = 0}^{i_2} \triangle \Gamma(j) + i_1\Phi(i_2) 
\end{equation}

Now as the mesh is arbitrarily refined, we conjecture that the limiting smooth surface is in the form of \eqref{eq: special ruled surface}, up to a combination of rotation and translation. The directrix $\Gamma(u_2)$ is now a smooth curve, $f(u_2)$ also becomes a smooth function. The directrix meets the rulings at a constant angle for all $u_2$. When $f(v) \equiv 1$, $\Gamma(u_2)$ becomes a helix.

\section{Discussion} \label{section: discussion}

Our results represent an initial step in advancing the form-finding capabilities of quad-mesh rigid origami beyond the commonly explored (anti-)V-hedra and T-hedra. 

\subsection*{Revisiting the levels of surface approximation}

One notable difference between the first and second levels of surface approximation is the zig-zag mode in quad-mesh rigid origami. We claim that there is no smooth analogue for developable quad-mesh rigid origami, such as the Miura-ori, anti-V-hedra and developable T-hedra. To elucidate this, it is helpful to introduce the concept of \textit{mountain-valley assignment}. By assigning an orientation to the discrete surface, we measure the dihedral angle at each crease and subtract it from $\pi$ to determine the folding angle. Creases with negative folding angles are called \textit{mountain creases}, where the paper bends away from the observer from the specified orientation. Conversely, creases with positive folding angles are called \textit{valley creases}, where the paper bends towards the observer from the specified orientation. At every developable vertex, the numbers of mountain and valley creases are 3-to-1 or 1-to-3. At every vertex where the sum of sector angles is less than $2\pi$, the mountain and valley creases can be 4-to-0, 3-to-1, 1-to-3 or 0-to-4 in different folded states. At every vertex where the sum of sector angles is more than $2\pi$, the mountain and valley creases can be 3-to-1, 2-to-2, or 1-to-3 in different folded states. This counting of mountain-valley assignments for a generic degree-4 vertex can be checked symbolically from \cite{he_real_2023}. The coordinate curves and tangent planes around vertices with 3-to-1 or 1-to-3 mountain-valley assignment oscillate when being arbitrarily refined, which is not a feature of a smooth surface. The 3-to-1 and 1-to-3 vertices introduce a zig-zag mode, as illustrated in Figure \ref{fig: quad mesh introduction}(a), \ref{fig: quad mesh introduction}(d) and \ref{fig: quad mesh introduction}(f), though they are not the only cause. In Figure \ref{fig: quad mesh introduction}(c), alternating rows of 2-to-2 and 0-to-4 vertices can also create this zig-zag mode. Patterns with a smooth analogue, as visualized in Figure \ref{fig: quad mesh introduction}(b) and \ref{fig: quad mesh introduction}(e), have a uniform mountain-valley assignment for all interior vertices following 4-to-0, 2-to-2 and 0-to-4 assignments. Our conjecture in \eqref{eq: special ruled surface} attempts to capture the apparent curvature superimposed on the zig-zag mode.

In practical origami-based design, we often aim for the metric- and curvature-related properties of the origami structure to closely approximate those of the target surface — beyond merely achieving closeness in distance. The metric-related properties include 1a) arc lengths of coordinate curves; 2a) arc lengths of geodesics; 3a) surface area. Curvature-related properties include 1b) curvature and torsion of coordinate curves; 2b) curvature and torsion of geodesics 3b) normal vector field; 4b) mean curvature; 5b) Gaussian curvature. In classic differential geometry, there are famous examples such as the `Staircase paradox' and the `Schwarz lantern', showcasing the non-convergence of length and area upon the convergence in distance. From classical mathematical analysis, if a series of discrete curves or surfaces approaches a smooth curve or surface, and all the vertices are exactly on the smooth curve or surface, the tangent plane, metric- and curvature-related properties will converge. We could see that the Staircase and the Schwarz lantern both have a zig-zag mode where the vertices of discrete curves and surfaces are not exactly on the target curve/surface. The examples in Figures \ref{fig: example} and \ref{fig: unit} also exhibit this zig-zag mode, demonstrating non-convergence of the properties listed in 1a) through 3a) and 1b) through 5b). However, this does not imply that all new patterns created through repetitive stitching will exhibit this zig-zag mode.

Several relevant details are worth noting. \citet{morvan_approximation_2004} showed that convergence of the normal vector fields implies convergence of surface area. \citet{hildebrandt_convergence_2006} considerably generalized the result in \citet{morvan_approximation_2004}: upon the convergence in distance, the convergence of metric tensor (the first fundamental form), surface area, normal vector field and mean curvature (the cotangent formula, Laplace-Beltrami operator) are equivalent. Once this convergence is met, it could be further inferred that arclength of coordinate curves/geodesics and Gaussian curvature will converge since they are dependent on the first fundamental form.  \citet{bauer_uniform_2010} showed that discrete principal curvatures computed from a series of curvature line nets uniformly converge to the principal curvatures of the limiting smooth surface. These results are not exhaustive, with much remaining to be explored.

\subsection*{Connection with numerical methods}

How challenging is it to numerically search for a quad-mesh rigid origami with accuracy comparable to the Miura-ori? \citet{jiang_quad_2024} addressed this problem by developing three computational methods capable of finding both planar and spatial flexible quad-meshes and modelling their flexes. Their numerical solutions exhibit precision with errors --- measured as the relative changes in crease lengths --- on the order of \(10^{-5}\) to \(10^{-7}\). These errors represent the relative changes in crease lengths. Notably, they proposed an effective strategy: starting from known symbolic variations of quad-mesh rigid origami, applying small perturbations, and subsequently optimizing for flexibility. Our result would provide a library of new examples that can serve as initial configurations for such numerical methods, potentially enlarging the outcome from computational searches.     

\subsection*{New semi-discrete quad-mesh rigid origami and curved crease origami}

The newly introduced infinitely refinable quad-mesh rigid origami presents strong potential for advancing the design of novel semi-discrete quad-mesh rigid origami and curved crease origami — extending beyond the existing frameworks based on (anti)-V-hedra and T-hedra. For recent advancements within these established frameworks, see \citet{sharifmoghaddam_generalizing_2023, liu_design_2024}.

A semi-discrete quad-mesh rigid origami refines the mesh in only one direction, replacing straight creases in that direction with smooth, non-intersecting curves. The result is a flexible, piecewise-smooth surface segmented by curved creases. More generally, curved crease origami refers to the isometric deformation of such piecewise-smooth surfaces joined along curves — i.e., curved creases. This broader category includes semi-discrete quad-mesh rigid origami but also encompasses configurations with intersecting curved creases.

In rigid-ruling folding of curved crease origami, the crease-ruling pattern preserves (i.e. rulings do not `slide' on the surface) during the folding motion \citep{demaine_conic_2018, mundilova_rigid-ruling_2024}. For example, bending a cylindrical surface to a cone is an isometric deformation but not a rigid-ruling folding. The semi-discrete quad-mesh rigid origami formed by the new infinitely refinable patterns introduced in this article may also potentially form a rigid-ruling folding.

\subsection*{Beyond using repetitive stitching} 

The periodicity of sector angles in our proposed construction not only reduces the number of constraints, making it fewer than the number of sector angles, but also plays a crucial role in defining the limiting smooth surface. However, this represents only the most basic symmetry. There remains substantial potential for exploration beyond periodicity.

\subsection*{Self-intersection of the crease pattern}

Self-intersection occurs when creases intersect at points other than the specified vertices, a scenario that can emerge during mesh refinement. While preventing self-intersection is essential for practical pattern design, allowing it can provide a method to discretize surface with self-intersecting coordinate curves \citep{kilian_interactive_2024}. Resolving this issue requires techniques that lie beyond the scope of this article, and we plan to explore it in future research.

\section*{Acknowledgement}

This work is supported by the Japan Science and Technology Agency – Core Research for Evolutional Science and Technology Grant No.~JPMJCR1911. 

We thank Ivan Izmestiev, Kiumars Sharifmoghaddam, Georg Nawratil, and Hellmuth Stachel for insightful discussions during the Special Semester on Rigidity and Flexibility at the Johann Radon Institute for Computational and Applied Mathematics, Johannes Kepler University Linz, Austria

We thank Hussein Nassar and Frederic Marazzato for insightful discussions during the “Geometry of Materials” Workshop at the Institute for Computational and Experimental Research in Mathematics (ICERM), held in Providence, RI, as part of the semester program on “Geometry of Materials, Packings, and Rigid Frameworks.”

\appendix

\part*{Supplementary Material}

This supplementary material serves as an extensive resource for understanding the mathematical principles underlying the new quad-mesh rigid origami emphasized in the main text. In our setup, the sum of sector angles at every interior vertex is \textbf{not necessarily} $2\pi$, which means our discussion includes but is not restricted to developable origami. We include all the necessary derivations towards common quad-mesh origami variants --- (anti-)V-hedra and T-hedra, and the more generalized variations reported in the main text --- linear couplings and the equimodular couplings. All the derivations are presented in a detailed manner, ensuring accessibility for researchers across diverse disciplines.

The content is divided into two parts: Part I covers foundational concepts around coordinate nets (i.e. surface patches or parametrization) in both differential geometry and discrete differential geometry. Table \ref{tab: notation} lists the pertinent notations used throughout this supplementary material. Section \ref{section: isometry} is a supplement to the information of geodesics and Christoffel symbol provided in \citet{do_carmo_differential_2016}. Section \ref{section: coordinate net} introduces common coordinate nets formed by coordinate curves $u_1 = \mathrm{Const}$ and $u_2 = \mathrm{Const}$. Section \ref{section: differential equation} introduces the well-posed initial condition to obtain these smooth coordinate nets from solving a partial differential equation. Furthermore, in computer graphics and computational mechanics, we are naturally seeking for a `nice' discretization of these coordinate nets. It leads to the introduction on discrete curves and surfaces, together with the matching discrete nets to the aforementioned smooth nets in Section \ref{section: discrete curvature} and Section \ref{section: discrete net}. In parallel, Section \ref{section: difference equation} introduces the well-posed initial condition to construct these discrete nets as the solution of a partial difference equation. After all these preparation, in Section \ref{section: ddg convergence} we discuss the convergence of a series of discrete nets to a smooth net as the mesh is refined. The above information on discrete nets is mainly from \cite{bobenko_discrete_2008}. 

Part II is about the information on quad-mesh rigid origami. We concern the continuous isometric (distance-preserving) deformation of both the quad-mesh rigid origami and its Gauss map. The flexibility of quad-mesh rigid origami is introduced in Section \ref{section: loop condition}. Common variations, including V-hedra and T-hedra, are detailed in Sections \ref{section: V-hedra} and \ref{section: T-hedra}, respectively. For more generalized variations --- linear couplings and equimodular couplings --- all relevant details are presented in Sections \ref{section: linear coupling}, \ref{section: equimodular coupling} and \ref{section: examples}. 

\begin{longtable} {@{}p{0.3\textwidth}@{}p{0.7\textwidth}@{}} 
	\caption{Notations} \label{tab: notation} \\
	\hline
	\textbf{Geometrical objects} & \\
	$X, ~Y$   & a curve or a surface in $\mathbb{R}^3$ \\
	$x,~y$ & arbitrary or fixed points in $X$ or $Y$, dependent on context \\
	$x = (x_1,~x_2,~\dots,~x_n)$ & coordinates of $x~(n \in \mathbb{Z}_+)$ \\
	$I^n$   & an $n$ dimensional open cube in $\mathbb{R}^n~(n \in \mathbb{Z}_+)$. $I$ refers to $(0, ~1)$ by default. \\
	$O$ & an open set \\
	$O(x)$ & a neighbourhood at $x$ \\
	$\Gamma,~\Phi, ~\Psi$ & charts of a curve or a surface. $\Gamma$ is usually used for a curve. \\
	$N$ & a unit normal vector \\
	$\mathrm{I}, ~\mathrm{I}_{11}, ~\mathrm{I}_{12}, ~\mathrm{I}_{22}$ & the first fundamental form and its components \\
	$\mathrm{II}, ~\mathrm{II}_{11}, ~\mathrm{II}_{12}, ~\mathrm{II}_{22}$ & the second fundamental form and its components \\
	$\mathrm{III}, ~\mathrm{III}_{11}, ~\mathrm{III}_{12}, ~\mathrm{III}_{22}$ & the third fundamental form and its components \\
	$\kappa, ~\tau$ & curvature and torsion for a curve  \\
	$\kappa_\mathrm{n}, ~\kappa_\mathrm{g}$ & the normal curvature and geodesic curvature for a curve on a surface \\
	$\kappa_1, ~\kappa_2, ~\kappa_\mathrm{H}, ~\kappa_\mathrm{G}$ & the principal curvatures, mean curvature, and Gaussian curvature for a surface. \\
	\hline
	\textbf{Parameters} & \\
	$i,~j,~k,~l$ & flexible positive integers or free indices \\
	$m,~n$ & fixed positive integers \\
	$a,~b,~c$ & scalar or vector parameters in $\mathbb{R}^n$ $(n \in \mathbb{Z}_+)$ \\ 
	$a = (a_1,~a_2,~\dots,~a_n)$ & coordinates of $a~(n \in \mathbb{Z}_+)$ \\
	$\epsilon,~\delta$ & real numbers in all forms of $\epsilon-\delta$ expressions \\
	$t,~u, ~v$ & parameters for a curve or a surface \\
	$t = (t_1,~t_2,~\dots,~t_m)$ & coordinates of $t~(m \in \mathbb{Z}_+)$ \\
	$s$ & arc length parameter for a curve \\
	\hline
	\label{tab: notation usage}
\end{longtable}

\part{Preliminaries in (discrete) differential geometry}

\section{Geodesic and the Christoffel symbol} \label{section: isometry}

Let $X$ be a surface with local chart $\Phi: u = (u_1, ~u_2) \in I^2 \rightarrow x = (x_1, ~x_2, ~x_3) \in X \subset \mathbb{R}^3$. In the calculations below, we apply the following \textbf{regularity condition} to all curves and surfaces by default: 1) all local charts are analytic, meaning they are locally represented by convergent power series. As a result, these charts are smooth (have arbitrary order of partial derivatives), with both the charts and their partial derivatives being bounded; 2) the Jacobian $\dif x / \dif u$ is of full rank. The normal vector field $N$ on $X$ is:
\begin{equation*}
	N = \dfrac{\dfrac{\partial x}{\partial u_1} \times \dfrac{\partial x}{\partial u_2}}{\left|\left| \dfrac{\partial x}{\partial u_1} \times \dfrac{\partial x}{\partial u_2} \right|\right|}
\end{equation*}
$N: X \rightarrow S^2$ is also called \textit{the Gauss map}, which can be interpreted as translating all the normal vectors of a surface to a unit sphere.

A curve $\Gamma: t \in I \rightarrow x \in X$ on the surface $X$ is a \textit{geodesic} if there is no `lateral acceleration':
\begin{equation*}
	\dfrac{\dif^2 x}{\dif t^2} \cdot \left( N \times \dfrac{\dif x}{\dif t} \right) = 0
\end{equation*}
since $\|\dif x / \dif s\| = 1 \Rightarrow (\dif x / \dif s) \cdot (\dif^2 x / \dif s^2) = 0$. In the arc length parametrization the above condition is:
\begin{equation*}
	\begin{gathered}
		\dfrac{\dif^2 x}{\dif s^2} \times N  = 0 ~~\Leftrightarrow~~ \dfrac{\dif^2 x}{\dif s^2} = \kappa N, ~~\kappa: I \rightarrow \mathbb{R} \\
		\kappa \mathrm{~is~the~curvature~of~a~geodesic~}\Gamma
	\end{gathered}
\end{equation*}
The velocity $\dif x / \dif s$ along a geodesic $\Gamma$, as a vector field, is hence said to be \textit{parallel} on the surface $X$. Clearly, a straight line contained in a surface is a geodesic. Being geodesic is a necessary condition for the shortest path joining two points on a surface.

\begin{figure}[t]
	\noindent \begin{centering}
		\includegraphics[width=0.7\linewidth]{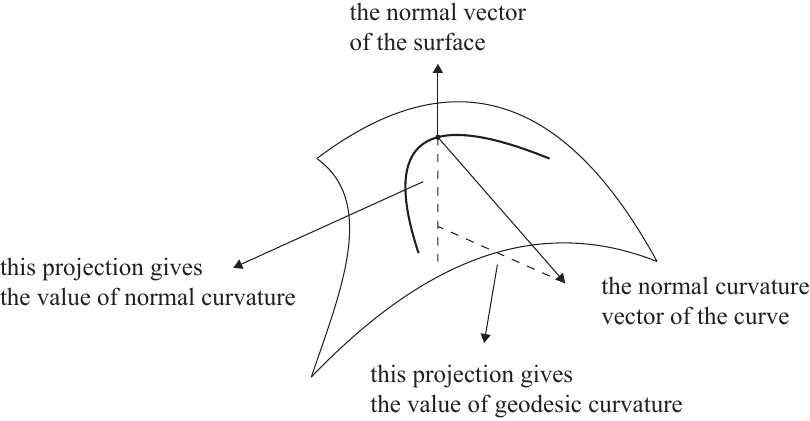}
		\par\end{centering}
	
	\caption{\label{fig: geodesic curvature}Illustration of the geodesic curvature.}
\end{figure}

Furthermore, for any curve $\Gamma \subset X$ we have:
\begin{equation*}
	\dfrac{\dif^2 x}{\dif s^2} = \dfrac{1}{\left\| \dfrac{\dif x}{\dif t}\right\|} \dfrac{\dif}{\dif t} \left( \dfrac{\dfrac{\dif x}{\dif t}}{\left\| \dfrac{\dif x}{\dif t}\right\|} \right) = \dfrac{1}{\left\| \dfrac{\dif x}{\dif t}\right\|^2} \left( \dfrac{\dif^2 x}{\dif t^2} - \dfrac{ \dfrac{\dif x}{\dif t} \cdot \dfrac{\dif^2 x}{\dif t^2}  }{\left\| \dfrac{\dif x}{\dif t}\right\|^2} \dfrac{\dif x}{\dif t} \right) 
\end{equation*}
\begin{equation*}
	\kappa = \left\| \dfrac{\dif^2 x}{\dif s^2}\right\| = \dfrac{ \left\| \dfrac{\dif x}{\dif t} \times \dfrac{\dif^2 x}{\dif t^2}\right\| }{\left\| \dfrac{\dif x}{\dif t}\right\|^3}
\end{equation*}
The (signed) \textit{normal curvature} $\kappa_\mathrm{n}$ is the length of the projection of the acceleration onto the surface normal vector. 
\begin{equation}
	\kappa_\mathrm{n} =  \dfrac{\dif^2 x}{\dif s^2} \cdot N  = \dfrac{ \dfrac{\dif^2 x}{\dif t^2} \cdot N }{\left\| \dfrac{\dif x}{\dif t}\right\|^2} 
\end{equation}
The (signed) \textit{geodesic curvature} $\kappa_\mathrm{g}$ is the length of the projection of the acceleration onto the tangent plane:
\begin{equation} \label{eq: geodesic curvature}
	\kappa_\mathrm{g} = \dfrac{\dif^2 x}{\dif s^2} \cdot \left(N \times \dfrac{\dif x}{\dif s}\right) = \dfrac{ \dfrac{\dif^2 x}{\dif t^2} \cdot \left(N \times \dfrac{\dif x}{\dif t}\right) }{\left\| \dfrac{\dif x}{\dif t}\right\|^3}
\end{equation}
The above definitions lead to
\begin{equation*}
	\kappa^2 = \kappa_\mathrm{n}^2 + \kappa_\mathrm{g}^2
\end{equation*}
Usually we think the curvature or the geodesic curvature is positive if the acceleration $\dif^2 x/\dif s^2$ is rotated counterclockwise from the velocity $\dif x/\dif s$. This aligns with the right-hand coordinate system such that the normal vector pointing outwards the surface is positive. In other words, the geodesic curvature is the curvature measured from the `viewpoint' on the surface', which further explains that a geodesic is the analogue of a line on a plane.
\begin{equation*}
	\Gamma \mathrm{~is~geodesic~} ~\Leftrightarrow~ \kappa_\mathrm{g} = 0 \mathrm{~~along~the~curve~}
\end{equation*}

The \textit{Christoffel symbol} $\Gamma_{11}^i, ~\Gamma_{12}^i, ~\Gamma_{22}^i~(i = 1, ~2)$ denotes how $(\partial^2 x / \partial u_1^2, ~\partial^2 x / \partial u_1 \partial u_2, ~\partial^2 x / \partial u_2^2)$ is linearly represented by the non-orthogonal frame $(\partial x / \partial u_1, ~\partial x / \partial u_2, ~N)$:
\begin{equation} \label{eq: Christoffel symbol}
	\begin{cases}
		\dfrac{\partial N}{\partial u_1} = a_{11}\dfrac{\partial x}{\partial u_1} + a_{21} \dfrac{\partial x}{\partial u_2} \\[8pt]
		\dfrac{\partial N}{\partial u_2} = a_{12}\dfrac{\partial x}{\partial u_1} + a_{22} \dfrac{\partial x}{\partial u_2} \\[8pt]
		\dfrac{\partial^2 x}{\partial u_1^2} = \Gamma^1_{11} \dfrac{\partial x}{\partial u_1} + \Gamma^2_{11} \dfrac{\partial x}{\partial u_2} + b_{11} N \\[8pt] 
		\dfrac{\partial^2 x}{\partial u_1 \partial u_2} = \Gamma^1_{12} \dfrac{\partial x}{\partial u_1} + \Gamma^2_{12} \dfrac{\partial x}{\partial u_2} + b_{12} N \\[8pt] 
		\dfrac{\partial^2 x}{\partial u_2^2} = \Gamma^1_{22} \dfrac{\partial x}{\partial u_1} + \Gamma^2_{22} \dfrac{\partial x}{\partial u_2} + b_{22} N \\[8pt] 
	\end{cases} 
\end{equation}
The first two equations in matrix form is:
\begin{equation*}
	\begin{bmatrix}
		\dfrac{\partial N}{\partial u_1} & \dfrac{\partial N}{\partial u_2}
	\end{bmatrix} = \begin{bmatrix}
		\dfrac{\partial x}{\partial u_1} & \dfrac{\partial x}{\partial u_2}
	\end{bmatrix} \begin{bmatrix}
		a_{11} & a_{12} \\
		a_{21} & a_{22}
	\end{bmatrix}
\end{equation*}
Note that, 
\begin{equation*}
	||N|| \equiv 1 ~~\Rightarrow~~ \dfrac{\dif N}{\dif u} \cdot N \equiv 0 
\end{equation*}
By taking the dot product of $\partial x / \partial u_1$ and $\partial x / \partial u_2$ with both sides of \eqref{eq: Christoffel symbol}, we obtain the equations below using the components of the first and second order fundamental form,
\begin{equation*}
	\begin{dcases}
		a_{11}\mathrm{I}_{11} + a_{21} \mathrm{I}_{12} = \dfrac{\partial x}{\partial u_1} \cdot \dfrac{\partial N}{\partial u_1} = \dfrac{\partial}{\partial u_1} \left( \dfrac{\partial x}{\partial u_1} \cdot N \right) - \mathrm{II}_{11}  = - \mathrm{II}_{11} \\
		a_{11} \mathrm{I}_{12} + a_{21}\mathrm{I}_{22} = \dfrac{\partial x}{\partial u_2} \cdot \dfrac{\partial N}{\partial u_1} = \dfrac{\partial}{\partial u_1} \left( \dfrac{\partial x}{\partial u_2} \cdot N \right) - \mathrm{II}_{12} = - \mathrm{II}_{12}\\
	\end{dcases}
\end{equation*}
\begin{equation*}
	\begin{dcases}
		a_{12}\mathrm{I}_{11} +a_{22} \mathrm{I}_{12} = \dfrac{\partial x}{\partial u_1} \cdot \dfrac{\partial N}{\partial u_2} = \dfrac{\partial}{\partial u_2} \left( \dfrac{\partial x}{\partial u_1} \cdot N \right) - \mathrm{II}_{12} = - \mathrm{II}_{12}\\
		a_{12} \mathrm{I}_{12} + a_{22}\mathrm{I}_{22} = \dfrac{\partial x}{\partial u_2} \cdot \dfrac{\partial N}{\partial u_2} = \dfrac{\partial}{\partial u_2} \left( \dfrac{\partial x}{\partial u_2} \cdot N \right) - \mathrm{II}_{22} = - \mathrm{II}_{22} \\
	\end{dcases}
\end{equation*}
\begin{equation*}
	\begin{dcases}
		\Gamma^1_{11}\mathrm{I}_{11} + \Gamma^2_{11} \mathrm{I}_{12} = \dfrac{\partial x}{\partial u_1} \cdot \dfrac{\partial^2 x}{\partial u_1^2} = \dfrac{1}{2} \dfrac{\partial \mathrm{I}_{11}}{\partial u_1} \\
		\Gamma^1_{11}\mathrm{I}_{12} + \Gamma^2_{11} \mathrm{I}_{22} = \dfrac{\partial x}{\partial u_2} \cdot \dfrac{\partial^2 x}{\partial u_1^2} = \dfrac{\partial \mathrm{I}_{12}}{\partial u_1} - \dfrac{1}{2} \dfrac{\partial \mathrm{I}_{11}}{\partial u_2} \\
	\end{dcases}
\end{equation*}
\begin{equation*}
	\begin{dcases}
		\Gamma^1_{12}\mathrm{I}_{11} +\Gamma^2_{12} \mathrm{I}_{12} = \dfrac{\partial x}{\partial u_1} \cdot \dfrac{\partial^2 x}{\partial u_1 \partial u_2} = \dfrac{1}{2} \dfrac{\partial \mathrm{I}_{11}}{\partial u_2} \\
		\Gamma^1_{12}\mathrm{I}_{12} +\Gamma^2_{12} \mathrm{I}_{22} = \dfrac{\partial x}{\partial u_2} \cdot \dfrac{\partial^2 x}{\partial u_1 \partial u_2} = \dfrac{1}{2} \dfrac{\partial \mathrm{I}_{22}}{\partial u_1} \\
	\end{dcases}
\end{equation*}
\begin{equation*}
	\begin{dcases}
		\Gamma^1_{22}\mathrm{I}_{11} +\Gamma^2_{22} \mathrm{I}_{12} = \dfrac{\partial x}{\partial u_1} \cdot \dfrac{\partial^2 x}{\partial u_2^2} = \dfrac{\partial \mathrm{I}_{12}}{\partial u_2} - \dfrac{1}{2} \dfrac{\partial \mathrm{I}_{22}}{\partial u_1} \\
		\Gamma^1_{22}\mathrm{I}_{12} +\Gamma^2_{22} \mathrm{I}_{22} = \dfrac{\partial x}{\partial u_2} \cdot \dfrac{\partial^2 x}{\partial u_2^2} = \dfrac{1}{2} \dfrac{\partial \mathrm{I}_{22}}{\partial u_2} \\
	\end{dcases}
\end{equation*}
Each group of two linear equations have a unique solution since the first fundamental form is positive-definite. More importantly, the Christoffel symbols are fully determined by the first fundamental form hence invariant under isometry. 

The components of $a$ are (note that $a_{12} \neq a_{21}$ without extra conditions): 
\begin{equation} \label{eq: derivative of the Gauss map}
	\begin{gathered}
		\begin{bmatrix}
			\mathrm{I}_{11} & \mathrm{I}_{12} \\
			\mathrm{I}_{12} & \mathrm{I}_{22}
		\end{bmatrix} \begin{bmatrix}
			a_{11} & a_{12} \\
			a_{21} & a_{22}
		\end{bmatrix} = -\begin{bmatrix}
			\mathrm{II}_{11} & \mathrm{II}_{12} \\
			\mathrm{II}_{12} & \mathrm{II}_{22}
		\end{bmatrix} \\
		\Rightarrow ~~ \begin{bmatrix}
			a_{11} & a_{12} \\
			a_{21} & a_{22}
		\end{bmatrix} = - \begin{bmatrix}
			\mathrm{I}_{11} & \mathrm{I}_{12} \\
			\mathrm{I}_{12} & \mathrm{I}_{22}
		\end{bmatrix}^{-1} \begin{bmatrix}
			\mathrm{II}_{11} & \mathrm{II}_{12} \\
			\mathrm{II}_{12} & \mathrm{II}_{22}
		\end{bmatrix} \\
		= \dfrac{1}{\mathrm{I}_{11}\mathrm{I}_{22}-\mathrm{I}_{12}^2} \begin{bmatrix}
			-\mathrm{I}_{22} & \mathrm{I}_{12} \\
			\mathrm{I}_{12} & -\mathrm{I}_{11}
		\end{bmatrix} \begin{bmatrix}
			\mathrm{II}_{11} & \mathrm{II}_{12} \\
			\mathrm{II}_{12} & \mathrm{II}_{22}
		\end{bmatrix} 
	\end{gathered}
\end{equation}
and we have
\begin{equation} \label{eq: cross product of the Gauss map}
	\begin{aligned}
		\dfrac{\partial N}{\partial u_1} \times \dfrac{\partial N}{\partial u_2} & = (a_{11}a_{22} - a_{12}a_{21}) \dfrac{\partial x}{\partial u_1} \times \dfrac{\partial x}{\partial u_2} \\
		& = \dfrac{\mathrm{II}_{11}\mathrm{II}_{22}-\mathrm{II}_{12}^2}{\mathrm{I}_{11}\mathrm{I}_{22}-\mathrm{I}_{12}^2} \dfrac{\partial x}{\partial u_1} \times \dfrac{\partial x}{\partial u_2} \\
		& = \kappa_\mathrm{G} \dfrac{\partial x}{\partial u_1} \times \dfrac{\partial x}{\partial u_2}
	\end{aligned} 
\end{equation}
The components of $b$ are exactly the components of the second fundamental form. The \textit{third fundamental form}, which is the first fundamental form of the Gauss map, is defined as:
\begin{equation} \label{eq: third fundamental form}
	\begin{bmatrix}
		\mathrm{III}_{11} & \mathrm{III}_{12} \\
		\mathrm{III}_{21} & \mathrm{III}_{22}
	\end{bmatrix} = \begin{bmatrix}
		\dfrac{\partial N}{\partial u_1} \cdot \dfrac{\partial N}{\partial u_1} & \dfrac{\partial N}{\partial u_1} \cdot \dfrac{\partial N}{\partial u_2} \\[12pt]
		\dfrac{\partial N}{\partial u_2} \cdot \dfrac{\partial N}{\partial u_1} & \dfrac{\partial N}{\partial u_2} \cdot \dfrac{\partial N}{\partial u_2}
	\end{bmatrix}
\end{equation}
\begin{equation*}
	\begin{aligned}
		\mathrm{III}_{11} & = a_{11}^2 \mathrm{I}_{11} + 2a_{11}a_{21} \mathrm{I}_{12} + a_{21}^2 \mathrm{I}_{22} \\
		& = \begin{bmatrix}
			a_{11} & a_{21}
		\end{bmatrix} \begin{bmatrix}
			\mathrm{I}_{11} & \mathrm{I}_{12} \\
			\mathrm{I}_{12} & \mathrm{I}_{22}
		\end{bmatrix} \begin{bmatrix}
			a_{11} \\
			a_{21}
		\end{bmatrix}
	\end{aligned}
\end{equation*}
\begin{equation*}
	\begin{aligned}
		\mathrm{III}_{12} & = a_{11}a_{12} \mathrm{I}_{11} + (a_{11}a_{22} + a_{21}a_{12}) \mathrm{I}_{12} + a_{21}a_{22} \mathrm{I}_{22} \\
		& = \begin{bmatrix}
			a_{11} & a_{21}
		\end{bmatrix} \begin{bmatrix}
			\mathrm{I}_{11} & \mathrm{I}_{12} \\
			\mathrm{I}_{12} & \mathrm{I}_{22}
		\end{bmatrix} \begin{bmatrix}
			a_{12} \\
			a_{22}
		\end{bmatrix}
	\end{aligned}
\end{equation*}
\begin{equation*}
	\begin{aligned}
		\mathrm{III}_{22} & = a_{12}^2 \mathrm{I}_{11} + 2a_{12}a_{22} \mathrm{I}_{12} + a_{12}^2 \mathrm{I}_{22} \\
		& = \begin{bmatrix}
			a_{12} & a_{22}
		\end{bmatrix} \begin{bmatrix}
			\mathrm{I}_{11} & \mathrm{I}_{12} \\
			\mathrm{I}_{12} & \mathrm{I}_{22}
		\end{bmatrix} \begin{bmatrix}
			a_{12} \\
			a_{22}
		\end{bmatrix}
	\end{aligned}
\end{equation*}
We could see that
\begin{equation*}
	\begin{aligned}
		\begin{bmatrix}
			\mathrm{III}_{11} & \mathrm{III}_{12} \\
			\mathrm{III}_{12} & \mathrm{III}_{22}
		\end{bmatrix} & = \begin{bmatrix}
			a_{11} & a_{12} \\
			a_{21} & a_{22}
		\end{bmatrix}^\mathrm{T} \begin{bmatrix}
			\mathrm{I}_{11} & \mathrm{I}_{12} \\
			\mathrm{I}_{12} & \mathrm{I}_{22}
		\end{bmatrix} \begin{bmatrix}
			a_{11} & a_{12} \\
			a_{21} & a_{22}
		\end{bmatrix} \\
		& =  \dfrac{1}{\mathrm{I}_{11}\mathrm{I}_{22}-\mathrm{I}_{12}^2}  \begin{bmatrix}
			\mathrm{II}_{11} & \mathrm{II}_{12} \\
			\mathrm{II}_{12} & \mathrm{II}_{22}
		\end{bmatrix} \begin{bmatrix}
			\mathrm{I}_{22} & -\mathrm{I}_{12} \\
			-\mathrm{I}_{12} & \mathrm{I}_{11}
		\end{bmatrix} \begin{bmatrix}
			\mathrm{II}_{11} & \mathrm{II}_{12} \\
			\mathrm{II}_{12} & \mathrm{II}_{22}
		\end{bmatrix} \\
	\end{aligned}
\end{equation*}
\begin{equation*}
	\begin{gathered}
		\mathrm{III}_{11} = \dfrac{\mathrm{II}_{12}^2\mathrm{I}_{11}-2\mathrm{II}_{11}\mathrm{II}_{22}\mathrm{I}_{12}+\mathrm{II}_{11}^2\mathrm{I}_{22}}{\mathrm{I}_{11}\mathrm{I}_{22}-\mathrm{I}_{12}^2}\\
		\mathrm{III}_{12} = 
		\dfrac{\mathrm{II}_{12}\mathrm{II}_{22}\mathrm{I}_{11}-(\mathrm{II}_{11}\mathrm{II}_{22}+\mathrm{II}_{12}^2)\mathrm{I}_{12}+\mathrm{II}_{11}\mathrm{II}_{12}\mathrm{I}_{22}}{\mathrm{I}_{11}\mathrm{I}_{22}-\mathrm{I}_{12}^2}\\
		\mathrm{III}_{22} = 
		\dfrac{\mathrm{II}_{22}^2\mathrm{I}_{11}-2\mathrm{II}_{12}\mathrm{II}_{22}\mathrm{I}_{12}+\mathrm{II}_{12}^2\mathrm{I}_{22}}{\mathrm{I}_{11}\mathrm{I}_{22}-\mathrm{I}_{12}^2}
	\end{gathered}
\end{equation*}
\begin{equation*}
	\begin{bmatrix}
		\mathrm{III}_{11} & \mathrm{III}_{12} \\
		\mathrm{III}_{12} & \mathrm{III}_{22}
	\end{bmatrix} = -\dfrac{\mathrm{II}_{11}\mathrm{II}_{22}-\mathrm{II}_{12}^2}{\mathrm{I}_{11}\mathrm{I}_{22}-\mathrm{I}_{12}^2} 
	\begin{bmatrix}
		\mathrm{I}_{11} & \mathrm{I}_{12} \\
		\mathrm{I}_{12} & \mathrm{I}_{22}
	\end{bmatrix} + \dfrac{\mathrm{II}_{22}\mathrm{I}_{11}-2\mathrm{II}_{12}\mathrm{I}_{12}+\mathrm{II}_{11}\mathrm{I}_{22}}{\mathrm{I}_{11}\mathrm{I}_{22}-\mathrm{I}_{12}^2} 	\begin{bmatrix}
		\mathrm{II}_{11} & \mathrm{II}_{12} \\
		\mathrm{II}_{12} & \mathrm{II}_{22}
	\end{bmatrix}
\end{equation*}
In conclusion:
\begin{equation} \label{eq: fundamental forms relation}
	\begin{bmatrix}
		\mathrm{III}_{11} & \mathrm{III}_{12} \\
		\mathrm{III}_{12} & \mathrm{III}_{22}
	\end{bmatrix} = - \kappa_\mathrm{G} \begin{bmatrix}
		\mathrm{I}_{11} & \mathrm{I}_{12} \\
		\mathrm{I}_{12} & \mathrm{I}_{22}
	\end{bmatrix} + 2 \kappa_\mathrm{H} \begin{bmatrix}
		\mathrm{II}_{11} & \mathrm{II}_{12} \\
		\mathrm{II}_{12} & \mathrm{II}_{22}
	\end{bmatrix}
\end{equation}

The explicit expression of the Christoffel symbol is:
\begin{equation*}
	\begin{gathered}
		\Gamma_{11}^1 = \dfrac{\dfrac{\mathrm{I}_{22}}{2} \dfrac{\partial \mathrm{I}_{11}}{\partial u_1} - \mathrm{I}_{12} \left( \dfrac{\partial \mathrm{I}_{12}}{\partial u_1} - \dfrac{1}{2} \dfrac{\partial \mathrm{I}_{11}}{\partial u_2} \right)}{\mathrm{I}_{11}\mathrm{I}_{22} - \mathrm{I}_{12}^2} \\
		\Gamma_{11}^2 = \dfrac{\mathrm{I}_{11} \left( \dfrac{\partial \mathrm{I}_{12}}{\partial u_1} - \dfrac{1}{2} \dfrac{\partial \mathrm{I}_{11}}{\partial u_2} \right) - \dfrac{\mathrm{I}_{12}}{2} \dfrac{\partial \mathrm{I}_{11}}{\partial u_1} }{\mathrm{I}_{11}\mathrm{I}_{22} - \mathrm{I}_{12}^2}
	\end{gathered}
\end{equation*}
\begin{equation} \label{eq: Christoffel symbol explicit}
	\begin{gathered}
		\Gamma_{12}^1 = \dfrac{\dfrac{\mathrm{I}_{22}}{2} \dfrac{\partial \mathrm{I}_{11}}{\partial u_2} - \dfrac{\mathrm{I}_{12}}{2} \dfrac{\partial \mathrm{I}_{22}}{\partial u_1} }{\mathrm{I}_{11}\mathrm{I}_{22} - \mathrm{I}_{12}^2} \\
		\Gamma_{12}^2 = \dfrac{\dfrac{\mathrm{I}_{11}}{2} \dfrac{\partial \mathrm{I}_{22}}{\partial u_1} - \dfrac{\mathrm{I}_{12}}{2} \dfrac{\partial \mathrm{I}_{11}}{\partial u_2} }{\mathrm{I}_{11}\mathrm{I}_{22} - \mathrm{I}_{12}^2}
	\end{gathered}
\end{equation}
\begin{equation*}
	\begin{gathered}
		\Gamma_{22}^1 = \dfrac{\mathrm{I}_{22} \left(\dfrac{\partial \mathrm{I}_{12}}{\partial u_2} - \dfrac{1}{2} \dfrac{\partial \mathrm{I}_{22}}{\partial u_1}\right) - \dfrac{\mathrm{I}_{12}}{2} \dfrac{\partial \mathrm{I}_{22}}{\partial u_2}}{\mathrm{I}_{11}\mathrm{I}_{22} - \mathrm{I}_{12}^2} \\
		\Gamma_{22}^2 = \dfrac{\dfrac{\mathrm{I}_{11}}{2} \dfrac{\partial \mathrm{I}_{22}}{\partial u_2} - \mathrm{I}_{12} \left(\dfrac{\partial \mathrm{I}_{12}}{\partial u_2} - \dfrac{1}{2} \dfrac{\partial \mathrm{I}_{22}}{\partial u_1}\right) }{\mathrm{I}_{11}\mathrm{I}_{22} - \mathrm{I}_{12}^2}
	\end{gathered}
\end{equation*}

Notably the following \textit{compatibility condition} relates the first and second fundamental forms.
\begin{equation*}
	\begin{cases}
		\dfrac{\partial }{\partial u_2} \left(\dfrac{\partial^2 x}{\partial u_1^2}\right) = \dfrac{\partial }{\partial u_1} \left(\dfrac{\partial^2 x}{\partial u_1 \partial u_2}\right) \\[8pt]
		\dfrac{\partial }{\partial u_1} \left(\dfrac{\partial^2 x}{\partial u_2^2}\right) = \dfrac{\partial }{\partial u_2} \left(\dfrac{\partial^2 x}{\partial u_1 \partial u_2}\right) \\[8pt]
		\dfrac{\partial }{\partial u_2} \left(\dfrac{\partial N}{\partial u_1}\right) = \dfrac{\partial }{\partial u_1} \left(\dfrac{\partial N}{\partial u_2}\right) \\
	\end{cases}
\end{equation*}
by writing everything under the basis $(\partial x / \partial u_1, ~\partial x / \partial u_2, ~N)$ and comparing the coefficient, we could obtain 9 relations among the first and second fundamental forms. It turns out that only 3 of them are independent, called the \textit{compatibility equation of surfaces} or \textit{Gauss-Mainardi-Codazzi Equations}:
\begin{equation} \label{eq: surface compatibility}
	\begin{dcases}
		\dfrac{\partial \Gamma^2_{12}}{\partial u_1} - \dfrac{\partial \Gamma^2_{11}}{\partial u_2} + \Gamma^1_{12}\Gamma^2_{11} + \Gamma^2_{12}\Gamma^2_{12} - \Gamma^2_{11}\Gamma^2_{22} - \Gamma^1_{11}\Gamma^2_{12} = - \mathrm{I}_{11} \kappa_\mathrm{G} \\
		\dfrac{\partial \mathrm{II}_{11}}{\partial u_2} - \dfrac{\partial \mathrm{II}_{12}}{\partial u_1} = \mathrm{II}_{11}\Gamma^1_{12} + \mathrm{II}_{12}(\Gamma^2_{12} - \Gamma^1_{11}) - \mathrm{II}_{22}\Gamma^2_{11} \\
		\dfrac{\partial \mathrm{II}_{12}}{\partial u_2} - \dfrac{\partial \mathrm{II}_{22}}{\partial u_1} = \mathrm{II}_{11}\Gamma^1_{22} + \mathrm{II}_{12}(\Gamma^2_{22} - \Gamma^1_{12}) - \mathrm{II}_{22}\Gamma^2_{12} \\
	\end{dcases}
\end{equation}

\section{Coordinate net} \label{section: coordinate net}

Let $X$ be a parametrized surface with chart $\Phi: u \in I^2 \rightarrow x \in X \subset \mathbb{R}^3$. The \textit{coordinate curves} described by $u_1 = \mathrm{Const}$ and $u_2 = \mathrm{Const}$, forms a \textit{coordinate net} on $X$. The angle $\theta$ between coordinate curves, which can be calculated using
\begin{equation*}
	\cos \theta = \dfrac{\mathrm{I}_{12}}{\sqrt{\mathrm{I}_{11}\mathrm{I}_{22}}}
\end{equation*}
is called the \textbf{Chebyshev angle}. The study on coordinate nets are extremely useful for our interests since it provides a natural discretization to a quad-mesh.

Recall that we simplify the parametrization of a curve by using arc length. If we take a similar operation:
\begin{equation*}
	\begin{gathered}
		s_1 = \int_{0}^{u_1} \sqrt{\mathrm{I}_{11}(v_1, ~v_2)} \dif v_1 \\
		s_2 = \int_{0}^{u_2} \sqrt{\mathrm{I}_{22}(v_1, ~v_2)} \dif v_2 
	\end{gathered}
\end{equation*}
since
\begin{equation*}
	\begin{gathered}
		\dfrac{\partial x}{\partial s_1} = \dfrac{\partial x}{\partial u_1} \dfrac{\partial u_1}{\partial s_1} + \dfrac{\partial x}{\partial u_2} \dfrac{\partial u_2}{\partial s_1} \\
		\dfrac{\partial x}{\partial s_2} = \dfrac{\partial x}{\partial u_1} \dfrac{\partial u_1}{\partial s_2} + \dfrac{\partial x}{\partial u_2} \dfrac{\partial u_2}{\partial s_2} \\
	\end{gathered}
\end{equation*}
the arc length reparametrization does not make any simplification. However, observe that if
\begin{equation*}
	\dfrac{\partial \mathrm{I}_{11}}{\partial u_2} = \dfrac{\partial \mathrm{I}_{22}}{\partial u_1} = 0 
\end{equation*}
which means the lengths of the opposite side of `curved quadrilaterals' formed by the coordinate curves are equal. We can use the above arc length parametrization $s = (s_1, ~s_2)$, called a \textbf{Chebyshev net}, such that
\begin{equation*}
	\begin{bmatrix}
		\mathrm{I}_{11} & \mathrm{I}_{12} \\
		\mathrm{I}_{12} & \mathrm{I}_{22}
	\end{bmatrix} = \begin{bmatrix}
		1 & \cos \theta \\
		\cos \theta & 1
	\end{bmatrix}, ~~\theta \in (0, \pi) \textrm{~is~the~Chebyshev~angle~}
\end{equation*}
Note that the condition for a Chebyshev net is equivalent to
\begin{equation} \label{eq: partial differential Chebyshev}
	\begin{dcases}
		\dfrac{1}{2}\dfrac{\partial \mathrm{I}_{11}}{\partial u_2} = \dfrac{\partial^2 x}{\partial u_1 \partial u_2} \cdot \dfrac{\partial x}{\partial u_1} = 0 \\
		\dfrac{1}{2}\dfrac{\partial \mathrm{I}_{22}}{\partial u_1} = \dfrac{\partial^2 x}{\partial u_1 \partial u_2} \cdot \dfrac{\partial x}{\partial u_2} = 0 \\
	\end{dcases} ~~\Leftrightarrow~~ 
	\begin{dcases}
		\dfrac{\partial^2 x}{\partial u_1 \partial u_2} = \lambda(u_1, ~u_2) \dfrac{\partial x}{\partial u_1} \times \dfrac{\partial x}{\partial u_2} \\ \lambda: I^2 \rightarrow \mathbb{R} 
	\end{dcases}
\end{equation}
which is the differential equation for a Chebyshev net. The ratio $\lambda(u), ~u = (u_1, ~u_2) \in I^2$ is:
\begin{equation} \label{eq: Chebyshev net ratio}
	\lambda(u) = \dfrac{\dfrac{\partial^2 x}{\partial u_1 \partial u_2} \cdot N}{\left \| \dfrac{\partial x}{\partial u_1} \times \dfrac{\partial x}{\partial u_2} \right \|} = \dfrac{\mathrm{I}_{12}}{\sqrt{\mathrm{I}_{11}\mathrm{I}_{22} - \mathrm{I}_{12}^2}}
\end{equation}

The Christoffel symbols and Gaussian curvature from direct calculation over the first fundamental form is:
\begin{equation*}
	\begin{gathered}
		\Gamma_{11}^1 = \dfrac{1}{\tan \theta} \dfrac{\partial \theta}{\partial s_1}, ~~\Gamma_{11}^2 = -\dfrac{1}{\sin \theta} \dfrac{\partial \theta}{\partial s_1} \\
		\Gamma_{12}^1 = \Gamma_{12}^2 = 0 \\
		\Gamma_{22}^1 = -\dfrac{1}{\sin \theta} \dfrac{\partial \theta}{\partial s_2}, ~~\Gamma_{22}^2 = \dfrac{1}{\tan \theta} \dfrac{\partial \theta}{\partial s_2}
	\end{gathered}
\end{equation*}
\begin{equation*}
	\kappa_\mathrm{G} = - \dfrac{1}{\sin \theta} \dfrac{\partial^2 \theta}{\partial s_1 \partial s_2}
\end{equation*}
In particular, an \textbf{orthogonal Chebyshev net} where $\mathrm{I}_{12} = 0$ everywhere infers that $\theta = \pi/2$ identically. The Gaussian curvature $\kappa_\mathrm{G} = 0$ from the above calculation.

Recall that an asymptotic curve on a surface has everywhere zero normal curvature. We say a parametrization forms an \textbf{asymptotic net} if both coordinate curves are asymptotic curves, which means:
\begin{equation} \label{eq: partial differential asymptotic}
	\begin{dcases}
		\begin{bmatrix}
			1 & 0
		\end{bmatrix} \begin{bmatrix}
			\mathrm{II}_{11} & \mathrm{II}_{12} \\
			\mathrm{II}_{12} & \mathrm{II}_{22}
		\end{bmatrix} \begin{bmatrix}
			1 \\
			0 
		\end{bmatrix} = 0 \\ \begin{bmatrix}
			0 & 1
		\end{bmatrix} \begin{bmatrix}
			\mathrm{II}_{11} & \mathrm{II}_{12} \\
			\mathrm{II}_{12} & \mathrm{II}_{22}
		\end{bmatrix} \begin{bmatrix}
			0 \\
			1 
		\end{bmatrix} = 0
	\end{dcases} ~~ \Leftrightarrow ~~ \mathrm{II}_{11} = 0, ~ \mathrm{II}_{22} = 0
\end{equation}
The derivative of the Gauss map of an asymptotic net can be derived from \eqref{eq: derivative of the Gauss map}:
\begin{equation} \label{eq: derivative of an asymptotic net}
	\begin{aligned}
		\begin{bmatrix}
			\dfrac{\partial N}{\partial u_1} &
			\dfrac{\partial N}{\partial u_2}
		\end{bmatrix}
		& = \dfrac{1}{\mathrm{I}_{11}\mathrm{I}_{22}-\mathrm{I}_{12}^2} \begin{bmatrix}
			\dfrac{\partial x}{\partial u_1} &
			\dfrac{\partial x}{\partial u_2}
		\end{bmatrix} \begin{bmatrix}
			-\mathrm{I}_{22} & \mathrm{I}_{12} \\
			\mathrm{I}_{12} & -\mathrm{I}_{11}
		\end{bmatrix} \begin{bmatrix}
			\mathrm{II}_{11} & \mathrm{II}_{12} \\
			\mathrm{II}_{12} & \mathrm{II}_{22}
		\end{bmatrix} \\
		& = \dfrac{\mathrm{II}_{12}}{\mathrm{I}_{11}\mathrm{I}_{22}-\mathrm{I}_{12}^2} \begin{bmatrix}
			\dfrac{\partial x}{\partial u_1} &
			\dfrac{\partial x}{\partial u_2}
		\end{bmatrix} \begin{bmatrix}
			\mathrm{I}_{12} & -\mathrm{I}_{22} \\
			-\mathrm{I}_{11} & \mathrm{I}_{12}
		\end{bmatrix} 
	\end{aligned}
\end{equation}
Since
\begin{equation*}
	N = \dfrac{\dfrac{\partial x}{\partial u_1} \times \dfrac{\partial x}{\partial u_2}}{\left\| \dfrac{\partial x}{\partial u_1} \times \dfrac{\partial x}{\partial u_2}\right\|} = \dfrac{\dfrac{\partial x}{\partial u_1} \times \dfrac{\partial x}{\partial u_2}}{ \sqrt{\mathrm{I}_{11}\mathrm{I}_{22}-\mathrm{I}_{12}^2}}
\end{equation*}
\begin{equation}
	\begin{gathered}
		N \times \dfrac{\partial x}{\partial u_1} = \dfrac{1}{\sqrt{\mathrm{I}_{11}\mathrm{I}_{22}-\mathrm{I}_{12}^2}} \left( \mathrm{I}_{11} \dfrac{\partial x}{\partial u_2} - \mathrm{I}_{12} \dfrac{\partial x}{\partial u_1} \right) \\
		\dfrac{\partial x}{\partial u_2} \times N = \dfrac{1}{\sqrt{\mathrm{I}_{11}\mathrm{I}_{22}-\mathrm{I}_{12}^2}} \left(\mathrm{I}_{22} \dfrac{\partial x}{\partial u_1} - \mathrm{I}_{12} \dfrac{\partial x}{\partial u_2} \right)
	\end{gathered}
\end{equation}
we have (note that $\kappa_\mathrm{G} < 0$ since $\mathrm{II}_{11} = \mathrm{II}_{22} = 0$)
\begin{equation} \label{eq: normal field asymptotic}
	\begin{gathered}
		N \times \dfrac{\partial N}{\partial u_1} = \dfrac{ \mathrm{II}_{12}}{\sqrt{\mathrm{I}_{11}\mathrm{I}_{22}-\mathrm{I}_{12}^2}} \dfrac{\partial x}{\partial u_1} = (-\kappa_\mathrm{G})^{1/2} \dfrac{\partial x}{\partial u_1} \\
		\dfrac{\partial N}{\partial u_2} \times N= \dfrac{ \mathrm{II}_{12}}{\sqrt{\mathrm{I}_{11}\mathrm{I}_{22}-\mathrm{I}_{12}^2}} \dfrac{\partial x}{\partial u_2} = (-\kappa_\mathrm{G})^{1/2} \dfrac{\partial x}{\partial u_2}
	\end{gathered}
\end{equation}
The \textbf{Lelieuvre normal field} $N^\mathrm{L}$ is defined as \citep{blaschke_vorlesungen_1923}:
\begin{equation}
	N^\mathrm{L} = N (-\kappa_\mathrm{G})^{-1/4}
\end{equation}
which satisfies:
\begin{equation} \label{eq: normal field Lelieuvre}
	\begin{gathered}
		N^\mathrm{L} \times \dfrac{\partial N^\mathrm{L}}{\partial u_1} =   \dfrac{\partial x}{\partial u_1} \\
		\dfrac{\partial N^\mathrm{L}}{\partial u_2} \times N^\mathrm{L} = \dfrac{\partial x}{\partial u_2}
	\end{gathered}
\end{equation}
Furthermore,
\begin{equation} \label{eq: derivation of Lelieuvre}
	\begin{gathered}
		\dfrac{1}{2} \left(\dfrac{\partial}{\partial u_1} \left(\dfrac{\partial N^\mathrm{L}}{\partial u_2} \times N^\mathrm{L} \right) - \dfrac{\partial}{\partial u_2} \left(N^\mathrm{L} \times \dfrac{\partial N^\mathrm{L}}{\partial u_1} \right)\right) =  \dfrac{\partial^2 N^\mathrm{L}}{\partial u_1 \partial u_2} \times N^\mathrm{L} = 0 \\
		\dfrac{1}{2} \left( \dfrac{\partial}{\partial u_1} \left(\dfrac{\partial N^\mathrm{L}}{\partial u_2} \times N^\mathrm{L} \right) + \dfrac{\partial}{\partial u_2} \left(N^\mathrm{L} \times \dfrac{\partial N^\mathrm{L}}{\partial u_1} \right) \right) =  \dfrac{\partial N^\mathrm{L}}{\partial u_2} \times \dfrac{\partial N^\mathrm{L}}{\partial u_1} = \dfrac{\partial^2 x}{\partial u_1 \partial u_2} \\
	\end{gathered}
\end{equation}
We say $N^\mathrm{L}$ is \textbf{Lorentz-harmonic} and forms a \textbf{Moutard net} if:
\begin{equation} \label{eq: Moutard}
	\dfrac{\partial^2 N^\mathrm{L}}{\partial u_1 \partial u_2} = \lambda N^\mathrm{L}, ~~\lambda: I^2 \rightarrow \mathbb{R} 
\end{equation}
Given a Moutard net $N^\mathrm{L}$, from \eqref{eq: normal field Lelieuvre} and the integration of $\partial x / \partial s_1$ and $\partial x / \partial s_2$ we could obtain a unique surface $x$, up to a translation. 

Now we consider an \textbf{asymptotic Chebyshev net}, as known as a \textbf{K-surface} in previous literatures. From the compatibility equation of surfaces, \eqref{eq: surface compatibility}, an asymptotic Chebyshev net has the second fundamental form below:
\begin{equation*}
	\begin{bmatrix}
		\mathrm{II}_{11} & \mathrm{II}_{12} \\
		\mathrm{II}_{12} & \mathrm{II}_{22}
	\end{bmatrix} = \begin{bmatrix}
		0 & \sin \theta \\
		\sin \theta & 0
	\end{bmatrix}
\end{equation*} 
and we could obtain
\begin{equation}
	\begin{gathered}
		\kappa_\mathrm{G} = -1 \\
		\dfrac{\partial^2 \theta}{\partial s_1 \partial s_2} = \sin \theta
	\end{gathered}
\end{equation}
To conclude, only a pseudosphere admits an asymptotic Chebyshev net. The latter is the famous \textbf{sine-Gordon equation}. Immediately from \eqref{eq: normal field asymptotic}:
\begin{equation} \label{eq: Lelieuvre normal field}
	\begin{dcases}
		\dfrac{\partial x}{\partial s_1} = N \times \dfrac{\partial N}{\partial s_1} \\
		\dfrac{\partial x}{\partial s_2} = \dfrac{\partial N}{\partial s_2} \times N \\
	\end{dcases}
\end{equation}
then from \eqref{eq: derivation of Lelieuvre} and \eqref{eq: cross product of the Gauss map}:
\begin{equation}
	\dfrac{\partial^2 x}{\partial s_1 \partial s_2} = -\dfrac{\partial N}{\partial s_1} \times \dfrac{\partial N}{\partial s_2} = -\kappa_\mathrm{G} \dfrac{\partial x}{\partial s_1} \times \dfrac{\partial x}{\partial s_2} = -\kappa_\mathrm{G}\sqrt{\mathrm{I}_{11}\mathrm{I}_{22}-\mathrm{I}_{12}^2} N = N \sin \theta
\end{equation}
which means
\begin{equation*}
	\dfrac{\partial^2 x}{\partial s_1 \partial s_2} \times N = 0 ~~\Leftrightarrow~~ \begin{dcases}
		\dfrac{\partial^2 x}{\partial s_1 \partial s_2} \cdot \dfrac{\partial x}{\partial s_1} = \dfrac{1}{2}\dfrac{\partial \mathrm{I}_{11}}{\partial s_2} = 0 \\
		\dfrac{\partial^2 x}{\partial s_1 \partial s_2} \cdot \dfrac{\partial x}{\partial s_2} = \dfrac{1}{2} \dfrac{\partial \mathrm{I}_{22}}{\partial s_1} = 0
	\end{dcases}
\end{equation*}

Now continue from \eqref{eq: derivative of the Gauss map}:
\begin{equation} \label{eq: derivative of an asymptotic Chebyshev net}
	\begin{aligned}
		\begin{bmatrix}
			\dfrac{\partial N}{\partial s_1} &
			\dfrac{\partial N}{\partial s_2}
		\end{bmatrix}
		& = \dfrac{\mathrm{II}_{12}}{\mathrm{I}_{11}\mathrm{I}_{22}-\mathrm{I}_{12}^2} \begin{bmatrix}
			\dfrac{\partial x}{\partial s_1} &
			\dfrac{\partial x}{\partial s_2}
		\end{bmatrix} \begin{bmatrix}
			\mathrm{I}_{12} & -\mathrm{I}_{22} \\
			-\mathrm{I}_{11} & \mathrm{I}_{12}
		\end{bmatrix} \\
		& = \begin{bmatrix}
			\dfrac{\partial x}{\partial s_1} &
			\dfrac{\partial x}{\partial s_2}
		\end{bmatrix} \begin{bmatrix}
			\dfrac{1}{\tan \theta} & -\dfrac{1}{\sin \theta} \\[10pt]
			-\dfrac{1}{\sin \theta} & \dfrac{1}{\tan \theta}
		\end{bmatrix} 
	\end{aligned}
\end{equation}
From direct calculation we could see that:
\begin{equation*}
	\begin{aligned}
		\dfrac{\partial^2 N}{\partial s_1 \partial s_2} = & - \left( 1+ \dfrac{1}{\tan^2 \theta} \right) \dfrac{\partial \theta}{\partial s_2} \dfrac{\partial x}{\partial s_1} + \dfrac{1}{\tan \theta} \dfrac{\partial^2 x}{\partial s_1 \partial s_2} + \dfrac{1}{\sin \theta \tan \theta} \dfrac{\partial \theta}{\partial s_2} \dfrac{\partial x}{\partial s_2} - \dfrac{1}{\sin \theta} \dfrac{\partial^2 x}{\partial s_2^2} \\
		= & - \left( 1+ \dfrac{1}{\tan^2 \theta} \right) \dfrac{\partial \theta}{\partial s_1} \dfrac{\partial x}{\partial s_2} + \dfrac{1}{\tan \theta} \dfrac{\partial^2 x}{\partial s_1 \partial s_2} + \dfrac{1}{\sin \theta \tan \theta} \dfrac{\partial \theta}{\partial s_1} \dfrac{\partial x}{\partial s_1} - \dfrac{1}{\sin \theta} \dfrac{\partial^2 x}{\partial s_1^2}
	\end{aligned}
\end{equation*}
Since
\begin{equation*}
	\dfrac{\partial^2 x}{\partial s_1 \partial s_2} \cdot \dfrac{\partial x}{\partial s_1} = \dfrac{\partial \mathrm{I}_{11}}{\partial s_2} = 0, ~~\dfrac{\partial^2 x}{\partial s_1 \partial s_2} \cdot \dfrac{\partial x}{\partial s_2} = \dfrac{\partial \mathrm{I}_{22}}{\partial s_1} = 0
\end{equation*}
\begin{equation*}
	\dfrac{\partial^2 x}{\partial s_1^2} \cdot \dfrac{\partial x}{\partial s_1} = \dfrac{\partial \mathrm{I}_{11}}{\partial s_1} = 0, ~~\dfrac{\partial^2 x}{\partial s_2^2} \cdot \dfrac{\partial x}{\partial s_2} = \dfrac{\partial \mathrm{I}_{22}}{\partial s_2} = 0
\end{equation*}
\begin{equation*}
	\dfrac{\partial^2 x}{\partial s_1^2} \cdot \dfrac{\partial x}{\partial s_2} = \dfrac{\partial \mathrm{I}_{12}}{\partial s_1} - \dfrac{\partial^2 x}{\partial s_1 \partial s_2} \cdot \dfrac{\partial x}{\partial s_1} = -\sin \theta \dfrac{\partial \theta}{\partial s_1}
\end{equation*}
\begin{equation*}
	\dfrac{\partial^2 x}{\partial s_2^2} \cdot \dfrac{\partial x}{\partial s_1} = \dfrac{\partial \mathrm{I}_{12}}{\partial s_2} - \dfrac{\partial^2 x}{\partial s_1 \partial s_2} \cdot \dfrac{\partial x}{\partial s_2} = -\sin \theta \dfrac{\partial \theta}{\partial s_2}
\end{equation*}
For both expressions of $\partial^2 N / \partial s_1 \partial s_2$, from dot production over $\partial^2 x / \partial s_1$ and $\partial^2 x / \partial s_2$:
\begin{equation*}
	\dfrac{\partial^2 N}{\partial s_1 \partial s_2} \cdot \dfrac{\partial x}{\partial s_1} = 0
\end{equation*}
\begin{equation*}
	\dfrac{\partial^2 N}{\partial s_1 \partial s_2} \cdot \dfrac{\partial x}{\partial s_2} = 0
\end{equation*}
We conclude that the Moutard equation for $N$ is:
\begin{equation} \label{eq: partial differential aymptopic Chebyshev}
	\dfrac{\partial^2 N}{\partial s_1 \partial s_2} = N \cos \theta
\end{equation} 
Furthermore we could see that:
\begin{equation*}
	\begin{gathered}
		\dfrac{\partial^2 N}{\partial s_1 \partial s_2} \cdot \dfrac{\partial N}{\partial s_1} = 0 ~~\Rightarrow~~ \dfrac{\partial}{\partial s_2} \left(\dfrac{\partial N}{\partial s_1} \cdot \dfrac{\partial N}{\partial s_1} \right) = 0\\
		\dfrac{\partial^2 N}{\partial s_1 \partial s_2} \cdot \dfrac{\partial N}{\partial s_2} = 0 ~~\Rightarrow~~ \dfrac{\partial}{\partial s_1} \left(\dfrac{\partial N}{\partial s_2} \cdot \dfrac{\partial N}{\partial s_2} \right) = 0
	\end{gathered}
\end{equation*}
The above derivation leads to the following proposition:
\begin{prop}
	The Gauss map of a K-surface is a Chebyshev net. A K-surface is the only asymptotic net with a Chebyshev Gauss map.
\end{prop}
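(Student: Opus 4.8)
The statement splits into two directions, and the forward one is essentially already in hand. For a K-surface I would read off the conclusion from the Moutard equation \eqref{eq: partial differential aymptopic Chebyshev} derived above, namely $\partial^2 N/\partial s_1\partial s_2 = N\cos\theta$: the mixed second derivative of the Gauss map is a multiple of $N$. Since the image of $N$ lies on $S^2$, the unit normal of that image at the point $N$ is $\pm N$ itself, so $\partial^2 N/\partial s_1\partial s_2 \parallel N$ is precisely the defining differential equation \eqref{eq: partial differential Chebyshev} of a Chebyshev net applied to the surface $N$. Equivalently, $\partial_{s_2}\mathrm{III}_{11} = 2(\partial^2 N/\partial s_1\partial s_2)\cdot\partial N/\partial s_1 = 0$ and $\partial_{s_1}\mathrm{III}_{22} = 0$, which is the Chebyshev condition on the first fundamental form $\mathrm{III}$ of the Gauss map. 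This settles that the Gauss map of a K-surface is a Chebyshev net.

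For the converse I would suppose $x$ is an asymptotic net ($\mathrm{II}_{11} = \mathrm{II}_{22} = 0$) whose Gauss map $N$ is a Chebyshev net, and organize the argument around the Lelieuvre field. The key general fact, valid for every asymptotic net and recorded in \eqref{eq: derivation of Lelieuvre}, is that $N^{\mathrm L} = (-\kappa_\mathrm{G})^{-1/4}N$ satisfies $\partial^2 N^{\mathrm L}/\partial u_1\partial u_2 \times N^{\mathrm L} = 0$, so $N^{\mathrm L}$ is automatically a Moutard net with $\partial^2 N^{\mathrm L}/\partial u_1\partial u_2 = \nu N^{\mathrm L}$ for some scalar $\nu$. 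Writing $h = (-\kappa_\mathrm{G})^{-1/4}$ so that $N^{\mathrm L} = hN$, I would expand $\partial^2(hN)/\partial u_1\partial u_2 = \nu hN$ and solve for $\partial^2 N/\partial u_1\partial u_2$ in the frame $\{N,\partial N/\partial u_1,\partial N/\partial u_2\}$. This frame is linearly independent because $\kappa_\mathrm{G} < 0$ on an asymptotic net, so by \eqref{eq: cross product of the Gauss map} one has $\partial N/\partial u_1\times\partial N/\partial u_2 = \kappa_\mathrm{G}\,\partial x/\partial u_1\times\partial x/\partial u_2\neq 0$ and $N$ is a genuine immersion. The Chebyshev hypothesis on $N$ — equivalently $\partial^2 N/\partial u_1\partial u_2\parallel N$ — then forces the coefficients of $\partial N/\partial u_1$ and $\partial N/\partial u_2$, which are $-\partial_2 h/h$ and $-\partial_1 h/h$, to vanish. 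Hence $h$, and therefore $\kappa_\mathrm{G}$, is constant.

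It remains to recover the Chebyshev property of $x$ itself, and here I would invoke the fundamental-forms relation \eqref{eq: fundamental forms relation}, which for an asymptotic net reduces to $\mathrm{III}_{11} = -\kappa_\mathrm{G}\mathrm{I}_{11}$ and $\mathrm{III}_{22} = -\kappa_\mathrm{G}\mathrm{I}_{22}$. With $\kappa_\mathrm{G}$ now constant and the Chebyshev condition on $N$ giving $\partial_{u_2}\mathrm{III}_{11} = 0$ and $\partial_{u_1}\mathrm{III}_{22} = 0$, I immediately obtain $\partial_{u_2}\mathrm{I}_{11} = 0$ and $\partial_{u_1}\mathrm{I}_{22} = 0$, which is exactly the Chebyshev condition for $x$. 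Thus $x$ is an asymptotic Chebyshev net; after the global rescaling that normalizes its constant negative curvature to $\kappa_\mathrm{G} = -1$, it is a K-surface, establishing uniqueness.

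The main obstacle I anticipate is this converse, specifically the passage from ``$N$ is Chebyshev'' to ``$\kappa_\mathrm{G}$ is constant.'' Merely comparing $\mathrm{III}_{11} = -\kappa_\mathrm{G}\mathrm{I}_{11}$ with the Chebyshev condition yields only that $\kappa_\mathrm{G}\mathrm{I}_{11}$ is independent of $u_2$ and $\kappa_\mathrm{G}\mathrm{I}_{22}$ independent of $u_1$, which is strictly weaker than the constancy of $\kappa_\mathrm{G}$; it is the Lelieuvre--Moutard detour that supplies the missing rigidity. The two technical points to handle carefully are the verification that $\{N,\partial N/\partial u_1,\partial N/\partial u_2\}$ is a genuine frame (using $\kappa_\mathrm{G}\neq 0$ on asymptotic nets) and the normalization convention, since the hypotheses determine $\kappa_\mathrm{G}$ only up to a positive constant and the value $-1$ is fixed only through the arc-length scaling used to define a K-surface.
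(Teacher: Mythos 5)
Your forward direction is exactly the paper's argument: the Moutard equation $\partial^2 N/\partial s_1\partial s_2 = N\cos\theta$ derived in Section \ref{section: coordinate net} makes the mixed derivative of $N$ orthogonal to $\partial N/\partial s_1$ and $\partial N/\partial s_2$, which is the Chebyshev condition $\partial_{s_2}\mathrm{III}_{11}=\partial_{s_1}\mathrm{III}_{22}=0$ on the Gauss map. No difference there.

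Where you genuinely add something is the converse. The paper states ``A K-surface is the only asymptotic net with a Chebyshev Gauss map'' as a consequence of ``the above derivation,'' but the derivation preceding the proposition only runs in the forward direction; no explicit argument for uniqueness is given. Your route --- use the fact that the Lelieuvre field $N^{\mathrm L}=(-\kappa_\mathrm{G})^{-1/4}N$ of \emph{any} asymptotic net is automatically Moutard (which is indeed what \eqref{eq: derivation of Lelieuvre} records, since $\partial^2 N^{\mathrm L}/\partial u_1\partial u_2\times N^{\mathrm L}=0$), expand $\partial^2(hN)/\partial u_1\partial u_2=\nu hN$ in the frame $\{N,\partial N/\partial u_1,\partial N/\partial u_2\}$, and let the Chebyshev hypothesis $\partial^2 N/\partial u_1\partial u_2\parallel N$ kill the coefficients $-\partial_2 h/h$ and $-\partial_1 h/h$ --- is correct and is the right amount of machinery: as you observe, the naive comparison of $\mathrm{III}_{ii}=-\kappa_\mathrm{G}\mathrm{I}_{ii}$ with the Chebyshev condition on $N$ only gives that $\kappa_\mathrm{G}\mathrm{I}_{11}$ and $\kappa_\mathrm{G}\mathrm{I}_{22}$ are partially constant, and it is precisely the Moutard rigidity that upgrades this to constancy of $\kappa_\mathrm{G}$, after which the same identity hands back the Chebyshev condition on $x$. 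The frame-independence check via $\partial N/\partial u_1\times\partial N/\partial u_2=\kappa_\mathrm{G}\,\partial x/\partial u_1\times\partial x/\partial u_2\neq 0$ and the normalization caveat ($\kappa_\mathrm{G}$ is pinned to $-1$ only by the arc-length scaling in the definition of a K-surface) are both handled appropriately. In short: same proof as the paper for the first sentence of the proposition, and a complete proof of the second sentence where the paper leaves the argument implicit.
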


The counterpart of an asymptotic net is a \textbf{geodesic net}, where the coordinate curves have everywhere zero geodesic curvature. Note that even though the velocity along coordinate curves are constant, it will change along the other direction hence there is no arc length reparametrization similar to the Chebyshev net. The condition for a parametrization $u$ to form a geodesic net is the geodesic curvature is everywhere zero for each coordinate curve. From \eqref{eq: geodesic curvature}
\begin{equation*}
	\begin{dcases}
		\dfrac{\partial^2 x}{\partial u_1^2} \cdot \left(N \times \dfrac{\partial x}{\partial u_1}\right) = 0 \\
		\dfrac{\partial^2 x}{\partial u_2^2} \cdot \left(N \times \dfrac{\partial x}{\partial u_2}\right) = 0 \\
	\end{dcases}
\end{equation*}
It says $\dif^2 x / \dif u_1^2$ can be linearly represented by $N$ and $\dif x / \dif u_1$; $\dif^2 x / \dif u_2^2$ can be linearly represented by $N$ and $\dif x / \dif u_2$. From \eqref{eq: Christoffel symbol}, this condition is equivalent to certain Christoffel symbols are zero:
\begin{equation*}
	\Gamma_{11}^2 = \Gamma_{22}^1 = 0
\end{equation*}
and we can write the condition above in terms of the components of the first fundamental form from \eqref{eq: Christoffel symbol explicit}:
\begin{equation} \label{eq: geodesic net}
	\begin{dcases}
		2 \mathrm{I}_{11}  \dfrac{\partial \mathrm{I}_{12}}{\partial u_1} = \mathrm{I}_{11} \dfrac{\partial \mathrm{I}_{11}}{\partial u_2} + \mathrm{I}_{12} \dfrac{\partial \mathrm{I}_{11}}{\partial u_1} \\
		2\mathrm{I}_{22} \dfrac{\partial \mathrm{I}_{12}}{\partial u_2} = \mathrm{I}_{22} \dfrac{\partial \mathrm{I}_{22}}{\partial u_1} + \mathrm{I}_{12} \dfrac{\partial \mathrm{I}_{22}}{\partial u_2} \\
	\end{dcases}
\end{equation}
\eqref{eq: geodesic net} is the condition for a chart to form a geodesic net. Furthermore, let $\mathrm{I}_{12} = 0$, we obtain $\partial \mathrm{I}_{11} / \partial u_2 = \partial \mathrm{I}_{22} / \partial u_1 = 0$. Therefore an \textbf{orthogonal geodesic net} is equivalent to an orthogonal Chebyshev net. The first fundamental form is an identity matrix and $\kappa_\mathrm{G} = 0$.

Two tangent vectors 
\begin{equation*}
	\begin{gathered}
		\dfrac{\dif x}{\dif u}	\begin{bmatrix}
			a_1 \\ a_2
		\end{bmatrix}, ~~ \dfrac{\dif x}{\dif u} \begin{bmatrix}
			b_1 \\
			b_2 
		\end{bmatrix}, ~~a_1, ~a_2 \in \mathbb{R},~~b_1, ~b_2 \in \mathbb{R}
	\end{gathered}
\end{equation*}
are \textit{conjugate} if: 
\begin{equation*}
	\begin{bmatrix}
		a_1 & a_2
	\end{bmatrix} \begin{bmatrix}
		\mathrm{II}_{11} & \mathrm{II}_{12} \\
		\mathrm{II}_{12} & \mathrm{II}_{22}
	\end{bmatrix} \begin{bmatrix}
		b_1 \\
		b_2 
	\end{bmatrix} = 0
\end{equation*} 
Principal directions are conjugate. An asymptotic direction is conjugate to itself. Coordinate curves of parametrization $u$ forms a \textbf{conjugate net} if 
\begin{equation} \label{eq: partial differential conjugate}
	\begin{bmatrix}
		1 & 0
	\end{bmatrix} \begin{bmatrix}
		\mathrm{II}_{11} & \mathrm{II}_{12} \\
		\mathrm{II}_{12} & \mathrm{II}_{22}
	\end{bmatrix} \begin{bmatrix}
		0 \\
		1 
	\end{bmatrix} = 0 ~~ \Leftrightarrow ~~ \mathrm{II}_{12} = 0
\end{equation}
A special case of a conjugate net is the \textbf{curvature line net}, where the first and second fundamental form are simultaneously diagonalized. Clearly the condition is $\mathrm{I}_{12} = \mathrm{II}_{12} = 0$. A curvature line net is also called an \textbf{orthogonal conjugate net}. From \eqref{eq: derivative of the Gauss map}, the derivative of the Gauss map of a curvature line net is:
\begin{equation} \label{eq: derivative curvature line}
	\begin{aligned}
		\begin{bmatrix}
			\dfrac{\partial N}{\partial u_1} &
			\dfrac{\partial N}{\partial u_2}
		\end{bmatrix}
		& = \dfrac{1}{\mathrm{I}_{11}\mathrm{I}_{22}-\mathrm{I}_{12}^2} \begin{bmatrix}
			\dfrac{\partial x}{\partial u_1} &
			\dfrac{\partial x}{\partial u_2}
		\end{bmatrix} \begin{bmatrix}
			-\mathrm{I}_{22} & \mathrm{I}_{12} \\
			\mathrm{I}_{12} & -\mathrm{I}_{11}
		\end{bmatrix} \begin{bmatrix}
			\mathrm{II}_{11} & \mathrm{II}_{12} \\
			\mathrm{II}_{12} & \mathrm{II}_{22}
		\end{bmatrix}  \\ 
		& =  \begin{bmatrix}
			- \dfrac{\mathrm{II}_{11}}{\mathrm{I}_{11}}\dfrac{\partial x}{\partial u_1} &
			- \dfrac{\mathrm{II}_{22}}{\mathrm{I}_{22}} \dfrac{\partial x}{\partial u_2}
		\end{bmatrix}
	\end{aligned}
\end{equation}

\section{Initial condition for coordinate nets} \label{section: differential equation}

The various smooth coordinate nets introduced in Section \ref{section: coordinate net} are solutions of parametric partial differential systems. When solving a system of parametric partial differential equations, we say this problem is \textit{well-posed} if a given initial condition leads to a unique solution, which smoothly relies on the initial value and parameter. The well-posedness is crucial since in practice the input data can only be measured up to certain level of accuracy.

A hyperbolic first-order system for $x(t)$ is in the form of 
\begin{equation*}
	\dfrac{\dif x}{\dif t} = f(x; ~b) ~~\Leftrightarrow~~ \dfrac{\partial x_i}{\partial t_j} = f_{ij}(x; ~b)
\end{equation*}
and is well-posed \cite[Chapter 5]{bobenko_discrete_2008}. Here $t \in I^m$; $x \in \mathbb{R}^n ~(m, ~n \in \mathbb{Z}_+)$; $f \in \mathbb{R}^{n \times m}$ is a matrix of smooth functions, $b \in \mathbb{R}^p ~(p \in \mathbb{Z}_+)$ are the $p$ parameters for the system. We further require $f$ and all the partial derivatives of $f$ are bounded and possess a global Lipschitz constant. Consequently no blow-ups (value goes to infinity) are possible and hence the well-posedness can be continued to the boundary of $I^m$.

If there are higher order partial derivatives, we could try transferring the system to first-order by adding the number of variables. For example, $\partial^2 x / \partial t_1 \partial t_2 = x$, we could set $y(t) = \partial x / \partial t_1$ and $z(t) = \partial x / \partial t_2$, now $(x, ~y, ~z)$ forms an equivalent first-order system with the compatibility condition $\partial y / \partial t_2 = \partial z / \partial t_1$. 

Index $i ~(i \in \mathbb{Z}_+, ~i \le m)$ is called an \textit{evolution direction} of $x_j~(j \in \mathbb{Z}_+, ~j \le n)$ if $f_{ij} \neq 0$, otherwise the index is called a \textit{stationary direction}. The set of indices for evolution directions is denoted by $I_j$. We refer to $P_j = \{ t_i = 0 ~|~ i \in I_j \}$ as the \textit{coordinate hyperplane} for $I_j$. In our problem setting, the initial value for the system is a smooth function given on:
\begin{equation*}
	x^\mathrm{i} = \{x_j(P_j) \mathrm{~for~all~} j\}
\end{equation*}
In other words, for the $j$-th component of $x$, the initial value includes its value on the coordinate hyperplane over the stationary directions, and we only consider this form of initial value. In the example $\partial^2 x / \partial t_1 \partial t_2 = x$, the initial values are $x(0, ~0), ~y(t_1, ~0), ~z(0, ~t_2)$. 

Specifically for a first-order system, the well-posedness means that: 1) there exists a smooth solution $x(t)$ for initial value $x^\mathrm{i}$ and parameter $b$; 2) the above solution is unique; 3) for a initial value $x^\mathrm{i}$, there exists a neighbourhood $O(x^\mathrm{i})$ such that the family of solution $x(t,~ x^\mathrm{i}; ~b)$ is smooth over $O(x^\mathrm{i})$; 4) for a parameter $b$, there exists a neighbourhood $O(b)$ such that the family of solution $x(t,~ x^\mathrm{i}; ~b)$ is smooth over $O(b)$.

Many of the coordinate nets introduced in Section \ref{section: coordinate net} are hyperbolic first-order linear systems for $x(u_1, ~u_2)$ with constant coefficients, by setting $\partial x / \partial u_1 = y, ~\partial x /\partial u_2 = z$. The initial conditions below are partially mentioned in \cite{bobenko_discrete_2008}. 

\subsection*{Chebyshev net and orthogonal Chebyshev net}

From \eqref{eq: partial differential Chebyshev}, the system for a Chebyshev net is:
\begin{equation*}
	\begin{gathered}
		\dfrac{\partial^2 x}{\partial u_1 \partial u_2} = \lambda(u_1, ~u_2) \dfrac{\partial x}{\partial u_1} \times \dfrac{\partial x}{\partial u_2} ~~\Rightarrow~~ \dfrac{\partial y}{\partial u_2} = \lambda y \times z, ~~\dfrac{\partial z }{ \partial u_1} = \lambda y \times z
	\end{gathered}
\end{equation*}
The initial condition for a Chebyshev net is:
\begin{description}
	\item[Initial value] $x(0, ~0)$, $\dfrac{\partial x}{\partial u_1}(u_1, ~0)$, $\dfrac{\partial x}{\partial u_2}(0, ~u_2)$
	\item[Parameter] $\lambda$ for all $u_1, ~u_2$
\end{description}
From integration along the coordinate curves, the above initial value is equivalent to:
\begin{equation*}
	x(0, ~0), ~\dfrac{\partial x}{\partial u_1}(u_1, ~0), ~\dfrac{\partial x}{\partial u_2}(0, ~u_2)  ~~\Leftrightarrow~~ x(0, ~0), ~x(u_1, ~0), ~x(0, ~u_2)
\end{equation*}

\subsection*{Asymptotic net}

From \eqref{eq: partial differential asymptotic}, the system for an asymptotic net is:
\begin{equation*} 
	\begin{aligned}
		\dfrac{\partial^2 x}{\partial u_1^2} \cdot N = 0 & ~~\Rightarrow~~ \dfrac{\partial^2 x}{\partial u_1^2} = \Gamma^1_{11} \dfrac{\partial x}{\partial u_1} + \Gamma^2_{11} \dfrac{\partial x}{\partial u_2} \\
		\dfrac{\partial^2 x}{\partial u_2^2} \cdot N = 0 & ~~\Rightarrow~~ \dfrac{\partial^2 x}{\partial u_2^2} = \Gamma^1_{22} \dfrac{\partial x}{\partial u_1} + \Gamma^2_{22} \dfrac{\partial x}{\partial u_2}
	\end{aligned}
\end{equation*}
hence:
\begin{equation*}
	\begin{gathered}
		\dfrac{\partial y}{\partial u_1} = \Gamma^1_{11} y + \Gamma^2_{11} z \\
		\dfrac{\partial z}{\partial u_2} = \Gamma^1_{22} y + \Gamma^2_{22} z
	\end{gathered}
\end{equation*}
The initial value for an asymptotic net is supposed to be $x(0, ~0)$, $y(0, ~u_2)$, $z(u_1, ~0)$. Additionally, the initial value for an asymptotic net should meet the compatibility constraint. Since $y(0, ~u_2)$ and $z(u_1, ~0)$ cannot be sorely obtained from differentiating along the coordinate curves $x(u_1, ~0)$ and $x(0, ~u_2)$, we choose to proceed with the Lelieuvre normal field $N^\mathrm{L}$ alternatively. From the derivation for an asymptotic net in Section \ref{section: coordinate net} and the Moutard Equation \eqref{eq: Moutard} $\partial^2 N^\mathrm{L} / \partial u_1 \partial u_2 = \lambda N^\mathrm{L}$, we can solve $N^\mathrm{L}$ first, then calculate $\partial x/\partial u_1$ and $\partial x/\partial u_2$ to obtain surface $x$ by integration. In conclusion, the initial condition for an asymptotic net is:
\begin{description}
	\item[Initial value] $N^\mathrm{L}(0, ~0)$, $\dfrac{\partial N^\mathrm{L}}{\partial u_1}(u_1, ~0)$, $\dfrac{\partial N^\mathrm{L}}{\partial u_2}(0, ~u_2)$
	\item[Parameter] $\lambda$ for all $u_1, ~u_2$ 
\end{description}
From integration along the coordinate curves, the above initial value is equivalent to:
\begin{equation*}
	\begin{gathered}
		N^\mathrm{L}(0, ~0), ~\dfrac{\partial N^\mathrm{L}}{\partial u_1}(u_1, ~0), ~\dfrac{\partial N^\mathrm{L}}{\partial u_2}(0, ~u_2) ~~\Leftrightarrow~~ N^\mathrm{L}(0, ~0), ~N^\mathrm{L}(u_1, ~0),~ N^\mathrm{L}(0, ~u_2)
	\end{gathered}
\end{equation*}
The above condition is also the initial condition for a Moutard net.

\subsection*{Asymptotic Chebyshev net}

From \eqref{eq: partial differential aymptopic Chebyshev}, the system for an asymptotic Chebyshev net is:
\begin{equation*}
	\dfrac{\partial^2 N}{\partial s_1 \partial s_2} = N \dfrac{\partial N}{\partial s_1} \cdot \dfrac{\partial N}{\partial s_2} 
\end{equation*}
and the initial condition for an asymptotic Chebyshev net is:
\begin{description}
	\item[Initial value] $N(0, ~0)$, $\dfrac{\partial N}{\partial s_1}(s_1, ~0)$, $\dfrac{\partial N}{\partial s_2}(0, ~s_2)$
\end{description}
From integration along the coordinate curves, the above initial value is equivalent to:
\begin{equation*}
	\begin{gathered}
		N(0, ~0), ~\dfrac{\partial N}{\partial s_1}(s_1, ~0), ~\dfrac{\partial N}{\partial s_2}(0, ~s_2) ~~\Leftrightarrow~~ N(0, ~0), ~N(s_1, ~0),~ N(0, ~s_2)
	\end{gathered}
\end{equation*}

\subsection*{Geodesic net}

It could be examined that for a geodesic net, \eqref{eq: geodesic net} is not in the form of a first-order system. We will introduce the condition to determine a discrete geodesic net in Section \ref{section: difference equation}.

\subsection*{Conjugate net}

From \eqref{eq: partial differential conjugate}, the system for a conjugate net is:
\begin{equation*} 
	\begin{aligned}
		\dfrac{\partial^2 x}{\partial u_1 \partial u_2} \cdot N = 0 & ~~\Rightarrow~~ \dfrac{\partial^2 x}{\partial u_1 \partial u_2} = \Gamma^1_{12} \dfrac{\partial x}{\partial u_1} + \Gamma^2_{12}   \dfrac{\partial x}{\partial u_2} \\
		& ~~\Rightarrow~~ \dfrac{\partial y}{\partial u_2} = \Gamma^1_{12} y + \Gamma^2_{12} z
	\end{aligned}
\end{equation*}
The initial condition for a conjugate net is:
\begin{description}
	\item[Initial value] $x(0, ~0)$, $\dfrac{\partial x}{\partial u_1}(u_1, ~0)$, $\dfrac{\partial x}{\partial u_2}(0, ~u_2)$
	\item[Parameter] $\Gamma^1_{12}$, $\Gamma^2_{12}$ for all $u_1, ~u_2$
\end{description}
From integration along the coordinate curves, the above initial value is equivalent to:
\begin{equation*}
	\begin{gathered}
		x(0, ~0), ~\dfrac{\partial x}{\partial u_1}(u_1, ~0), ~\dfrac{\partial x}{\partial u_2}(0, ~u_2)  ~~\Leftrightarrow~~ x(0, ~0), ~x(u_1, ~0), ~x(0, ~u_2)
	\end{gathered}
\end{equation*}

\subsection*{Curvature line net}

For a curvature line net, $\mathrm{I}_{12} = \mathrm{II}_{22} = 0$, let $y = av$, $a \in \mathbb{R}_+$ is the norm of $y$, $v \in \mathbb{R}^3$ is the direction vector of $y$, $\|v\| = 1$; $z = bw$, $b \in \mathbb{R}_+$ is the norm of $z$, $w \in \mathbb{R}^3$ is the direction vector of $z$, $\|w\| = 1$. 
\begin{equation*}
	\dfrac{\partial v}{\partial u_2} = \dfrac{\partial }{\partial u_2} \left( \dfrac{\dfrac{\partial x}{\partial u_1}}{\left\| \dfrac{\partial x}{\partial u_1} \right\|} \right) = \dfrac{1}{\left\| \dfrac{\partial x}{\partial u_1} \right\|} \left( \dfrac{\partial^2 x}{\partial u_1 \partial u_2} - \dfrac{ \dfrac{\partial x}{\partial u_1} \cdot \dfrac{\partial^2 x}{\partial u_1 \partial u_2}  }{\left\| \dfrac{\partial x}{\partial u_1} \right\|^2} \dfrac{\partial x}{\partial u_1} \right)
\end{equation*}
we could see that $(\partial v / \partial u_2) \cdot v= 0$ and $(\partial v / \partial u_2) \cdot N = 0$, hence $\partial v / \partial u_2$ is along $w$, and we define $\partial v / \partial u_2 = \beta_2 w$, similarly $\partial w / \partial u_1 = \beta_1 v$. Here $\beta_1, ~\beta_2$ are the \textit{rotational coefficients}. 
\begin{equation*}
	\beta_1 = \dfrac{\dfrac{\partial^2 x}{\partial u_1 \partial u_2} \cdot \dfrac{\partial x}{\partial u_1}}{\left\| \dfrac{\partial x}{\partial u_1} \right\|\left\| \dfrac{\partial x}{\partial u_2} \right\|}, ~~\beta_2 = \dfrac{\dfrac{\partial^2 x}{\partial u_1 \partial u_2} \cdot \dfrac{\partial x}{\partial u_2}}{\left\| \dfrac{\partial x}{\partial u_1} \right\|\left\| \dfrac{\partial x}{\partial u_2} \right\|}
\end{equation*}
The system for a curvature line net is:
\begin{equation*}
	\begin{dcases}
		\dfrac{\partial x}{\partial u_1} = av, ~~\dfrac{\partial x}{\partial u_2} = bw \\	
		\dfrac{\partial v}{\partial u_2} = \beta_2 w, ~~\dfrac{\partial w}{\partial u_1} = \beta_1 v \\
		\dfrac{\partial a}{\partial u_2} = \beta_1 b, ~~\dfrac{\partial b}{\partial u_1} = \beta_2 a
	\end{dcases}
\end{equation*}
The parameters $\beta_1$ and $\beta_2$ are not independent due to the orthogonality. 
\begin{equation*}
	\dfrac{\partial^2 (v \cdot w)}{\partial u_1 \partial u_2} = 0 ~~\Rightarrow~~ \dfrac{\partial \beta_1}{\partial u_1} + \dfrac{\partial \beta_2}{\partial u_2} + \dfrac{\partial v}{\partial u_1} \cdot \dfrac{\partial w}{\partial u_2} = 0
\end{equation*}
\cite[Section 1.4]{bobenko_discrete_2008} indicates that the system can be characterized by 
\begin{equation*}
	\eta = \dfrac{1}{2} \left(\dfrac{\partial \beta_1}{\partial u_1} - \dfrac{\partial \beta_2}{\partial u_2}\right)
\end{equation*}
The initial condition for a curvature line net is:
\begin{description}
	\item[Initial value] $x(0, ~0)$, $\dfrac{\partial x}{\partial u_1}(u_1, ~0)$, $\dfrac{\partial x}{\partial u_2}(0, ~u_2)$
	\item[Parameter] $\eta$ for all $u_1, ~u_2$
\end{description}
From integration along the coordinate curves, the above initial value is equivalent to:
\begin{equation*}
	\begin{gathered}
		x(0, ~0), ~\dfrac{\partial x}{\partial u_1}(u_1, ~0), ~\dfrac{\partial x}{\partial u_2}(0, ~u_2)  ~~\Leftrightarrow~~ x(0, ~0), ~x(u_1, ~0), ~x(0, ~u_2)
	\end{gathered}
\end{equation*}

\section{Discrete curve and surface} \label{section: discrete curvature}

We will see that an $m$-dimensional discrete surface in $\mathbb{R}^n$ ($m,~n \in \mathbb{Z}_+, ~ m \le n$) is a group of scatter points.

\begin{defn}
	An \textit{$m$-dimensional discrete surface} $X$ in $\mathbb{R}^n$ ($m,~n \in \mathbb{Z}_+, ~ m \le n$) is the range of a mapping $\Phi: i \in \mathbb{Z}^m \rightarrow x \in X \subset \mathbb{R}^n$. We say $X$ is a \textit{discrete curve} when $m=1$ and $X$ is a \textit{discrete surface} when $m=2$ in $\mathbb{R}^3$. Here $\mathbb{Z}^m$ is the \textit{parameter domain}.
\end{defn}

Similar to the regularity condition we applied for a chart, we apply the following \textbf{regularity condition} to all the discrete curves and surfaces: the partial difference $\triangle x$ has non-zero components and full rank everywhere:		
\begin{equation*}
	\begin{gathered}
		\triangle x = 
		\begin{bmatrix}
			\triangle_1 x_1 & \triangle_2 x_1 & \cdots & \triangle_m x_1 \\[6pt]
			\triangle_1 x_2 & \triangle_2 x_2 & \cdots & \triangle_m x_2\\[6pt]
			\dots & \dots & \dots & \dots \\[6pt]
			\triangle_1 x_n & \triangle_2 x_n & \cdots & \triangle_m x_n
		\end{bmatrix} \\
		\triangle_{k} x_j(i) = x_j(i_1, ~i_2, ~\cdots, ~i_k+1, ~\cdots, ~i_m) - x_j(i_1, ~i_2, ~\cdots, ~i_k, ~\cdots, ~i_m)  
	\end{gathered}
\end{equation*}

Regarding a discrete curve $X$, an immediate consideration is to introduce a discrete Frenet-Serret frame $(\boldsymbol{x}_\mathrm{t}, ~\boldsymbol{x}_\mathrm{n}, ~\boldsymbol{x}_\mathrm{b})$ attached to every node $x(i) \in X, ~i \in \mathbb{Z}$. Note the use of bold symbols to represent the basis of a vector space, distinct from the coordinates of a point. We define
\begin{equation}
	\begin{gathered}
		\boldsymbol{x}_\mathrm{t}(i) = \dfrac{\triangle x}{\triangle s} =  \dfrac{x(i+1)-x(i)}{\|x(i+1)-x(i)\|} \\
		\boldsymbol{x}_\mathrm{b}(i) = \dfrac{\boldsymbol{x}_\mathrm{t}(i-1) \times \boldsymbol{x}_\mathrm{t}(i)}{ \|\boldsymbol{x}_\mathrm{t}(i-1) \times \boldsymbol{x}_\mathrm{t}(i)\| } \\ 
		\boldsymbol{x}_\mathrm{n}(i) = \boldsymbol{x}_\mathrm{b}(i) \times \boldsymbol{x}_\mathrm{t}(i) = \dfrac{-\boldsymbol{x}_\mathrm{t}(i-1)+(\boldsymbol{x}_\mathrm{t}(i-1) \cdot \boldsymbol{x}_\mathrm{t}(i)) \boldsymbol{x}_\mathrm{t}(i)}{\|-\boldsymbol{x}_\mathrm{t}(i-1)+(\boldsymbol{x}_\mathrm{t}(i-1) \cdot \boldsymbol{x}_\mathrm{t}(i)) \boldsymbol{x}_\mathrm{t}(i)\|} \\
	\end{gathered}
\end{equation}
If $\boldsymbol{x}_\mathrm{t}(i-1)$ is parallel to $\boldsymbol{x}_\mathrm{t}(i)$, the discrete curve $X$ is locally a line at $x(i)$, then $\boldsymbol{x}_\mathrm{b}(i)$ can be determined by other methods. For example, the interpolation of its surrounding values when there is no cluster of zero. The discrete curvature and torsion are calculated from the change rate of these unit vectors:
\begin{equation}
	\begin{gathered}
		\kappa(i) = \dfrac{\|\triangle \boldsymbol{x}_\mathrm{t}(i)\|}{\triangle s} = \dfrac{\|\boldsymbol{x}_\mathrm{t}(i) - \boldsymbol{x}_\mathrm{t}(i-1)\|}{\|x(i)-x(i-1)\|} \\
		\tau(i) = \dfrac{\|\triangle \boldsymbol{x}_\mathrm{b}(i)\|}{\triangle s} = \dfrac{\|\boldsymbol{x}_\mathrm{b}(i+1) - \boldsymbol{x}_\mathrm{b}(i)\|}{\|x(i+1)-x(i)\|} \\
	\end{gathered}
\end{equation}
We need $x(i-1)$, $x(i)$ and $x(i+1)$ to calculate $\kappa(i)$; and $x(i-1)$, $x(i)$, $x(i+1)$ and $x(i+2)$ to calculate $\tau(i)$.

Regarding a discrete surface, the motivation for calculating the discrete mean curvature vector and the discrete Gaussian curvature is from Proposition \ref{prop: geometric interpretation of curvature} below. Let $X$ be a parametrized surface with chart $\Phi: t \in I^2 \rightarrow x \in X \subset \mathbb{R}^3$. Suppose there is a point $x$ where the Gaussian curvature $\kappa_\mathrm{G}(x) \neq 0$. $O_1(x)$ is a neighbourhood where $\kappa_\mathrm{G}$ does not change sign. $O_1(x) \supset O_2(x) \supset \dots \supset O_n(x)$ is a sequence of neighbourhoods at $x$ whose diameter satisfies:
\begin{equation*}
	\lim_{n \rightarrow \infty} \mathrm{diam}(O_n(x)) = 0
\end{equation*}
The diameter of a set refers to the supremum of distances between points within the set. 
\begin{equation*}
	\mathrm{diam}(Y) = \sup (d(x,~y)),~ \mathrm{for~all}~x,~y \in Y
\end{equation*}

\begin{prop} \label{prop: geometric interpretation of curvature}
	The normal vector, mean curvature and Gaussian curvature satisfy the equation below:
	\begin{equation*}
		\begin{gathered}
			2\kappa_\mathrm{H}(x) N(x) = -\lim_{n \rightarrow +\infty} \dfrac{\nabla_x \mathrm{area}(O_n(x))}{\mathrm{area}(O_n(x))} \\
			\kappa_\mathrm{G}(x) = \lim_{n \rightarrow +\infty} \dfrac{\mathrm{area}(N(O_n(x)))}{\mathrm{area}(O_n(x))}
		\end{gathered}
	\end{equation*}
	$\mathrm{area}(O_n(x))$ is the surface area of $O_n(x)$. $\mathrm{area}(N(O_n(x)))$ is the area of $N(O_n(x))$, which is the Gauss map of $O_n(x)$.
\end{prop}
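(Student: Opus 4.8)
\section*{Proof proposal}

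The plan is to reduce both limits to ratios of integrals over the parameter domain $\Phi^{-1}(O_n(x)) \subset I^2$ and then extract the pointwise curvature by continuity as the domain collapses. Writing $O_n = O_n(x)$, the surface area is $\mathrm{area}(O_n) = \iint_{\Phi^{-1}(O_n)} \left\| \partial x/\partial u_1 \times \partial x/\partial u_2 \right\| \dif u_1 \dif u_2$ and the spherical-image area is $\mathrm{area}(N(O_n)) = \iint_{\Phi^{-1}(O_n)} \left\| \partial N/\partial u_1 \times \partial N/\partial u_2 \right\| \dif u_1 \dif u_2$. I would settle the Gaussian-curvature identity first, since it follows almost directly from the machinery already assembled.

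For the Gaussian curvature the key input is \eqref{eq: cross product of the Gauss map}, which gives $\partial N/\partial u_1 \times \partial N/\partial u_2 = \kappa_\mathrm{G}\, \partial x/\partial u_1 \times \partial x/\partial u_2$ pointwise. Taking norms, the two integrands differ by exactly the factor $|\kappa_\mathrm{G}|$, so $\mathrm{area}(N(O_n))/\mathrm{area}(O_n)$ equals $\iint |\kappa_\mathrm{G}| \left\| \partial x/\partial u_1 \times \partial x/\partial u_2 \right\| \big/ \iint \left\| \partial x/\partial u_1 \times \partial x/\partial u_2 \right\|$. By the integral mean value theorem (the weight $\left\| \partial x/\partial u_1 \times \partial x/\partial u_2 \right\|$ is strictly positive by the regularity condition) this ratio equals $|\kappa_\mathrm{G}(\xi_n)|$ for some $\xi_n \in O_n$; since $\mathrm{diam}(O_n) \to 0$ forces $\xi_n \to x$, continuity of $\kappa_\mathrm{G}$ yields the limit $|\kappa_\mathrm{G}(x)|$. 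The sign is recovered by reading $\mathrm{area}(N(O_n))$ as an oriented area: the Gauss map preserves or reverses orientation according to the sign of $\kappa_\mathrm{G}$, and the hypothesis that $\kappa_\mathrm{G}$ does not change sign on $O_1(x)$ is precisely what makes this orientation well defined and constant on each $O_n$, promoting $|\kappa_\mathrm{G}(x)|$ to the signed $\kappa_\mathrm{G}(x)$.

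For the mean curvature I would read $\nabla_x \mathrm{area}(O_n)$ as the first variation of area under an infinitesimal displacement of the surface, i.e. the $L^2$-gradient of area viewed as a functional of the embedding. Perturbing $x \mapsto x + t\,\phi$ and splitting $\phi = \phi^{\mathrm{T}} + fN$ into tangential and normal parts, the standard first-variation computation gives $\delta\sqrt{\mathrm{I}_{11}\mathrm{I}_{22}-\mathrm{I}_{12}^2} = (\mathrm{div}\,\phi^{\mathrm{T}} - 2\kappa_\mathrm{H} f)\sqrt{\mathrm{I}_{11}\mathrm{I}_{22}-\mathrm{I}_{12}^2}$, where the factor $2\kappa_\mathrm{H}$ enters as the trace of the shape operator assembled in \eqref{eq: derivative of the Gauss map}. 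The tangential term is a pure divergence and integrates to a boundary contribution, while the normal term produces the interior density $-2\kappa_\mathrm{H} N$. Hence the gradient density is $-2\kappa_\mathrm{H} N$ pointwise; integrating over $\Phi^{-1}(O_n)$, normalizing by $\mathrm{area}(O_n)$, and using continuity of $-2\kappa_\mathrm{H} N$ as $O_n$ collapses to $x$ yields $2\kappa_\mathrm{H}(x) N(x) = -\lim_{n} \nabla_x \mathrm{area}(O_n)/\mathrm{area}(O_n)$.

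The main obstacle is the mean-curvature half: making the meaning of $\nabla_x \mathrm{area}(O_n)$ fully precise and controlling the boundary term in the first variation. One must fix a variation convention so that displacing the representative point $x$ induces a well-defined variation field on the entire patch $O_n$, and then verify that the conormal boundary contribution is of lower order after normalization by $\mathrm{area}(O_n) = O(\mathrm{diam}(O_n)^2)$, so that only the interior mean-curvature density survives in the limit. By contrast, the Gaussian-curvature half is essentially a direct corollary of \eqref{eq: cross product of the Gauss map}, the only delicate point there being the orientation bookkeeping just described.
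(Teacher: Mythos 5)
Your Gaussian-curvature argument is essentially the paper's own proof: both rest on \eqref{eq: cross product of the Gauss map} to identify the spherical-image area element as $\kappa_\mathrm{G}$ times the surface area element, followed by the integral mean value theorem and shrinking $O_n(x)$ to the point $x$. The paper keeps the signed density $\kappa_\mathrm{G}\sqrt{\mathrm{I}_{11}\mathrm{I}_{22}-\mathrm{I}_{12}^2}$ inside the integral from the start (tacitly treating $\mathrm{area}(N(O_n))$ as oriented, which is licensed by the hypothesis that $\kappa_\mathrm{G}$ does not change sign on $O_1(x)$), whereas you pass through $|\kappa_\mathrm{G}|$ and restore the sign afterwards; this is the same argument with slightly different bookkeeping. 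Your mean-curvature argument is also the same computation in substance — the first variation of $\sqrt{\mathrm{I}_{11}\mathrm{I}_{22}-\mathrm{I}_{12}^2}$ under a normal displacement — except that the paper obtains the coefficient $-2\kappa_\mathrm{H}$ by an explicit $\epsilon$-expansion of $\mathrm{I}^\epsilon_{11}\mathrm{I}^\epsilon_{22}-(\mathrm{I}^\epsilon_{12})^2$ using the second and third fundamental forms and the identity \eqref{eq: fundamental forms relation}, while you quote the standard first-variation formula with the trace of the shape operator.

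There is, however, one concrete misstep in your plan for the mean-curvature half. You propose to allow a general variation $\phi=\phi^{\mathrm{T}}+fN$ and to dispose of the tangential part by showing that the resulting conormal boundary integral is of lower order than $\mathrm{area}(O_n)$. It is not: for a bounded tangential field the boundary flux $\oint_{\partial O_n}\phi^{\mathrm{T}}\cdot\nu\,\dif s$ scales like $\mathrm{diam}(O_n)$, while $\mathrm{area}(O_n)$ scales like $\mathrm{diam}(O_n)^2$, so the normalized boundary contribution generically diverges rather than vanishes. The statement only makes sense — and the paper only proves it — under the convention that $\nabla_x\mathrm{area}$ is computed from purely normal variations $x\mapsto x+\epsilon hN$; the paper says this explicitly ("the limit of area does not change through tangential variation" is its justification for discarding the tangential component at the outset, not a claim that the boundary term is asymptotically negligible). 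With $\phi^{\mathrm{T}}\equiv 0$ your divergence term is absent and your computation reduces exactly to the paper's; without that restriction the argument as you state it would fail. So the fix is to build the normal-variation convention into the definition of the gradient rather than to try to estimate the boundary term away.
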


\begin{proof}
	The area of $O_n(x)$ is:
	\begin{equation*}
		\mathrm{area}(O_n(x)) = \bigintsss_{O_n(x)} \sqrt{\mathrm{I}_{11}\mathrm{I}_{22}-\mathrm{I}_{12}^2} \dif u_1 \dif u_2
	\end{equation*}
	We will then consider the \textbf{normal variation} $x \rightarrow x^\epsilon = x+\epsilon h N$ controlled by a distribution $h: O_1(x) \rightarrow \mathbb{R}$, and $\epsilon \in \mathbb{R}_+$ is a scaling factor. The reason for only considering the normal variation is that the limit of area does not change through tangential variation.
	\begin{equation*}
		\mathrm{area}(O_n^\epsilon(x^\epsilon)) = \bigintsss_{O_n(x)} \sqrt{\mathrm{I}^\epsilon_{11}\mathrm{I}^\epsilon_{22}-\left(\mathrm{I}^\epsilon_{12}\right)^2} \dif u_1 \dif u_2
	\end{equation*}
	The gradient of $\mathrm{area}(O_n(x))$ is the integral of the directional derivative along the normal vector:
	\begin{equation*}
		\| \nabla_x \mathrm{area}(O_n(x)) \| = \lim_{\epsilon \rightarrow 0} \bigintsss_{O_n(x)} \dfrac{\sqrt{\mathrm{I}^\epsilon_{11}\mathrm{I}^\epsilon_{22}-\left(\mathrm{I}^\epsilon_{12}\right)^2} - \sqrt{\mathrm{I}_{11}\mathrm{I}_{22}-\mathrm{I}_{12}^2}}{\epsilon h} \dif u_1 \dif u_2
	\end{equation*}
	Continue the calculation:
	\begin{equation*}
		\begin{gathered}
			\dfrac{\partial x^\epsilon}{\partial u_1} = \dfrac{\partial x}{\partial u_1} +\epsilon \dfrac{\partial h}{\partial u_1} N + \epsilon h \dfrac{\partial N}{\partial u_1} \\
			\dfrac{\partial x^\epsilon}{\partial u_2} = \dfrac{\partial x}{\partial u_2} +\epsilon \dfrac{\partial h}{\partial u_2} N + \epsilon h \dfrac{\partial N}{\partial u_2}
		\end{gathered}
	\end{equation*}
	\begin{equation*}
		\begin{gathered}
			\mathrm{I}^\epsilon_{11} = \mathrm{I}_{11} - 2 \epsilon h \mathrm{II}_{11} + \epsilon^2 \dfrac{\partial h}{\partial u_1} \dfrac{\partial h}{\partial u_1} + \epsilon^2h^2\mathrm{III}_{11} \\
			\mathrm{I}^\epsilon_{12} = \mathrm{I}_{12} - 2 \epsilon h \mathrm{II}_{12} + \epsilon^2 \dfrac{\partial h}{\partial u_1} \dfrac{\partial h}{\partial u_2} + \epsilon^2h^2\mathrm{III}_{12} \\
			\mathrm{I}^\epsilon_{22} = \mathrm{I}_{22} - 2 \epsilon h \mathrm{II}_{22} + \epsilon^2 \dfrac{\partial h}{\partial u_2} \dfrac{\partial h}{\partial u_2} + \epsilon^2h^2\mathrm{III}_{22}
		\end{gathered}
	\end{equation*}
	\begin{equation*}
		\begin{aligned}
			\mathrm{I}^\epsilon_{11}\mathrm{I}^\epsilon_{22} - \left(\mathrm{I}^\epsilon_{12}\right)^2 & = \mathrm{I}_{11}\mathrm{I}_{22}-\mathrm{I}_{12}^2 \\
			& -2\epsilon h (\mathrm{I}_{11}\mathrm{II}_{22} - 2\mathrm{I}_{12}\mathrm{II}_{12} + \mathrm{I}_{22}\mathrm{II}_{11}) \\
			& + 4 \epsilon^2 h^2 (\mathrm{II}_{11}\mathrm{II}_{22}-\mathrm{II}_{12}^2) \\
			& + \epsilon^2 h^2 (\mathrm{I}_{11}\mathrm{III}_{22}-2\mathrm{I}_{12}\mathrm{III}_{12}+\mathrm{I}_{22}\mathrm{III}_{11}) \\
			& + \epsilon^2 \left(\mathrm{I}_{11} \dfrac{\partial h}{\partial u_2} \dfrac{\partial h}{\partial u_2} - 2\mathrm{I}_{12} \dfrac{\partial h}{\partial u_1} \dfrac{\partial h}{\partial u_2} + \mathrm{I}_{22} \dfrac{\partial h}{\partial u_1} \dfrac{\partial h}{\partial u_1} \right) \\
			& + o(\epsilon^2)
		\end{aligned}
	\end{equation*}
	$o(\epsilon^2)$ means terms over $\epsilon$ with higher order than 2. From the previous derivation on the third fundamental form, \eqref{eq: fundamental forms relation}, we could see that
	\begin{equation*}
		\begin{aligned}
			& ~ \mathrm{I}_{11}\mathrm{III}_{22}-2\mathrm{I}_{12}\mathrm{III}_{12}+\mathrm{I}_{22}\mathrm{III}_{11} \\
			= & ~ \mathrm{I}_{11}(-\kappa_\mathrm{G}\mathrm{II}_{22}+2\kappa_\mathrm{H}\mathrm{II}_{22})-2\mathrm{I}_{12}(-\kappa_\mathrm{G}\mathrm{II}_{12}+2\kappa_\mathrm{H}\mathrm{II}_{12})+\mathrm{I}_{22}(-\kappa_\mathrm{G}\mathrm{II}_{11}+2\kappa_\mathrm{H}\mathrm{II}_{11}) \\
			= & ~ -2\kappa_\mathrm{G}(\mathrm{I}_{11}\mathrm{I}_{22}-\mathrm{I}_{12}^2) + 2\kappa_\mathrm{H} (\mathrm{I}_{11}\mathrm{II}_{22} - 2\mathrm{I}_{12}\mathrm{II}_{12} + \mathrm{I}_{22}\mathrm{II}_{11})
		\end{aligned}
	\end{equation*} 
	Furthermore, use the definition of the mean and Gaussian curvature
	\begin{equation} \label{eq: curvatures}
		\begin{gathered}
			\kappa_\mathrm{H} =  \dfrac{\mathrm{II}_{22}\mathrm{I}_{11}-2\mathrm{II}_{12}\mathrm{I}_{12}+\mathrm{II}_{11}\mathrm{I}_{22}}{2 \left( \mathrm{I}_{11}\mathrm{I}_{22}-\mathrm{I}_{12}^2 \right)} \\
			\kappa_\mathrm{G} =  \dfrac{\mathrm{II}_{11}\mathrm{II}_{22}-\mathrm{II}_{12}^2}{\mathrm{I}_{11}\mathrm{I}_{22}-\mathrm{I}_{12}^2}
		\end{gathered}
	\end{equation}
	we could obtain:
	\begin{equation*}
		\begin{aligned}
			\dfrac{\mathrm{I}^\epsilon_{11}\mathrm{I}^\epsilon_{22} - \left(\mathrm{I}^\epsilon_{12}\right)^2}{\mathrm{I}_{11}\mathrm{I}_{22}-\mathrm{I}_{12}^2} & = 1 - 4 \kappa_\mathrm{H} \epsilon h + (4 \kappa_\mathrm{H}^2 + 2\kappa_\mathrm{G}) \epsilon^2 h^2  \\
			& + \left(\mathrm{I}_{11} \dfrac{\partial h}{\partial u_2} \dfrac{\partial h}{\partial u_2} - 2\mathrm{I}_{12} \dfrac{\partial h}{\partial u_1} \dfrac{\partial h}{\partial u_2} + \mathrm{I}_{22} \dfrac{\partial h}{\partial u_1} \dfrac{\partial h}{\partial u_1} \right) \epsilon^2 / (\mathrm{I}_{11}\mathrm{I}_{22}-\mathrm{I}_{12}^2) \\
			& + o(\epsilon^2)
		\end{aligned}
	\end{equation*}
	In the calculation of surface gradient, only the first-order term is needed. By applying the Mean Value Theorem for double integral, we could see that:
	\begin{equation*}
		\begin{aligned}
			\dfrac{\| \nabla_x \mathrm{area}(O_n(x)) \|}{\mathrm{area}(O_n(x))} & = \lim_{\epsilon \rightarrow 0} \dfrac{1}{\epsilon h}\left(\sqrt{\dfrac{\mathrm{I}^\epsilon_{11}\mathrm{I}^\epsilon_{22}-\left(\mathrm{I}^\epsilon_{12}\right)^2}{\mathrm{I}_{11}\mathrm{I}_{22}-\mathrm{I}_{12}^2}} - 1\right) \mathrm{~~at~some~} y \in O_n(x) \\
			& = -2 \kappa_\mathrm{H} \mathrm{~~at~some~} y \in O_n(x)
		\end{aligned}
	\end{equation*}
	hence
	\begin{equation*}
		2\kappa_\mathrm{H}(x) N(x) = -\lim_{n \rightarrow +\infty} \dfrac{\nabla \mathrm{area}(O_n(x))}{\mathrm{area}(O_n(x))} 
	\end{equation*}
	Next, apply the derivative of the Gauss map, \eqref{eq: derivative of the Gauss map}, we have:
	\begin{equation*} 
		\begin{aligned}
			\mathrm{area}(N(O_n(x))) & = \bigintsss_{O_n(x)} \left\| \dfrac{\partial N}{\partial u_1} \times \dfrac{\partial N}{\partial u_2}\right\| \dif u_1 \dif u_2 \\
			& = \bigintsss_{O_n(x)} (a_{11}a_{22} - a_{12}a_{21}) \left\| \dfrac{\partial x}{\partial u_1} \times \dfrac{\partial x}{\partial u_2}\right\| \dif u_1 \dif u_2 \\
			& = \bigintsss_{O_n(x)} \kappa_\mathrm{G} \sqrt{\mathrm{I}_{11}\mathrm{I}_{22}-\mathrm{I}_{12}^2} \dif u_1 \dif u_2
		\end{aligned}
	\end{equation*}
	Using the Mean Value Theorem for double integral:
	\begin{equation*}
		\begin{aligned}
			\dfrac{\mathrm{area}(N(O_n(x)))}{\mathrm{area}(O_n(x))} = \kappa_\mathrm{G} \mathrm{~~at~some~} y \in O_n(x) \\
		\end{aligned}
	\end{equation*}
	hence
	\begin{equation*}
		\begin{aligned}
			\lim_{n \rightarrow +\infty} \dfrac{\mathrm{area}(N(O_n(x)))}{\mathrm{area}(O_n(x))} = \kappa_\mathrm{G}(x) \\
		\end{aligned}
	\end{equation*}
\end{proof}

\begin{rem}
	The famous \textit{Steiner formula} considers the \textbf{uniform normal variation} when $h = 1$.
	\begin{equation*}
		\begin{aligned}
			\dfrac{\mathrm{I}^\epsilon_{11}\mathrm{I}^\epsilon_{22} - \left(\mathrm{I}^\epsilon_{12}\right)^2}{\mathrm{I}_{11}\mathrm{I}_{22}-\mathrm{I}_{12}^2} & = 1 - 4 \kappa_\mathrm{H} \epsilon + (4 \kappa_\mathrm{H}^2 + 2\kappa_\mathrm{G}) \epsilon^2 + o(\epsilon^2)
		\end{aligned}
	\end{equation*}
	then
	\begin{equation*}
		\begin{aligned}
			\sqrt{\dfrac{\mathrm{I}^\epsilon_{11}\mathrm{I}^\epsilon_{22} - \left(\mathrm{I}^\epsilon_{12}\right)^2}{\mathrm{I}_{11}\mathrm{I}_{22}-\mathrm{I}_{12}^2}} & = 1 - 2 \kappa_\mathrm{H} \epsilon + \kappa_\mathrm{G} \epsilon^2 + o(\epsilon^2)
		\end{aligned}
	\end{equation*}
	Geometrically,
	\begin{equation*}
		\lim_{n \rightarrow +\infty} \dfrac{\mathrm{area}(O_n^\epsilon(x^\epsilon))}{\mathrm{area}(O_n(x))} = 1 - 2 \kappa_\mathrm{H}(x) \epsilon + \kappa_\mathrm{G}(x) \epsilon^2 + o(\epsilon^2) 
	\end{equation*}
\end{rem}

\begin{figure}[tbph]
	\noindent \begin{centering}
		\includegraphics[width=1\linewidth]{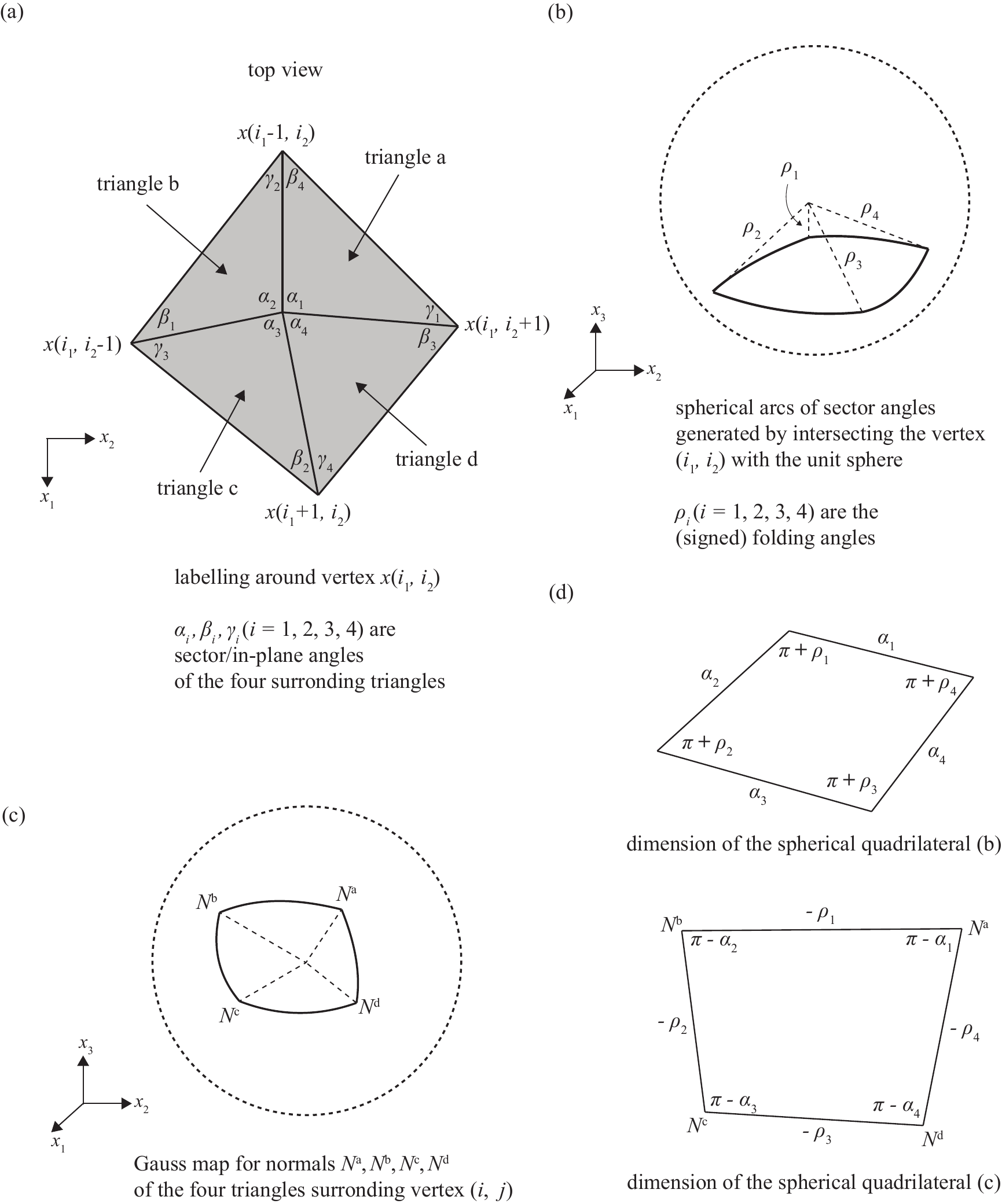}
		\par\end{centering}
	
	\caption{\label{fig: discrete curvature}Labelling around vertex $x(i_1, ~i_2)$ in the calculation of the discrete mean curvature vector and the discrete Gaussian curvature. Note that in (b), at each crease, a signed \textit{folding angle} $\rho_i$ is the angle between the normal vectors of its two adjacent panels. If these two normal vectors meet on the specified side of the paper (here, upwards), $\rho_i \in (0, \pi)$, the crease is called a \textit{valley crease}. If these two normal vectors meet on the opposite side, $\rho_i \in (-\pi, 0)$, the crease is called a \textit{mountain crease}. (b) shows four mountain creases. In (d) we could see that the two spherical quadrilaterals are dual/polar-and-poles to each other.}
\end{figure}

We will show how to use the above formula in Proposition \ref{prop: geometric interpretation of curvature} to calculate the discrete mean curvature vector $\kappa_\mathrm{H} N$ and the discrete Gaussian curvature $\kappa_\mathrm{G}$. For every $x(i), ~i = (i_1, ~i_2) \in \mathbb{Z}^2$ on a discrete surface, we need the information of $x(i_1-1, ~i_2)$, $x(i_1+1, ~i_2)$, $x(i_1, ~i_2-1)$ and $x(i_1, ~i_2+1)$ to calculate the area gradient of the four triangles surrounding $x(i)$. Let
\begin{equation*}
	\begin{gathered}
		a = x(i_1, ~i_2) - x(i_1-1, ~i_2), ~~ b = x(i_1, ~i_2) - x(i_1, ~i_2-1) \\
		c = x(i_1, ~i_2) - x(i_1+1, ~i_2), ~~ d = x(i_1, ~i_2) - x(i_1, ~i_2+1) 
	\end{gathered}
\end{equation*}
and the sum of area of the four triangles is:
\begin{equation*}
	\begin{aligned}
		\mathrm{area}(i) & = \dfrac{1}{2} \left(\|d \times a\| + \|a \times b\| + \|b \times c\| + \|c \times d\| \right) 
	\end{aligned}
\end{equation*}
Note that the $\mathrm{area}(i)$ lives on vertex $i$. The derivative of $\mathrm{area}(i)$ with respect to $x(i)$ can be directly calculated. For example:
\begin{equation*}
	\dfrac{\dif \|d \times a\|}{\dif x} = 	\dfrac{1}{\|d \times a\|} \begin{bmatrix}
		(d \times a) \cdot \dfrac{\partial (d \times a)}{\partial x_1} \\[8pt]
		(d \times a) \cdot \dfrac{\partial (d \times a)}{\partial x_2} \\[8pt]
		(d \times a) \cdot \dfrac{\partial (d \times a)}{\partial x_3}
	\end{bmatrix} = \dfrac{\dif (d \times a)}{\dif x} \cdot \dfrac{d \times a}{\|d \times a\|}
\end{equation*}
and for $a = (a_1; ~a_2; ~a_3)$ and $d = (d_1; ~d_2; ~d_3)$:
\begin{equation*}
	\dfrac{\dif (d \times a)}{\dif x} = \begin{bmatrix}
		0 & a_3-d_3 & d_2-a_2 \\
		d_3-a_3 & 0 & a_1-d_1 \\
		a_2-d_2 & d_1-a_1 & 0
	\end{bmatrix}
\end{equation*}
Using the information above we could obtain the expression below in terms of cross product:
\begin{equation*}
	\begin{aligned}
		\nabla \mathrm{area}(i) = & \dfrac{1}{2} \left((d-a) \times \dfrac{d \times a}{\|d \times a\|} + (a-b) \times \dfrac{a \times b}{\|a \times b\|} \right. \\
		& \left. ~~ + (b-c) \times \dfrac{b \times c}{\|b \times c\|} + (c-d) \times \dfrac{c \times d}{\|c \times d\|} \right)\\
	\end{aligned}	
\end{equation*}
Physically, $\nabla \mathrm{area}(i)$ indicates the steepest direction pulling at vertex $i$ to increase $\mathrm{area}(i)$ of the four triangles. Then apply the formula for triple cross product we will obtain the final expression, as known as the \textbf{cotan formula}, using the angles defined in Figure \ref{fig: discrete curvature}:
\begin{equation} \label{eq: discrete mean curvature 2}
	\begin{aligned}
		\nabla \mathrm{area}(i) = ~ & \dfrac{1}{2} \left( \left(\dfrac{1}{\tan \beta_1}+\dfrac{1}{\tan \gamma_1} \right) a  + \left(\dfrac{1}{\tan \beta_2}+\dfrac{1}{\tan \gamma_2} \right) b \right. \\
		& \left. ~ + \left(\dfrac{1}{\tan \beta_3}+\dfrac{1}{\tan \gamma_3} \right) c +  \left(\dfrac{1}{\tan \beta_4}+\dfrac{1}{\tan \gamma_4} \right) d \right)\\
	\end{aligned}
\end{equation}
Hence the discrete mean curvature vector, i.e., the \textbf{Laplace-Beltrami Operator} is:
\begin{equation} \label{eq: discrete mean curvature}
	\begin{gathered}
		2\kappa_\mathrm{H}(i) N(i) = \dfrac{\nabla \mathrm{area}(i)}{\mathrm{area}(i)}, ~~\|N(i)\| = 1
	\end{gathered}
\end{equation}
Here $\kappa_\mathrm{H}(i)$ and $N(i)$ both live on vertex $i$. Note that if $\nabla \mathrm{area}(i) = 0$, for example when the five points in Figure \ref{fig: discrete curvature} are coplanar, we take $N(i)$ as the average of the surrounding normal vectors. 

Next, we can calculate the normal of the surrounding four triangles, whose spherical view is provided in Figure \ref{fig: discrete curvature}(c)
\begin{equation*}
	\begin{gathered}
		N^\mathrm{a} = \dfrac{d \times a}{\|d\|\|a\| \sin \alpha_1}, ~~N^\mathrm{b} = \dfrac{a \times b}{\|a\|\|b\| \sin \alpha_2} \\
		N^\mathrm{c} = \dfrac{b \times c}{\|b\|\|c\| \sin \alpha_3}, ~~N^\mathrm{d} = \dfrac{c \times d}{\|c\|\|d\| \sin \alpha_4} 
	\end{gathered}
\end{equation*}
\begin{equation*}
	\begin{gathered}
		N^\mathrm{a} \cdot N^\mathrm{b} = \cos \rho_1, ~~ N^\mathrm{b} \cdot N^\mathrm{c} = \cos \rho_2 \\
		N^\mathrm{c} \cdot N^\mathrm{d} = \cos \rho_3, ~~ N^\mathrm{d} \cdot N^\mathrm{a} = \cos \rho_4
	\end{gathered}
\end{equation*}
Note that the Gauss map is an \textit{involution} (a mapping is its inverse) of the direction vectors along $a$, $b$, $c$, $d$. 
\begin{equation*}
	\begin{gathered}
		\dfrac{a}{\|a\|} = -\dfrac{N^\mathrm{a} \times N^\mathrm{b}}{\sin \rho_1}, ~~ \dfrac{b}{\|b\|} = -\dfrac{N^\mathrm{b} \times N^\mathrm{c}}{\sin \rho_2} \\
		\dfrac{c}{\|c\|} = -\dfrac{N^\mathrm{c} \times N^\mathrm{d}}{\sin \rho_3}, ~~ \dfrac{d}{\|d\|} = -\dfrac{N^\mathrm{d} \times N^\mathrm{a}}{\sin \rho_4} 
	\end{gathered}
\end{equation*}
The geometrical reason is $a$ being orthogonal to both $N^\mathrm{a}$ and $N^\mathrm{b}$. The same principle holds for the rest.

An important fact from spherical trigonometry is that the spherical linkage sharing identical motion with the degree-4 vertex shown in Figure \ref{fig: discrete curvature}(b) is the \textbf{polar} quadrilateral of the spherical quadrilateral formed by the Gauss map shown in Figure \ref{fig: discrete curvature}(c). The sector angles and folding angles are therefore related as indicated in Figure \ref{fig: discrete curvature}(d). Further, the area of a spherical quadrilateral is the sum of interior angles minus $2\pi$ (also called the \textbf{angular defect}), which leads to the calculation of discrete Gaussian curvature:
\begin{equation} \label{eq: discrete Gaussian curvature}
	\kappa_\mathrm{G}(i) = 2\pi - \alpha_1 - \alpha_2 - \alpha_3 - \alpha_4
\end{equation}
$\kappa_\mathrm{G}(i)$ also lives on vertex $i$. 

The above calculation method of the discrete mean and Gaussian curvature is just one of the admissible definitions. In practice the calculation might be altered for different discrete surfaces, for example, \cite{meyer_discrete_2003} and the edge-constrained net introduced in \cite{hoffmann_discrete_2017}. 

\section{Discrete nets} \label{section: discrete net}

This section will introduce the discrete analogue of coordinate nets derived in Section \ref{section: coordinate net}. Here `discrete analogue' means the discrete system (usually a partial difference system) defined by a discrete coordinate net is a discretization of a smooth system (usually a partial differential system). We will show the conversion between the smooth and discrete notations by examining multiple discrete nets in line with the smooth nets provided in Section \ref{section: coordinate net} and from the discussion on convergence in Section \ref{section: ddg convergence}. It is worth mentioning that there may be multiple approaches to discretize a smooth net, and the choice of discrete net will depend on specific scenarios and requirements (for example, in the simulation of isometric deformation). The labelling of geometrical quantities on a discrete net is provided in Figure \ref{fig: labelling discrete net}.

\begin{figure}[tbph]
	\noindent \begin{centering}
		\includegraphics[width=1\linewidth]{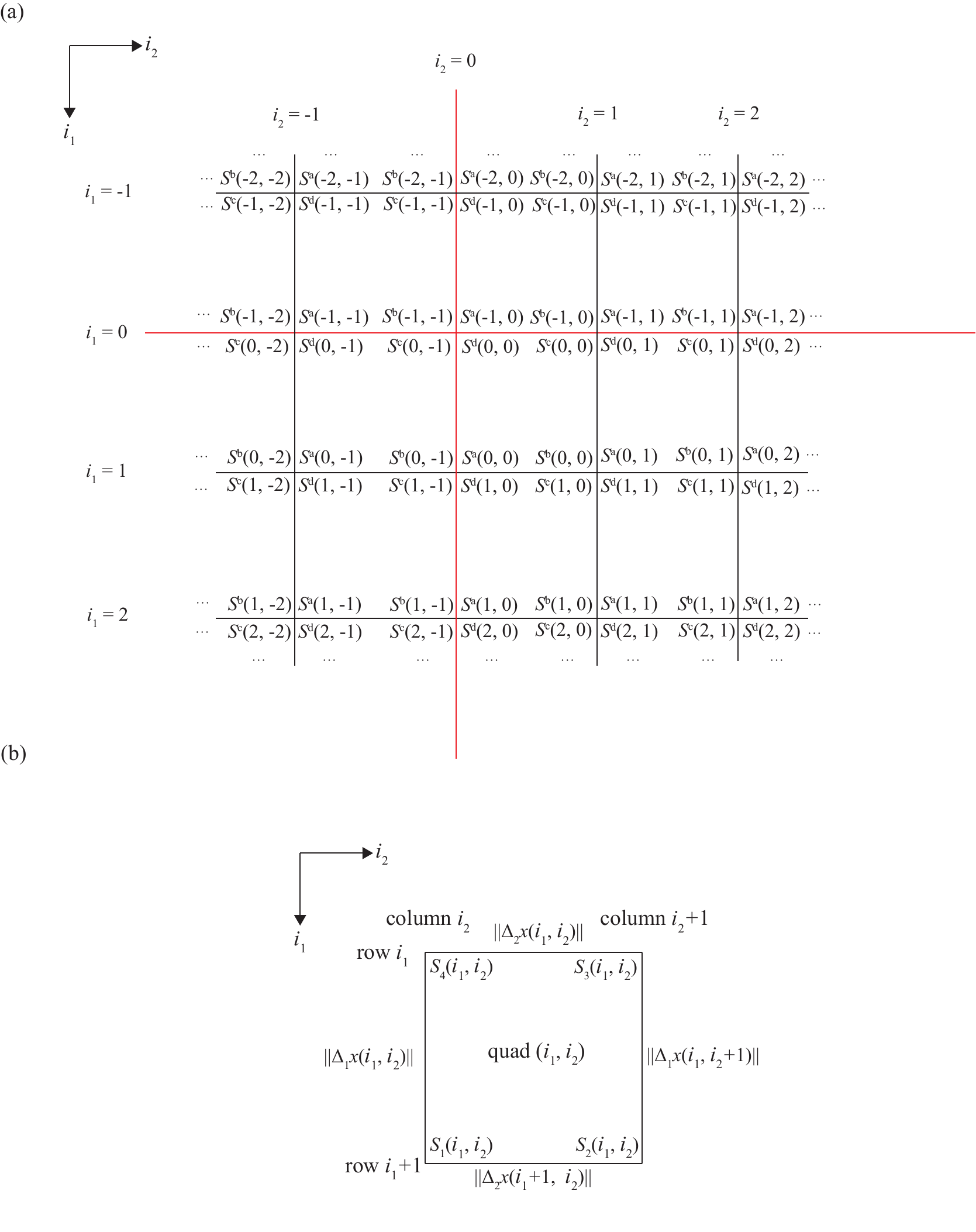}
		\par\end{centering}
	
	\caption{\label{fig: labelling discrete net}Labelling of vertices, lengths and angles of a discrete net. Note that these figures are not a three-dimensional drawing, and $x(i_1, ~i_2), ~x(i_1+1, ~i_2), ~x(i_1+1, ~i_2+1), ~x(i_1, ~i_2+1)$ are \textbf{not necessarily planar}. The sector angles are $S^\mathrm{a}(i), ~S^\mathrm{b}(i), ~S^\mathrm{c}(i), ~S^\mathrm{d}(i), ~i = (i_1, ~i_2) \in \mathbb{Z}^2$. The crease lengths are $\|\triangle_1 x(i)\| = \|x(i_1+1, ~i_2) - x(i_1, ~i_2)\|, ~\|\triangle_2 x(i)\| = \|x(i_1, ~i_2+1) - x(i_1, ~i_2) \|, ~i = (i_1, ~i_2) \in \mathbb{Z}^2$.}
\end{figure}

A discrete surface $X: \mathbb{Z}^2 \rightarrow \mathbb{R}^3$ is called a \textbf{discrete Chebyshev net} if 
\begin{equation} \label{eq: discrete Chebyshev net}
	\triangle_2 \|\triangle_1 x\|^2 = \triangle_1 \|\triangle_2 x\|^2 = 0, ~~\mathrm{for~all~} i = (i_1, ~i_2) \in \mathbb{Z}^2
\end{equation}
The discrete operators are in the form of:
\begin{equation*}
	\begin{gathered}
		\triangle_1 x (i) = x(i_1+1, ~i_2) - x(i_1, ~i_2) \\
		\triangle_2 x (i) = x(i_1, ~i_2+1) - x(i_1, ~i_2) \\
		\triangle_1\triangle_2 x(i) = \triangle_2\triangle_1 x(i) = x(i_1+1, ~i_2+1) - x(i_1, ~i_2+1) - x(i_1+1, ~i_2) + x(i_1, ~i_2)
	\end{gathered}
\end{equation*}
Note that $\triangle_1 x$ lives on grid lines $i_2$, $\triangle_2 x$ lives on grid lines $i_1$, $\triangle_1\triangle_2 x$ lives on quadrilaterals $(i_1, ~i_2)$.
\begin{equation*}
	\begin{aligned}
		\triangle_2 \|\triangle_1 x \|^2 & = \|\triangle_1 x(i_1, ~i_2+1)\|^2 - \|\triangle_1 x(i_1, ~i_2)\|^2 \\
		& = \|x(i_1+1, ~i_2+1) - x(i_1, ~i_2+1)\|^2 - \|x(i_1+1, ~i_2) - x(i_1, ~i_2)\|^2 \\
		& = \triangle_1\triangle_2 x(i) \cdot (x(i_1+1, ~i_2+1) - x(i_1, ~i_2+1) + x(i_1+1, ~i_2) - x(i_1, ~i_2))
	\end{aligned}
\end{equation*}
\begin{equation*}
	\begin{aligned}
		\triangle_1 \|\triangle_2 x \|^2 & = \|\triangle_2 x(i_1+1, ~i_2)\|^2 - \|\triangle_2 x(i_1, ~i_2)\|^2 \\
		& = \|x(i_1+1, ~i_2+1) - x(i_1+1, ~i_2)\|^2 - \|x(i_1, ~i_2+1) - x(i_1, ~i_2)\|^2 \\
		& = \triangle_1\triangle_2 x(i) \cdot (x(i_1+1, ~i_2+1) + x(i_1, ~i_2+1) - x(i_1+1, ~i_2) - x(i_1, ~i_2))
	\end{aligned}
\end{equation*}
Hence the partial difference equation for a discrete Chebyshev net, \eqref{eq: discrete Chebyshev net}, is equivalent to
\begin{equation} \label{eq: partial difference Chebyshev}
	\begin{aligned}
		& \begin{dcases}
			\triangle_1\triangle_2 x(i) \cdot (x(i_1+1, ~i_2+1) - x(i_1, ~i_2)) = 0 \\
			\triangle_1\triangle_2 x(i) \cdot (x(i_1, ~i_2+1) - x(i_1+1, ~i_2)) = 0
		\end{dcases} \\
		~~\Leftrightarrow & \begin{dcases}
			\triangle_1\triangle_2 x(i) = \dfrac{\lambda(i)}{2} (x(i_1+1, ~i_2+1) - x(i_1, ~i_2)) \times (x(i_1, ~i_2+1) - x(i_1+1, ~i_2)) \\ \lambda: \mathbb{Z}^2 \rightarrow \mathbb{R} \mathrm{~living~on~quadrilaterals~} (i_1, ~i_2)
		\end{dcases}
	\end{aligned}
\end{equation}

The reason for choosing $\lambda/2$ is for the consistency with its smooth analogue, \eqref{eq: partial differential Chebyshev}. It can be verified that on the integer grid, if seeing $u_1$ in the direction along $(\sqrt{2}/2, ~\sqrt{2}/2)$, and seeing $u_2$ in the direction along $(-\sqrt{2}/2, ~\sqrt{2}/2)$, we have
\begin{equation*}
	\begin{gathered}
		\triangle_1\triangle_2 x \sim \dfrac{\partial^2 x}{\partial u_1 \partial u_2} \\
		x(i_1+1, ~i_2+1) - x(i_1, ~i_2) \sim \sqrt{2} \dfrac{\partial x}{\partial u_1} \\
		x(i_1, ~i_2+1) - x(i_1+1, ~i_2) \sim \sqrt{2} \dfrac{\partial x}{\partial u_2}
	\end{gathered}
\end{equation*}
hence the amplitude $\lambda$ has the same meaning for both the discrete and smooth case. 

When $\triangle_1\triangle_2 x \neq 0$, see Figure \ref{fig: ChebyShev quad}(a) for a geometric illustration for a Chebyshev quadrilateral. Apply a parallel transport from $BD$ to $B'D'$ such that the intersection $O'$ of $AC$ and $B'D'$ bisects both $AC$ and $B'D'$. The side length condition $AB = CD$, $AD=BC$ implies that both $BB'$ and $DD'$ are perpendicular to the planar parallelogram $AB'CD'$, and we could see that $BB' = DD' = OO' = \triangle_1\triangle_2 x/2$. When $\triangle_1\triangle_2 x = 0$, as shown in Figure \ref{fig: ChebyShev quad}(b), $ABCD$ becomes a planar parallelogram, geometrically flipping $D$ to the other side of plane $AB'CD'$. 

\begin{figure}[t]
	\noindent \begin{centering}
		\includegraphics[width=1\linewidth]{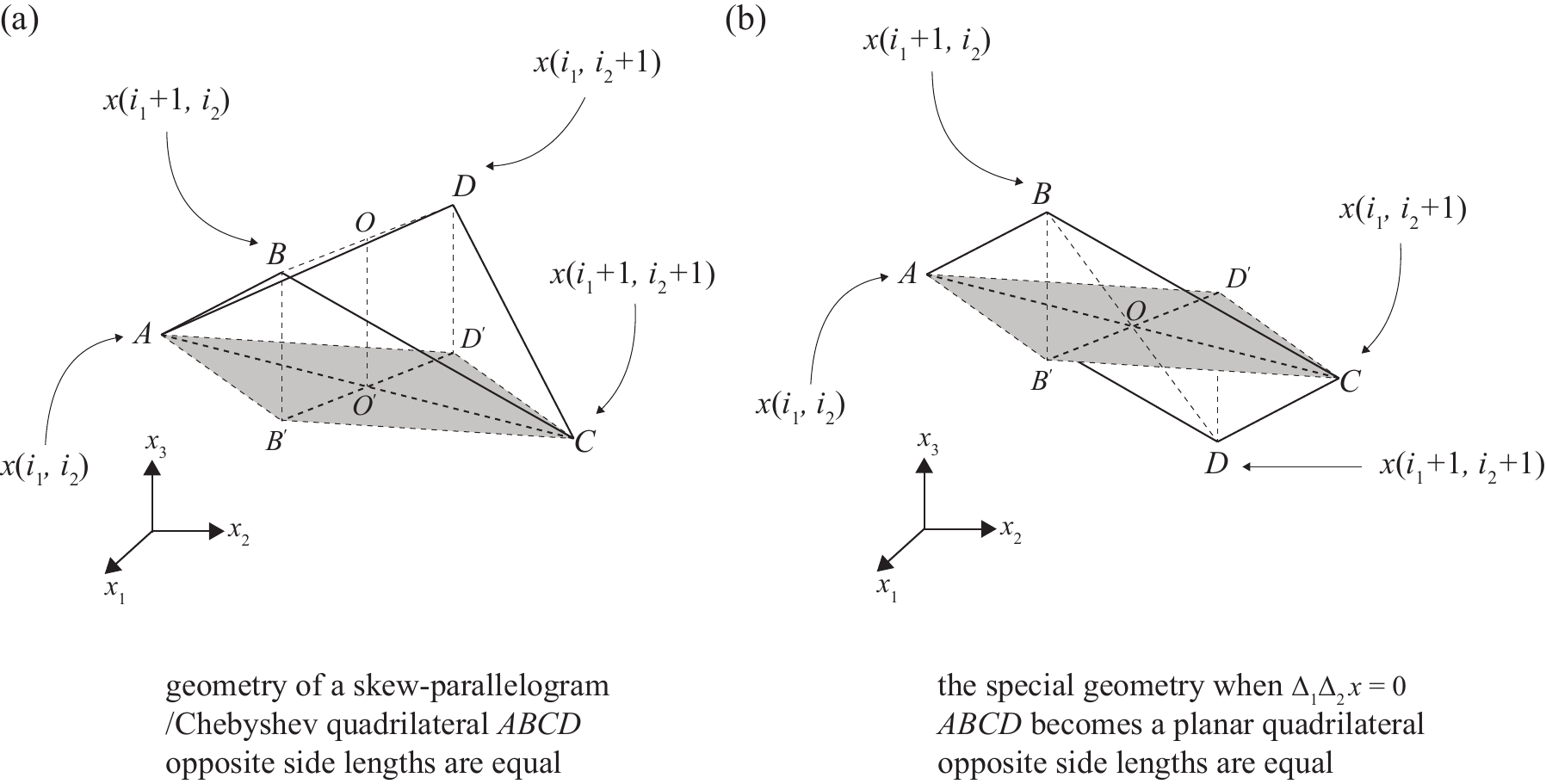}
		\par\end{centering}
	
	\caption{\label{fig: ChebyShev quad}(a) A figure illustrating the geometry of a skew-parallelogram/Chebyshev quadrilateral, where the side lengths $AB = CD$, $AD = BC$. (b) Degeneration to a planar quadrilateral when $\triangle_1\triangle_2 x = 0$. In both figures, $A, ~B, ~C, ~D$ refer to the position of $x(i_1, ~i_2), ~x(i_1+1, ~i_2), ~x(i_1+1, ~i_2+1), ~x(i_1, ~i_2+1)$.}
\end{figure} 

We could see that one reasonable way to define the discrete normal field is to define $N(i), ~i = (i_1, ~i_2) \in \mathbb{Z}^2$ on each quadrilateral $(i_1, ~i_2)$, along the direction of $O'O$:
\begin{equation}
	N(i) = \dfrac{(x(i_1+1, ~i_2+1) - x(i_1, ~i_2)) \times (x(i_1, ~i_2+1) - x(i_1+1, ~i_2))}{\|(x(i_1+1, ~i_2+1) - x(i_1, ~i_2)) \times (x(i_1, ~i_2+1) - x(i_1+1, ~i_2))\|}
\end{equation} 
Additionally, $\lambda(i), ~i = (i_1, ~i_2) \in \mathbb{Z}^2$ for a discrete Chebyshev net shows the `curvature' of a Chebyshev quadrilateral since
\begin{equation*}
	\begin{aligned}
		\lambda(i) & = \dfrac{\triangle_1\triangle_2 x(i) \cdot N(i)}{\|(x(i_1+1, ~i_2+1) - x(i_1, ~i_2)) \times (x(i_1, ~i_2+1) - x(i_1+1, ~i_2))\|} \\
		& = \dfrac{2 \mathrm{length}(OO')}{\mathrm{area}(AB'CD')}
	\end{aligned}
\end{equation*}
Note that the smooth analogue of $\lambda$ for a Chebyshev net is provided in \eqref{eq: Chebyshev net ratio}. The above information for a discrete Chebyshev net is from \cite{schief_discrete_2007}. 

A \textbf{discrete orthogonal Chebyshev net} is a discrete Chebyshev net where $x(i_1+1, ~i_2+1) - x(i_1, ~i_2)$ is perpendicular to $x(i_1, ~i_2+1) - x(i_1+1, ~i_2)$ for all $i \in \mathbb{Z}^2$. It implies that the length of the four sides of the Chebyshev quadrilateral is equal, i.e. $AB = BC = CD = DA$ in Figure \ref{fig: ChebyShev quad}. Such net in fact has a cylindrical shape.

A discrete surface $X: \mathbb{Z}^2 \rightarrow \mathbb{R}^3$ is called a \textbf{discrete asymptotic net} if:
\begin{equation*}
	\begin{dcases}
		\triangle_1^2 x(i) \cdot N(i) = 0 \\
		\triangle_2^2 x(i) \cdot N(i) = 0 
	\end{dcases}, \mathrm{~~for~all~} i = (i_1, ~i_2) \in \mathbb{Z}^2
\end{equation*}
where
\begin{equation*}
	\begin{gathered}
		\triangle_1^2 x(i) = x(i_1-1, ~i_2) + x(i_1+1, ~i_2) - 2x(i_1, ~i_2) \\
		\triangle_2^2 x(i) = x(i_1, ~i_2-1) + x(i_1, ~i_2+1) - 2x(i_1, ~i_2) 
	\end{gathered}
\end{equation*}
From \eqref{eq: discrete mean curvature}:
\begin{equation*}
	\begin{aligned}
		\nabla \mathrm{area}(i) = ~ & \dfrac{1}{2} \left( \left(\dfrac{1}{\tan \beta_1}+\dfrac{1}{\tan \gamma_1} \right) (x(i_1, ~i_2) - x(i_1-1, ~i_2)) \right. \\
		& ~~ + \left(\dfrac{1}{\tan \beta_2}+\dfrac{1}{\tan \gamma_2} \right) (x(i_1, ~i_2) - x(i_1, ~i_2-1))  \\
		& ~~ + \left(\dfrac{1}{\tan \beta_3}+\dfrac{1}{\tan \gamma_3} \right) (x(i_1, ~i_2) - x(i_1+1, ~i_2)) \\
		& ~~ + \left. \left(\dfrac{1}{\tan \beta_4}+\dfrac{1}{\tan \gamma_4} \right) (x(i_1, ~i_2) - x(i_1, ~i_2+1)) \right)\\
	\end{aligned}
\end{equation*}
From direct calculation we could see that the condition for a discrete asymptotic net is that $x(i_1, ~i_2), ~x(i_1, ~i_2-1), ~x(i_1+1, ~i_2), ~x(i_1, ~i_2+1), ~x(i_1-1, ~i_2)$ are coplanar, then $N(i)$ is perpendicular to this plane and hence perpendicular to both $\triangle_1^2 x(i)$ and $\triangle_2^2 x(i)$. The above geometry also indicates that both $N(i_1, ~i_2)$ and $N(i_1+1, ~i_2)$ are perpendicular to $\triangle_1 x$; and both $N(i_1, ~i_2)$ and $N(i_1, ~i_2+1)$ are perpendicular to $\triangle_2 x$. The Lelieuvre normal field for an asymptotic net is defined as $N^\mathrm{L} = N (-\kappa_\mathrm{G})^{-1/4}$ (one option for discrete Gaussian curvature is the angular defect \eqref{eq: discrete Gaussian curvature}), and we could define the \textbf{discrete Lelieuvre normal field} $N^\mathrm{L}(i_1, ~i_2)$ to be a suitable scaling of $N(i_1, ~i_2)$ such that:
\begin{equation} \label{eq: discrete Lelieuvre normal}
	\begin{dcases}
		N^\mathrm{L} \times \triangle_1 N^\mathrm{L} = \triangle_1 x \\
		\triangle_2 N^\mathrm{L} \times N^\mathrm{L} = \triangle_2 x
	\end{dcases}
\end{equation}
From
\begin{equation} \label{eq: discrete Lelieuvre normal specfic}
	\begin{gathered}
		N^\mathrm{L} (i_1, ~i_2) \times N^\mathrm{L} (i_1+1, ~i_2) = x(i_1+1, ~i_2) - x(i_1, ~i_2) \\
		N^\mathrm{L} (i_1, ~i_2+1) \times N^\mathrm{L} (i_1+1, ~i_2+1) = x(i_1+1, ~i_2+1) - x(i_1, ~i_2+1) \\
		N^\mathrm{L} (i_1, ~i_2+1) \times N^\mathrm{L} (i_1, ~i_2) = x(i_1, ~i_2+1) - x(i_1, ~i_2) \\
		N^\mathrm{L} (i_1+1, ~i_2+1) \times N^\mathrm{L} (i_1+1, ~i_2) = x(i_1+1, ~i_2+1) - x(i_1+1, ~i_2) \\
	\end{gathered}
\end{equation}
sum the equations above we could see that:
\begin{equation*}
	(N^\mathrm{L} (i_1, ~i_2) + N^\mathrm{L} (i_1+1, ~i_2+1)) \times (N^\mathrm{L} (i_1+1, ~i_2) + N^\mathrm{L} (i_1, ~i_2+1)) = 0
\end{equation*}
which is equivalent to the \textbf{discrete Moutard Equation} for $N^\mathrm{L}$:
\begin{equation} \label{eq: discrete Moutard}
	\begin{gathered}
		\triangle_1 \triangle_2 N^\mathrm{L} = \dfrac{\lambda(i_1, ~i_2)}{2} (N^\mathrm{L} (i_1+1, ~i_2) + N^\mathrm{L} (i_1, ~i_2+1)), ~~\lambda: \mathbb{Z}^2 \rightarrow \mathbb{R} 
	\end{gathered}
\end{equation}
The reason for choosing $\lambda/2$ is for the consistency with its smooth analogue, \eqref{eq: Moutard}.

A discrete surface $X: \mathbb{Z}^2 \rightarrow \mathbb{R}^3$ is called a \textbf{discrete asymptotic Chebyshev net}, as known as a \textbf{K-hedron}/\textbf{discrete K-surface} in previous literatures if both \eqref{eq: discrete Chebyshev net} and the five points coplanar condition are satisfied. In Figure \ref{fig: ChebyShev quad}(a), set vector $O'B' = a$, vector $O'C = b$, vector $O'O = c$. Here $c$ is perpendicular to both $a$ and $b$. These three vectors determine the shape of a Chebyshev quadrilateral. Since vector $AB = a + b + c$, vector $BC = -a + b - c$, vector $CD = -a - b + c$, vector $DA = a - b - c$, let
\begin{equation*}
	\begin{gathered}
		N^\mathrm{L}(i_1, ~i_2) = \dfrac{a \times (b+c)}{\sqrt{(a \times b) \cdot c}} \\
		N^\mathrm{L}(i_1+1, ~i_2) = \dfrac{b \times (a+c)}{\sqrt{(a \times b) \cdot c}} \\
		N^\mathrm{L}(i_1+1, ~i_2+1) = \dfrac{a \times (b-c)}{\sqrt{(a \times b) \cdot c}} \\
		N^\mathrm{L}(i_1, ~i_2+1) = \dfrac{b \times (a-c)}{\sqrt{(a \times b) \cdot c}} \\
	\end{gathered}
\end{equation*} 
we could examine that $N^\mathrm{L}$ is a discrete Lelieuvre normal field, which agrees with \eqref{eq: discrete Lelieuvre normal specfic}:
\begin{equation*}
	\begin{gathered}
		N^\mathrm{L} (i_1, ~i_2) \times N^\mathrm{L} (i_1+1, ~i_2) = a+b+c \\
		N^\mathrm{L} (i_1, ~i_2+1) \times N^\mathrm{L} (i_1+1, ~i_2+1) = a+b-c \\
		N^\mathrm{L} (i_1, ~i_2+1) \times N^\mathrm{L} (i_1, ~i_2) = -a+b+c \\
		N^\mathrm{L} (i_1+1, ~i_2+1) \times N^\mathrm{L} (i_1+1, ~i_2) = -a+b-c \\
	\end{gathered}
\end{equation*}
It turns out that in the discrete Moutard Equation for a K-hedron, $\lambda(i_1, ~i_2) = -4$,
\begin{equation} \label{eq: discrete Moutard asymptotic Chebyshev}
	\begin{gathered}
		\triangle_1 \triangle_2 N^\mathrm{L} = -2 (N^\mathrm{L} (i_1+1, ~i_2) + N^\mathrm{L} (i_1, ~i_2+1)) 
	\end{gathered}
\end{equation}
In terms of the discrete normal field:
\begin{equation*}
	\begin{gathered}
		N(i_1, ~i_2) = \dfrac{a \times (b+c)}{\|a \times (b+c)\|}, ~~N(i_1+1, ~i_2) = \dfrac{b \times (a+c)}{\|b \times (a+c)\|} \\
		N(i_1+1, ~i_2+1) = \dfrac{a \times (b-c)}{\|a \times (b-c)\|}, ~~N(i_1, ~i_2+1) = \dfrac{b \times (a-c)}{\|b \times (a-c)\|} \\
	\end{gathered}
\end{equation*}
From the geometry illustrated in Figure \ref{fig: ChebyShev quad}(a):
\begin{equation*}
	\|a \times (b+c)\| = \|a \times (b-c)\|, ~~\|b \times (a+c)\| = \|b \times (a-c)\|
\end{equation*}
\begin{equation*}
	\begin{gathered}
		\triangle_1 \triangle_2 N = 2 \left(\dfrac{1}{\|a \times (b+c)\|} + \dfrac{1}{\|b \times (a+c)\|}\right) a \times b \\
		\triangle_1 \triangle_2 N = - \left(\dfrac{\|b \times (a+c)\|}{\|a \times (b+c)\|} + 1 \right) (N (i_1+1, ~i_2) + N (i_1, ~i_2+1))
	\end{gathered}
\end{equation*}
\begin{equation*}
	\begin{gathered}
		N(i_1+1, ~i_2+1) - N(i_1, ~i_2) = \dfrac{2 c \times a}{\|a \times (b+c)\|} \\
		N(i_1, ~i_2+1) - N(i_1+1, ~i_2) = \dfrac{2 c \times b}{\|b \times (a+c)\|}
	\end{gathered}
\end{equation*}
\begin{equation*}
	\begin{gathered}
		\triangle_1 \triangle_2 N \cdot (N(i_1+1, ~i_2+1) - N(i_1, ~i_2)) = 0 \\
		\triangle_1 \triangle_2 N \cdot (N(i_1, ~i_2+1) - N(i_1+1, ~i_2)) = 0
	\end{gathered}
\end{equation*}

The above derivation leads to the following proposition, which is parallel to its smooth analogue:
\begin{prop}
	The Gauss map of a discrete K-surface is a discrete Chebyshev net. A discrete K-surface is the only discrete asymptotic net with a discrete Chebyshev Gauss map.
\end{prop}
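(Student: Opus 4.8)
The plan is to treat the two assertions separately. The first (``the Gauss map of a discrete K-surface is a discrete Chebyshev net'') can be read off directly from the computation preceding the proposition, while the second (the uniqueness clause) is the substantive part and is best organized through the discrete Lelieuvre field and the discrete Moutard equation.

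For the first assertion, recall that the K-hedron computation produced the two identities
\begin{equation*}
	\triangle_1\triangle_2 N \cdot (N(i_1+1,~i_2+1)-N(i_1,~i_2)) = 0, \quad
	\triangle_1\triangle_2 N \cdot (N(i_1,~i_2+1)-N(i_1+1,~i_2)) = 0.
\end{equation*}
These are exactly the equations of \eqref{eq: partial difference Chebyshev} with the net $x$ replaced by its Gauss map $N$; since that characterization is a purely algebraic identity valid for any discrete net, it follows at once that $N$ satisfies \eqref{eq: discrete Chebyshev net}, i.e. $\triangle_2\|\triangle_1 N\|^2 = \triangle_1\|\triangle_2 N\|^2 = 0$. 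Equivalently, one may quote the two length equalities $\|a\times(b+c)\| = \|a\times(b-c)\|$ and $\|b\times(a+c)\| = \|b\times(a-c)\|$ obtained from the skew-parallelogram geometry of Figure \ref{fig: ChebyShev quad}(a), which state precisely that opposite edges of the spherical quadrilateral $N(i_1,~i_2)N(i_1+1,~i_2)N(i_1+1,~i_2+1)N(i_1,~i_2+1)$ have equal chord length. Either way the first assertion is complete.

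For the uniqueness assertion I would start from a discrete asymptotic net $x$ whose Gauss map $N$ is a discrete Chebyshev net and aim to show that $x$ itself satisfies \eqref{eq: discrete Chebyshev net}. The asymptotic (five-point coplanarity) condition gives, at every vertex, that each edge is orthogonal to the unit normals at both of its endpoints, so $\triangle_k x$ is parallel to the cross product of the two endpoint normals; this is precisely what allows the discrete Lelieuvre field $N^\mathrm{L} = \mu N$ of \eqref{eq: discrete Lelieuvre normal} to exist, with a positive scaling $\mu$ living on vertices, and it forces $N^\mathrm{L}$ to obey the discrete Moutard equation \eqref{eq: discrete Moutard}, equivalently $(N^\mathrm{L}_A+N^\mathrm{L}_C)\parallel(N^\mathrm{L}_B+N^\mathrm{L}_D)$ on each quadrilateral $ABCD$. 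Writing each opposite pair of edge lengths through $N^\mathrm{L}$ reduces the desired conclusion $\|AB\| = \|CD\|$, $\|BC\| = \|DA\|$ to the single statement that $\mu$ is constant along both diagonals of every quadrilateral, i.e. $\mu_A = \mu_C$ and $\mu_B = \mu_D$. The hypothesis that $N$ is Chebyshev contributes the other half: opposite arcs of each spherical quadrilateral are equal, which I expect to endow the Gauss image with a half-turn symmetry exchanging the two diagonal pairs; combining this symmetry with the Moutard relation should propagate $\mu_A = \mu_C$, $\mu_B = \mu_D$ from the initial coordinate polylines across the whole net. Running the Lelieuvre reconstruction in reverse then rebuilds $x$ as a discrete asymptotic Chebyshev net, so that every quadrilateral is the skew-parallelogram of Figure \ref{fig: ChebyShev quad}(a) and \eqref{eq: discrete Moutard asymptotic Chebyshev} holds.

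The main obstacle is exactly the control of the Lelieuvre scaling $\mu$, and it is structural: ``asymptotic'' is a condition on vertex stars whereas ``Chebyshev'' is a condition on single faces, so the reversal cannot be carried out naively quad by quad but must be routed through $N^\mathrm{L}$, which converts the vertex data into the face-based Moutard net. Concretely, I expect the crux to be proving that a spherical Chebyshev quadrilateral together with the Moutard condition $(\mu_A N_A+\mu_C N_C)\parallel(\mu_B N_B + \mu_D N_D)$ forces $\mu_A = \mu_C$ and $\mu_B = \mu_D$; this is the nontautological content, since the relations $\|AB\| = \mu_A\mu_B\sin\phi_{AB}$ hold automatically for any Lelieuvre field and by themselves say nothing about opposite edges. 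A clean resolution should follow from the half-turn symmetry of the spherical Chebyshev quadrilateral, but verifying that this symmetry is compatible with the Moutard propagation, and hence global rather than merely per-quad, is where the real work lies. Should the symmetry argument prove delicate, a viable fallback is a contrapositive estimate showing that unequal opposite edges in an asymptotic quadrilateral force unequal opposite arcs in its Gauss image, which would yield uniqueness without reconstructing $\mu$ explicitly.
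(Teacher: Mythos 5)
Your first paragraph is exactly the paper's argument: the two identities $\triangle_1\triangle_2 N \cdot (N(i_1+1,~i_2+1)-N(i_1,~i_2)) = 0$ and $\triangle_1\triangle_2 N \cdot (N(i_1,~i_2+1)-N(i_1+1,~i_2)) = 0$, obtained from $\|a\times(b+c)\|=\|a\times(b-c)\|$ and $\|b\times(a+c)\|=\|b\times(a-c)\|$, are via \eqref{eq: partial difference Chebyshev} precisely the discrete Chebyshev condition for $N$, and the paper supplies nothing beyond this computation before asserting the proposition. In particular the uniqueness clause is not argued in the paper at all, so your second paragraph goes beyond the source; its skeleton --- run the converse through the Lelieuvre field $N^{\mathrm{L}}=\mu N$ and the per-quadrilateral Moutard parallelism --- is the right one.

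The gap is that you leave your own crux open and overestimate its difficulty: no global propagation and no ``compatibility of the symmetry with the Moutard propagation'' is needed, because the step is purely local. Label the corners $A=(i_1,~i_2)$, $B=(i_1+1,~i_2)$, $C=(i_1+1,~i_2+1)$, $D=(i_1,~i_2+1)$. The Chebyshev hypothesis on $N$ says $N_A\cdot N_B=N_D\cdot N_C$ and $N_A\cdot N_D=N_B\cdot N_C$, whence $(N_A+N_C)\cdot(N_B-N_D)=0$ and $(N_A-N_C)\cdot(N_B+N_D)=0$; together with $(N_A+N_C)\perp(N_A-N_C)$ and $(N_B+N_D)\perp(N_B-N_D)$ this forces $N_A+N_C$ and $N_B+N_D$ to be parallel (both orthogonal to the plane spanned by $N_A-N_C$ and $N_B-N_D$, assumed independent) --- this is the same computation the paper performs for the V-hedron in Section \ref{section: V-hedra}. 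Hence $N_A+N_C$ spans the intersection line of $\mathrm{span}(N_A,~N_C)$ and $\mathrm{span}(N_B,~N_D)$. The Moutard relation puts $\mu_A N_A+\mu_C N_C$ on that same line, so $\mu_A N_A+\mu_C N_C=t(N_A+N_C)$, and linear independence of $N_A,~N_C$ gives $\mu_A=\mu_C$; likewise $\mu_B=\mu_D$. Then $\|\triangle_1 x(i_1,~i_2)\|=\mu_A\mu_B\sin\phi_{AB}=\mu_C\mu_D\sin\phi_{DC}=\|\triangle_1 x(i_1,~i_2+1)\|$ and similarly for the other pair, so $x$ is a discrete Chebyshev net and hence a K-hedron; your contrapositive fallback is unnecessary. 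The caveats worth stating instead are the non-degeneracy assumptions (the two normal planes distinct, $N_A-N_C$ and $N_B-N_D$ independent) and the tacit existence of a consistent positive Lelieuvre scaling $\mu$ on a general discrete asymptotic net, which the paper also assumes without proof.
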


\begin{figure}[t]
	\noindent \begin{centering}
		\includegraphics[width=1\linewidth]{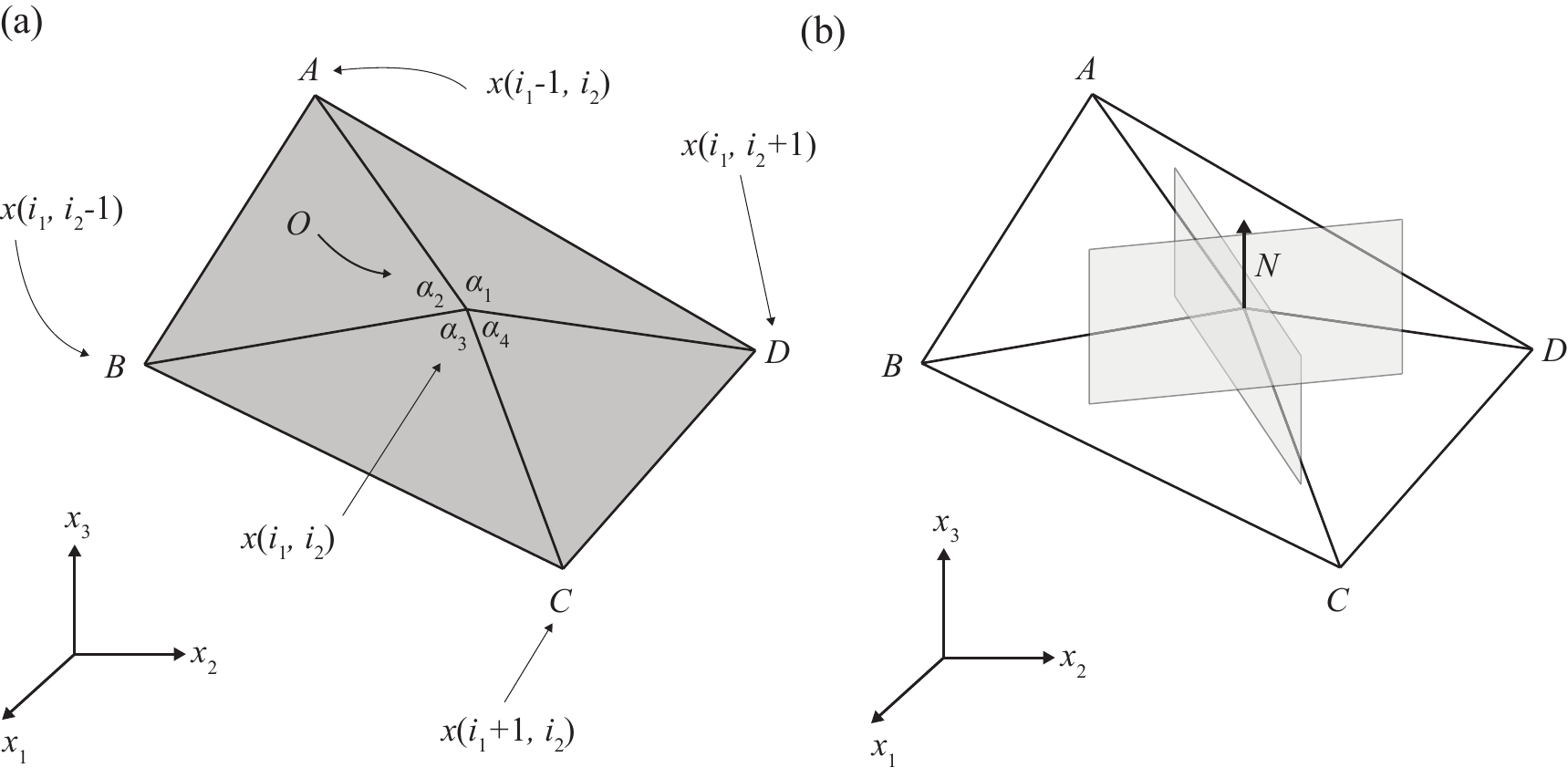}
		\par\end{centering}
	
	\caption{\label{fig: discrete geodesic}A figure illustrating the geometry of a discrete geodesic net, where opposite sector angles at each interior vertex are equal. One important geometrical feature is the coordinate curve normals are identical to the surface normal at every interior vertex.}
\end{figure}

A discrete surface $X: \mathbb{Z}^2 \rightarrow \mathbb{R}^3$ is called a \textbf{discrete geodesic net} if opposite sector angles at every vertex are equal:
\begin{equation*}
	\begin{dcases}
		S_1(i_1, ~i_2+1) = S_3(i_1+1, ~i_2) \\
		S_2(i_1, ~i_2) = S_4(i_1+1, ~i_2+1)
	\end{dcases} \textrm{~~~for~all~} i \in \mathbb{Z}^2
\end{equation*} 
which means $\alpha_1 = \alpha_3, ~\alpha_2 = \alpha_4$ in Figure \ref{fig: discrete geodesic}.

A discrete surface $X: \mathbb{Z}^2 \rightarrow \mathbb{R}^3$ is called a \textbf{discrete orthogonal geodesic net} if all four sector angles at every vertex are equal:
\begin{equation*}
	S_1(i_1, ~i_2+1) = S_2(i_1, ~i_2) = S_3(i_1+1, ~i_2) = S_4(i_1+1, ~i_2+1)
	\textrm{~~~for~all~} i \in \mathbb{Z}^2
\end{equation*}
which means $\alpha_1 = \alpha_2 = \alpha_3 = \alpha_4$ in Figure \ref{fig: discrete geodesic}.

A geodesic curve is `as straight as possible' and has `no lateral acceleration' on a surface. By requiring $\alpha_1 + \alpha_2 = \alpha_3 + \alpha_4$ and $\alpha_2 + \alpha_3 = \alpha_4 + \alpha_1$, the polylines $AOC$ and $BOD$ will divide the angular defect $\kappa_\mathrm{G} = 2 \pi - \alpha_1 - \alpha_2 - \alpha_3 - \alpha_4$ equally. This leads to the angle condition for a discrete geodesic net mentioned above.

Next we will calculate the normal vector defined on $x(i)$, let $a, ~b, ~c, ~d$ be the direction vector of $OA, ~OB, ~OC, ~OD$:
\begin{equation*}
	\begin{gathered}
		a = \dfrac{x(i_1, ~i_2) - x(i_1-1, ~i_2)}{\|x(i_1, ~i_2) - x(i_1-1, ~i_2)\|}, ~~b = \dfrac{x(i_1, ~i_2) - x(i_1, ~i_2-1)}{\|x(i_1, ~i_2) - x(i_1, ~i_2-1)\|} \\
		c = \dfrac{x(i_1, ~i_2) - x(i_1+1, ~i_2)}{\|x(i_1, ~i_2) - x(i_1+1, ~i_2)\|}, ~~d = \dfrac{x(i_1, ~i_2) - x(i_1, ~i_2+1)}{\|x(i_1, ~i_2) - x(i_1, ~i_2+1)\|} 
	\end{gathered}
\end{equation*}

Along the $i_1$ direction, the discrete Frenet-Serret frame is 
\begin{equation*}
	\boldsymbol{x}_\mathrm{t}^1 = -\dfrac{a-c}{\|a-c\|}, ~~ \boldsymbol{x}_\mathrm{n}^1 = -\dfrac{a+c}{\|a+c\|}, ~~ \boldsymbol{x}_\mathrm{b}^1 = \boldsymbol{x}_\mathrm{t}^1 \times \boldsymbol{x}_\mathrm{n}^1
\end{equation*}
Along the $i_2$ direction, the discrete Frenet-Serret frame is 
\begin{equation*}
	\boldsymbol{x}_\mathrm{t}^2 = -\dfrac{b-d}{\|b-d\|}, ~~ \boldsymbol{x}_\mathrm{n}^2 = -\dfrac{b+d}{\|b+d\|}, ~~ \boldsymbol{x}_\mathrm{b}^2 = \boldsymbol{x}_\mathrm{t}^2 \times \boldsymbol{x}_\mathrm{n}^2
\end{equation*}
The normal vector is:
\begin{equation*}
	N(i) = \dfrac{\boldsymbol{x}_\mathrm{t}^1 \times \boldsymbol{x}_\mathrm{t}^2}{\|\boldsymbol{x}_\mathrm{t}^1 \times \boldsymbol{x}_\mathrm{t}^2\|}
\end{equation*}
Since opposite sector angles are equal, $\boldsymbol{x}_\mathrm{n}^1 \cdot \boldsymbol{x}_\mathrm{t}^2 = 0$ and $\boldsymbol{x}_\mathrm{n}^2 \cdot \boldsymbol{x}_\mathrm{t}^1 = 0$. We could see that either $\boldsymbol{x}_\mathrm{n}^1$ or $\boldsymbol{x}_\mathrm{n}^2$ is perpendicular to both $\boldsymbol{x}_\mathrm{t}^1$ and $\boldsymbol{x}_\mathrm{t}^2$, hence
\begin{equation*}
	N(i) = \boldsymbol{x}_\mathrm{n}^1 = \boldsymbol{x}_\mathrm{n}^2
\end{equation*}
The above equality of normal vectors further shows that polylines $AOC$ and $BOD$ are discrete analogue of geodesic curves on a surface. 

For a discrete orthogonal geodesic net, we further have $\boldsymbol{x}_\mathrm{t}^1$ perpendicular to $\boldsymbol{x}_\mathrm{t}^2$, which geometrically explains that the coordinate curves are perpendicular at every interior vertex. The information of discrete geodesic net and discrete orthogonal geodesic net is from \cite{rabinovich_discrete_2018}.

A discrete surface $X: \mathbb{Z}^2 \rightarrow \mathbb{R}^3$ is called a \textbf{discrete conjugate net} if all its elementary quadrilaterals formed by $x(i_1, ~i_2), ~x(i_1+1, ~i_2), ~x(i_1+1, ~i_2+1), ~x(i_1, ~i_2+1)$ are planar for all $i \in \mathbb{Z}$. Here the normal vector $N(i)$ at each vertex is associated with the normal vector of the above planar quadrilateral and hence $\triangle_1\triangle_2x$ is perpendicular to $N$. Using the Christoffel symbol, the planarity condition for a discrete conjugate net is equivalent to:
\begin{equation} \label{eq: discrete conjugate net}
	\triangle_1\triangle_2 x = \Gamma_{12}^1 \triangle_1 x + \Gamma_{12}^2 \triangle_2 x, ~~ \Gamma_{12}^1, ~\Gamma_{12}^2: \mathbb{Z}^2 \rightarrow \mathbb{R}
\end{equation}
Here $\Gamma_{12}^1(i)$, $\Gamma_{12}^2(i)$ can be directly calculated as the coefficients of the above linear combination for the given discrete net $x$. The smooth analogue of $\Gamma_{12}^1(i)$, $\Gamma_{12}^2(i)$ is the corresponding Christoffel symbol $\Gamma_{12}^1, ~\Gamma_{12}^2$ for the corresponding conjugate net.

\begin{figure}[tbph]
	\noindent \begin{centering}
		\includegraphics[width=0.7\linewidth]{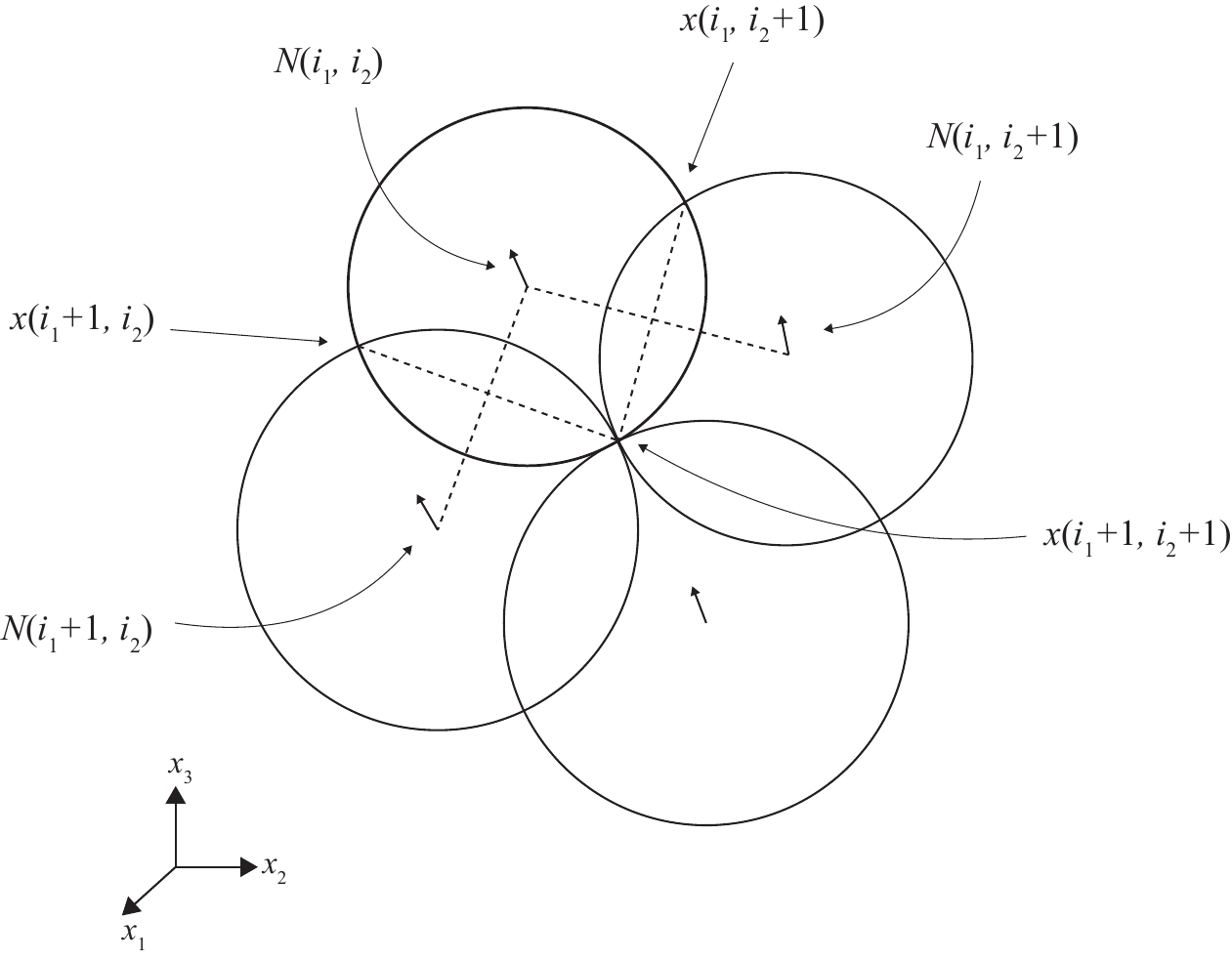}
		\par\end{centering}
	
	\caption{\label{fig: circular net}Illustration of the geometry of a circular net, where $x(i_1, ~i_2), ~x(i_1+1, ~i_2), ~x(i_1+1, ~i_2+1), ~x(i_1, ~i_2+1)$ are concircular for all $i \in \mathbb{Z}^2$. Here we draw a special case where pairs of circles located diagonally from each other are tangent. It happens, for example, on a discrete isothermic net whose cross ratio is limited to -1.}
	
	\noindent \begin{centering}
		\includegraphics[width=1\linewidth]{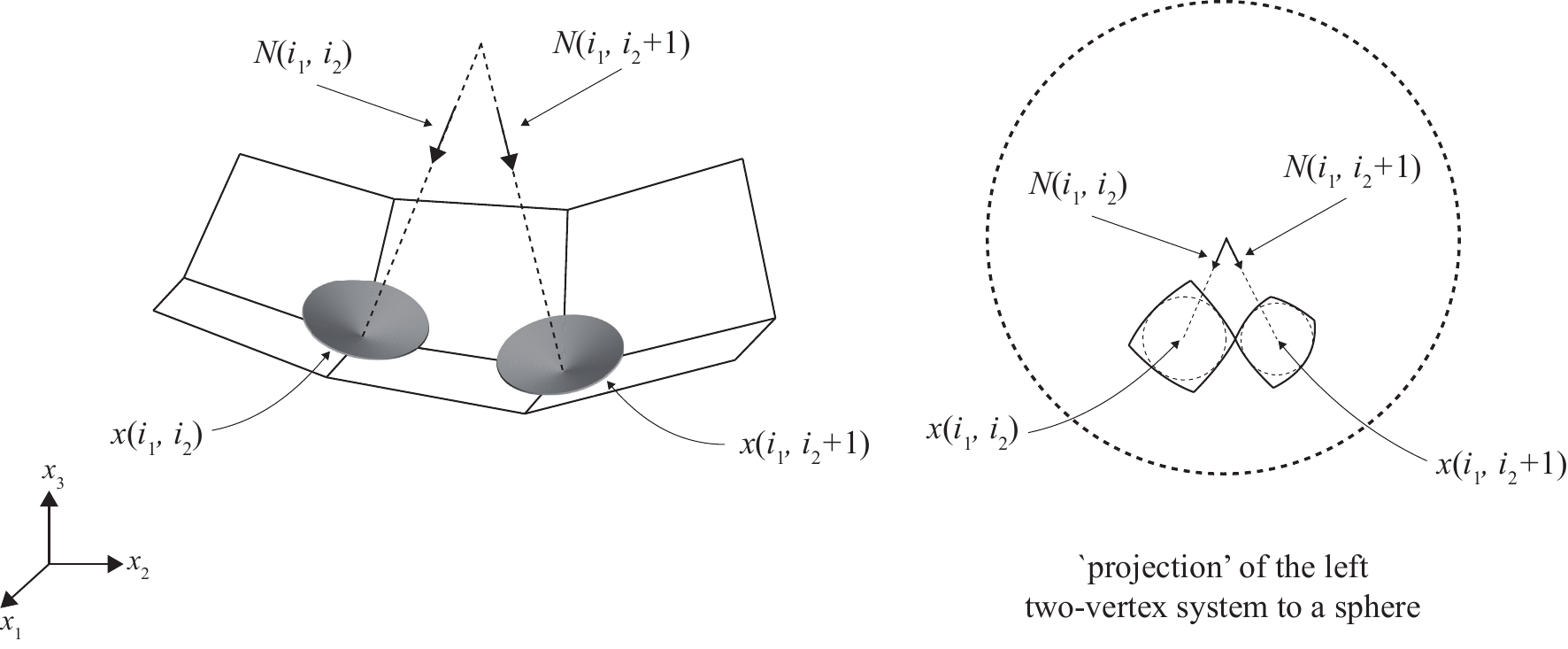}
		\par\end{centering}
	
	\caption{\label{fig: conical net}Illustration of the geometry of a conical net, where the four planar quadrilaterals incident to a vertex are tangent to a common cone whose apex is the vertex. For every pair of adjacent cones, there exists a sphere that touches both at the apexes. The centre of this sphere is the intersection point of the cones' axes.}
\end{figure}

A discrete conjugate net $X: \mathbb{Z}^2 \rightarrow \mathbb{R}^3$ is called a \textbf{circular net} if all its elementary quadrilaterals formed by $x(i_1, ~i_2), ~x(i_1+1, ~i_2), ~x(i_1+1, ~i_2+1), ~x(i_1, ~i_2+1)$ have circumscribed circles, in other words, they are concircular. It is one of the common discrete analogue of the curvature line net. From Figure \ref{fig: circular net}, $\triangle_2 x (i_1+1, ~i_2)$ is perpendicular to both $N(i_1, ~i_2)$ and $N(i_1+1, ~i_2)$, hence perpendicular to $\triangle_1 N(i_1, ~i_2)$; $\triangle_1 x (i_1, ~i_2+1)$ is perpendicular to both $N(i_1, ~i_2)$ and $N(i_1, ~i_2+1)$, hence perpendicular to $\triangle_2 N(i_1, ~i_2)$. This relation forms the discrete analogue of a curvature line net, \eqref{eq: derivative curvature line}.

A discrete conjugate net $X: \mathbb{Z}^2 \rightarrow \mathbb{R}^3$ is called a \textbf{conical net} if the four planar quadrilaterals incident to a vertex are tangent to a common cone whose apex is the vertex. The discrete normal vector $N(i)$ assigned to each vertex $i$ is along the axis of the cone, see Figure \ref{fig: conical net}(a). A conical net is another common discrete analogue of the curvature line net. We can interpret it by drawing the corresponding spherical 4-bar linkages of the degree-4 two-vertex system on a sphere. The cones become inscribed circles of the two spherical quadrilaterals, whose axes intersect at the centre of the sphere. We could see that $\triangle_2 x (i_1, ~i_2)$ is parallel to $\triangle_2 N(i_1, ~i_2)$, similarly, $\triangle_1 x (i_1, ~i_2)$ is parallel to $\triangle_1 N(i_1, ~i_2)$. This relation forms the discrete analogue of a curvature line net, \eqref{eq: derivative curvature line}.

\begin{prop} \label{prop: conical net}
	Properties of a conical net:
	\begin{enumerate} [label={[\arabic*]}]
		\item \citep{wang_angle_2007} The sum of opposite sector angles of each vertex are equal. 
		\item \citep[Section 3.4]{bobenko_discrete_2008} A discrete conjugate net $X$ is a conical net if and only if the Gauss map is a circular net. A discrete conjugate net $X$ is a circular net if and only if the Gauss map is a conical net.
	\end{enumerate}	
\end{prop}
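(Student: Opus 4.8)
The plan is to reduce every statement to elementary spherical geometry on a small sphere $S$ centred at the relevant vertex, so that the two global claims follow from two classical and mutually dual facts: the spherical Pitot theorem, and the characterisation of tangent planes to a cone of revolution by their normals. For part [1], fix an interior vertex $v = x(i)$ and intersect the four incident planar faces with $S$. Each face meets $S$ in a great-circle arc, and the four arcs form a spherical quadrilateral whose successive vertices are the four edge directions and whose four side lengths are exactly the sector angles $S^\mathrm{a}(i),\,S^\mathrm{b}(i),\,S^\mathrm{c}(i),\,S^\mathrm{d}(i)$. The cone of revolution with apex $v$ meets $S$ in a small circle $c$ centred at the axis direction $N(i)$, and tangency of a face-plane to the cone is equivalent to tangency of the corresponding arc to $c$; thus $c$ is an inscribed circle of the spherical quadrilateral. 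I would then run the spherical Pitot argument: since for a tangential polygon the incircle centre lies on every angle bisector, reflection across the great circle through a vertex and $N(i)$ fixes $c$ and swaps the two adjacent sides, so the two tangent arcs from that vertex are congruent. Writing $t_1,\dots,t_4$ for these tangent lengths, each side has length $t_k+t_{k+1}$ and both sums of opposite sides equal $t_1+t_2+t_3+t_4$, giving $S^\mathrm{a}+S^\mathrm{c}=S^\mathrm{b}+S^\mathrm{d}$.

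For the first equivalence in part [2], I would work with the face-normal Gauss map $\nu$ that assigns to each planar quadrilateral its unit normal; its elementary quadrilaterals are the four faces surrounding a vertex $v$, with vertices the four corresponding face normals. The key local lemma is that four planes through $v$ are tangent to a common cone of revolution if and only if their unit normals make a common angle with a fixed axis, i.e. lie on a common circle of $S$. Reading this lemma in both directions at each vertex shows directly that $X$ is conical precisely when every elementary quadrilateral of $\nu$ is concircular, which is exactly the statement that $\nu$ is a circular net. This direction needs no polarity and is essentially a restatement of the cone lemma.

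For the second equivalence, the roles of vertices and faces are interchanged and the argument becomes the polar dual of the first. Here I would exploit that, at each vertex, the spherical quadrilateral of edge directions and the spherical quadrilateral of face normals are polar to one another (as noted around Figure~\ref{fig: discrete curvature}(d)), and that polarity on $S$ exchanges inscribed circles with circumscribed circles, and side-tangency with vertex-concyclicity. A circular net has, on each face, four concyclic corners; transporting this condition through the reciprocity between $X$ and $\nu$ should identify it with the tangential, hence conical, condition for $\nu$ at the corresponding dual vertex.

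I expect this reciprocity step to be the main obstacle. One must verify that the combinatorial dualisation --- faces of $X$ becoming vertices of $\nu$, and vertex-stars becoming face-stars --- is compatible with the metric polarity on each local sphere, and in particular that the iterated Gauss map $\nu\circ\nu$ closes up correctly, so that applying the first equivalence to $\nu$ returns the circularity of $X$ rather than some merely parallel net. Once this compatibility is pinned down, the two equivalences are genuinely inverse to one another and part [2] follows; the remaining computations (verifying the cone lemma and the equal-tangent-arc property) are routine and can be carried out in the spherical pictures already drawn in Figures~\ref{fig: circular net} and \ref{fig: conical net}.
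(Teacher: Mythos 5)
Your treatment of part [1] and of the first equivalence in part [2] is correct and is essentially the argument the paper itself sketches: the paper justifies [1] by observing that a spherical quadrilateral with an inscribed circle has equal sums of opposite sides (your tangent-length Pitot argument is just the missing detail), and it justifies the first equivalence by noting that tangency of the four face-planes to a common cone is the same as the four unit normals making a common angle with the axis, hence being concircular on $S^2$. Your cone lemma is exactly that observation, read in both directions.

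The second equivalence is where your proposal has a genuine gap, and you have correctly located it yourself: circularity of $X$ is a condition on the \emph{positions} of the four corners of each face, whereas the polarity you invoke (Figure~\ref{fig: discrete curvature}(d)) is a statement about \emph{directions} in the star of a single vertex, so the per-face concyclicity does not transport through that vertex-wise polarity without an intermediate step. The worry about $\nu\circ\nu$ closing up is, however, a red herring; you do not need to iterate the Gauss map. The clean way to finish is purely local at each face $f$: since $f$ is planar, concyclicity of its four corners is equivalent to the interior angles of $f$ satisfying the cyclic condition (opposite angles summing to $\pi$). On the other side, the sector angle of the normal net $\nu$ at the vertex $\nu(f)$ between the arcs towards $\nu(f')$ and $\nu(f'')$ (neighbours of $f$ across edges $e'$, $e''$) equals $\pi$ minus the interior angle of $f$ between $e'$ and $e''$, because each arc $\nu(f)\nu(f')$ lies in the plane orthogonal to the shared edge $e'$. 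Hence equality of the sums of opposite sector angles of $\nu$ at $\nu(f)$ --- which by part [1] and its converse is the conical condition for $\nu$ --- is equivalent to equality of the sums of opposite interior angles of the planar quadrilateral $f$, i.e.\ to each sum being $\pi$, i.e.\ to $f$ being cyclic. With that angle computation in place your two equivalences are indeed dual to one another and part [2] is complete; note that the paper itself does not prove this half but defers to \citet[Section 3.4]{bobenko_discrete_2008}.
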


Regarding [1], intuitively, as shown in Figure \ref{fig: conical net}, the sum of the length of opposite spherical arcs are equal if the spherical quadrilateral admits an inscribed circle. Regarding [2], at every vertex, the angles between all four normal vectors and the axis of the cone are equal, therefore the tips of these normal vectors are concircular, and the centre of this circle is on the axis of the cone.

\section{Initial condition for discrete nets} \label{section: difference equation}

The various discrete nets introduced in Section \ref{section: discrete net} are solutions of parametric partial difference equations. In this section we will focus on the well-posedness for such a discrete system. Similarly, we hope a given initial condition leads to a unique solution, which smoothly relies on the initial value and parameter.

In parallel with Section \ref{section: differential equation}, we will focus on a first-order partial difference system for $x(i)$:
\begin{equation*}
	\triangle x = f(x; ~b) ~~\Leftrightarrow~~ \triangle_{k} x_j = f_{jk}(x; ~b)
\end{equation*}
where $i \in \mathbb{Z}^m$; $x \in \mathbb{R}^n ~(m, ~n \in \mathbb{Z}_+)$; $f \in \mathbb{R}^{n \times m}$ is a matrix of smooth functions, $b \in \mathbb{R}^p ~(p \in \mathbb{Z}_+)$ are the $p$ parameters for the system. Similar to Section \ref{section: differential equation}, we further require $f$ and all the partial derivatives of $f$ are bounded and possess a global Lipschitz constant. Consequently no blow-ups (value goes to infinity) are possible and hence the well-posedness can be continued to the boundary of $I^m$. If there are higher order partial differences, we could try transferring the system to first-order by adding the number of variables. For example, when $\triangle_1\triangle_2 x = x, ~x \in \mathbb{R}$, we could set $y(i) = \triangle_1 x$ and $z(i) = \triangle_2 x$, so that $(x, ~y, ~z)$ forms an equivalent first-order system with compatibility condition $\triangle_2 y = \triangle_1 z$.

%

The definitions for the evolution direction, stationary direction, initial value and well-posedness for a first-order partial difference system are verbatim repetition for those defined for a partial differential system in Section \ref{section: differential equation}. For each discrete net introduced in Section \ref{section: discrete net}, we will introduce the construction method leading to a unique configuration from the initial condition. It could be directly examined that solution smoothly relies on the initial value and parameter. 

\subsection*{Discrete Chebyshev net}

\begin{description}
	\item [Initial value] Two discrete coordinate curves $x(i_1, ~0)$ and $x(0, ~i_2)$ intersecting at $x(0, ~0)$. 
	\item [Parameter] The ratio $\lambda$ in \eqref{eq: partial difference Chebyshev} for all quadrilaterals $(i_1, ~i_2)$.
	\item [Step a] In the quadrant $\mathbb{Z}_+^2$, recursively calculate $x(i_1+1, ~i_2+1)$ from $x(i_1, ~i_2)$, $x(i_1+1, ~i_2)$, $x(i_1, ~i_2+1)$ using \eqref{eq: partial difference Chebyshev}. Geometrically in Figure \ref{fig: ChebyShev quad}(a), when points $A, ~B, ~C$ are fixed, the shape of the Chebyshev quadrilateral can be controlled by the length of $BB'$, or equivalently $\lambda$. 
	\item [Step b] Use the same method described in Step a to calculate the other three quadrants to obtain the entire mesh.
	\item [Regularity condition] Every step returns a non-degenerated and bounded result.
\end{description}	

\subsection*{Discrete orthogonal Chebyshev net}

\begin{description}
	\item [Initial value] Two discrete coordinate curves $x(i_1, ~0)$ and $x(0, ~i_2)$ intersecting at $x(0, ~0)$ where $\|\triangle_1 x (i_1, ~0) \| = \|\triangle_1 x (i_1+1, ~0) \| = \|\triangle_2 x (0, ~i_2)\| = \|\triangle_2 x (0, ~i_2+1) \|$ for all $i_1, ~i_2 \in \mathbb{Z}$. 
	\item [Parameter] The ratio $\lambda$ in \eqref{eq: partial difference Chebyshev} for all quadrilaterals $(i_1, ~i_2)$.
	\item [Step a] In the quadrant $\mathbb{Z}_+^2$, recursively calculate $x(i_1+1, ~i_2+1)$ from $x(i_1, ~i_2)$, $x(i_1+1, ~i_2)$, $x(i_1, ~i_2+1)$ using \eqref{eq: partial difference Chebyshev}.  
	\item [Step b] Use the same method described in Step a to calculate the other three quadrants to obtain the entire mesh.
	\item [Regularity condition] Every step returns a non-degenerated and bounded result.
\end{description}	

\subsection*{Discrete asymptotic net}

The first construction is:
\begin{description}
	\item [Initial value 1] Two discrete coordinate curves $x(i_1, ~0)$ and $x(0, ~i_2)$ intersecting at $x(0, ~0)$. The five points $x(0, ~0), ~x(0, ~-1)$, $x(1, ~0), ~x(0, ~1), ~x(-1, ~0)$ are coplanar. The three points $x(i_1-1, ~0), ~x(i_1, ~0), ~x(i_1+1, ~0)$; $x(0, ~i_2-1), ~x(0, ~i_2), ~x(0, ~i_2+1)$ are not collinear. 
	\item [Parameter 1] Cross ratio $q$ for all quadrilaterals $(i_1, ~i_2)$, will be defined below.
	\item [Step 1a] In the quadrant $\mathbb{Z}_+^2$, we say $P(i_1, ~i_2)$ is the plane incident to $x(i_1, ~i_2), ~x(i_1, ~i_2-1), ~x(i_1+1, ~i_2), ~x(i_1, ~i_2+1), ~x(i_1-1, ~i_2)$. We can calculate $P(1, ~0)$ and $P(0, ~1)$ from the initial value.
	\item [Step 1b] $x(1, ~1)$ can be chosen from the intersection of two planes $P(1, ~0)$ and $P(0, ~1)$, which passes through $x(0, ~0)$. Usually we use the \textit{cross-ratio} $q$ defined on each quadrilateral $(i_1, ~i_2)$ to control the position of $x(i_1+1, ~i_2+1)$:
	\begin{equation} \label{eq: cross ratio}
		q(i_1, ~i_2) = \dfrac{\|x(i_1+1, ~i_2+1) - x(i_1, ~i_2)\| \|x(i_1, ~i_2+1) - x(i_1, ~i_2)\|}{\|x(i_1+1, ~i_2+1) - x(i_1+1, ~i_2)\| \|x(i_1, ~i_2+1) - x(i_1+1, ~i_2)\|}
	\end{equation}
	\item [Step 1c] Calculate $P(1, ~1)$ from $x(1, ~0), ~x(0, ~1), ~x(1, ~1)$.
	\item [Step 1d] Recursively do Steps 1a, 1b, 1c to obtain $x$ over the quadrant $\mathbb{Z}_+^2$.
	\item [Step 1e] Use the same method described in Step 1d to calculate the other three quadrants to obtain the entire mesh.
	\item [Regularity condition] Every step returns a non-degenerated and bounded result.
\end{description}	

The second construction is from the discrete Lelieuvre normal field $N^\mathrm{L}$.
\begin{description}
	\item [Initial value 2] $N^\mathrm{L}$ along the two discrete coordinate curves $N^\mathrm{L}(i_1, ~0)$ and $N^\mathrm{L}(0, ~i_2)$. The position of $x(0, ~0)$. 
	\item [Parameter 2] The ratio $\lambda$ in the discrete Moutard Equation \eqref{eq: discrete Moutard} on all the vertices $(i_1, ~i_2)$.
	\item [Step 2a] In the quadrant $\mathbb{Z}_+^2$, recursively calculate $N(i_1+1, ~i_2+1)$ from $N(i_1, ~i_2)$, $N(i_1+1, ~i_2)$, $N(i_1, ~i_2+1)$ using \eqref{eq: discrete Moutard}.
	\item [Step 2b] In the quadrant $\mathbb{Z}_+^2$, use the discrete Lelieuvre normal field, \eqref{eq: discrete Lelieuvre normal}, to calculate all the $\triangle_1 x$ and $\triangle_2 x$, further obtain all the position $x$ based on the initial position $x(0, ~0)$. 
	\item [Step 2c] Use the same method described in Step 2a and Step 2b to calculate the other three quadrants to obtain the entire mesh.
	\item [Regularity condition] Every step returns a non-degenerated and bounded result.
\end{description}	

\subsection*{Discrete asymptotic Chebyshev net}

\begin{description}
	\item [Initial value] $N^\mathrm{L}$ along the two discrete coordinate curves $N^\mathrm{L}(i_1, ~0)$ and $N^\mathrm{L}(0, ~i_2)$. The position of $x(0, ~0)$. 
	\item [Step a] In the quadrant $\mathbb{Z}_+^2$, recursively calculate $N(i_1+1, ~i_2+1)$ from $N(i_1, ~i_2)$, $N(i_1+1, ~i_2)$, $N(i_1, ~i_2+1)$ using \eqref{eq: discrete Moutard asymptotic Chebyshev}.
	\item [Step b] In the quadrant $\mathbb{Z}_+^2$, use the discrete Lelieuvre normal field, \eqref{eq: discrete Lelieuvre normal}, to calculate all the $\triangle_1 x$ and $\triangle_2 x$, further obtain all the position $x$ based on the initial position $x(0, ~0)$. 
	\item [Step c] Use the same method described in Step a and Step b to calculate the other three quadrants to obtain the entire mesh.
	\item [Regularity condition] Every step returns a non-degenerated and bounded result.
\end{description}

\subsection*{Discrete geodesic/orthogonal geodesic net}

The constraint for a discrete geodesic/orthogonal geodesic net is not first-order. From our examination, it is not possible to construct a discrete geodesic/orthogonal geodesic net in a point-by-point procedure as the previous examples. \cite{rabinovich_discrete_2018} generates a discrete geodesic/orthogonal geodesic net from introducing an (global) optimization problem, where the variables are vertex coordinates of the entire mesh, subject to the sector angle constraints.

\subsection*{Discrete conjugate net}

\begin{description}
	\item [Initial value] Two discrete coordinate curves $x(i_1, ~0)$ and $x(0, ~i_2)$ intersecting at $x(0, ~0)$.  
	\item [Parameter] Discrete Christoffel symbol $\Gamma_{12}^1, ~\Gamma_{12}^2$ in \eqref{eq: discrete conjugate net} for all quadrilaterals $(i_1, ~i_2)$.
	\item [Step a] In the quadrant $\mathbb{Z}_+^2$, recursively calculate $x(i_1+1, ~i_2+1)$ from $x(i_1, ~i_2)$, $x(i_1+1, ~i_2)$, $x(i_1, ~i_2+1)$ using \eqref{eq: discrete conjugate net}.
	\item [Step b] Use the same method described in Step a to calculate the other three quadrants to obtain the entire mesh.
	\item [Regularity condition] Every step returns a non-degenerated and bounded result.
\end{description}		

\subsection*{Circular net}

\begin{description}
	\item [Initial value] Two discrete coordinate curves $x(i_1, ~0)$ and $x(0, ~i_2)$ intersecting at $x(0, ~0)$.  
	\item [Parameter] Cross ratio $q$ in \eqref{eq: cross ratio} for all quadrilaterals $(i_1, ~i_2)$ to control the position of $x(i_1+1, ~i_2+1)$.
	\item [Step a] In the quadrant $\mathbb{Z}_+^2$, recursively calculate $x(i_1+1, ~i_2+1)$ on the circle determined by $x(i_1, ~i_2)$, $x(i_1+1, ~i_2)$, $x(i_1, ~i_2+1)$ using \eqref{eq: cross ratio}.
	\item [Step b] Use the same method described in Step a to calculate the other three quadrants to obtain the entire mesh.
	\item [Regularity condition] Every step returns a non-degenerated and bounded result.
\end{description}	

\subsection*{Conical net}

From Proposition \ref{prop: conical net}, a conical net can be uniquely constructed from a circular Gauss map.

\begin{description}
	\item [Initial value] The normal vectors $N(i_1, ~0)$ and $N(0, ~i_2)$ on the two coordinate axes and the position of the planes where the elementary quadrilaterals on the coordinate axes $(i_1, ~0)$ and $(0, ~i_2)$ locate.   
	\item [Parameter] Cross ratio $q$ for all the spherical quadrilaterals of the Gauss map. 
	\item [Step a] In the quadrant $\mathbb{Z}_+^2$, recursively calculate $N(i_1+1, ~i_2+1)$ on the circle determined by $N(i_1, ~i_2)$, $N(i_1+1, ~i_2)$, $N(i_1, ~i_2+1)$ using the cross ratio.
	\item [Step b] Use the same method described in Step a to calculate the other three quadrants to obtain the entire Gauss map.
	\item [Step c] In the quadrant $\mathbb{Z}_+^2$,  recursively calculate the plane where the elementary quadrilateral $(i_1+1, ~i_2+1)$ locate from the position of planes $(i_1, ~i_2)$, $(i_1+1, ~i_2)$, $(i_1, ~i_2+1)$ -- the plane $(i_1+1, ~i_2+1)$ is normal to $N(i_1+1, ~i_2+1)$ and passes through the common intersection $x(i_1+1, ~i_2+1)$ determined by the position of planes $(i_1, ~i_2)$, $(i_1+1, ~i_2)$, $(i_1, ~i_2+1)$.
	\item [Step d] Use the same method described in Step c to calculate the other three quadrants to obtain the entire mesh.
	\item [Regularity condition] Every step returns a non-degenerated and bounded result.
\end{description}		 	 

\section{Convergence} \label{section: ddg convergence}

We have introduced various initial conditions for well-posed solutions of smooth nets in Section \ref{section: differential equation} and discrete nets in Section \ref{section: difference equation}. One natural question is, if the initial conditions and parameters for a discrete net converge to that for a smooth net, will the solution converge? Further, would the geometrical quantities -- such as the distance/area, normal vector field, mean curvature and Gaussian curvature -- also converge? There is still plenty of unexplored space for this question, and some results might be counter-intuitive.  

Let us start with a series of discrete curves $X^\epsilon: I \cap (\epsilon\mathbb{Z}) \rightarrow \mathbb{R}^3, ~\epsilon > 0$ where the number of discrete points in the interval $I$ is controlled by $\epsilon$. When setting $\epsilon = 0$,  $X^0: I \rightarrow \mathbb{R}^3$ is a smooth curve, and all the ratio between discrete operators becomes the corresponding differential operators. 

\begin{defn} \label{defn: curve convergence}
	(Curve convergence in distance) A series of discrete curves $X^\epsilon: I \cap (\epsilon\mathbb{Z}) \rightarrow x^\epsilon \in \mathbb{R}^3$ (uniformly) converges to a smooth curve $X^0: I \rightarrow x^0 \in \mathbb{R}^3$ if for any $\mathrm{error} > 0$, there exists a grid size $\mathrm{gsize} > 0$ such that for all $0 < \epsilon < \mathrm{gsize}$:
	\begin{equation*}
		\sup_{\epsilon i \in I \cap (\epsilon\mathbb{Z})} \| x^\epsilon (\epsilon i)  - x^0(\epsilon i) \| < \mathrm{error}
	\end{equation*}
	That is to say we expect the error uniformly goes to zero as the grid size goes to zero. 
	
	The \textit{convergence rate} of $X^\epsilon$ is $O(f(\epsilon))$ if $\mathrm{error} = \mathrm{Const} \cdot f(\epsilon)$, here $\mathrm{Const}$ is a constant irrelevant to $\epsilon$ when $\mathrm{error} \rightarrow 0$.  
\end{defn}

This `uniform convergence' definition is used in \cite[Section 5.1]{bobenko_discrete_2008}. Regarding the convergence rate, for example, $O(\epsilon)$ means we need to halve the grid size to halve the error in distance when the error is near zero; $O(\epsilon^2)$ means we have a better convergence rate, so that halve the grid size will quarter the error in distance when the error is near zero. For short, we will omit `uniform convergence in distance' and simply call it `convergence in distance'. 

Similarly we provide the definition for surface convergence below:
\begin{defn} \label{defn: surface convergence}
	(Surface convergence in distance) A series of discrete surfaces $X^\epsilon: \epsilon i = (\epsilon i_1, ~\epsilon i_2) \in I^2 \cap (\epsilon \mathbb{Z})^2 \rightarrow x^\epsilon \in \mathbb{R}^3$ uniformly converges to a smooth surface $X^0: u = (u_1, ~u_2) \in I^2 \rightarrow x^0 \in \mathbb{R}^3$ if for any $\mathrm{error} > 0$, there exists a grid size $\mathrm{gsize} > 0$ such that for all $0 < \epsilon < \mathrm{gsize}$:
	\begin{equation*}
		\sup_{\epsilon i \in I^2 \cap (\epsilon \mathbb{Z})^2} \| x^\epsilon \left(\epsilon i\right) - x^0(\epsilon i) \| < \mathrm{error}
	\end{equation*}
	Similarly we expect the error uniformly goes to zero as the grid size goes to zero.
\end{defn}

\begin{thm} \label{thm: first-order convergence}
	\citep{matthes_discrete_2004, bobenko_discrete_2008} The solution of a series of first-order hyperbolic partial difference system (refined over $\epsilon$) converges to the solution of the first-order hyperbolic partial differential system over $I^2$ (globally) upon the convergence to initial condition and parameter. 
\end{thm}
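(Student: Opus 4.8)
The plan is to run a standard consistency--stability--convergence argument in which stability is supplied by a two-dimensional discrete Gronwall inequality. I write the smooth solution as $x^0(u)$, which exists, is unique, and---by the regularity condition---is analytic with uniformly bounded derivatives over $I^2$; and the discrete solution as $x^\epsilon$, which exists and is unique via the point-by-point marching constructions of Section~\ref{section: difference equation}. The quantity to control is the nodal error $e^\epsilon(\epsilon i) = x^\epsilon(\epsilon i) - x^0(\epsilon i)$, and the goal is to show $\sup_{\epsilon i \in I^2 \cap (\epsilon\mathbb{Z})^2} \|e^\epsilon(\epsilon i)\| \to 0$, which is precisely Definition~\ref{defn: surface convergence}.

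First I would establish consistency. Sampling $x^0$ on the refined grid and Taylor-expanding in $\epsilon$ gives $x^0(\epsilon i + \epsilon \mathbf{e}_k) - x^0(\epsilon i) = \epsilon\, f_k(x^0(\epsilon i);\, b^0) + \tau^\epsilon_k(\epsilon i)$ with a local truncation error $\|\tau^\epsilon_k\| \le C_2\, \epsilon^2$, where $C_2$ bounds the second derivatives of $x^0$ and is uniform over $I^2$ by analyticity. Thus the exact solution satisfies the scaled scheme $\triangle_k x = \epsilon\, f_k(x;\, b)$ (the natural refinement that recovers $\partial x/\partial t_k = f_k$ as $\epsilon \to 0$) up to $O(\epsilon^2)$ per step, whereas $x^\epsilon$ satisfies it exactly with parameter $b^\epsilon$. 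Second I would run the stability march. Subtracting the two schemes and invoking the global Lipschitz constants $L$ of $f$ in $x$ and $L'$ in $b$ yields the one-step bound $\|e^\epsilon(\epsilon i + \epsilon \mathbf{e}_k)\| \le (1 + \epsilon L)\|e^\epsilon(\epsilon i)\| + \epsilon L' |b^\epsilon - b^0| + C_2\, \epsilon^2$. Any node of $I^2$ is reached from the coordinate hyperplanes carrying the initial data by a staircase path of at most $N = O(1/\epsilon)$ lattice steps, so iterating the one-step bound and summing the resulting geometric series gives $\|e^\epsilon\| \le (1+\epsilon L)^N \big( \|e^\epsilon\|_{\mathrm{init}} + L' |b^\epsilon - b^0| + C_2\, \epsilon \big)$.

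Since $N\epsilon \le 2$ on the unit cube, the amplification factor obeys $(1+\epsilon L)^N \le e^{LN\epsilon} \le e^{2L}$, so this is a discrete Gronwall estimate with an $\epsilon$-independent constant, valid globally over the whole cube precisely because the hypotheses (bounded $f$ with a global Lipschitz constant) forbid blow-up. Letting $\epsilon \to 0$, the assumed convergence of the initial data ($\|e^\epsilon\|_{\mathrm{init}} \to 0$) and of the parameter ($|b^\epsilon - b^0| \to 0$), together with the $C_2\,\epsilon$ term, drives the right-hand side to zero; this proves convergence and, as a by-product, exhibits an $O(\epsilon)$ rate whenever the data themselves converge at least that fast.

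The main obstacle I anticipate is the genuinely two-dimensional nature of the march: unlike an ODE-type scheme, a node can be reached along distinct lattice paths, so I must check that the error accumulation is path-independent and that the staircase length is simultaneously $O(1/\epsilon)$ in both coordinates. This is where the hyperbolic structure (first order, with designated evolution directions) is essential. The compatibility condition that renders the discrete scheme well-defined---for instance $\triangle_2 y = \triangle_1 z$ in the first-order reduction of a second-order net---must be propagated \emph{exactly} by $x^\epsilon$ and \emph{approximately}, to $O(\epsilon^2)$, by the sampled $x^0$, so that the Gronwall march does not accrue an extra, uncontrolled consistency defect across the two directions. Verifying that the reduction of the higher-order nets of Section~\ref{section: difference equation} to first order preserves this discrete compatibility, and that its residual is likewise $O(\epsilon^2)$, is the step demanding the most care; the remaining estimates are routine Taylor expansion and summation.
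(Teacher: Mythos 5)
The paper does not prove this theorem; it is imported verbatim from the cited sources, so there is no internal proof to compare against. Your consistency--stability--convergence argument with a two-dimensional discrete Gronwall estimate is exactly the strategy of those sources (Matthes 2004; Bobenko--Suris 2008, Ch.~5), and it is sound: the global Lipschitz and boundedness hypotheses on $f$ give the $\epsilon$-independent amplification constant, and the $O(\epsilon^2)$ truncation error over $O(1/\epsilon)$ steps yields the $O(\epsilon)$ rate. The one obstacle you flag --- path-independence of the march --- is not actually an extra burden on the error estimate: well-posedness of the \emph{discrete} hyperbolic system (part of the hypothesis, enforced by the discrete compatibility condition such as $\triangle_2 y = \triangle_1 z$) already makes $x^\epsilon$ single-valued, so you only need the Gronwall bound along one canonical staircase to each node, while the sampled smooth solution satisfies the discrete compatibility to $O(\epsilon^2)$ by Clairaut's theorem and Taylor expansion, which is absorbed into the truncation term you already carry.
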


\begin{figure}[t]
	\noindent \begin{centering}
		\includegraphics[width=1\linewidth]{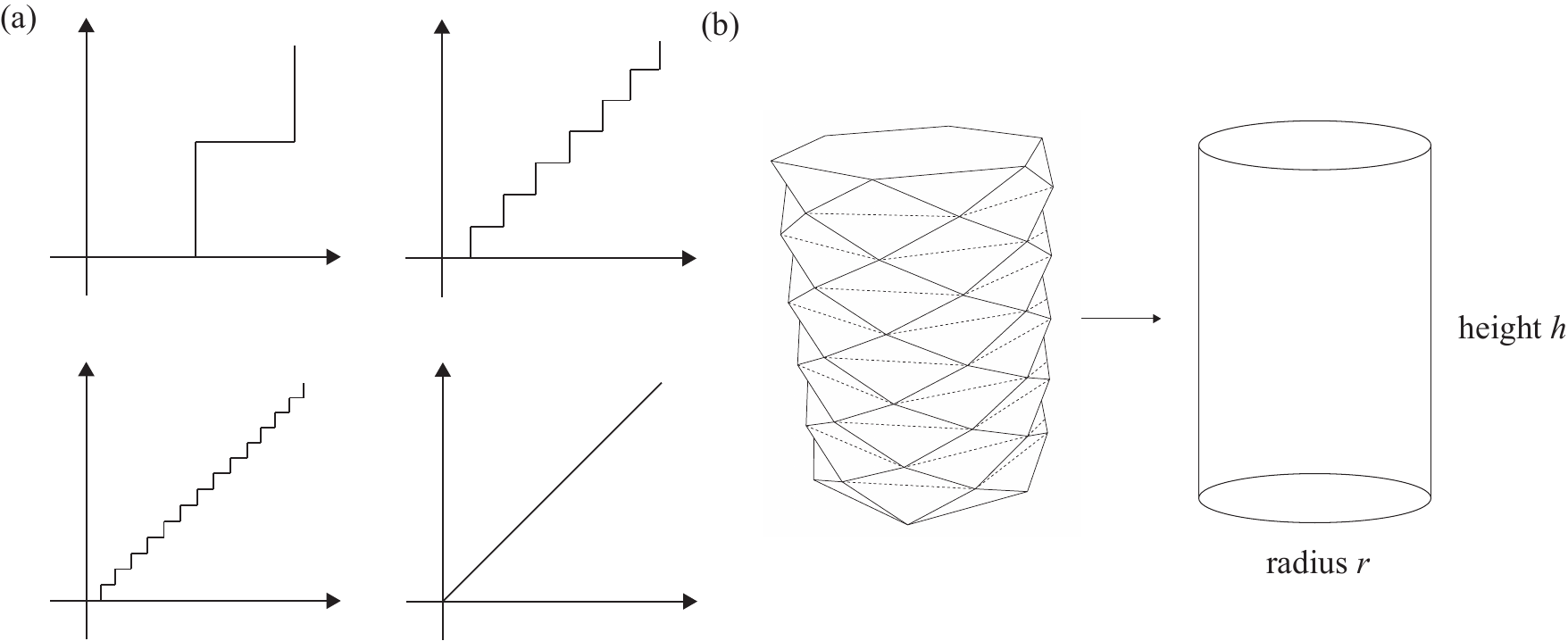}
		\par\end{centering}
	
	\caption{\label{fig: staircase ddg}(a) the `Staircase paradox'. Fractal curves (for example the Koch curve) have similar non-convergence in metric- and curvature-related properties. (b) the `Schwarz Lantern'. In a cylinder of radius $r~(r > 0)$ and height $h~(h > 0)$ we inscribe a polyhedron as follows. Cut the cylinder into $m~(m \in \mathbb{Z}_+)$ equal cylinders each of height $h/m$ by means of horizontal planes. Break each of the $m+1$ circles of intersection (including the upper and lower bases of the original cylinder) into $n ~(n \in \mathbb{Z}_+, ~n \ge 2)$ equal parts so that the points of division on each circle lie beneath the midpoints of the points of division of the circle immediately above. We now take a pair of division points of each circle and the point lying directly above or below the midpoint of the pair of division points. These three points form a triangle, and the set of all such triangles forms a polyhedral surface inscribed in the original cylindrical surface. The area of each elementary triangle is $r \sin (\pi/n) \sqrt{h^2/m^2 + r^2 \left(1- \cos (\pi/n) \right)^2} = r \sin (\pi/n) \sqrt{h^2/m^2 + 4r^2 \sin^4 (\pi/2n)}$, and the total area is $2\left(n\sin (\pi/n)\right)rh\sqrt{1+4m^2r^2/h^2 \sin^4 (\pi/2n)} = 2\pi r h \left(1 + \pi^4 m^2 r^2/8h^2n^4 \right)$, when $m, ~n \rightarrow +\infty$. The area of the polyhedral surface depends on the limit of $m/n^2$, which can even reach infinity.}
\end{figure}

Theorem \ref{thm: first-order convergence} indicates that, for each discrete net listed in Section \ref{section: difference equation}, once the initial value and parameter are bounded and converge to the initial condition for its smooth analogue listed in Section \ref{section: differential equation}, the discrete surface will converge to the corresponding smooth surface. Note that as explained in Section \ref{section: differential equation} and Section \ref{section: difference equation}, for each initial condition problem, the initial value and parameter are bounded, and the result -- $x(u)$, $\partial x / \partial u_1$, $\partial x / \partial u_2$ are bounded and process a global Lipschitz constant.

In the Discussion section of the main text, we mentioned the convergence in distance does not guarantee the convergence of tangent plane, as well as other metric- and curvature- related properties. The zig-zag mode is a common reason for such non-convergence, akin to the Staircase paradox and the Schwarz Lantern illustrated in Figure \ref{fig: staircase ddg}. 

\part{Quad-mesh rigid origami}

\section{The loop condition and the two-vertex system} \label{section: loop condition}

The idea of deriving the flexibility condition of a quad-mesh rigid origami is straightforward, which can be explained by `cutting' through the paper to make the folding of each vertex independently driven (similar to single degree-of-freedom robotic arms), then consider the condition to properly `glue' them together. A graphical illustration of both the original and the cut quad mesh is provided in Figure~\ref{fig: flexibility condtion}(a) and (b), where we denote the tangent of half the folding angles at the labeled creases by $y_{ij}$ and $w_{ij}$, with $i, j \in \mathbb{Z}_+$. We use the tangent of half the folding angles — rather than the angles themselves — because this quantity appears consistently in all subsequent calculations.

\begin{prop} \label{prop: loop condition}	
	(The loop condition, \citep{tachi_generalization_2009}) A quad-mesh rigid origami is flexible if and only if:
	\begin{enumerate} [label={[\arabic*]}]
		\item The cut quad-mesh is flexible. Consequently, there exists a non-constant, analytic one-parameter flex for all $y_{ij}(t)$ and $w_{ij}(t)$ over $t \in [0, 1]$.
		\item For all $i, ~j$, 
		\begin{equation} \label{eq: loop condition}
			y_{i,j}(t) = y_{i,j+1}(t), ~~w_{i,j}(t) = w_{i,j+1}(t), ~~ \mathrm{for~} t \in [0, 1]
		\end{equation}
	\end{enumerate}
\end{prop}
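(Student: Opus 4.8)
The plan is to recast flexibility in terms of an analytic family of folding angles together with a \emph{developing map}, so that global flexibility decouples into purely local (single-vertex) conditions plus a gluing condition. A flex is an analytic one-parameter family of isometric immersions of the panel complex into $\mathbb{R}^3$; equivalently it is an analytic family of folding angles, one per crease, recovered by fixing one panel and attaching the others through the prescribed dihedral rotations. Since the quad mesh is a topological disk, hence simply connected, the only obstruction to this attaching being single-valued is the holonomy around loops, and every loop is a product of loops each encircling one interior degree-4 vertex. Consequently a family of folding angles integrates to a genuine flex \emph{if and only if} the four dihedral rotations close up at every interior vertex. The tangent-half-angle variables $y_{ij},w_{ij}$ are chosen precisely because they turn the single-vertex closure relations (spherical four-bar linkages) into M\"obius maps, making these per-vertex conditions tractable.

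Within this framework, ``cutting'' means driving every interior vertex as an independent spherical four-bar linkage. Item~[1] asserts that each such vertex is flexible; since a generic degree-4 vertex is a one-degree-of-freedom mechanism, this furnishes, after reparametrizing all vertices by a common $t\in[0,1]$, the analytic functions $y_{ij}(t),w_{ij}(t)$ recording the folding angle that each vertex independently predicts on each incident crease. A crease shared by the vertices $(i,j)$ and $(i,j+1)$ then receives two a priori distinct predictions, and item~[2], \eqref{eq: loop condition}, demands that these coincide for all $t$. I would then prove necessity directly: a flex of the whole mesh restricts to a flex of each vertex (giving~[1]), and the folding angle at a physical shared crease is single-valued, so both predictions equal it (giving~[2]).

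For sufficiency I would reverse the construction. Items~[1] and~[2] together produce a single, globally well-defined, analytic folding-angle assignment: [2] makes the two predictions on every shared crease agree, so the per-vertex data patch into one function per crease, while [1] guarantees the closure relation at each interior vertex. Feeding this assignment into the developing map and invoking simple-connectivity reduces global single-valuedness to the per-vertex closures already in hand; the resulting family of immersions is the desired flex, and it is non-constant and analytic because these properties pass through the finite compositions of rotations used in developing. This establishes the equivalence.

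The step I expect to be the main obstacle is the sufficiency gluing, namely turning the \emph{scalar} agreement of half-angle tangents in \eqref{eq: loop condition} into an \emph{honest single-valued immersion} in $\mathbb{R}^3$ with no residual rotational holonomy. Equal folding angles pin down only the dihedral angle along each crease, so one must check carefully that the simply-connected reduction genuinely annihilates the loop holonomy, that the branch choices in the tangent-half-angle parametrization of the individual linkages are made coherently, and that a common analytic parameter $t$ can be chosen so that all vertices fold in synchrony without the flex stalling at a branch point of any single spherical linkage. Controlling these branch phenomena while preserving global analyticity in $t$ is the delicate part.
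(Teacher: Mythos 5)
Your proposal is correct and follows essentially the same cut-and-glue argument the paper sketches: cut the mesh so the interior creases form a tree, let each vertex fold independently, and recover the original flex exactly when the duplicated folding angles agree, with simple-connectivity reducing global closure to the per-vertex spherical-linkage closures. Your developing-map/holonomy language is just a more formal rendering of what the paper (following Tachi) leaves implicit, and your noted worry about residual holonomy is already resolved by the per-vertex closure supplied by condition [1].
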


Note that condition [1] above is also essential since the cut quad-mesh might be rigid at special configurations. An example is provided in Figure \ref{fig: flexibility condtion}(c). Further, Proposition \ref{prop: loop condition} infers that:

\begin{prop}
	\citep{schief_integrability_2008} A quad-mesh rigid origami is flexible if and only if all its $3 \times 3$ quad-meshes (Kokotsakis quadrilaterals) are flexible.  
\end{prop}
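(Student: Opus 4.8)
The plan is to derive the statement directly from the loop condition, Proposition~\ref{prop: loop condition}. The forward implication is immediate: if the whole quad-mesh admits a non-constant analytic one-parameter flex, then restricting that flex to any $3\times 3$ window leaves a non-constant analytic flex of the corresponding Kokotsakis quadrilateral, so every $3\times 3$ sub-mesh is flexible. All the work lies in the converse, where I must manufacture a single global flex from the hypothesis that every $3\times 3$ sub-mesh is individually flexible.

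For the converse I would set up a propagation (Cauchy-type) argument that mirrors the point-by-point constructions of Section~\ref{section: difference equation}. Each interior degree-4 vertex is a spherical four-bar linkage with a single degree of freedom, so once one of its four incident folding angles is prescribed, the remaining three are determined as analytic transfer functions of that angle, away from degenerate configurations. I would first prescribe folding angles along one row and one column of creases --- an $L$-shaped set of initial data --- all driven by a single parameter $t$ taken from the flex of the corner vertex. Then I would fill in the grid recursively: to resolve the crease entering the quadrilateral at position $(i_1+1,~i_2+1)$ I would apply the transfer relation at an already-resolved neighbouring vertex. The consistency requirement is that the folding angle on each newly created interior crease, computed from the two vertices that share it, agree for all $t$; this is precisely the loop condition \eqref{eq: loop condition} around the interior panel, and it is equivalent to the flexibility of the single $3\times 3$ Kokotsakis quadrilateral (four interior vertices with their nine incident panels) surrounding that panel. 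By induction over the grid, local consistency at every elementary square assembles into a globally consistent assignment, i.e. a genuine global flex, which together with condition [1] establishes the converse via Proposition~\ref{prop: loop condition}.

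The main obstacle is showing that local consistency propagates to global consistency: that satisfying the loop condition around each elementary interior panel forces it to hold around every larger loop in the mesh. I would argue this by induction on the filled region, using that any larger closed loop in the dependency graph decomposes into a composition of elementary $3\times 3$ loops, so the elementary consistencies compose. Two technical points need care. First, the transfer functions and all loop-condition identities must be promoted from pointwise equalities to identities in the parameter $t$; here I would invoke analyticity of the flex, guaranteed by condition [1] of Proposition~\ref{prop: loop condition}, so that an equality holding on a set with a limit point holds identically. Second, I must exclude the momentarily rigid configurations flagged after Proposition~\ref{prop: loop condition} and illustrated in Figure~\ref{fig: flexibility condtion}(c), by choosing the driving parameter and the propagation order so that each vertex used for transfer is genuinely flexing; analyticity again ensures that such degeneracies are isolated and can be avoided on an open parameter subinterval, on which the constructed flex is non-constant.
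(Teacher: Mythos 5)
Your proposal is correct and takes essentially the route the paper intends: the paper presents this proposition purely as a consequence of the loop condition (Proposition~\ref{prop: loop condition}) together with the citation to Schief, Bobenko and Hoffmann, and your argument --- forward direction by restriction, converse by propagating a single-parameter flex from $L$-shaped Cauchy data with each elementary consistency check being exactly the flexibility of one Kokotsakis quadrilateral --- is the standard way to make that inference precise. Your handling of analyticity and of the momentarily rigid configurations of Figure~\ref{fig: flexibility condtion}(c) covers the only delicate points.
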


\begin{figure} [tbph]
	\noindent \begin{centering}
		\includegraphics[width=1\linewidth]{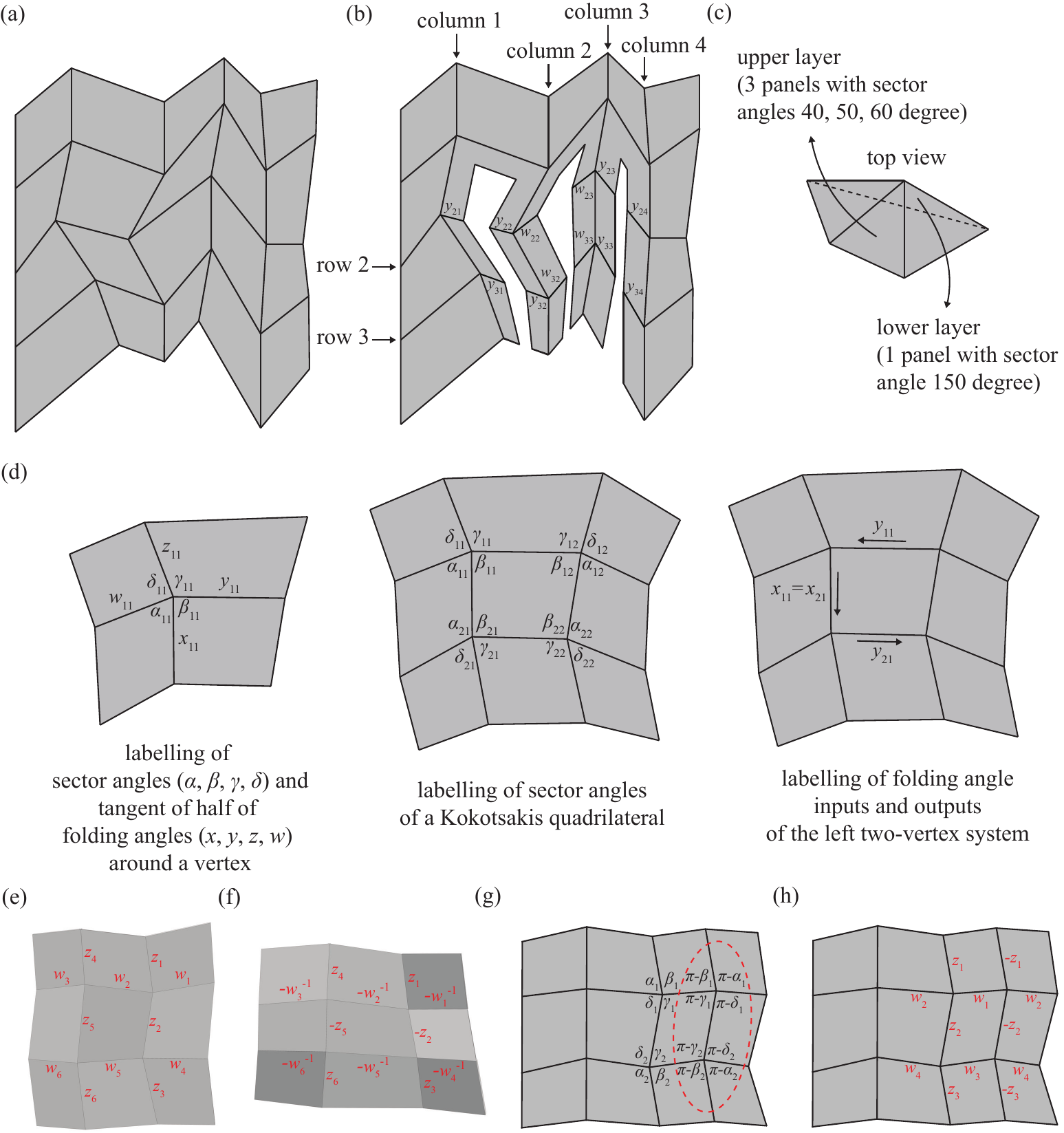}
		\par \end{centering}
	
	\caption{\label{fig: flexibility condtion}(a) and (b) explain how we `cut' a quad-mesh rigid origami to make the interior of crease pattern a tree. When (b) is foldable, the motion of (b) agrees with a quad-mesh rigid origami if and only if all the folding angles $y$ and $w$ are identical as indicated in \eqref{eq: loop condition}. (c) shows a vertex with sector angles $40^\circ, ~50^\circ, ~60^\circ, ~150^\circ$, which forms a double cover of a circular sector and hence remain rigid. (d) Labelling of a single-vertex, a Kokotsakis quadrilateral and a two-vertex system. (e) We use $\{z_1, ~z_2, ~z_3, ~z_4, ~z_5, ~z_6\}$ and $\{w_1, ~w_2, ~w_3, ~w_4, ~w_5, ~w_6\}$ to represent the tangent of half of the folding angles on these labelled interior creases. (f) the sector angles on panels of the middle row are replaced by their complements to $\pi$, other sector angles remain unchanged. (g) shows how a parallel strip is added, where the parallel strip is marked  with a dashed cycle. In (h), the magnitude of new folding angles are labelled.} 
\end{figure} 

The library of flexible Kokotsakis quadrilaterals are derived in the complexified configuration space, where each Kokotsakis quadrilateral is flexible upon a system of constraints on the sector angles --- most of these constraints are highly nonlinear. Our target is to explore all the `stitchings' of Kokotsakis quadrilaterals that can form a quad-mesh rigid origami with the following requirements: 1) we require the construction of rigid origami to be `infinitely extendable', in other words, not constrained in a finite grid. 2) we assume the number of variables is no less than the number of constraints; 3) for the admissible stitchings, we require the existence of valid real solutions from numerical examination; 4) on top of a valid numerical solution, we require the rigid origami to have an actual plotable folding motion. Otherwise, it might locate in the complexified configuration space and the structure will remain rigid even satisfying the flexibility constraint.

In terms of the flexibility condition of a Kokotsakis quadrilateral, it is convenient to consider the `compatibility' of its two two-vertex systems, as shown in Figure \ref{fig: flexibility condtion}(d). Here a Kokotsakis quadrilateral is `divided' to its left and right two-vertex systems. For the left two-vertex system, we start from the input $y_{11}$, going through the left top vertex to obtain the output $x_{11}$. This output $x_{11}$ equals to the input of the left bottom vertex $x_{21}$, with which we can further calculate $y_{21}$. Consequently, the left two-vertex system generates its output $y_{21}$ as a function of its input $y_{11}$. Clearly, if and only if the $y_{21}$ calculated from the left and right two-vertex systems are identical for all $y_{11}$, the Kokotsakis quadrilateral will be flexible. A two-vertex system from $y_{11}$ to $y_{21}$ is clearly a compound function on the relation between adjacent folding angles of two degree-4 single-vertex rigid origami.

In \citet[Section 2]{he_real_2023}, we present a comprehensive list of the various types of a single vertex. Notably, the terms (anti-)isogram, (anti-)deltoid, conic, and elliptic are included in this list.

We will now introduce two operations that can create a new flexible quad-mesh rigid origami from an existing one: these are called 'switching a strip' and 'adding a parallel strip.'

\begin{defn} \label{defn: switching} 
	
\textit{Switching a strip} refers to replacing all the sector angles in a row or column of panels with their complements to $\pi$, while keeping the other sector angles unchanged. A visual representation of this operation can be seen in Figures \ref{fig: flexibility condtion}(e) and \ref{fig: flexibility condtion}(f). \textit{Adding a parallel strip} means introducing an additional row or column of vertices with new interior creases, which are parallel to the creases of the adjacent row or column, as shown in Figures \ref{fig: flexibility condtion}(g) and \ref{fig: flexibility condtion}(h). 

\end{defn}

We will demonstrate that both operations --- switching a strip and adding a parallel strip --- preserve the flexibility of a quad-mesh rigid origami. Let's first examine the case of switching a transverse strip. Consider switching the middle row of panels in Figure \ref{fig: flexibility condtion}(e), where the sector angles are replaced by their complements to $\pi$ as shown in Figure \ref{fig: flexibility condtion}(f).

The tangents of half the folding angles on the labelled interior creases are denoted by ${z_1, ~z_2, ~z_3, ~z_4, ~z_5, ~z_6}$ and ${w_1, ~w_2, ~w_3, ~w_4, ~w_5, ~w_6}$. After switching the strip, the folding angles change as follows:
\begin{equation*}
	\begin{gathered}
		\{z_1,~z_2,~z_3,~z_4,~z_5,~z_6\} \rightarrow \{~z_1,~-z_2,~z_3,~z_4,~-z_5,~z_6\} \\
		\{w_1,~w_2,~w_3,~w_4,~w_5,~w_6\} \rightarrow \{-w_1^{-1},~-w_2^{-1},~-w_3^{-1},~-w_4^{-1},~-w_5^{-1},~-w_6^{-1}\}
	\end{gathered}
\end{equation*}
Further details can be found in \cite[Section 5]{he_real_2023}. According to Proposition \ref{prop: loop condition}, switching a strip preserves the flexibility of a quad-mesh rigid origami. The proof for switching a longitudinal strip follows a similar reasoning. Next, after adding a parallel strip, the new folding angles are shown in Figure \ref{fig: flexibility condtion}(h). As per Proposition \ref{prop: loop condition}, adding a parallel strip also maintains the flexibility of a quad-mesh rigid origami.

\section{V-hedra and V-surface} \label{section: V-hedra}

In this section we will give a comprehensive introduction to a V-hedron, as well as its smooth analogue called a V-surface. A V-hedron only contains linear couplings of isograms. The flexibility condition for a V-hedron is the compatible stitching of linear couplings, or equivalently, the existence of a folded state. The properties of a V-hedron are listed below. The additional \textbf{regularity condition} for a V-hedron is at every vertex $\alpha \in (0, \pi), ~\beta \in (0, \pi), ~\alpha + \beta \neq \pi$, i.e, a V-hedron is not developable. 
\begin{prop} \label{prop: V-hedron properties}
	Features for a V-hedron:
	\begin{enumerate} [label={[\arabic*]}]
		\item An V-hedron has a \textit{flat-folded state} where the folding angles around each vertex are $\{0, ~\pi, ~0, ~\pi\}$, up to any cyclic permutation.
		\item If a V-hedron has a non-flat rigidly folded state, this V-hedron is flexible. 
		\item Folding angles are constant along discrete coordinate curves.
		\item \citep{sauer_differenzengeometrie_1970} The Gauss map of a V-hedron is a discrete Chebyshev net. A V-hedron is the only discrete conjugate net with a discrete Chebyshev Gauss map. See Figure \ref{fig: V-hedron}(c). 
	\end{enumerate}
\end{prop}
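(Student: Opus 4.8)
The plan is to reduce all four statements to a single computation at one interior (isogram) vertex: the folding of a spherical four-bar linkage whose opposite link lengths (sector angles) are equal. From the single-vertex analysis of \citet{he_real_2023} and the \textit{linear coupling} terminology of \citet{izmestiev_classification_2017} I would take as the engine two facts about an isogram vertex ($\alpha=\gamma$, $\beta=\delta$): throughout its one-parameter motion the opposite folding angles are equal, $\rho_1=\rho_3$ and $\rho_2=\rho_4$, and two adjacent folding angles are coupled linearly in the half-tangent variables, $\tan(\rho_2/2)=c\,\tan(\rho_1/2)$ with $c$ determined by the sector angles. Establishing these (via the spherical cosine rule) is the computational heart; everything else is assembly through the loop condition, Proposition \ref{prop: loop condition}. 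Property [3] then follows by propagation: the east crease of a vertex is the west crease of its neighbour (loop condition), and the identity $\rho_1=\rho_3$ forces equal folding angles on collinear creases, so iterating shows the folding angle is constant along each row and, symmetrically, each column.

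For [1] I would build the flat-folded state directly: assign folding angle $0$ to every crease of one coordinate family and $\pi$ to the other, giving the pattern $\{0,\pi,0,\pi\}$ at each vertex. To verify closure I track crease directions in the plane of the fully folded configuration, where crossing a fully folded crease reverses the angular orientation and crossing an unfolded one preserves it; the total turning around the vertex is the alternating sum $\alpha_1-\alpha_2-\alpha_3+\alpha_4$, which vanishes precisely by the isogram condition $\alpha_1=\alpha_3$, $\alpha_2=\alpha_4$ (crucially needing no developability). Since each crease carries a single value, the loop condition is trivially met and the local states glue into a global flat-folded state.

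For [2], given a non-flat rigidly folded state I would use [3] to record the configuration by one half-tangent value $Y_i$ per row and $W_j$ per column, related at vertex $(i,j)$ by the linear coupling $W_j=c_{ij}Y_i$. Non-flatness makes $Y_i^\ast,W_j^\ast$ finite and nonzero, so $c_{ij}=W_j^\ast/Y_i^\ast$ factors as a column quantity over a row quantity; hence $\lambda\mapsto(\lambda Y_i^\ast,\ \lambda W_j^\ast)$ satisfies every coupling for all $\lambda$ near $1$ and is a genuine analytic one-parameter flex, so the V-hedron is flexible. The homogeneity of the coupling (it passes through the origin) is exactly what lets one state be scaled into a family.

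For [4], planarity of the rigid panels makes a V-hedron a discrete conjugate net, so only the Gauss map is at issue. The four face normals around a vertex form the elementary Gauss quadrilateral, which is the polar dual of the vertex linkage (Figure \ref{fig: discrete curvature}(d)); under polarity its spherical edge lengths are the folding angles $\rho_i$, so the discrete Chebyshev condition (equal opposite edge lengths, \eqref{eq: discrete Chebyshev net} applied to $N$) is equivalent to $\rho_1=\rho_3$, $\rho_2=\rho_4$, and the forward direction is the single-vertex lemma. For the converse — that a discrete conjugate net with a Chebyshev Gauss map must be a V-hedron — I would show that a spherical four-bar whose opposite joint angles stay equal along its whole motion must have opposite sides equal, i.e. be an (anti)parallelogram/isogram. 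I expect this converse to be the main obstacle: a single state with equal opposite folding angles does not by itself force equal sector angles, so the argument must exploit that the Chebyshev property persists through the flex (or invoke the single-vertex classification) to pin down the isogram condition.
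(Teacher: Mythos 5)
Your engine --- the isogram vertex as a spherical parallelogram, polar duality with the Gauss-map quadrilateral, and assembly through Proposition \ref{prop: loop condition} --- is the same one the paper relies on: the text never proves Proposition \ref{prop: V-hedron properties} explicitly, but the computation $N^\mathrm{a}\cdot N^\mathrm{b}=\cos\rho_1=N^\mathrm{c}\cdot N^\mathrm{d}$ and the identity $a+c=b+d$ in the subsequent Moutard derivation are exactly your equal-opposite-folding-angle lemma and spherical-parallelogram picture. Your closure check for [1] (the alternating angle sum vanishing under $\alpha_1=\alpha_3$, $\alpha_2=\alpha_4$) and your homogeneous-scaling argument for [2] are sound and in fact supply more detail than the paper does. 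Two small points on [2]: you should note that non-flatness forces \emph{all} row and column half-tangents to be nonzero simultaneously (if one vanishes, the homogeneous couplings $W_j=c_{ij}Y_i$ propagate zero to every crease), and that the flat-folded state of [1] needs separate treatment since there the half-tangents are $0$ and $\infty$ and the scaling parametrization degenerates.

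The one genuine misstep is your plan for the converse in [4]. You propose to show that a spherical four-bar whose opposite joint angles stay equal \emph{along its whole motion} must be an isogram, ``exploiting that the Chebyshev property persists through the flex.'' But the converse is a statement about a static discrete conjugate net, which in general admits no flex at all, so there is no motion to exploit. Fortunately none is needed: the Chebyshev condition says the Gauss-map quadrilateral at each vertex has equal opposite spherical side lengths ($\rho_1=\rho_3$, $\rho_2=\rho_4$); cutting that spherical quadrilateral along a diagonal produces two triangles with the same three side lengths, hence congruent, hence the quadrilateral has equal opposite interior angles; and by the polarity you already invoked (Figure \ref{fig: discrete curvature}(d)) those interior angles are the supplements of the sector angles, so $\alpha_1=\alpha_3$, $\alpha_2=\alpha_4$ and the net is a V-hedron. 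In other words, the direction you flagged as the main obstacle is the same one-configuration congruence argument as the forward direction, read through the polarity in the opposite order; your worry that a single state with equal opposite folding angles does not force equal sector angles is unfounded, and the classification of flexible vertices is not needed.
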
  

\begin{figure} [t]
	\noindent \begin{centering}
		\includegraphics[width=1\linewidth]{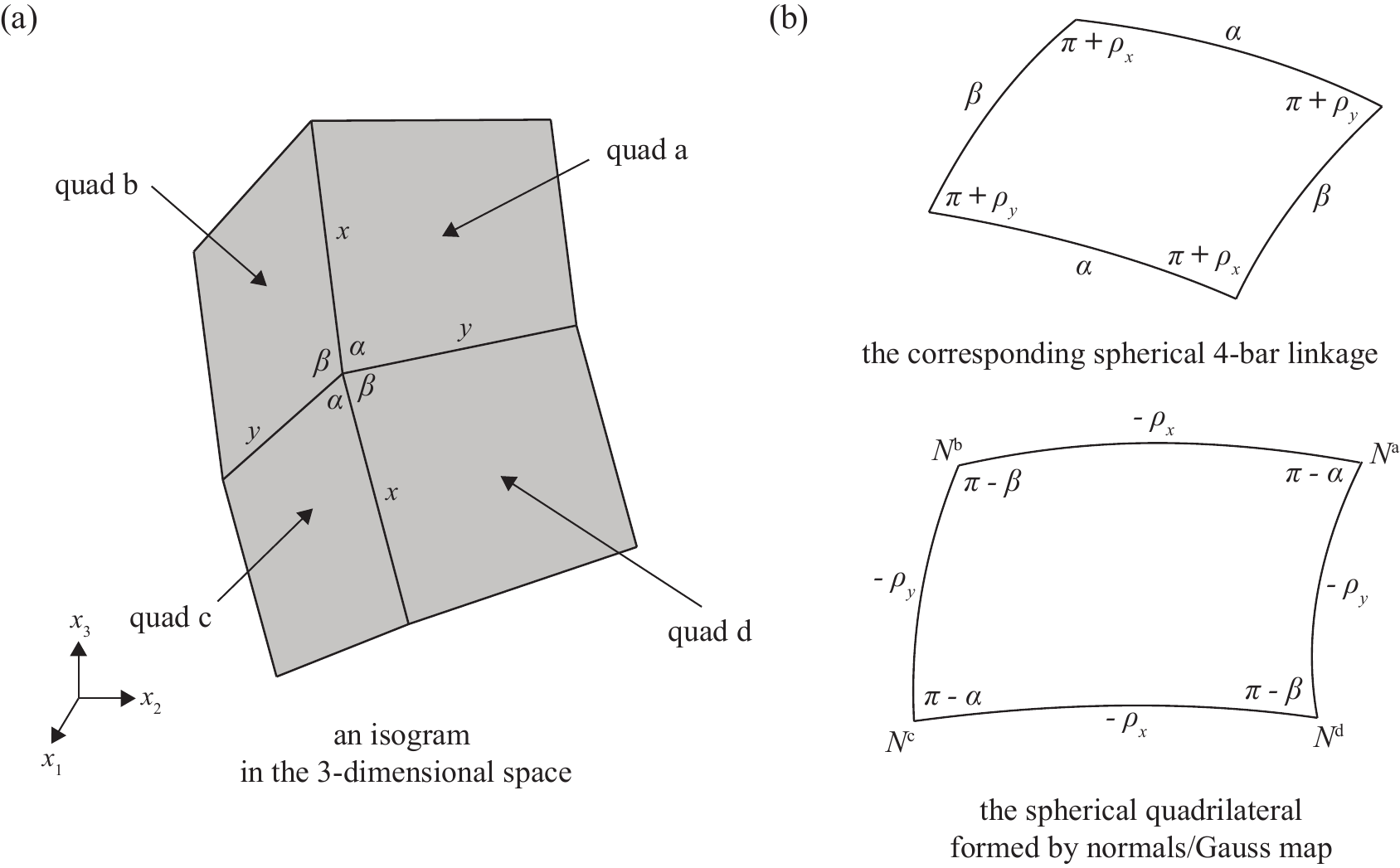}
		\par\end{centering}
	
	\caption{\label{fig: V-hedron}(a) A three-dimensional view of an isogram, where the surrounding quadrilaterals are labelled $\mathrm{a}, ~\mathrm{b}, ~\mathrm{c}, ~\mathrm{d}$. (b)The corresponding spherical 4-bar linkage of this isogram. (c) The spherical parallelogram formed by the Gauss map of surrounding panels.}
\end{figure}

We also say the Gauss map $N$ is a discrete spherical Chebyshev net since it is on the unit sphere. Clearly for any elementary quad of $N$, the opposite spherical arc lengths are also equal.

From the graphical explanation in Figure \ref{fig: reciprocal parallel V-hedron}, a V-hedron is \textit{reciprocal-parallel} related to a K-hedron \citep{sauer_parallelogrammgitter_1950, schief_integrability_2008}. This result is on top of the reciprocal-parallel relation between a discrete asymptotic net and a discrete conjugate net. Let $X(i)$ be a discrete asymptotic net, $Y(i)$ is another discrete surface such that $y(i_1+1, ~i_2) - y(i_1, ~i_2)$ is parallel to $x(i_1, ~i_2) - x(i_1, ~i_2-1)$; $y(i_1+1, ~i_2+1) - y(i_1, ~i_2+1)$ is parallel to $x(i_1, ~i_2+1) - x(i_1, ~i_2)$; $y(i_1, ~i_2+1) - y(i_1, ~i_2)$ is parallel to $x(i_1, ~i_2) - x(i_1-1, ~i_2)$; $y(i_1+1, ~i_2+1) - y(i_1+1, ~i_2)$ is parallel to $x(i_1+1, ~i_2) - x(i_1, ~i_2)$. From Figure \ref{fig: reciprocal parallel}, $X$ is `five points coplanar' if and only if the elementary quadrilateral of $Y$ is planar.

\begin{figure}[p]
	\noindent \begin{centering}
		\includegraphics[width=1\linewidth]{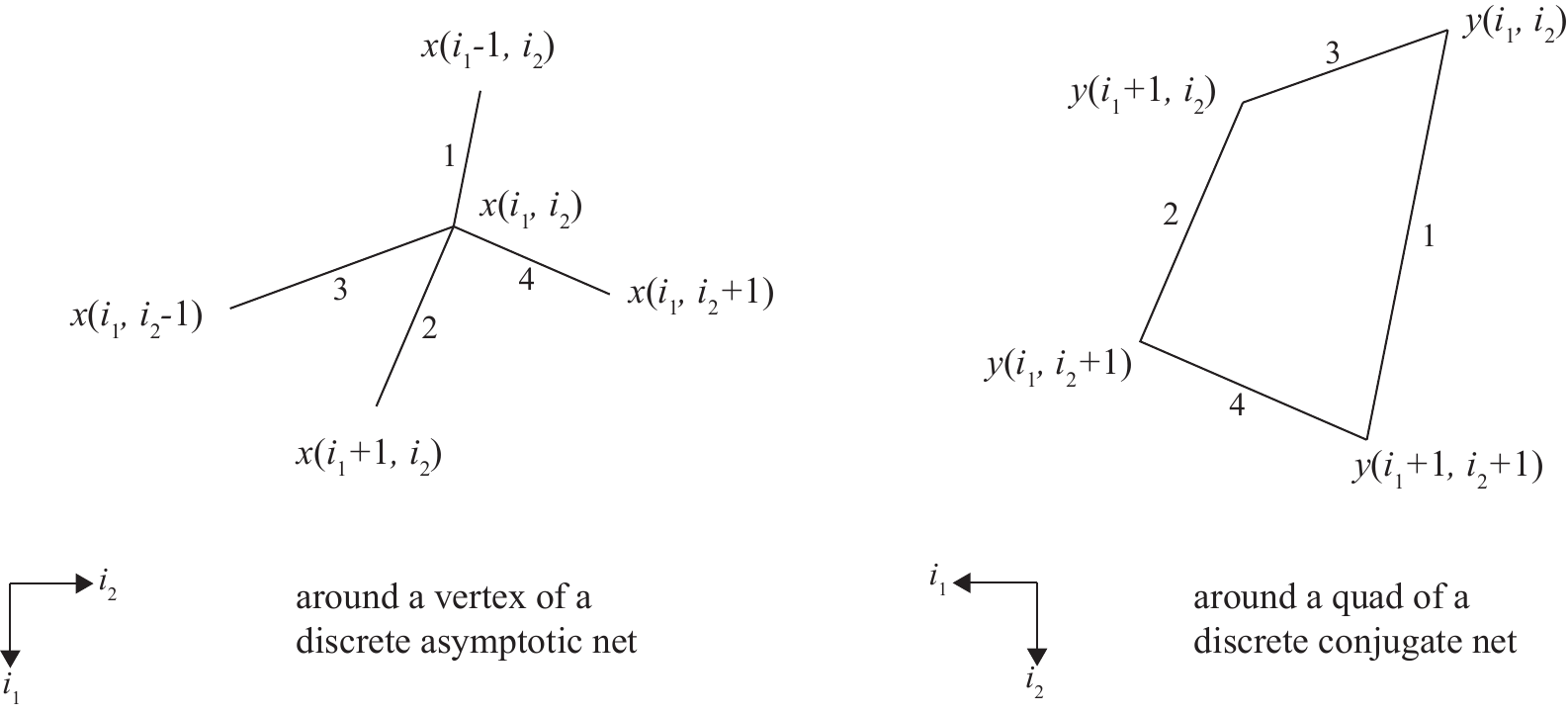}
		\par\end{centering}
	
	\caption{\label{fig: reciprocal parallel}A figure illustrating the reciprocal-parallel relation between an asymptotic net and a conjugate net. Lines labelled with the same numbers are parallel.}
	~\\
	\noindent \begin{centering}
		\includegraphics[width=1\linewidth]{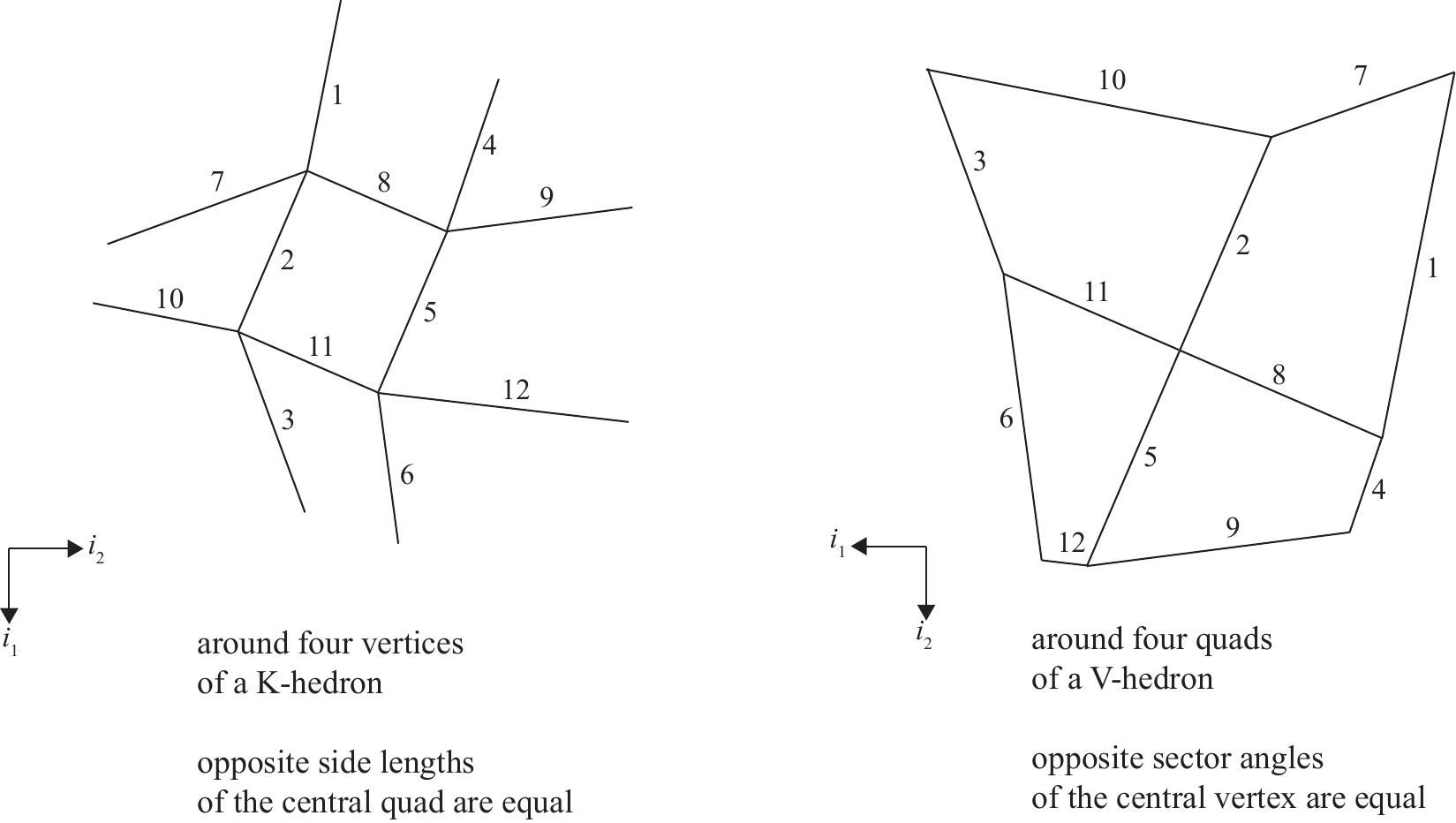}
		\par\end{centering}
	
	\caption{\label{fig: reciprocal parallel V-hedron}A figure illustrating the reciprocal-parallel relation between a V-hedron and a K-hedron. Lines labelled with the same numbers are parallel.}
\end{figure}

Define the non-zero coefficients $a, b: \mathbb{Z}^2 \rightarrow \mathbb{R}, ~a, ~b \neq 0$, which live on the $i_1$ and $i_2$ grid lines, respectively:
\begin{equation*}
	\begin{gathered}
		y(i_1+1, ~i_2) - y(i_1, ~i_2) = -b(i_1, ~i_2)(x(i_1, ~i_2) - x(i_1, ~i_2-1))  \\
		y(i_1, ~i_2+1) - y(i_1, ~i_2) = a(i_1, ~i_2)(x(i_1, ~i_2) - x(i_1-1, ~i_2))
	\end{gathered}
\end{equation*}
For simplicity we further require $ab > 0$ over $\mathbb{Z}^2$, hence $X$ and $Y$ will share the same discrete normal vector field. Given a discrete asymptotic net $X$, $Y$ will be a discrete conjugate net determined upon $a, ~b$ up to a translation. The inverse statement also holds. Given a discrete conjugate net $Y$, $X$ will be a discrete asymptotic net determined upon $a, ~b$ up to a translation.

Next we will do a series of calculation to obtain the discrete Moutard equation, \eqref{eq: discrete Moutard}, for the normal vector field of a V-hedron. Let
\begin{equation*}
	\begin{gathered}
		a = \dfrac{x(i_1, ~i_2) - x(i_1-1, ~i_2)}{\|x(i_1, ~i_2) - x(i_1-1, ~i_2)\|}, ~~b = \dfrac{x(i_1, ~i_2) - x(i_1, ~i_2-1)}{\|x(i_1, ~i_2) - x(i_1, ~i_2-1)\|} \\
		c = \dfrac{x(i_1, ~i_2) - x(i_1+1, ~i_2)}{\|x(i_1, ~i_2) - x(i_1+1, ~i_2)\|}, ~~d = \dfrac{x(i_1, ~i_2) - x(i_1, ~i_2+1)}{\|x(i_1, ~i_2) - x(i_1, ~i_2+1)\|} 
	\end{gathered}
\end{equation*}
Note that $a, ~b, ~c, ~d$ are temporary variables, which is updated from its previous usage. The Gauss map on each quadrilateral around vertex $i$ is
\begin{equation*}
	\begin{gathered}
		N^\mathrm{a} = \dfrac{d \times a}{\sin \alpha_1}, ~~ N^\mathrm{b} = \dfrac{a \times b}{\sin \alpha_2} \\
		N^\mathrm{c} = \dfrac{b \times c}{\sin \alpha_1}, ~~
		N^\mathrm{d} = \dfrac{c \times d}{\sin \alpha_2}
	\end{gathered}
\end{equation*}
since
\begin{equation*}
	\begin{gathered}
		N^\mathrm{a} \cdot N^\mathrm{b} = \cos \rho_1, ~~ N^\mathrm{b} \cdot N^\mathrm{c} = \cos \rho_2 \\
		N^\mathrm{c} \cdot N^\mathrm{d} = \cos \rho_1, ~~ N^\mathrm{d} \cdot N^\mathrm{a} = \cos \rho_2
	\end{gathered}
\end{equation*}
we have
\begin{equation*}
	\begin{gathered}
		(N^\mathrm{a} + N^\mathrm{c}) \cdot (N^\mathrm{b} - N^\mathrm{d}) = 0 \\
		(N^\mathrm{a} - N^\mathrm{c}) \cdot (N^\mathrm{b} + N^\mathrm{d}) = 0 \\
	\end{gathered}
\end{equation*}
Furthermore, 
\begin{equation*}
	\begin{gathered}
		(N^\mathrm{a} + N^\mathrm{c}) \cdot (N^\mathrm{a} - N^\mathrm{c}) = 0 \\
		(N^\mathrm{b} - N^\mathrm{d}) \cdot (N^\mathrm{b} + N^\mathrm{d}) = 0 \\
	\end{gathered}
\end{equation*}
We could see that either $(N^\mathrm{a} + N^\mathrm{c})$ is parallel to $(N^\mathrm{b} + N^\mathrm{d})$, or $(N^\mathrm{a} - N^\mathrm{c})$ is parallel to $(N^\mathrm{b} - N^\mathrm{d})$. It can be verified from the derivation below:
\begin{equation*}
	\begin{gathered}
		N^\mathrm{a} + N^\mathrm{c} = \dfrac{d \times a + b \times c}{\sin \alpha_1} \\
		N^\mathrm{b} + N^\mathrm{d} = \dfrac{a \times b + c \times d}{\sin \alpha_2} \\
		(a \times b + c \times d) - (d \times a + b \times c) = (a + c) \times (b + d) = 0 \\
		(a \times b + c \times d) + (d \times a + b \times c) = (a - c) \times (b - d)
	\end{gathered}
\end{equation*}
which makes use of the special geometry from the spherical quadrilateral Figure \ref{fig: V-hedron}(c):
\begin{equation*}
	\begin{gathered}
		d - a = c - b \\
		d - c = a - b
	\end{gathered} ~~\Rightarrow~~ a + c = b + d
\end{equation*}
It leads to the following relation on the normal vectors:
\begin{equation*}
	\begin{aligned}
		\triangle_1\triangle_2 N & = N^\mathrm{b} + N^\mathrm{d} - N^\mathrm{a} - N^\mathrm{c} \\
		& = \dfrac{\sin \alpha_1 - \sin \alpha_2}{\sin \alpha_1 + \sin \alpha_2} (N^\mathrm{a} + N^\mathrm{b} + N^\mathrm{c} + N^\mathrm{d}) \\
		& = \left( \dfrac{\sin \alpha_1}{\sin \alpha_2} - 1 \right) (N^\mathrm{a} + N^\mathrm{c})
	\end{aligned}
\end{equation*}
Assume there is no self-intersection, $N^\mathrm{a} - N^\mathrm{c}$ will not be parallel to $N^\mathrm{b} - N^\mathrm{d}$ (diagonals of a spherical parallelogram will not be parallel). From the symmetry of the Gauss map:
\begin{equation*}
	\begin{gathered}
		N^\mathrm{a} \cdot \dfrac{N^\mathrm{a} + N^\mathrm{c}}{||N^\mathrm{a} + N^\mathrm{c}||} = \dfrac{||N^\mathrm{a} + N^\mathrm{c}||}{2} \\
		N^\mathrm{b} \cdot \dfrac{N^\mathrm{a} + N^\mathrm{c}}{||N^\mathrm{a} + N^\mathrm{c}||} = \dfrac{||N^\mathrm{b} + N^\mathrm{d}||}{2} \\
	\end{gathered}	 
\end{equation*}
hence
\begin{equation*}
	\begin{gathered}
		N^\mathrm{b} + N^\mathrm{d} =
		\dfrac{N^\mathrm{b} \cdot (N^\mathrm{a} + N^\mathrm{c})}{N^\mathrm{a} \cdot (N^\mathrm{a} + N^\mathrm{c})} (N^\mathrm{a} + N^\mathrm{c}) \\
	\end{gathered}	 
\end{equation*}
which leads to the proposition below:
\begin{prop}
	\citep{bobenko_discrete_1996}
	Let $X: \mathbb{Z}^2 \rightarrow \mathbb{R}^3$ be a V-hedron. The Gauss map $N$ is a discrete Moutard net satisfying the equation below:
	\begin{equation} \label{eq: Moutard for N}
		\begin{gathered}
			\triangle_1 \triangle_2 N =  \lambda(i_1, ~i_2) \left(N(i_1+1, ~i_2) + N(i_1, ~i_2+1) \right) \\
			\lambda(i_1, ~i_2) = \dfrac{N(i_1, ~i_2) \cdot \left(N(i_1+1, ~i_2) + N(i_1, ~i_2+1)\right)}{1+ N(i_1+1, ~i_2) \cdot N(i_1, ~i_2+1)} - 1
		\end{gathered}
	\end{equation}
	Equivalently, 
	\begin{equation*}
		N(i_1+1, ~i_2+1) = -N(i_1, ~i_2) + (\lambda(i_1, ~i_2)+1) \left(N(i_1+1, ~i_2) + N(i_1, ~i_2+1) \right)
	\end{equation*}
\end{prop}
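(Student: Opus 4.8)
The plan is to repackage the explicit quadrilateral normals $N^\mathrm{a}, N^\mathrm{b}, N^\mathrm{c}, N^\mathrm{d}$ already computed for the four panels incident to a vertex into the grid labelling, and then read off both the Moutard relation and the coefficient $\lambda$ directly. First I would fix the dictionary between the local and global labels. Placing the common vertex of the four quadrilaterals at $x(i_1+1, ~i_2+1)$ and letting $a, ~b, ~c, ~d$ be the unit directions from it toward $x(i_1, ~i_2+1)$, $x(i_1+1, ~i_2)$, $x(i_1+2, ~i_2+1)$, $x(i_1+1, ~i_2+2)$, the incident quadrilateral normals become $N^\mathrm{a} = N(i_1, ~i_2+1)$, $N^\mathrm{b} = N(i_1, ~i_2)$, $N^\mathrm{c} = N(i_1+1, ~i_2)$, $N^\mathrm{d} = N(i_1+1, ~i_2+1)$. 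Under this dictionary the mixed difference is $\triangle_1\triangle_2 N = (N^\mathrm{b} + N^\mathrm{d}) - (N^\mathrm{a} + N^\mathrm{c})$, while the target right-hand side $N(i_1+1, ~i_2) + N(i_1, ~i_2+1)$ is exactly $N^\mathrm{a} + N^\mathrm{c}$; so it suffices to show that $N^\mathrm{b} + N^\mathrm{d}$ is a scalar multiple of $N^\mathrm{a} + N^\mathrm{c}$ and to identify that scalar.

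Next I would establish the collinearity $N^\mathrm{a} + N^\mathrm{c} \parallel N^\mathrm{b} + N^\mathrm{d}$. The isogram identity $a + c = b + d$ of the spherical parallelogram, together with the folding-angle relations $N^\mathrm{a}\cdot N^\mathrm{b} = N^\mathrm{c}\cdot N^\mathrm{d} = \cos\rho_1$ and $N^\mathrm{b}\cdot N^\mathrm{c} = N^\mathrm{d}\cdot N^\mathrm{a} = \cos\rho_2$ and the unit length of each normal, yields the four orthogonality relations $(N^\mathrm{a}+N^\mathrm{c})\perp(N^\mathrm{b}-N^\mathrm{d})$, $(N^\mathrm{a}-N^\mathrm{c})\perp(N^\mathrm{b}+N^\mathrm{d})$, $(N^\mathrm{a}+N^\mathrm{c})\perp(N^\mathrm{a}-N^\mathrm{c})$, and $(N^\mathrm{b}+N^\mathrm{d})\perp(N^\mathrm{b}-N^\mathrm{d})$. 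Both $N^\mathrm{a}+N^\mathrm{c}$ and $N^\mathrm{b}+N^\mathrm{d}$ are thereby orthogonal to the plane spanned by $N^\mathrm{a}-N^\mathrm{c}$ and $N^\mathrm{b}-N^\mathrm{d}$, which forces them to be parallel.

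Finally I would pin down the coefficient by projection onto the common direction: since $N^\mathrm{a}+N^\mathrm{c}$ and $N^\mathrm{b}+N^\mathrm{d}$ are parallel, $N^\mathrm{b}+N^\mathrm{d} = \frac{N^\mathrm{b}\cdot(N^\mathrm{a}+N^\mathrm{c})}{N^\mathrm{a}\cdot(N^\mathrm{a}+N^\mathrm{c})}(N^\mathrm{a}+N^\mathrm{c})$, and substituting $N^\mathrm{a}\cdot(N^\mathrm{a}+N^\mathrm{c}) = 1 + N^\mathrm{a}\cdot N^\mathrm{c}$ (unit vector) gives $\triangle_1\triangle_2 N = \lambda(N^\mathrm{a}+N^\mathrm{c})$ with $\lambda = \frac{N^\mathrm{b}\cdot(N^\mathrm{a}+N^\mathrm{c})}{1+N^\mathrm{a}\cdot N^\mathrm{c}} - 1$. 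Translating back through the dictionary reproduces the stated formula for $\lambda(i_1, ~i_2)$, and solving $\triangle_1\triangle_2 N = \lambda(N^\mathrm{a}+N^\mathrm{c})$ for $N^\mathrm{d} = N(i_1+1, ~i_2+1)$ yields the equivalent explicit form.

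The one genuinely non-routine step is the branch selection inside the collinearity argument: the orthogonality relations by themselves leave open the degenerate alternative that $N^\mathrm{a}-N^\mathrm{c}$ is parallel to $N^\mathrm{b}-N^\mathrm{d}$ rather than the sums being parallel, and excluding it requires the geometric input that the diagonals of the spherical parallelogram of an isogram cannot be parallel — precisely the no-self-intersection regularity assumption invoked above. Every remaining step is bookkeeping on the normals already computed, so I expect no further difficulty.
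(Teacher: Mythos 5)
Your proposal is correct and follows essentially the same route as the paper: the four orthogonality relations coming from equal opposite folding angles and unit normals, the resulting parallelism of $N^\mathrm{a}+N^\mathrm{c}$ and $N^\mathrm{b}+N^\mathrm{d}$ with the degenerate branch excluded by the no-self-intersection assumption on the spherical parallelogram, and the projection argument yielding $\lambda = \frac{N^\mathrm{b}\cdot(N^\mathrm{a}+N^\mathrm{c})}{N^\mathrm{a}\cdot(N^\mathrm{a}+N^\mathrm{c})}-1$. The only (immaterial) difference is that the paper additionally confirms the parallelism by the explicit cross-product identity $(a+c)\times(b+d)=0$, which also identifies the coefficient as $\sin\alpha_1/\sin\alpha_2$, whereas you rely solely on the orthogonality-plus-nondegeneracy argument.
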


The above proposition infers the following two constructions for a V-hedron. The first construction is based on the position of normal vectors on the coordinate curves. The second construction is based on the sector angles on the coordinate curves, hence the position of the rigid origami is determined up to an orthogonal transformation. The labelling is provided in Figure \ref{fig: labelling discrete net}.
\begin{description}
	\item [Initial Value 1] Two discrete coordinate curves $x(i_2 = 0)$ and $x(i_1 = 0)$ intersecting at $x(0, ~0)$. $S^\mathrm{a}(-1, ~0) = S^\mathrm{c}(0, ~-1)$, $S^\mathrm{b}(-1, ~-1) = S^\mathrm{d}(0, ~0)$. Sector angles $S^\mathrm{a}(i_1 \ge 0, ~0)$; $S^\mathrm{c}(0, ~i_2 \ge 0)$; $S^\mathrm{c}(i_1 \le -1, ~-1)$; $S^\mathrm{a}(-1, ~i_2 \le -1)$.
	
	Note that this input is equivalent to two boundary polylines and the direction vectors along them \citep{sauer_differenzengeometrie_1970}. 
	
	\item [Step 1a] From the above initial value we can immediately calculate $x(1, ~1)$ and $N(0, ~0)$, then from iterative calculation we could obtain $N(i_1 \ge 1, ~i_2 = 0)$ and $N(i_1 = 0, ~i_2 \ge 1)$. 
	\item [Step 1b] Use \eqref{eq: Moutard for N} to calculate $N(i_1 \ge 1, ~i_2 = 1)$ and $N(i_1 = 1, ~i_2 \ge 1)$.
	\item [Step 1c] Use the equations below to locate $x(i_1 \ge 2, ~i_2 = 1)$ and $x(i_1 = 1, ~i_2 \ge 2)$:
	\begin{equation*}
		\begin{dcases*}
			\triangle_1 x(i_1, ~1) \mathrm{~is~parallel~to~} N(i_1, ~1) \times N(i, ~0) \\
			\triangle_2 x(1, ~i_2) \mathrm{~is~parallel~to~} N(1, ~i_2) \times N(0, ~j) 
		\end{dcases*}, ~~i_1, ~i_2 \in \mathbb{Z}_+, ~~i_1, ~i_2 \ge 1
	\end{equation*} 
	\item [Step 1d] In the quadrant $\mathbb{Z}_+^2$, repeat Step 1b and Step 1c to obtain $x(i_1 \ge 0, ~i_2 \ge 0)$ and its Gauss map $N(i_1 \ge 0, ~i_2 \ge 0)$. 
	\item [Step 1e] Use the same method as described in Step 1d to calculate the other three quadrants to obtain the entire mesh.
	\item [Regularity condition] Every step returns a non-degenerated and bounded result.
\end{description}	
Note that Step 1b and Step 1c are equivalent to the condition requiring opposite sector angles equal. The intertwining calculation involving the Gauss map is relatively simple and does not require solving implicit equations.

\begin{description}
	\item[Initial Value 2] Sector angles on the two discrete coordinate curves $S^\mathrm{a}(i_1, ~0)$; $S^\mathrm{a}(-1, ~i_2)$; $S^\mathrm{b}(i_1, ~-1)$; $S^\mathrm{b}(-1, ~i_2)$; $S^\mathrm{c}(i_1, ~-1) = S^\mathrm{a}(i_1, ~0)$; $S^\mathrm{c}(0, ~i_2) = S^\mathrm{a}(-1, ~i_2)$; $S^\mathrm{d}(i_1, ~0) = S^\mathrm{b}(i_1, ~-1)$; $S^\mathrm{d}(0, ~i_2) = S^\mathrm{b}(-1, ~i_2)$. Crease lengths on the two discrete coordinate curves $\|\triangle_1x (i_1, ~0)\|$ and $\|\triangle_2x (0, ~i_2)\|$.
	\item [Step 2a] From the above initial value, as the sum of sector angles on a quadrilateral equals to $2\pi$, and the opposite sector angles at each vertex are equal, we could calculate $S^\mathrm{b}(0, ~0)$ and $S^\mathrm{d}(1, ~1)$, then apply the equality of linear dependence coefficients, we could calculate $S^\mathrm{a}(0, ~1)$ and $S^\mathrm{c}(1, ~0)$. Next, from iterative calculation we will obtain the sector angles on $x(i_2 = 1)$ and $x(i_1 = 1)$.
	\item [Step 2b] In the quadrant $\mathbb{Z}_+^2$, repeat Step 2a to obtain the sector angles of $x(i_1 \ge 0, ~i_2 \ge 0)$.
	\item [Step 2c] Use the same method described in Step 2b to calculate the other three quadrants to obtain the sector angles of the entire mesh.
	\item [Step 2d] After all the sector angles are determined, the crease lengths on the two discrete coordinate curves will fully determine the shape of the quad-mesh, up to an orthogonal transformation.
	\item [Regularity condition] The result of calculating a sector angle always falls in $(0, \pi)$.
\end{description}

Next we explain the smooth analogue of a V-hedron -- called a \textit{V-surface}. A V-hedron is a discrete geodesic conjugate net. From Section \ref{section: discrete net}, the smooth analogue of a V-hedron should be a smooth geodesic conjugate net. The additional \textbf{regularity condition} for a V-surface is non-developable, which means not being an orthogonal net simultaneously. From \eqref{eq: geodesic net}, the condition for a surface parametrization to form a geodesic conjugate net, i.e, to be a V-surface is:
\begin{equation*} 
	\begin{gathered}
		\begin{dcases}
			\dfrac{\partial^2 x}{\partial u_1^2} \mathrm{~is~on~the~plane~spanned ~from~} N \mathrm{~and~} \dfrac{\partial x}{\partial u_1} \\
			\dfrac{\partial^2 x}{\partial u_2^2} \mathrm{~is~on~the~plane~spanned ~from~} N \mathrm{~and~} \dfrac{\partial x}{\partial u_2}
		\end{dcases} ~~\Leftrightarrow~~
		\begin{dcases}
			2 \mathrm{I}_{11}  \dfrac{\partial \mathrm{I}_{12}}{\partial u_1} = \mathrm{I}_{11} \dfrac{\partial \mathrm{I}_{11}}{\partial u_2} + \mathrm{I}_{12} \dfrac{\partial \mathrm{I}_{11}}{\partial u_1} \\
			2\mathrm{I}_{22} \dfrac{\partial \mathrm{I}_{12}}{\partial u_2} = \mathrm{I}_{22} \dfrac{\partial \mathrm{I}_{22}}{\partial u_1} + \mathrm{I}_{12} \dfrac{\partial \mathrm{I}_{22}}{\partial u_2} \\
		\end{dcases} \\
		\mathrm{II}_{12} = 0
	\end{gathered}
\end{equation*}
Recall that the principal curvatures are from the simultaneous diagonalization of the first and second fundamental forms. There exists a $2 \times 2$ matrix $A$ such that:
\begin{equation*}
	\begin{gathered}
		\begin{bmatrix}
			\mathrm{I}_{11} & \mathrm{I}_{12} \\
			\mathrm{I}_{12} & \mathrm{I}_{22}
		\end{bmatrix} =  AA^\mathrm{T}\\
		\begin{bmatrix}
			\mathrm{II}_{11} & \mathrm{II}_{12} \\
			\mathrm{II}_{12} & \mathrm{II}_{22}
		\end{bmatrix} = A \begin{bmatrix}
			\kappa_1 & 0 \\
			0 & \kappa_2
		\end{bmatrix} A^\mathrm{T}
	\end{gathered}
\end{equation*}
We could infer that a sphere is not a V-surface. If so, from the above relation $\mathrm{I}_{12} = \mathrm{II}_{12} = 0$, hence on top of being a geodesic conjugate net, the surface is also an orthogonal Chebyshev net, which has zero Gaussian curvature.

A V-surface has geometric properties parallel to Proposition \ref{prop: V-hedron properties}. 
\begin{prop}
	Features for a V-surface:
	\begin{enumerate} [label={[\arabic*]}]
		\item \citep{bianchi_sopra_1890} A V-surface admits a one-parameter flex (isometric deformation) and preserves to be a V-surface in this flex.
		\item The Gauss map of a V-surface is a Chebyshev net. A V-surface is the only conjugate net with a Chebyshev Gauss map. 
	\end{enumerate}
\end{prop}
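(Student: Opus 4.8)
The plan is to reduce both parts to the Gauss--Codazzi--Mainardi equations \eqref{eq: surface compatibility} specialised to a conjugate net $\mathrm{II}_{12}=0$ together with the geodesic conditions $\Gamma^2_{11}=\Gamma^1_{22}=0$. The latter are precisely \eqref{eq: geodesic net} and involve only the first fundamental form, so they are automatically preserved by any isometry. With $\mathrm{II}_{12}=0$ and $\Gamma^2_{11}=\Gamma^1_{22}=0$, the two Codazzi equations collapse to $\partial \mathrm{II}_{11}/\partial u_2 = \mathrm{II}_{11}\Gamma^1_{12}$ and $\partial \mathrm{II}_{22}/\partial u_1 = \mathrm{II}_{22}\Gamma^2_{12}$, while the Gauss equation (the first line of \eqref{eq: surface compatibility}) still involves only the Christoffel symbols and $\kappa_\mathrm{G}$. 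The non-developability regularity condition supplies $\kappa_\mathrm{G}\neq 0$, hence $\mathrm{II}_{11}\mathrm{II}_{22}=\kappa_\mathrm{G}W\neq 0$ with $W=\mathrm{I}_{11}\mathrm{I}_{22}-\mathrm{I}_{12}^2$; I will use this non-degeneracy repeatedly.

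For [1] I would keep $\mathrm{I}$ fixed and exhibit the explicit family $\mathrm{II}^c_{11}=c\,\mathrm{II}_{11}$, $\mathrm{II}^c_{22}=c^{-1}\mathrm{II}_{22}$, $\mathrm{II}^c_{12}=0$ for $c>0$. The reduced Codazzi equations are homogeneous and linear with coefficients $\Gamma^1_{12},\Gamma^2_{12}$ depending only on $\mathrm{I}$, so a scaling of $\mathrm{II}_{11}$ by any function of $u_1$ and of $\mathrm{II}_{22}$ by any function of $u_2$ preserves them; the Gauss equation then forces the product $\mathrm{II}^c_{11}\mathrm{II}^c_{22}=\mathrm{II}_{11}\mathrm{II}_{22}$ to be unchanged, which pins the two functions down to a constant reciprocal pair $c,c^{-1}$. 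Thus each $(\mathrm{I},\mathrm{II}^c)$ satisfies Gauss--Codazzi--Mainardi and, by the fundamental theorem of surface theory (Bonnet), is realised by a surface $X^c$, unique up to rigid motion and smooth in $c$. Since $\mathrm{I}$ is fixed, every $X^c$ is isometric to $X=X^1$; since $\mathrm{II}^c_{12}=0$ and the geodesic conditions depend only on $\mathrm{I}$, each $X^c$ is again a geodesic conjugate net, i.e. a V-surface, and for $c\neq1$ the second fundamental form genuinely changes, giving a non-trivial flex. This is the classical Voss/Bianchi deformation.

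For [2] the strategy is to prove the smooth analogue of the discrete Moutard equation \eqref{eq: Moutard for N}, namely that $\partial^2 N/\partial u_1\partial u_2$ is normal to the surface. Expanding $\partial^2 N/\partial u_1\partial u_2 = P\,\partial x/\partial u_1 + Q\,\partial x/\partial u_2 + R\,N$ with the Weingarten relations \eqref{eq: derivative of the Gauss map} and the frame equations \eqref{eq: Christoffel symbol}, and dotting with $\partial N/\partial u_1$ and $\partial N/\partial u_2$, the Weingarten identities give $\tfrac12\,\partial\mathrm{III}_{11}/\partial u_2 = -P\,\mathrm{II}_{11}$ and $\tfrac12\,\partial\mathrm{III}_{22}/\partial u_1 = -Q\,\mathrm{II}_{22}$, so the Chebyshev condition on the Gauss net is equivalent to $P=Q=0$. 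Substituting the reduced Codazzi equations and the explicit Christoffel symbols \eqref{eq: Christoffel symbol explicit} into the tangential coefficients collapses them to
\begin{equation*}
	P = \frac{1}{W}\left(\mathrm{I}_{22}\mathrm{II}_{22}\,\Gamma^2_{11}-\mathrm{I}_{12}\mathrm{II}_{11}\,\Gamma^1_{22}\right), \qquad Q = \frac{1}{W}\left(\mathrm{I}_{11}\mathrm{II}_{11}\,\Gamma^1_{22}-\mathrm{I}_{12}\mathrm{II}_{22}\,\Gamma^2_{11}\right).
\end{equation*}
The forward direction is then immediate: a V-surface has $\Gamma^2_{11}=\Gamma^1_{22}=0$, so $P=Q=0$ and $N$ is a Chebyshev net. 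For the converse, reading $P=Q=0$ as a linear system in $(\Gamma^2_{11},\Gamma^1_{22})$ gives coefficient determinant $\mathrm{II}_{11}\mathrm{II}_{22}\,W=\kappa_\mathrm{G}W^2\neq0$, so $\Gamma^2_{11}=\Gamma^1_{22}=0$ and the net is geodesic; hence a conjugate net with a Chebyshev Gauss map is necessarily a V-surface.

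The main obstacle is the collapse of the tangential coefficients $P$ and $Q$ to the compact linear form above. Carrying it out means differentiating the Weingarten coefficients $a_{ij}$ of \eqref{eq: derivative of the Gauss map}, eliminating $\partial\mathrm{II}_{11}/\partial u_2$ and $\partial\mathrm{II}_{22}/\partial u_1$ via the reduced Codazzi equations, and recognising the surviving purely metric combination as $-2W\,\Gamma^1_{22}$ (respectively $-2W\,\Gamma^2_{11}$) through \eqref{eq: Christoffel symbol explicit}; this bookkeeping is where all the work lies. Once the identity is secured, every remaining claim in [2] — both implications and the uniqueness — follows from the single fact that its coefficient determinant is $\kappa_\mathrm{G}W^2$, which is why the non-developability hypothesis is exactly what the converse needs.
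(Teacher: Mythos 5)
Your proposal is correct, and I verified the key identity: with $\mathrm{II}_{12}=0$, the reduced Codazzi equations $\partial_2\mathrm{II}_{11}=\mathrm{II}_{11}\Gamma^1_{12}-\mathrm{II}_{22}\Gamma^2_{11}$ and $\partial_1\mathrm{II}_{22}=\mathrm{II}_{22}\Gamma^2_{12}-\mathrm{II}_{11}\Gamma^1_{22}$ together with the metric identities $\partial_2 W/W=2(\Gamma^1_{12}+\Gamma^2_{22})$ and $\mathrm{I}_{12}\Gamma^1_{22}+\mathrm{I}_{22}\Gamma^2_{22}=\tfrac12\partial_2\mathrm{I}_{22}$ do collapse the tangential coefficient of $\partial^2N/\partial u_1\partial u_2$ to exactly $P=\bigl(\mathrm{I}_{22}\mathrm{II}_{22}\Gamma^2_{11}-\mathrm{I}_{12}\mathrm{II}_{11}\Gamma^1_{22}\bigr)/W$, with determinant $\kappa_\mathrm{G}W^2$ for the resulting linear system. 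For part [1] your argument is essentially the paper's: the paper exhibits the same flex $\mathrm{II}_{11}(t)=\lambda(t)\mathrm{II}_{11}(0)$, $\mathrm{II}_{22}(t)=\lambda(t)^{-1}\mathrm{II}_{22}(0)$ satisfying the reduced Gauss--Mainardi--Codazzi system; your additions (Bonnet realisation, and the observation that the Gauss equation forces the two scaling functions into a constant reciprocal pair) sharpen the claim that the flex is one-parameter but do not change the route. For part [2] the paper gives no proof at all --- it explicitly says only [1] is proved by calculation --- and the route it sets up elsewhere is indirect: V-surfaces are reciprocal-parallel to K-surfaces, reciprocal-parallel nets share the same normal field, and the paper separately proves that K-surfaces are exactly the asymptotic nets with Chebyshev Gauss map. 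Your direct frame computation is therefore a genuinely different and self-contained argument; what it buys is that both implications and the uniqueness statement fall out of a single $2\times 2$ linear system whose determinant is $\kappa_\mathrm{G}W^2$, making transparent that non-developability is precisely the hypothesis needed for the converse, whereas the reciprocal-parallel route requires first constructing the partner K-surface by solving the compatibility system \eqref{eq: reciprocal parallel compatibility condition}.
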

In particular we will prove [1] from calculation. The Gauss-Mainardi-Codazzi Equations yield:
\begin{equation*} 
	\begin{dcases}
		\dfrac{\partial \Gamma^2_{12}}{\partial u_1} + \Gamma^2_{12}\Gamma^2_{12} - \Gamma^1_{11}\Gamma^2_{12} = - \mathrm{I}_{11} \kappa_\mathrm{G} \\
		\dfrac{\partial \mathrm{II}_{11}}{\partial u_2} = \mathrm{II}_{11}\Gamma^1_{12}  \\
		\dfrac{\partial \mathrm{II}_{22}}{\partial u_1} =  \mathrm{II}_{22}\Gamma^2_{12} \\
	\end{dcases}
\end{equation*}
The flex parametrized by $t \in I$ where the first fundamental form $\mathrm{I}$ is preserved and 
\begin{equation} \label{eq: flex for a V-hedron}
	\begin{gathered}
		\mathrm{II}_{11}(t) = \lambda(t)\mathrm{II}_{11}(0) \\
		\mathrm{II}_{12}(t) = 0 \\
		\mathrm{II}_{22}(t) = \dfrac{1}{\lambda(t)}\mathrm{II}_{22}(0) \\
	\end{gathered}, ~~ \lambda: I \rightarrow \mathbb{R}
\end{equation}
meets the Gauss-Mainardi-Codazzi Equations. A V-surface is the only conjugate net that has such a flex.

A V-surface is \textit{reciprocal-parallel} related to a K-surface (asymptotic Chebyshev net, Section \ref{section: coordinate net}). This result is on top of the reciprocal-parallel relation between an asymptotic net and a conjugate net. A conjugate net is \textit{reciprocal-parallel} related with an asymptotic net. Let $X(u)$ be an asymptotic net, $Y(u)$ is another surface such that $\partial y / \partial u_1$ is parallel to $\partial x / \partial u_2$ and $\partial y / \partial u_2$ is parallel to $\partial x / \partial u_1$.
\begin{equation*}
	\begin{gathered}
		\dfrac{\partial y}{\partial u_1} = -b \dfrac{\partial x}{\partial u_2}, ~~ \dfrac{\partial y}{\partial u_2} = a \dfrac{\partial x}{\partial u_1} \\
		a, ~b: I^2 \rightarrow \mathbb{R} \mathrm{~are~smooth~functions},~ a, ~b \neq 0
	\end{gathered}
\end{equation*}
For simplicity we further require $ab > 0$ over $I^2$, hence $X$ and $Y$ will share the same normal vector field:
\begin{equation*}
	N^y = \dfrac{\dfrac{\partial y}{\partial u_1} \times \dfrac{\partial y}{\partial u_2}}{\left\| \dfrac{\partial y}{\partial u_1} \times \dfrac{\partial y}{\partial u_2}\right\|} = \dfrac{\dfrac{\partial x}{\partial u_1} \times \dfrac{\partial x}{\partial u_2}}{\left\|\dfrac{\partial x}{\partial u_1} \times \dfrac{\partial x}{\partial u_2}\right\|} = N^x
\end{equation*}
\begin{equation*}
	\begin{dcases}
		\mathrm{II}_{11}^x = 0 \\
		\mathrm{II}_{22}^x = 0 \\
	\end{dcases} ~~\Leftrightarrow~~ \mathrm{II}_{12}^y = 0
\end{equation*}
The coefficients should satisfy the compatibility condition from $\partial (\partial y / \partial u_1)/ \partial u_2 = \partial (\partial y / \partial u_2) / \partial u_1$, using the Christoffel symbols, \eqref{eq: Christoffel symbol}:
\begin{equation} \label{eq: reciprocal parallel compatibility condition}
	\begin{gathered}
		\dfrac{\partial b}{\partial u_2} + a \Gamma_{11}^{2x} + b \Gamma_{22}^{2x} = 0 \\
		\dfrac{\partial a}{\partial u_1} + a \Gamma_{11}^{1x} + b \Gamma_{22}^{1x} = 0 \\
	\end{gathered}
\end{equation}
The compatibility condition is a first-order hyperbolic system for $a, ~b$, which is well-posed (Section \ref{section: differential equation}), hence $a, ~b$ are determined upon the initial value on the stationary directions $a(u_1, ~0)$ and $b(0, ~u_2)$. Given an asymptotic net $X$, $Y$ will be a conjugate net determined upon $a, ~b$ up to a translation. The inverse statement also holds. Given a conjugate net $Y$, $X$ will be an asymptotic net determined upon $a, ~b$ up to a translation.

Now on top of being asymptotic, assume $X$ is a Chebyshev net:
\begin{equation*}
	\begin{dcases}
		\dfrac{\partial^2 x}{\partial u_1 \partial u_2} \cdot \dfrac{\partial x}{\partial u_1 } = 0 \\
		\dfrac{\partial^2 x}{\partial u_1 \partial u_2} \cdot \dfrac{\partial x}{\partial u_2 } = 0 \\
	\end{dcases} ~~\Leftrightarrow~~ \begin{dcases}
		\dfrac{\partial}{\partial u_1} \left(\dfrac{1}{b}\dfrac{\partial y}{\partial u_1} \right) \cdot \dfrac{\partial y}{\partial u_2} = 0 \\
		\dfrac{\partial}{\partial u_1} \left(\dfrac{1}{b}\dfrac{\partial y}{\partial u_1} \right) \cdot \dfrac{\partial y}{\partial u_1} = 0 \\
		\dfrac{\partial}{\partial u_2} \left(\dfrac{1}{a}\dfrac{\partial y}{\partial u_2} \right) \cdot \dfrac{\partial y}{\partial u_2} = 0 \\
		\dfrac{\partial}{\partial u_2} \left(\dfrac{1}{a}\dfrac{\partial y}{\partial u_2} \right) \cdot \dfrac{\partial y}{\partial u_1} = 0 \\
	\end{dcases}
\end{equation*}
which shows that
\begin{equation*}
	\begin{gathered}
		\begin{dcases}
			\dfrac{\partial}{\partial u_1} \left(\dfrac{1}{b}\right) \mathrm{I}_{12}^y + \dfrac{1}{b} \dfrac{\partial^2 y}{\partial u_1^2} \cdot \dfrac{\partial y}{\partial u_2} = 0 \\
			\dfrac{\partial}{\partial u_1} \left(\dfrac{1}{b}\right) \mathrm{I}_{11}^y + \dfrac{1}{b} \dfrac{\partial^2 y}{\partial u_1^2} \cdot \dfrac{\partial y}{\partial u_1} = 0 \\
			\dfrac{\partial}{\partial u_1} \left(\dfrac{1}{a}\right) \mathrm{I}_{22}^y + \dfrac{1}{a} \dfrac{\partial^2 y}{\partial u_2^2} \cdot \dfrac{\partial y}{\partial u_2} = 0 \\
			\dfrac{\partial}{\partial u_1} \left(\dfrac{1}{a}\right) \mathrm{I}_{12}^y + \dfrac{1}{a} \dfrac{\partial^2 y}{\partial u_2^2} \cdot \dfrac{\partial y}{\partial u_1} = 0 \\
		\end{dcases}
		~~\Rightarrow~~
		\begin{dcases}
			\dfrac{\dfrac{\partial^2 y}{\partial u_1^2} \cdot \dfrac{\partial y}{\partial u_1}}{\dfrac{\partial^2 y}{\partial u_1^2} \cdot \dfrac{\partial y}{\partial u_2}} = \dfrac{\mathrm{I}_{11}^y}{\mathrm{I}_{12}^y} \\
			\dfrac{\dfrac{\partial^2 y}{\partial u_2^2} \cdot \dfrac{\partial y}{\partial u_2}}{\dfrac{\partial^2 y}{\partial u_2^2} \cdot \dfrac{\partial y}{\partial u_1}} = \dfrac{\mathrm{I}_{22}^y}{\mathrm{I}_{12}^y} \\
		\end{dcases}
	\end{gathered}
\end{equation*}

Geometrically it means that in the non-orthogonal frame $(\partial y/\partial u_1, ~\partial y/\partial u_2, ~N)$, $\partial^2 y/\partial u_1^2$ has no component along $\partial y/\partial u_1$, and $\partial^2 y/\partial u_2^2$ has no component along $\partial y/\partial u_2$, hence $Y$ is a geodesic net.

\begin{exam} \label{example: K to V helicoid}
	\citep{izmestiev_voss_2025} Consider a K-surface:
	\begin{equation*}
		x(u_1, ~u_2) = \begin{bmatrix}
			\cos (u_1 - u_2) / \cosh (u_1 + u_2) \\
			\sin (u_1 - u_2) / \cosh (u_1 + u_2) \\
			u_1+u_2 - \tanh (u_1+u_2)
		\end{bmatrix}
	\end{equation*}
	The second fundamental form is:
	\begin{equation*}
		\begin{bmatrix}
			\mathrm{II}_{11}^x & \mathrm{II}_{12}^x \\
			\mathrm{II}_{12}^x & \mathrm{II}_{22}^x 
		\end{bmatrix} = 
		\begin{bmatrix}
			0 & -\dfrac{2\tanh (u_1+u_2)}{\cosh (u_1+u_2)} \\[10pt]
			-\dfrac{2\tanh (u_1+u_2)}{\cosh (u_1+u_2)} & 0
		\end{bmatrix}
	\end{equation*}
	The compatibility condition \eqref{eq: reciprocal parallel compatibility condition} has a solution $a = b$:
	\begin{equation*}
		\dfrac{\partial a}{\partial u_1} = \dfrac{\partial a}{\partial u_2} = -\dfrac{2 a}{\cosh (u_1 + u_2) \sinh (u_1 + u_2)}
	\end{equation*}
	and hence:
	\begin{equation*}
		a = b = \mathrm{Const} \cdot  \left(\dfrac{1+\exp(2(u_1+u_2))}{1-\exp(2(u_1+u_2))}\right)^2
	\end{equation*}
	The V-surface is now ready to be obtain by integration. For example if $\mathrm{Const} = 1$, under a suitable sign choice:
	\begin{equation*}
		y(u_1, ~u_2) = \begin{bmatrix}
			\dfrac{2 (\cosh (u_1 + u_2) + \sinh (u_1 + u_2)) \cos(u_1 - u_2)}{ \cosh (2u_1 + 2u_2) + \sinh (2u_1 + 2u_2) +1} \\[10pt]
			\dfrac{2 (\cosh (u_1 + u_2) + \sinh (u_1 + u_2)) \sin(u_1 - u_2)}{ \cosh (2u_1 + 2u_2) + \sinh (2u_1 + 2u_2) +1} \\
			u_1-u_2
		\end{bmatrix}
	\end{equation*}
	We could see that the V-surface is in the form of $(\rho \cos \theta, ~\rho \sin \theta, ~\theta)$, hence is actually a helicoid. Concurrently, we can construct a V-hedron in the shape of a helicoid from a K-hedron in the shape of a pseudosphere using grid values of $a, ~b$ calculated above.
\end{exam}

\section{T-hedra and T-surface} \label{section: T-hedra}

In this section we will focus on the details of a T-hedron, as well as its smooth analogue called a T-surface. The information here is an excerpt from \citet{izmestiev_isometric_2024}. A T-hedron only contains involutive couplings of orthodiagonal vertices. To be specific, consider the grid depicted in Figure \ref{fig: T hedron} as an example, the condition of being orthodiagonal vertices is:
\begin{equation*}
	\begin{gathered}
		\cos \alpha_{11} \cos \gamma_{11} = \cos \beta_{11} \cos \delta_{11}, ~~
		\cos \alpha_{12} \cos \gamma_{12} = \cos \beta_{12} \cos \delta_{12} \\
		\cos \alpha_{21} \cos \gamma_{21} = \cos \beta_{21} \cos \delta_{21}, ~~
		\cos \alpha_{22} \cos \gamma_{22} = \cos \beta_{22} \cos \delta_{22} \\
	\end{gathered} 
\end{equation*} 
The condition of the involution factor being equal for all the four pairs in a Kokotsakis quadrilateral is:
\begin{equation*} 
	\begin{gathered}
		\dfrac{\tan \alpha_{11}}{\tan \beta_{11}} = \dfrac{\tan \alpha_{21}}{\tan \beta_{21}}, ~~ \dfrac{\tan \alpha_{12}}{\tan \beta_{12}} = \dfrac{\tan \alpha_{22}}{\tan \beta_{22}} \\
		\dfrac{\tan \gamma_{11}}{\tan \beta_{11}} = \dfrac{\tan \gamma_{21}}{\tan \beta_{21}}, ~~ \dfrac{\tan \gamma_{12}}{\tan \beta_{12}} = \dfrac{\tan \gamma_{22}}{\tan \beta_{22}}
	\end{gathered}	
\end{equation*}
The condition of the amplitude being equal is:
\begin{equation*}
	\dfrac{\cos \beta_{11}}{\cos \beta_{21}} = \dfrac{\cos \beta_{12}}{\cos \beta_{22}}
\end{equation*}
As $\beta_{11}+\beta_{21}+\beta_{12}+\beta_{22} = 2\pi$, we could see that either $\beta_{11}+\beta_{21} = \beta_{12} + \beta_{22} = \pi$ or $\beta_{11}+\beta_{12} = \beta_{21} + \beta_{22} = \pi$, i.e., every elementary quadrilateral is a trapezoid. Further, if $\beta_{11}+\beta_{21} = \beta_{12} + \beta_{22} = \pi$, the nearby Kokotsakis quadrilaterals must have $\beta_{21}+\beta_{31} = \beta_{22} + \beta_{32} = \pi$ and $\beta_{12}+\beta_{22} = \beta_{13} + \beta_{23} = \pi$, etc. In combination with each vertex being orthodiagonal and every pair of vertices forms an involutive coupling, we list the properties of a T-hedron below, which is also visualized in Figure \ref{fig: T hedron}.

\begin{figure}[tbph]
	\noindent \begin{centering}
		\includegraphics[width=0.9\linewidth]{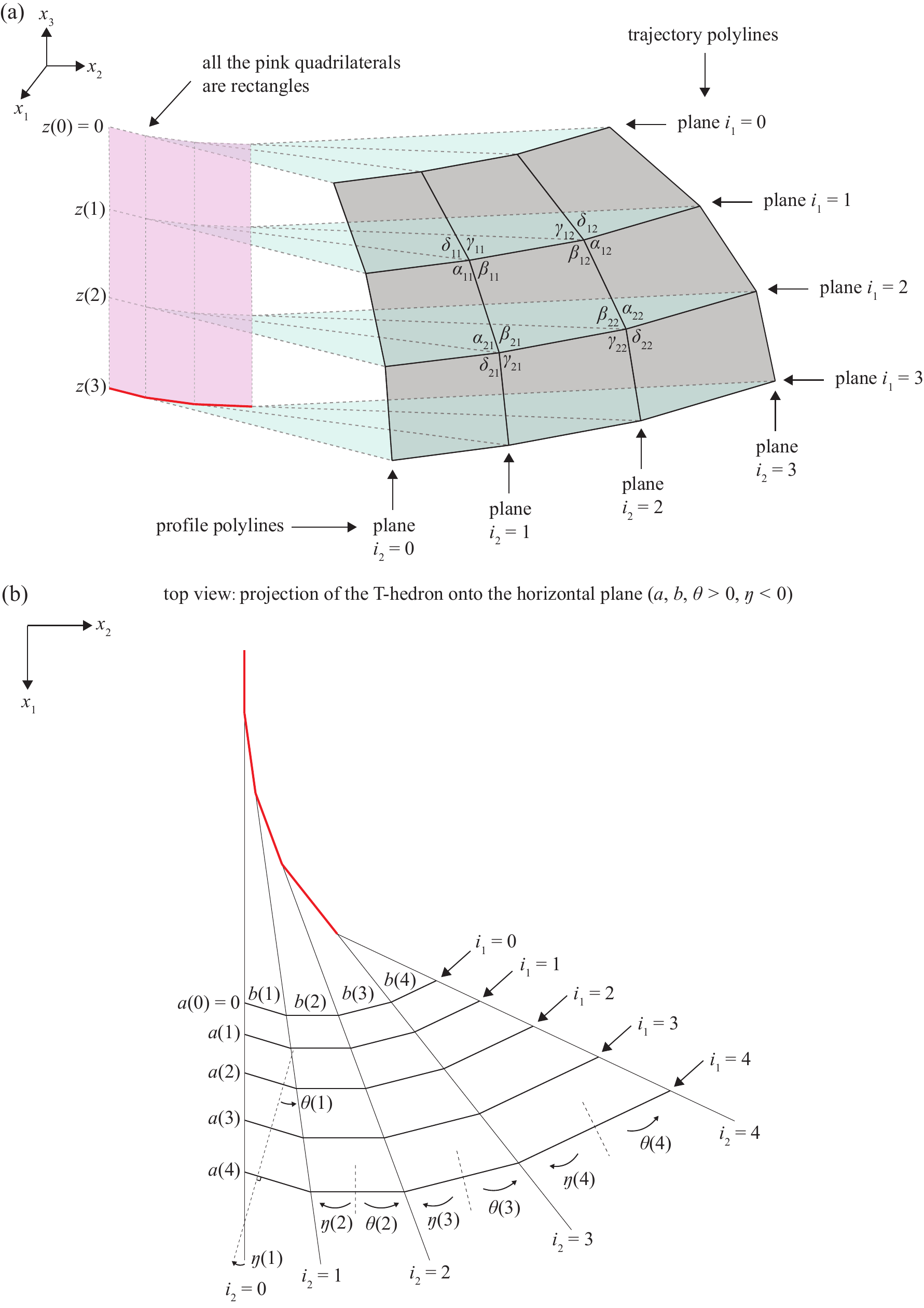}
		\par\end{centering}
	
	\caption{\label{fig: T hedron}(a) shows a T-hedron (coloured grey) and its associated geometry. The sector angles are labelled $\alpha_{i_1, ~i_2}, ~\beta_{i_1, ~i_2}, ~\gamma_{i_1, ~i_2}, ~\delta_{i_1, ~i_2},~i_1, ~i_2 \in \mathbb{Z}$. The horizontal planes ($i_1 = j, ~j \in \mathbb{Z}$) are coloured green, and all the intersections of the horizontal planes and the longitudinal ($i_2 = k, ~k \in \mathbb{Z}$) planes are drawn with dashed lines. These dashed lines intersect with each other consecutively, forming rectangles (coloured pink). (b) is a top view of the projection of a T-hedron to the horizontal plane, which also graphically explains coordinates $a(i_1)$, signed crease lengths $b(i_2)$, signed angles $\eta(i_2)$, $\theta(i_2)$.}
\end{figure}

\begin{prop} \label{prop: T-hedron properties}
	Features for a T-hedron:
	\begin{enumerate} [label={[\arabic*]}]
		\item Every elementary quadrilateral is a trapezoid, the parallel sides of all the trapezoids are all horizontal or all longitudinal.
		\item Every row of vertices ($i_1 = j, ~j \in \mathbb{Z}$) is coplanar. Every column of vertices ($i_2 = k, ~k \in \mathbb{Z}$) is coplanar.
		\item Plane $i_1 = j, ~j \in \mathbb{Z}$ is orthogonal to plane $i_2 = k, ~k \in \mathbb{Z}$.
		\item Either all the horizontal planes $i_1 = j, ~j \in \mathbb{Z}$ are parallel to each other (Figure \ref{fig: T hedron}), or all the longitudinal planes are parallel to each other. 
	\end{enumerate}
\end{prop}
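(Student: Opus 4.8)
The plan is to treat property [1] first and then derive [2]--[4] from it together with the orthodiagonal and involutive conditions. For [1], the equal-amplitude relation $\cos\beta_{11}/\cos\beta_{21}=\cos\beta_{12}/\cos\beta_{22}$ combined with the panel-planarity sum $\beta_{11}+\beta_{21}+\beta_{12}+\beta_{22}=2\pi$ already forces, within a single Kokotsakis quadrilateral, either $\beta_{11}+\beta_{21}=\beta_{12}+\beta_{22}=\pi$ or $\beta_{11}+\beta_{12}=\beta_{21}+\beta_{22}=\pi$; in each case a pair of co-interior sector angles sums to $\pi$, so the corresponding elementary quadrilateral has a pair of parallel sides and is a trapezoid. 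To promote this local statement to the global claim that \emph{all} trapezoids have parallel sides in one coordinate direction, I would run a propagation argument: two Kokotsakis quadrilaterals overlapping in a $2\times2$ block of vertices share the relevant $\beta$'s, so they cannot make opposite choices, and an induction over the connected grid fixes a single global orientation, which I take without loss of generality to be the transverse (``horizontal'') one.

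For [2] I would exploit the geometric meaning of an involutive coupling. The plan is to show that, along a coordinate line transverse to the parallel sides, the two vertex-stars joined by each leg of a trapezoid are mirror images of one another across a common plane: the equality of the involution factors $\tan\alpha/\tan\beta$ across a coupled pair, together with orthodiagonality $\cos\alpha\cos\gamma=\cos\beta\cos\delta$ at each vertex, should be reinterpreted in the folded configuration as a reflection relating the spherical images, hence the edge directions, of adjacent vertices. Composing these reflections along a whole row, I expect each row of vertices $i_1=j$ to be confined to a single plane, and likewise each column $i_2=k$, which yields the coplanarity in [2]. I would verify the reflection claim by passing to the two-vertex system of Figure~\ref{fig: flexibility condtion}(d) and checking that the involutive propagation map acts as an isometric reflection on the folded star, reusing the planarity of every (flat, rigid) trapezoidal panel.

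Property [3] then follows locally: at the shared vertex $x(j,k)$ the row-plane $i_1=j$ is spanned by the two transverse edges and the column-plane $i_2=k$ by the two longitudinal edges, so the two normals are determined by the vertex star, and the orthodiagonality relation $\cos\alpha\cos\gamma=\cos\beta\cos\delta$ is precisely the condition making these planes meet at a right angle. For [4] I would track the ``amplitude'' (scaling) data used in [1]: the relation $\cos\beta_{11}/\cos\beta_{21}=\cos\beta_{12}/\cos\beta_{22}$ says that consecutive trapezoids in a strip share a common perpendicular-bisector direction, so the reflections relating successive members of one plane family reduce to pure translations and force those planes to be mutually parallel; the two alternatives of the dichotomy in [4] correspond exactly to the two orientation cases fixed in [1], as illustrated in Figure~\ref{fig: T hedron}.

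The step I expect to be the main obstacle is establishing the geometric reflection interpretation of the involutive coupling in [2] --- converting the purely algebraic equalities among $\tan\alpha/\tan\beta$ and the orthodiagonality condition into the rigid statement that adjacent vertex-stars are exact mirror images across a fixed plane throughout the one-parameter flex. Handling this carefully, including the sign and branch bookkeeping of the involution and distinguishing the genuine pencil-of-planes case from the parallel case in [4], is where the real work lies; the remaining planarity, orthogonality, and parallelism conclusions should then follow from the short local arguments sketched above.
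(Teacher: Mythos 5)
Your argument for [1] is the paper's: the equal-amplitude relation $\cos\beta_{11}/\cos\beta_{21}=\cos\beta_{12}/\cos\beta_{22}$ together with the planarity sum $\beta_{11}+\beta_{21}+\beta_{12}+\beta_{22}=2\pi$ forces one of the two pairings to equal $\pi$, and the overlap of adjacent Kokotsakis quadrilaterals propagates a single orientation across the grid. Your local reading of [3] --- that orthodiagonality $\cos\alpha\cos\gamma=\cos\beta\cos\delta$ at a vertex is exactly orthogonality of the plane spanned by the two row edges and the plane spanned by the two column edges --- is correct and makes explicit something the paper leaves implicit. For [4] the paper's argument is shorter and needs only [2] and [3]: if two horizontal planes intersected, every longitudinal plane would be orthogonal to both, hence would have its normal along their line of intersection, so all longitudinal planes would be parallel; you do not need to track amplitudes or reflections here.

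The genuine gap is [2], and the mechanism you propose for it fails. A reflection is an isometry, so if the vertex stars at $x(j,k)$ and $x(j+1,k)$ were mirror images their sector angles would have to coincide pairwise; but the involutive coupling only imposes $\tan\alpha_{11}/\tan\beta_{11}=\tan\alpha_{21}/\tan\beta_{21}$ (and the analogous relation in $\gamma$), not equality of the angles themselves, and a T-hedron's trapezoids are isosceles only in the special molding-surface case. In general the two legs of a panel have different directions and the height increments $z(j+1)-z(j)$ vary with $j$, so no fixed mirror plane relates adjacent stars, and composing such (nonexistent) reflections cannot confine a row to a plane. Note also that [2] and [4] are entangled in a way your ordering hides: from [1], all row edges $x(j,k+1)-x(j,k)$ in the strip between columns $k$ and $k+1$ are parallel to a common direction $d_k$, so every row is a polyline with the same sequence of edge directions $(d_k)_k$ up to scalars, and coplanarity of the rows is then \emph{equivalent} to all the $d_k$ lying in one common plane --- which is precisely the parallelism statement of [4] for that family. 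So [2] cannot be settled before [4] by a purely local argument; the additional input of the involution-factor equalities is what pins the $d_k$ into a pencil. The paper itself does not close this step either: it proves [1] and [4] and attributes the derivation of [2]--[3] from the orthodiagonal and involutive conditions to Izmestiev, Rasoulzadeh and Tervooren, whose explicit parametrization (\eqref{eq: T-hedra 1} and \eqref{eq: T-hedra 2} in Section \ref{section: T-hedra}) encodes the coplanarity. If you want a self-contained proof, work from that parametrization or show directly that the coupling conditions force the $d_k$ to be coplanar, rather than through a reflection symmetry the surface does not possess.
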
 

Statement [4] holds since if not all horizontal planes are parallel to each other, we could take two intersecting horizontal planes, all the longitudinal planes will be perpendicular to this intersection, hence all the longitudinal planes are parallel to each other.

To reach an analytical description of a T-hedron, we will use the quantities graphically defined in Figure \ref{fig: T hedron}(b) to write the coordinate of every vertex of the T-hedron: $\eta(1) \in (-\pi/2, \pi/2)$ is the rotation from the projection of plane $i_2 = 0$ to the line perpendicular to the parallel edges of trapezoid on column 0; $\theta(1) \in (-\pi/2, \pi/2)$ is the rotation from the aforementioned line to the projection of plane $i_2 = 1$, $\eta(i_2) \in (-\pi/2, \pi/2)$ and $\theta(i_2) \in (-\pi/2, \pi/2)$ are defined in a similar way. $a(i_1) \in \mathbb{R}$ are the coordinates along the $x_1$ axis of the projection of row $i_1$, column $0$ of the T-hedron; $b(i_2) \in \mathbb{R}$ are the signed crease lengths of the projection of row $0$, column $i_2$ of the T-hedron. In Figure \ref{fig: T hedron} all the $a(i_1 \neq 0)$, $b(i_2)$ are positive. The projection of each row $i_1 = j$ of the T-hedron is called a \textbf{trajectory polyline}. The projection of each column $i_2 = k$ of the T-hedron is called a \textbf{profile polyline}. In addition to being a discrete surface, we further apply the regularity condition to the data mentioned above:

\begin{description}
	\item [Additional regularity condition] $b(i_2) \neq 0$ for all $i_2$, $z(0) \neq z(1) \neq \cdots \neq z(i_1)$ for all $i_1$.
\end{description}

Let
\begin{equation*}
	\phi(i_2) = \sum \limits_{k=1}^{i_2} \eta(k) + \sum \limits_{k=1}^{i_2} \theta(k), ~~\psi(i_2) = \sum \limits_{k=1}^{i_2} \eta(k) + \sum \limits_{k=1}^{i_2-1} \theta(k)
\end{equation*}
The vertices on row $i_1$ are on the same horizontal plane:
\begin{equation} \label{eq: T-hedra 1}
	x(i_1, ~i_2) = \begin{bmatrix}
		x_1(i_1, ~i_2) \\
		x_2(i_1, ~i_2) \\ 
		z(i_1)
	\end{bmatrix}
\end{equation}
the coordinates of vertices on the first row $i_1 = 0$ is:
\begin{equation*}
	\begin{bmatrix}
		x_1(0, ~i_2) \\
		x_2(0, ~i_2) 
	\end{bmatrix} = \sum \limits_{k=1}^{i_2} b(k) \begin{bmatrix}
		- \sin \psi(k) \\
		\cos \psi(k)
	\end{bmatrix}
\end{equation*}
The signed distance between $[x_1(i_1, ~1); ~x_2(i_1, ~1)]$ and $[x_1(i_1, ~0); ~x_2(i_1, ~0)]$ on the horizontal plane is:
\begin{equation*}
	a(i_1) \dfrac{\cos \eta(1)}{\cos \theta(1)}
\end{equation*}
Similarly, the signed distance between $[x_1(i_1, ~i_2); ~x_2(i_1, ~i_2)]$ and $[x_1(i_1, ~0); ~x_2(i_1, ~0)]$ on the horizontal plane is:
\begin{equation*}
	\begin{gathered}
		a(i_1) \dfrac{\cos \eta(1) \cos \eta(2) \cdots \cos \eta(i_2)}{\cos \theta(1) \cos \theta(2) \cdots \cos \theta(i_2)} = a(i_1) \prod \limits_{k = 1}^{i_2} \dfrac{ \cos \eta(k)}{ \cos \theta(k)} = a(i_1) c(i_2) \\ \mathrm{~if~set~} c(i_2) = \prod \limits_{k = 1}^{i_2} \dfrac{ \cos \eta(k) }{ \cos \theta(k) }
	\end{gathered}
\end{equation*}
then we could calculate the coordinate on each column:
\begin{equation} \label{eq: T-hedra 2}
	\begin{aligned}
		\begin{bmatrix}
			x_1(i_1, ~i_2) \\
			x_2(i_1, ~i_2) 
		\end{bmatrix} & = 	\begin{bmatrix}
			x_1(0, ~i_2) \\
			x_2(0, ~i_2) 
		\end{bmatrix} + a(i_1) c(i_2) \begin{bmatrix}
			\cos \phi(i_2) \\
			\sin \phi(i_2) 
		\end{bmatrix} \\
		& = \sum \limits_{k=1}^{i_2} b(k) \begin{bmatrix}
			- \sin \psi(k) \\
			\cos \psi(k)
		\end{bmatrix} + a(i_1) c(i_2) \begin{bmatrix}
			\cos \phi(i_2) \\
			\sin \phi(i_2) 
		\end{bmatrix}
	\end{aligned} 
\end{equation}
To summarize, the dataset $\eta(i_2), ~\theta(i_2), ~a(i_1), ~b(i_2), ~z(i_1)$, or equivalently $\phi(i_2), ~\psi(i_2), ~a(i_1), ~b(i_2), ~z(i_1)$ uniquely determines a T-hedron upon the regularity condition.

\cite{izmestiev_isometric_2024} also provides several special T-hedra with graphical illustration, including 1) the molding surface: $\eta(i_2) = \theta(i_2)$. Here every trapezoid is isosceles, consequently, every trapezoid have same sector angles; 2) the axial surface: the trajectory polyline at $i_1 = 0$ degenerates to a single point; 3) surface of revolution: being both a molding surface and an axial surface; 4) translational surface: the trajectory polyline at $i_1 = 0$ degenerates to a single point at infinity. Here every trapezoid is a parallelogram.

Next we will calculate the coordinates of all the vertices $x(i; ~t), ~t \in [-\epsilon, \epsilon], ~\epsilon \in (0, 1)$ in its one-parameter folding motion. By applying a proper rotation and translation, all the green planes of $x(i; ~t)$ in Figure \ref{fig: T hedron}(a) could be set horizontal, with $z(0;~t) = 0$ for all $t \in [-\epsilon, \epsilon]$. The deformed T-hedron has the parametrization below:
\begin{equation*}
	x(i_1, ~i_2; ~t) = \begin{bmatrix}
		x_1(i_1, ~i_2; ~t) \\
		x_2(i_1, ~i_2; ~t) \\ 
		z(i_1; ~t)
	\end{bmatrix} 
\end{equation*}
\begin{equation*}
	\begin{bmatrix}
		x_1(i_1, ~i_2; ~t) \\
		x_2(i_1, ~i_2; ~t) 
	\end{bmatrix} = \sum \limits_{k=1}^{i_2} b(k) \begin{bmatrix}
		- \sin \psi(k; ~t) \\
		\cos \psi(k; ~t)
	\end{bmatrix} + a(i_1; ~t)c(i_2; ~t) \begin{bmatrix}
		\cos \phi(i_2; ~t) \\
		\sin \phi(i_2; ~t) 
	\end{bmatrix}
\end{equation*}
\begin{equation*}
	\phi(i_2; ~t) = \sum \limits_{k=1}^{i_2} \eta(k; ~t) + \sum \limits_{k=1}^{i_2} \theta(k; ~t), ~~\psi(i_2; ~t) = \sum \limits_{k=1}^{i_2} \eta(k; ~t) + \sum \limits_{k=1}^{i_2-1} \theta(k; ~t)
\end{equation*}
Note that $b(i_2)$ is irrelevant of $k$. We will see it from the analysis below. 

We will analyse how a T-hedron deforms from its projection on the horizontal planes. In Figure \ref{fig: T hedron deformation}(b) we extract four trapezoids $(i_1, ~i_2) = (0, ~0)$, $(0, ~1)$, $(1, ~0)$, $(1, ~1)$ (coloured black) from Figure \ref{fig: T hedron} and consider its deformed state (coloured red). Take quadrilateral $ABED$ as example, the key geometrical constraint is $DD'E'E$ preserves to be a rectangle. Further, $BCFE$ will deform to $BCF'E'$ under this constraint: $EE'F'F$ preserves to be a rectangle after a possible rotation and translation of $BCF'E'$. Here, $BCHG$ is rotated from $BCF'E'$, and $EFHG$ preserves to be a rectangle. From how the deformed state is generated, we could see that, $b(i_2)$ preserves in the motion for all $i_2$. Now consider the first row of quadrilaterals, we could list $i_2+1$ equations from the crease lengths of the T-hedron being preserved for all $t \in [-\epsilon, \epsilon]$. Note that $a(0; ~t) = 0$ for all $t \in [-\epsilon, \epsilon]$, but we still write it for consistency with the other rows of quadrilaterals.
\begin{equation*}
	\begin{dcases}
		\begin{split}
			& \quad (a(1; ~0)-a(0; ~0))^2 + (z(1; ~0)-z(0; ~0))^2 \\ & = (a(1; ~t)-a(0; ~t))^2 + (z(1;~t)-z(0;~t))^2 
		\end{split} \\
		\begin{split}
			& \quad (a(1; ~0)-a(0; ~0))^2c^2(1; ~0) + (z(1; ~0)-z(0; ~0))^2 \\ &= (a(1; ~t)-a(0; ~t))^2c^2(1; ~t) + (z(1;~t)-z(0;~t))^2 
		\end{split} \\
		\cdots \\
		\begin{split}
			& \quad (a(1; ~0)-a(0; ~0))^2c^2(i_2; ~0) + (z(1; ~0)-z(0; ~0))^2 \\ &= (a(1; ~t)-a(0; ~t))^2c^2(i_2; ~t) + (z(1;~t)-z(0;~t))^2
		\end{split}
	\end{dcases}
\end{equation*}

\begin{figure}[tbph]
	\noindent \begin{centering}
		\includegraphics[width=1\linewidth]{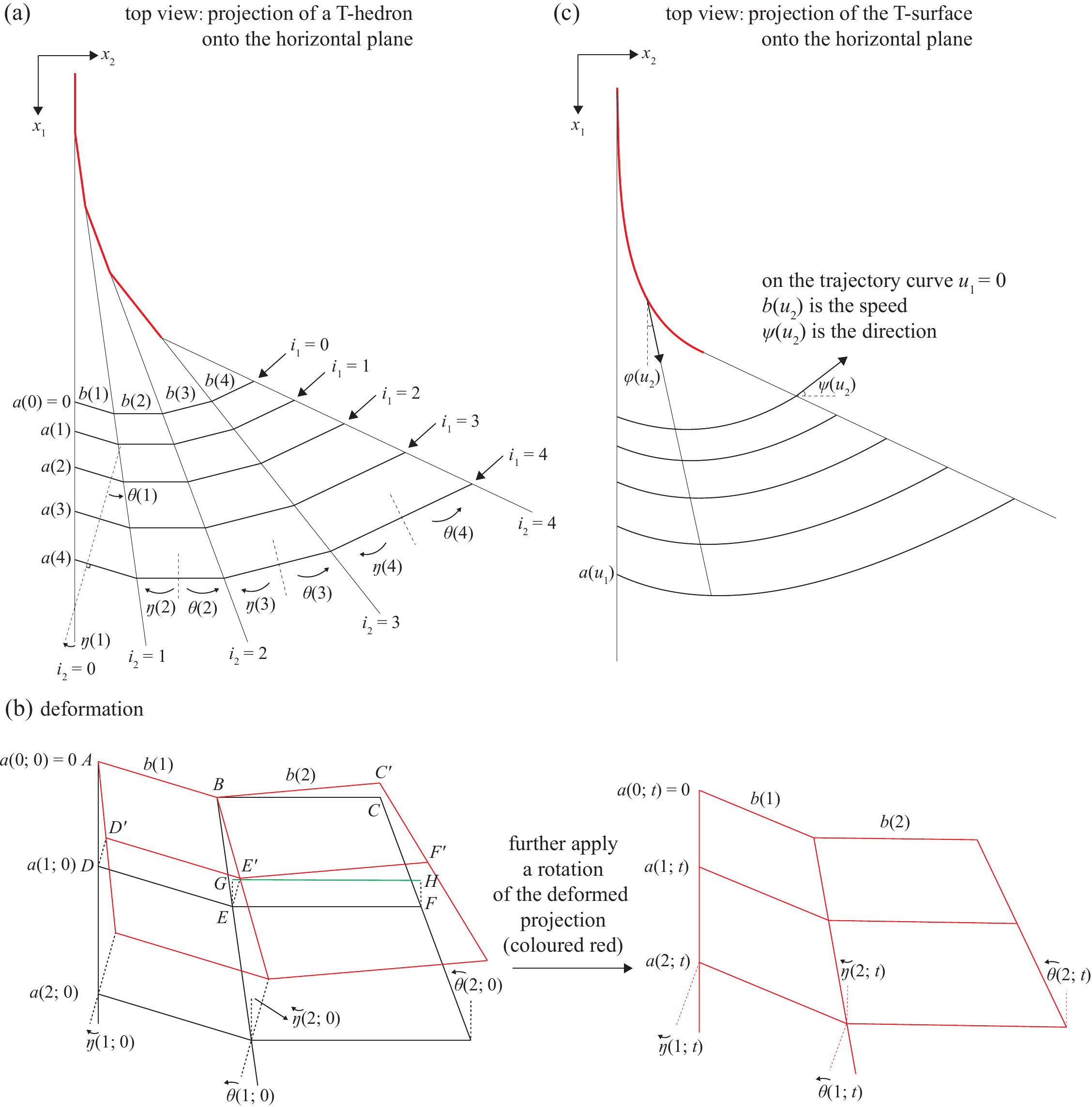}
		\par\end{centering}
	
	\caption{\label{fig: T hedron deformation} An illustration to the deformation of a T-hedron from its projection onto the horizontal plane. We let the horizontal plane remain horizontal and the vertex $x(0, ~0)$ fixed -- consequently $a(0;~t) = 0$ for all $t$. (b) shows how the trapezoid projection deforms from (a). (c) depicts the projection of the corresponding T-surface onto the horizontal plane.}
\end{figure}

From these equations, we could draw a parametrization starting with:
\begin{equation}
	a(1;~t) = a(1;~0) \sqrt{1 + t}
\end{equation}
Consequently,
\begin{equation}
	c^2(i_2; ~t) = \dfrac{ c^2(i_2;~0) + t}{1 + t}, ~~\mathrm{for~all~}i_2
\end{equation}
Further from Figure \ref{fig: T hedron deformation}(b):
\begin{equation*}
	\begin{gathered}
		a(1; ~0) c(i_2-1;~0) \sin \eta(i_2; ~0) = a(1; ~t) c(i_2-1;~t) \sin \eta(i_2;~t) \\
		a(1; ~0) c(i_2;~0) \sin \theta(i_2;~0) = a(1;~t) c(i_2;~t) \sin \theta(i_2;~t)
	\end{gathered}
\end{equation*}
that is to say
\begin{equation*}
	\begin{gathered}
		\dfrac{\sin \eta(i_2; ~t)}{\sin \eta(i_2;~0)} = \dfrac{a(1; ~0) c(i_2-1;~0)}{a(1; ~t) c(i_2-1;~t)} = \dfrac{c(i_2-1;~0)}{\sqrt{c^2(i_2-1;~0) + t}} \\
		\dfrac{\sin \theta(i_2;~t)}{\sin \theta(i_2;~0)} = \dfrac{a(1; ~0) c(i_2;~0)}{a(1; ~t) c(i_2;~t)} = \dfrac{c(i_2;~0)}{\sqrt{c^2(i_2;~0) + t}} \\
	\end{gathered}
\end{equation*}
Next we will calculate $z(i_1; ~t)$, from the second row of quadrilateral:
\begin{equation*}
	\begin{dcases}
		\begin{split}
			& \quad (a(i_1; ~0)-a(i_1-1; ~0))^2 + (z(i_1; ~0)-z(i_1-1; ~0))^2 \\ & = (a(i_1; ~t)-a(i_1-1; ~t))^2 + (z(i_1;~t)-z(i_1-1;~t))^2
		\end{split} \\
		\begin{split}
			& \quad (a(i_1; ~0)-a(i_1-1; ~0))^2c^2(1; ~0) + (z(i_1; ~0)-z(i_1-1; ~0))^2 \\ & = (a(i_1; ~t)-a(i_1-1; ~t))^2c^2(1; ~t) + (z(i_1;~t)-z(i_1-1;~t))^2 \\
		\end{split} \\
		\cdots \\
		\begin{split}
			& \quad	(a(i_1; ~0)-a(i_1-1; ~0))^2c^2(i_2; ~0) + (z(i_1; ~0)-z(i_1-1; ~0))^2 \\ & = (a(i_1; ~t)-a(i_1-1; ~t))^2c^2(i_2; ~t) + (z(i_1;~t)-z(i_1-1;~t))^2 
		\end{split}
	\end{dcases}
\end{equation*}
The result is an iterative expression for $z(i_1;~t)$:
\begin{equation}
	a(i_1;~t) = a(i_1;~0) \sqrt{1+t} \mathrm{~~for~all~} i_1 
\end{equation}
\begin{equation*}
	\begin{gathered}
		z^2(1;~t) = z^2(1; ~0) - t (a(1;~0)-a(0;~0))^2 \\
		(z(2;~t)-z(1;~t))^2 = (z(2;~0)-z(1;~0))^2 - t (a(2;~0)-a(1;~0))^2 \\
		(z(3;~t)-z(2;~t))^2 = (z(3;~0)-z(2;~0))^2 - t (a(3;~0)-a(2;~0))^2 \\
		\cdots \\
		(z(i_1;~t)-z(i_1-1;~t))^2 = (z(i_1;~0)-z(i_1-1;~0))^2 - t (a(i_1;~0)-a(i_1-1;~0))^2 \\
	\end{gathered} 
\end{equation*}
Calculate the square root of every equation above and sum them up, we could obtain:
\begin{equation}
	\begin{gathered}
		z(i_1;~t) =  \sum \limits_{j = 1}^{i_1} \mathrm{sign} \left( \triangle z(j;~0) \right) \sqrt{(\triangle z(j;~0))^2 - t (\triangle a(j;~0))^2} \\
		\triangle z(j;~0) = z(j;~0)-z(j-1;~0), ~~\triangle a(j;~0) = a(j;~0)-a(j-1;~0)
	\end{gathered}
\end{equation}

Now we are in a good position to discuss T-surface. A \textit{T-surface} is a conjugate net where 1) each coordinate curve $u_1 = \mathrm{Const}, ~u_2 = \mathrm{Const}$ is coplanar; 2) each plane $u_1 = \mathrm{Const}$ is orthogonal to $u_2 = \mathrm{Const}$. Immediately, from statement [4] of Proposition \ref{prop: T-hedron properties} we could know that either all planes $u_1 = \mathrm{Const}$ are parallel to each other, or all planes $u_2 = \mathrm{Const}$ are parallel to each other.

As depicted in Figure \ref{fig: T hedron deformation}(c), a T-surface has the following parametrization, which corresponds well with a T-hedron in Figure \ref{fig: T hedron deformation}(a). Corresponding to \eqref{eq: T-hedra 1} and \eqref{eq: T-hedra 2}: 
\begin{equation} \label{eq: T-surface 1}
	x(u_1, ~u_2) = \begin{bmatrix}
		x_1(u_1, ~u_2) \\
		x_2(u_1, ~u_2) \\ 
		x_3(u_1)
	\end{bmatrix} 
\end{equation}
\begin{equation} \label{eq: T-surface 2}
	\begin{aligned}
		\begin{bmatrix}
			x_1(u_1, ~u_2) \\
			x_2(u_1, ~u_2) 
		\end{bmatrix} & = 	\begin{bmatrix}
			x_1(0, ~u_2) \\
			x_2(0, ~u_2) 
		\end{bmatrix} + a(u_1) c(u_2) \begin{bmatrix}
			\cos \phi(u_2) \\
			\sin \phi(u_2) 
		\end{bmatrix} \\
		& = \int \limits_{0}^{u_2} b(v) \begin{bmatrix}
			- \sin \psi(v) \\
			\cos \psi(v)
		\end{bmatrix} + a(u_1) c(u_2) \begin{bmatrix}
			\cos \phi(u_2) \\
			\sin \phi(u_2) 
		\end{bmatrix}
	\end{aligned} 
\end{equation}
Here the dataset $\phi(u_2), ~\psi(u_2), ~a(u_1), ~b(u_2), ~z(u_1)$ are exactly the smooth analogue of $\phi(i_2), ~\psi(i_2), ~a(i_1), ~b(i_2), ~z(i_1)$. Specifically, consider the projection onto the horizontal plane, $\phi(u_2)$ is the direction of line $[x_1(u_1; ~u_2);~ x_2(u_1; ~u_2)]$, $\psi(u_2)$ and $b(u_2)$ are the direction and length of the tangent vector along the trajectory curve $[x_1(0; ~u_2); ~x_2(0; ~u_2)]$, $a(u_1) = x_1(u_1; ~0)$, $c(u_2)$ is the length expansion ratio at position $u_2$ compared to the $x_1$ axis. Further, $\eta(u_2) = \theta(u_2) = \phi(u_2)-\psi(u_2)$.

It is straightforward to examine the geometrical properties of a T-surface from \eqref{eq: T-surface 1} and \eqref{eq: T-surface 2}. Further,
\begin{equation*}
	\dfrac{\partial x}{\partial u_1} = \begin{bmatrix}
		\dfrac{\dif a}{\dif u_1} c \cos \phi \\[8pt]
		\dfrac{\dif a}{\dif u_1} c \sin \phi \\[8pt]
		\dfrac{\dif z}{\dif u_1}
	\end{bmatrix}, ~~ \dfrac{\partial x}{\partial u_2} = \begin{bmatrix}
		-b \sin \psi + a \left( \dfrac{\dif c}{\dif u_2} \cos \phi - c\sin \phi \dfrac{\dif \phi}{\dif u_2} \right) \\[8pt]
		b \cos \psi + a \left( \dfrac{\dif c}{\dif u_2} \sin \phi + c\cos \phi \dfrac{\dif \phi}{\dif u_2} \right) \\[8pt]
		0
	\end{bmatrix}
\end{equation*}
\begin{equation*}
	\dfrac{\partial^2 x}{\partial u_1 \partial u_2} = \begin{bmatrix}
		\dfrac{\dif a}{\dif u_1} \left( \dfrac{\dif c}{\dif u_2} \cos \phi - c\sin \phi \dfrac{\dif \phi}{\dif u_2} \right) \\[8pt]
		\dfrac{\dif a}{\dif u_1} \left( \dfrac{\dif c}{\dif u_2} \sin \phi + c\cos \phi \dfrac{\dif \phi}{\dif u_2} \right) \\[8pt]
		0
	\end{bmatrix}
\end{equation*}
\begin{equation*}
	\dfrac{\partial x}{\partial u_1} \times \dfrac{\partial x}{\partial u_2} = \begin{bmatrix}
		- \dfrac{\dif z}{\dif u_1} \left( 	b \cos \psi + a \left( \dfrac{\dif c}{\dif u_2} \sin \phi + c\cos \phi \dfrac{\dif \phi}{\dif u_2} \right) \right) \\[8pt]
		\dfrac{\dif z}{\dif u_1} \left(-b \sin \psi + a \left( \dfrac{\dif c}{\dif u_2} \cos \phi - c\sin \phi \dfrac{\dif \phi}{\dif u_2} \right)\right) \\[8pt]
		\star ~~(\mathrm{not~important~in~further~calculation})
	\end{bmatrix} 
\end{equation*}
The condition of conjugate net, $\mathrm{I}_{12} = 0$, means that:
\begin{equation} \label{eq: T-surface conjugate pre}
	\begin{gathered}
		\begin{bmatrix}
			- \sin \psi \\
			\cos \psi
		\end{bmatrix} \mathrm{~is~parallel~to~} \dfrac{\dif }{\dif u_2} \begin{bmatrix}
			c \cos \phi \\
			c \sin \phi
		\end{bmatrix} = \begin{bmatrix}
			\dfrac{\dif c}{\dif u_2} \cos \phi - c\sin \phi \dfrac{\dif \phi}{\dif u_2} \\[8pt]
			\dfrac{\dif c}{\dif u_2} \sin \phi + c\cos \phi \dfrac{\dif \phi}{\dif u_2} \\[8pt]
		\end{bmatrix} \\
		\Rightarrow ~~ \dfrac{\dif c}{\dif u_2} \cos(\phi - \psi) - c \dfrac{\dif \phi}{\dif u_2} \sin(\phi - \psi) = 0
	\end{gathered}
\end{equation}
Use $\eta(u_2) = \phi(u_2) - \psi(u_2)$, \eqref{eq: T-surface conjugate pre} is equivalent to
\begin{equation} \label{eq: T-surface conjugate}
	c \dfrac{\dif \phi}{\dif u_2} \tan \eta =  \dfrac{\dif c}{\dif u_2} 
\end{equation}
To summarize, a T-surface is described by \eqref{eq: T-surface 1} and \eqref{eq: T-surface 2}, upon the conjugate net condition \eqref{eq: T-surface conjugate}. As a construction method of a T-surface, one can specify a trajectory curve which provides the information of $[x_1(0, ~u_2); ~x_2(0, ~u_2)]$, and a profile curve which provides the information of $a(u_1) c(0) [\cos \phi(0);~ \sin \phi(0)]$ and $x_3(u_1)$. Further, with the information of $\phi(u_2)$ and $\psi(u_2)$, \eqref{eq: T-surface conjugate} is an ordinary differential equation for $c(u_2)$ with initial condition $c(0)$. The solution is listed below up to a constant factor, which can be determined from the given profile curve:
\begin{equation*}
	c(u_2) = c(0) \mathrm{exp} \left(\int \limits_{0}^{u_2} \dfrac{\dif \phi}{\dif u_2} (v) \tan \eta(v) \dif v \right)
\end{equation*}   

The special cases of a T-surface include 1) the molding surface: The condition that every trapezoid is isosceles and every trapezoid have same sector angles translate to the ratio $c(u_2) = \mathrm{Const}$. From \eqref{eq: T-surface conjugate}, $\eta(u_2) = 0$ for all $u_2$. 2) the axial surface, the trajectory curve at $u_1 = 0$, i.e., $[x_1(0; ~u_2); ~x_2(0; ~u_2)]$ degenerates to a single point, which means the speed $b(u_2) = 0$ for all $u_2$. 3) surface of revolution: being both a molding surface and an axial surface; 4) translational surface: the the trajectory curve at $u_1 = 0$, i.e., $[x_1(0; ~u_2); ~x_2(0; ~u_2)]$ degenerates to a single point at infinity, which means $\phi(u_2) = \mathrm{Const}$ for all $u_2$. From \eqref{eq: T-surface conjugate}, $c(u_2) = \mathrm{Const}$ for all $u_2$. The above information means that translational T-surface is a scanned surface -- a profile curve scanning along a trajectory curve.

The deformation of a T-surface resembles the deformation of a T-hedron, in the form of:
\begin{equation*}
	x(u_1, ~u_2; ~t) = \begin{bmatrix}
		x_1(u_1, ~u_2; ~t) \\
		x_2(u_1, ~u_2; ~t) \\ 
		z(u_1; ~t)
	\end{bmatrix} 
\end{equation*}
\begin{equation*}
	\begin{bmatrix}
		x_1(u_1, ~u_2; ~t) \\
		x_2(u_1, ~u_2; ~t) 
	\end{bmatrix} = \int \limits_{0}^{u_2} b(v) \begin{bmatrix}
		- \sin \psi(v; ~t) \\
		\cos \psi(v; ~t)
	\end{bmatrix} + a(u_1; ~t)c(u_2; ~t) \begin{bmatrix}
		\cos \phi(u_2; ~t) \\
		\sin \phi(u_2; ~t) 
	\end{bmatrix}
\end{equation*}
\begin{equation*}
	a(u_1; ~t) = a(u_1; ~0) \sqrt{1+t}
\end{equation*}
\begin{equation*}
	z(u_1;~t) = \int \limits_{0}^{u_1} \mathrm{sign}\left( \dfrac{\partial z}{\partial u_1}(u_1;~0) \right) \sqrt{\left( \dfrac{\partial z}{\partial u_1} (u_1;~0) \right)^2 - t \left(\dfrac{\partial a}{\partial u_1} (u_1;~0) \right)^2}
\end{equation*}
\begin{equation*}
	c(u_2;~t) = \dfrac{c^2(u_2;~0) + t}{1 + t}
\end{equation*}
In the discrete T-hedron we derive the expressions of deformation from the preserved crease length, in the smooth T-surface it is translated into the first fundamental form preserves. In combination with the parallel condition, $\mathrm{I}_{22} = 0$ for all $t$ if and only if
\begin{equation*}
	a \dfrac{\dif }{\dif u_2} \begin{bmatrix}
		c \cos \phi \\
		c \sin \phi
	\end{bmatrix} \mathrm{~has~a~constant~norm}
\end{equation*}
This condition is written as:
\begin{equation*}
	\begin{split}
		& \quad a^2(u_1;~t) \left( \dfrac{\partial c}{\partial u_2} (u_2;~t) \cos \phi(u_2;~t) - c(u_2;~t)\sin \phi(u_2;~t) \dfrac{\partial \phi}{\partial u_2} (u_2;~t) \right)^2 \\
		& + a^2(u_1;~t) \left( \dfrac{\partial c}{\partial u_2}(u_2;~t) \sin \phi(u_2;~t) + c(u_2;~t)\cos \phi(u_2;~t) \dfrac{\partial \phi}{\partial u_2}(u_2;~t) \right)^2 \\
		& = a^2(u_1;~0) \left( \dfrac{\partial c}{\partial u_2}(u_2;~0) \cos \phi(u_2;~0) - c(u_2;~0)\sin \phi(u_2;~0) \dfrac{\partial \phi}{\partial u_2}(u_2;~0) \right)^2 \\
		& + a^2(u_1;~0) \left( \dfrac{\partial c}{\partial u_2}(u_2;~0) \sin \phi(u_2;~0) + c(u_2;~0)\cos \phi(u_2;~0) \dfrac{\partial \phi}{\partial u_2}(u_2;~0) \right) ^2 
	\end{split}
\end{equation*}
which means
\begin{equation*}
	\begin{split}
		& \quad (1+t) \left(\left( \dfrac{\partial c}{\partial u_2}(u_2;~t) \right)^2 + c^2(u_2;~t) \left(\dfrac{\partial \phi}{\partial u_2}(u_2;~t) \right)^2 \right)  \\ & = \left( \dfrac{\partial c}{\partial u_2}(u_2;~0) \right)^2 + c^2(u_2;~0) \left(\dfrac{\partial \phi}{\partial u_2}(u_2;~0) \right)^2
	\end{split}
\end{equation*}
and we could obtain:
\begin{equation*}
	\dfrac{\partial \phi}{\partial u_2}(u_2;~t) = \dfrac{\sqrt{ c^4(u_2;~0) \left(\dfrac{\partial \phi}{\partial u_2}(u_2;~0) \right)^2 + t\left(c^2(u_2;~0) \left(\dfrac{\partial \phi}{\partial u_2}(u_2;~0) \right)^2 + \left(\dfrac{\partial c}{\partial u_2} (u_2;~0)\right)^2 \right)}}{c^2(u_2;~0)+t}
\end{equation*}
\begin{equation}
	\phi(u_2;~t) = \int \limits_{0}^{u_2} \dfrac{\partial \phi}{\partial v}(v;~t) \dif v
\end{equation}
The parallel condition \eqref{eq: T-surface conjugate} means:
\begin{equation*}
	\tan \eta(u_2;~t) = \dfrac{\dfrac{\partial c}{\partial u_2}(u_2;~0)}{c(u_2;~t)\dfrac{\partial \phi}{\partial u_2}(u_2;~t)} = \dfrac{c(u_2;~t) \sin \eta(u_2;~0)}{\sqrt{c^2(u_2;~t) \cos^2 \eta(u_2;~0) + t}}
\end{equation*}
and finally
\begin{equation*}
	\psi(u_2;~t) = \phi(u_2;~t) - \eta(u_2;~t)
\end{equation*}
It can be examined that the other two coefficients $\mathrm{I}_{11}, ~\mathrm{I}_{12}$ preserve for all $t$.

\section{Linear couplings of two-vertex systems} \label{section: linear coupling}

In Figure \ref{fig: flexibility condtion}(d), we described a two-vertex system from $y_{11}$ to $y_{21}$. A \textit{linear coupling} of such a degree-4 two-vertex system means $y_{21} = c_yy_{11}$ and $w_{21} = c_ww_{11}$ for all input $y_{11}$ (labelling provided in Figure \ref{fig: linear coupling}.). $c_y$ and $c_w$ are real coefficients dependent on sector angles. Changing $\gamma_{21} \rightarrow \pi - \gamma_{21}$, $\delta_{21} \rightarrow \pi - \delta_{21}$ leads to another form of linear dependence $y_{21} = c_y'y_{11}^{-1}$ and $w_{21} = c_w'w_{11}^{-1}$. Two linear couplings can stitch together to form a flexible Kokotsakis quadrilateral if they have the same linear dependence $c_y$ or $c_w$. The derivation of the linear dependence is based on the complexified configuration space for a degree-4 vertex as presented in \cite{he_real_2023}.

\begin{description}
	\item [isogram] (the non-self-intersecting branch)
	\begin{equation*}
		\begin{gathered}
			\dfrac{y_{21}}{y_{11}} = \dfrac{\cos \dfrac{\alpha_{21}+\beta_{21}}{2} \cos \dfrac{\alpha_{11}-\beta_{11}}{2} }{\cos \dfrac{\alpha_{21}-\beta_{21}}{2} \cos \dfrac{\alpha_{11}+\beta_{11}}{2} } , ~ \dfrac{w_{21}}{w_{11}} = \dfrac{y_{21}}{y_{11}} ~(\mathrm{let~} \alpha \rightarrow \beta, ~ \beta \rightarrow \alpha) \\ \dfrac{y_{31}}{y_{21}} = \dfrac{y_{21}}{y_{11}} ~(\mathrm{let~} \alpha \rightarrow \beta, ~ \beta \rightarrow \alpha), ~ \dfrac{w_{31}}{w_{21}} = \dfrac{y_{21}}{y_{11}} \\
		\end{gathered}
	\end{equation*}
	\item [anti-isogram] 
	\begin{equation*}
		\begin{aligned}
			\dfrac{y_{21}}{y_{11}} = & ~~ \dfrac{\sin \dfrac{\alpha_{21}-\beta_{21}}{2} \sin \dfrac{\alpha_{11}+\beta_{11}}{2}}{\sin \dfrac{\alpha_{21}+\beta_{21}}{2} \sin \dfrac{\alpha_{11}-\beta_{11}}{2}} ~~\mathrm{or}~~ \dfrac{\cos \dfrac{\alpha_{21}-\beta_{21}}{2} \cos \dfrac{\alpha_{11}+\beta_{11}}{2}}{\cos \dfrac{\alpha_{21}+\beta_{21}}{2} \cos \dfrac{\alpha_{11}-\beta_{11}}{2}} \\
			\mathrm{or~~} & \dfrac{\sin \dfrac{\alpha_{21}-\beta_{21}}{2} \cos \dfrac{\alpha_{11}+\beta_{11}}{2}}{\sin \dfrac{\alpha_{21}+\beta_{21}}{2} \cos \dfrac{\alpha_{11}-\beta_{11}}{2}} ~~\mathrm{or}~~ \dfrac{\cos \dfrac{\alpha_{21}-\beta_{21}}{2} \sin \dfrac{\alpha_{11}+\beta_{11}}{2}}{\cos \dfrac{\alpha_{21}+\beta_{21}}{2} \sin \dfrac{\alpha_{11}-\beta_{11}}{2}} \\
		\end{aligned}
	\end{equation*}
	\begin{equation*}
		\begin{gathered}
			\dfrac{w_{21}}{w_{11}} = \dfrac{y_{21}}{y_{11}} ~~\mathrm{or}~~ \dfrac{y_{21}}{y_{11}} ~~\mathrm{or}~~ -\dfrac{y_{21}}{y_{11}} ~~\mathrm{or}~~ -\dfrac{y_{21}}{y_{11}} ~(\mathrm{let~} \alpha \rightarrow \beta, ~ \beta \rightarrow \alpha) \\
			\dfrac{y_{31}}{y_{21}} = \dfrac{y_{21}}{y_{11}} ~~\mathrm{or}~~ \dfrac{y_{21}}{y_{11}} ~~\mathrm{or}~~ -\dfrac{y_{21}}{y_{11}} ~~\mathrm{or}~~ -\dfrac{y_{21}}{y_{11}} ~(\mathrm{let~} \alpha \rightarrow \pi - \beta, ~ \beta \rightarrow \pi - \alpha) \\
			\dfrac{w_{31}}{w_{21}} = \dfrac{y_{21}}{y_{11}} ~~\mathrm{or}~~ \dfrac{y_{21}}{y_{11}} ~~\mathrm{or}~~ \dfrac{y_{21}}{y_{11}} ~~\mathrm{or}~~ \dfrac{y_{21}}{y_{11}} ~(\mathrm{let~} \alpha \rightarrow \pi - \alpha, ~ \beta \rightarrow \pi - \beta) \\
		\end{gathered}
	\end{equation*}
	\item [deltoid I]
	The linear relation $y_{21} = c y_{11}$ holds when vertices 11 and 21 form an involutive coupling, i.e., they share the same involution factor.
	\begin{equation*}
		\begin{gathered}
			\lambda^x_{11} = \lambda^x_{21}, ~ \lambda = \dfrac{\sin (\beta + \alpha)}{\sin (\beta - \alpha)}    \Leftrightarrow \dfrac{\tan \alpha_{11} + \tan \beta_{11}}{\tan \alpha_{11} - \tan \beta_{11}} = \dfrac{\tan \alpha_{21} + \tan \beta_{21}}{\tan \alpha_{21} - \tan \beta_{21}} \Leftrightarrow \dfrac{\tan \alpha_{11}}{\tan \beta_{11}} = \dfrac{\tan \alpha_{21}}{\tan \beta_{21}}
		\end{gathered}	
	\end{equation*}
	Since
	\begin{equation*}
		\dfrac{2\sin \alpha}{\sin(\beta-\alpha)} = \dfrac{2\tan \alpha}{\sin \beta - \cos \beta \tan \alpha}  
		= \dfrac{2\tan \alpha}{\cos \beta (\tan \beta - \tan \alpha) } = \dfrac{1}{\cos \beta} \left(\dfrac{\tan \beta + \tan \alpha}{\tan \beta - \tan \alpha} -1 \right)
	\end{equation*}
	The ratios are:
	\begin{equation*}
		\begin{gathered}
			\dfrac{y_{21}}{y_{11}} = \left. \dfrac{2\sin \alpha_{11}}{\sin(\beta_{11}-\alpha_{11})} \middle/ \dfrac{2\sin \alpha_{21}}{\sin(\beta_{21}-\alpha_{21})} = \dfrac{\cos \beta_{21}}{\cos \beta_{11}} \right.,~
			\dfrac{w_{21}}{w_{11}} = \dfrac{\cos \alpha_{21}}{\cos \alpha_{11}}  ~(\mathrm{let~} \alpha \rightarrow \beta, ~ \beta \rightarrow \alpha) \\ \dfrac{y_{31}}{y_{21}} = \dfrac{\cos \beta_{21}}{\cos \beta_{11}} ,~ \dfrac{w_{31}}{w_{21}} = \dfrac{\cos \alpha_{21}}{\cos \alpha_{11}}  ~(\mathrm{let~} \alpha \rightarrow \beta, ~ \beta \rightarrow \alpha) 
		\end{gathered}
	\end{equation*}
	
	\begin{figure}[t]
		\noindent \begin{centering}
			\includegraphics[width=0.8\linewidth]{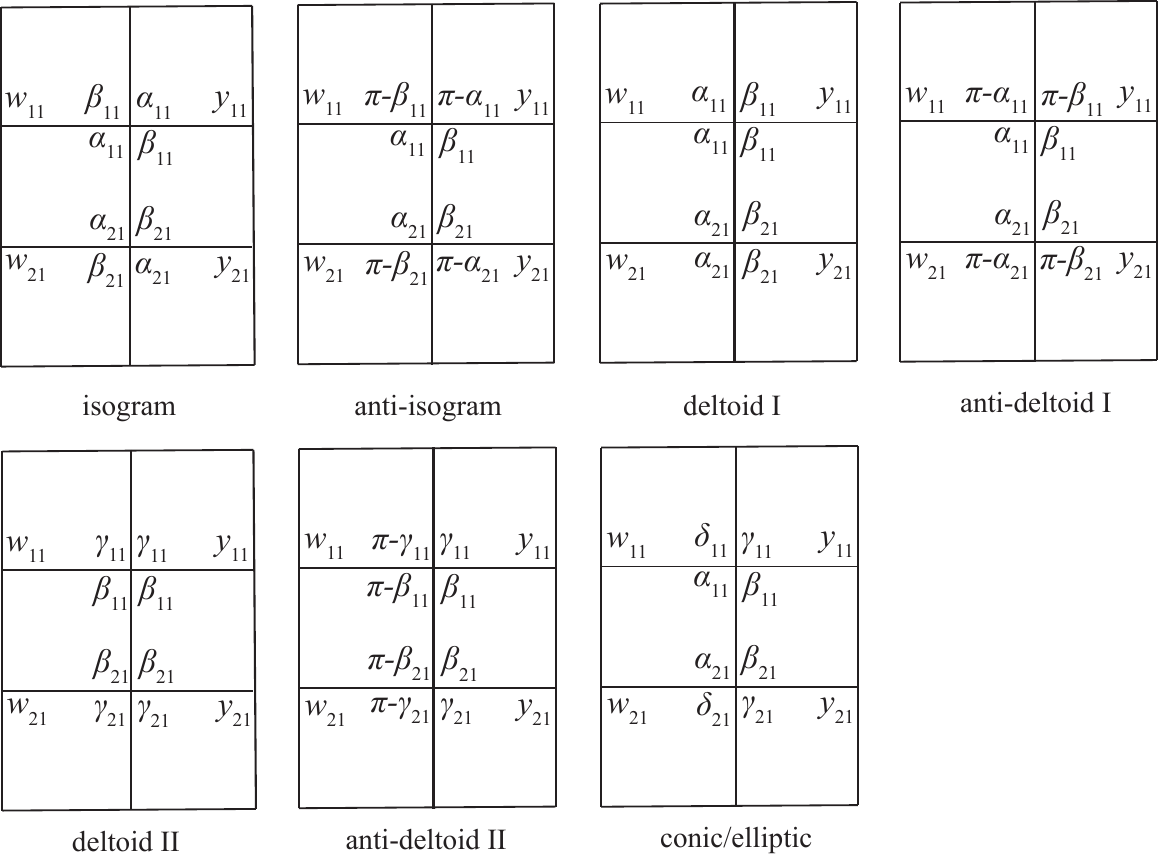}
			\par\end{centering}
		
		\caption{\label{fig: linear coupling}Labelling of linear couplings.}
	\end{figure}
	
	\item [anti-deltoid I]
	Similar to deltoid I:
	\begin{equation*}
		\dfrac{\tan \alpha_{11}}{\tan \beta_{11}} = \dfrac{\tan \alpha_{21}}{\tan \beta_{21}}
	\end{equation*}
	The ratios are:
	\begin{equation*}
		\begin{gathered}
			\dfrac{y_{21}}{y_{11}} = \dfrac{\cos \beta_{11}}{\cos \beta_{21}} , ~ \dfrac{w_{21}}{w_{11}} = \dfrac{\cos \alpha_{11}}{\cos \alpha_{21}}  ~(\mathrm{let~} \alpha \rightarrow \beta, ~ \beta \rightarrow \alpha) \\ \dfrac{y_{31}}{y_{21}} = \dfrac{\cos \beta_{11}}{\cos \beta_{21}} ~(\mathrm{let~} \alpha \rightarrow \pi-\alpha, ~ \beta \rightarrow \pi-\beta) , ~ \dfrac{w_{31}}{w_{21}} = \dfrac{\cos \alpha_{11}}{\cos \alpha_{21}}  ~(\mathrm{let~} \alpha \rightarrow \pi-\beta, ~ \beta \rightarrow \pi-\alpha) \\
		\end{gathered}
	\end{equation*}
	\item [deltoid II] The linear relation $y_{21} = c y_{11}$ holds when vertices 11 and 21 form a reducible coupling, i.e., they share the same amplitude:
	\begin{equation*}
		p^x_{11} = p^x_{21}, ~ p^x = \sqrt{\dfrac{\sin^2 \beta}{\sin^2 \gamma}-1}  ~~\Leftrightarrow~~ \dfrac{\sin \beta_{11}}{\sin \gamma_{11}} = \dfrac{\sin \beta_{21}}{\sin \gamma_{21}}
	\end{equation*}	
	The ratios are:
	\begin{equation*}
		\begin{gathered}
			\dfrac{y_{21}}{y_{11}} = \mathrm{sign} \left(\dfrac{\pi - \beta_{21} - \gamma_{21}}{\pi - \beta_{11} - \gamma_{11}} \right) \sqrt{\dfrac{\sin ( \beta_{21} + \gamma_{21}) \sin ( \beta_{11} - \gamma_{11}) }{\sin ( \beta_{21} - \gamma_{21})\sin ( \beta_{11} + \gamma_{11})}}, ~
			\dfrac{w_{21}}{w_{11}} = \dfrac{y_{21}}{y_{11}}  \\
			\dfrac{y_{31}}{y_{21}} = \dfrac{y_{21}}{y_{11}} ~(\mathrm{let~} \beta \rightarrow \gamma, ~\gamma \rightarrow \beta) , ~ \dfrac{w_{31}}{w_{21}} = \dfrac{y_{21}}{y_{11}} ~(\mathrm{let~} \beta \rightarrow \gamma, ~\gamma \rightarrow \beta) \\
		\end{gathered}
	\end{equation*}
	\item [anti-deltoid II] Similar to deltoid II:
	\begin{equation*}
		\dfrac{\sin \beta_{11}}{\sin \gamma_{11}} = \dfrac{\sin \beta_{21}}{\sin \gamma_{21}}
	\end{equation*}	
	The ratios are the same with deltoid II ($y_{21}$ and $y_{11}$ changed to its opposite):
	\begin{equation*}
		\begin{gathered}
			\dfrac{y_{21}}{y_{11}} = \mathrm{sign} \left(\dfrac{\pi - \beta_{21} - \gamma_{21}}{\pi - \beta_{11} - \gamma_{11}} \right) \sqrt{\dfrac{\sin ( \beta_{21} + \gamma_{21}) \sin ( \beta_{11} - \gamma_{11}) }{\sin ( \beta_{21} - \gamma_{21})\sin ( \beta_{11} + \gamma_{11})}} , ~ \dfrac{w_{21}}{w_{11}} = \dfrac{\cos \alpha_{11}}{\cos \alpha_{21}}  ~(\mathrm{let~} \alpha \rightarrow \beta, ~ \beta \rightarrow \alpha) \\
			\dfrac{y_{31}}{y_{21}} = \dfrac{\cos \beta_{11}}{\cos \beta_{21}} ~(\mathrm{let~} \alpha \rightarrow \pi-\alpha, ~ \beta \rightarrow \pi-\beta) ,~ \dfrac{w_{31}}{w_{21}} = \dfrac{\cos \alpha_{11}}{\cos \alpha_{21}}  ~(\mathrm{let~} \alpha \rightarrow \pi-\beta, ~ \beta \rightarrow \pi-\alpha) \\
		\end{gathered}
	\end{equation*}
	\item [conic] Two conic I or two conic IV form a linear coupling if both the amplitudes and phase shifts are equal:
	\begin{equation*}
		\begin{gathered}
			p^x_{11} = p^x_{21}, ~ p^x = \sqrt{\dfrac{\sin \alpha \sin \beta}{\sin \gamma \sin \delta}-1} , ~ \dfrac{\sin \beta_{11} \sin \delta_{11}}{\sin \alpha_{11} \sin \gamma_{11}} = \dfrac{\sin \beta_{21} \sin \delta_{21}}{\sin \alpha_{21} \sin \gamma_{21}} 
		\end{gathered}		
	\end{equation*}
	which leads to:
	\begin{equation*}
		\dfrac{\sin \delta_{11}}{\sin \alpha_{11}} = \dfrac{\sin \delta_{21}}{\sin \alpha_{21}}, ~~ \dfrac{\sin \beta_{11}}{\sin \gamma_{11}} = \dfrac{\sin \beta_{21}}{\sin \gamma_{21}}
	\end{equation*}
	The ratio for two conic I is:
	\begin{equation*}
		\dfrac{y_{21}}{y_{11}} = \mathrm{sign}\left(\dfrac{\pi - \sigma_{21}}{\pi - \sigma_{11}}\right) \dfrac{p^y_{21}}{p^y_{11}} = \left. \mathrm{sign}\left(\dfrac{\pi - \sigma_{21}}{\pi - \sigma_{11}}\right) \sqrt{\dfrac{\sin \beta_{21} \sin \gamma_{21}}{\sin \delta_{21} \sin \alpha_{21}}-1} \middle/ \sqrt{\dfrac{\sin \beta_{11} \sin \gamma_{11}}{\sin \delta_{11} \sin \alpha_{11}}-1} \right.
	\end{equation*}
	\begin{equation*}
		\dfrac{w_{21}}{w_{11}} = \left. \mathrm{sign}\left(\dfrac{\pi - \sigma_{21}}{\pi - \sigma_{11}}\right) \sqrt{\dfrac{\sin \delta_{21} \sin \alpha_{21}}{\sin \beta_{21} \sin \gamma_{21}}-1} \middle/ \sqrt{\dfrac{\sin \delta_{11} \sin \alpha_{11}}{\sin \beta_{11} \sin \gamma_{11}}-1} \right.  ~(\mathrm{let~} \alpha \rightarrow \beta, ~\beta \rightarrow \alpha, ~ \gamma \rightarrow \delta, ~ \delta \rightarrow \gamma)
	\end{equation*}
	\begin{equation*}
		\dfrac{y_{31}}{y_{21}} = \left. \mathrm{sign}\left(\dfrac{\pi - \sigma_{21}}{\pi - \sigma_{11}}\right) \sqrt{\dfrac{\sin \beta_{21} \sin \gamma_{21}}{\sin \delta_{21} \sin \alpha_{21}}-1} \middle/ \sqrt{\dfrac{\sin \beta_{11} \sin \gamma_{11}}{\sin \delta_{11} \sin \alpha_{11}}-1} \right. ~(\mathrm{let~} \alpha \rightarrow \delta, ~ \beta \rightarrow \gamma, ~\gamma \rightarrow \beta, ~\delta \rightarrow \alpha)
	\end{equation*}
	\begin{equation*}
		\dfrac{w_{31}}{w_{21}} = \left. \mathrm{sign}\left(\dfrac{\pi - \sigma_{21}}{\pi - \sigma_{11}}\right) \sqrt{\dfrac{\sin \delta_{21} \sin \alpha_{21}}{\sin \beta_{21} \sin \gamma_{21}}-1} \middle/ \sqrt{\dfrac{\sin \delta_{11} \sin \alpha_{11}}{\sin \beta_{11} \sin \gamma_{11}}-1} \right.  ~(\mathrm{let~} \alpha \rightarrow \gamma, ~ \beta \rightarrow \delta, ~\gamma \rightarrow \alpha, ~\delta \rightarrow \beta) \\
	\end{equation*}
	\begin{equation*}
		\begin{gathered}
			\sigma = \alpha + \gamma = \beta + \delta, ~~p^y = \sqrt{\dfrac{\sin \beta \sin \gamma}{\sin \delta \sin \alpha}-1}
		\end{gathered}
	\end{equation*}
	The ratio for two conic IV is:
	\begin{equation*}
		\dfrac{y_{21}}{y_{11}} = \mathrm{sign}\left(\dfrac{\pi - \sigma_{11}}{\pi - \sigma_{21}}\right) \dfrac{p^y_{11}}{p^y_{21}} = \left. \mathrm{sign}\left(\dfrac{\pi - \sigma_{11}}{\pi - \sigma_{21}}\right) \sqrt{\dfrac{\sin \beta_{11} \sin \gamma_{11}}{\sin \delta_{11} \sin \alpha_{11}}-1} \middle/ \sqrt{\dfrac{\sin \beta_{21} \sin \gamma_{21}}{\sin \delta_{21} \sin \alpha_{21}}-1} \right.
	\end{equation*}
	\begin{equation*}
		\dfrac{w_{21}}{w_{11}} = \left. \mathrm{sign}\left(\dfrac{\pi + \sigma_{11}}{\pi + \sigma_{21}}\right) \sqrt{\dfrac{\sin \delta_{11} \sin \alpha_{11}}{\sin \beta_{11} \sin \gamma_{11}}-1} \middle/ \sqrt{\dfrac{\sin \delta_{21} \sin \alpha_{21}}{\sin \beta_{21} \sin \gamma_{21}}-1} \right.  ~(\mathrm{let~} \alpha \rightarrow \beta, ~\beta \rightarrow \alpha, ~ \gamma \rightarrow \delta, ~ \delta \rightarrow \gamma)
	\end{equation*}
	\begin{equation*}
		\dfrac{y_{31}}{y_{21}} = \left. \mathrm{sign}\left(\dfrac{\pi + \sigma_{11}}{\pi + \sigma_{21}}\right) \sqrt{\dfrac{\sin \beta_{11} \sin \gamma_{11}}{\sin \delta_{11} \sin \alpha_{11}}-1} \middle/ \sqrt{\dfrac{\sin \beta_{21} \sin \gamma_{21}}{\sin \delta_{21} \sin \alpha_{21}}-1} \right. ~(\mathrm{let~} \alpha \rightarrow \delta, ~ \beta \rightarrow \gamma, ~\gamma \rightarrow \beta, ~\delta \rightarrow \alpha)
	\end{equation*}
	\begin{equation*}
		\dfrac{w_{31}}{w_{21}} = \left. \mathrm{sign}\left(\dfrac{\pi - \sigma_{11}}{\pi - \sigma_{21}}\right) \sqrt{\dfrac{\sin \delta_{11} \sin \alpha_{11}}{\sin \beta_{11} \sin \gamma_{11}}-1} \middle/ \sqrt{\dfrac{\sin \delta_{21} \sin \alpha_{21}}{\sin \beta_{21} \sin \gamma_{21}}-1} \right.  ~(\mathrm{let~} \alpha \rightarrow \gamma, ~ \beta \rightarrow \delta, ~\gamma \rightarrow \alpha, ~\delta \rightarrow \beta) \\
	\end{equation*}
	\begin{equation*}
		\begin{gathered}
			\sigma = \dfrac{-\alpha + \beta - \gamma + \delta}{2} + \pi, ~~p^y = \sqrt{\dfrac{\sin \beta \sin \gamma}{\sin \delta \sin \alpha}-1}
		\end{gathered}
	\end{equation*}
	\item [elliptic] Two elliptic vertices form a linear coupling if the elliptic modulus, amplitudes and phase shifts are equal:
	\begin{equation*}
		\begin{gathered}
			p^x_{11} = p^x_{21}, ~ p^x = \sqrt{\dfrac{\sin \alpha \sin \beta}{\sin (\sigma-\alpha) \sin (\sigma-\beta)}-1}, ~ \sigma = \dfrac{\alpha + \beta + \gamma + \delta}{2} \\ M_{11} = M_{21}, ~ M = \dfrac{\sin \alpha \sin \beta \sin \gamma \sin \delta}{\sin (\sigma-\alpha) \sin (\sigma-\beta) \sin (\sigma-\gamma) \sin(\sigma-\delta)} \\
			\dfrac{\sin \alpha_{11} \sin \gamma_{11}}{\sin (\sigma_{11} - \alpha_{11}) \sin (\sigma_{11} - \gamma_{11})} = \dfrac{\sin \alpha_{21} \sin \gamma_{21}}{\sin (\sigma_{21} - \alpha_{21}) \sin (\sigma_{21} - \gamma_{21})}
		\end{gathered}		
	\end{equation*}
	which leads to
	\begin{equation*}
		\dfrac{\sin \alpha_{11} \sin \beta_{11}}{\sin (\sigma_{11}-\alpha_{11})\sin (\sigma_{11}-\beta_{11})} = \dfrac{\sin \alpha_{21} \sin \beta_{21}}{\sin (\sigma_{21}-\alpha_{21})\sin (\sigma_{21}-\beta_{21})}
	\end{equation*}
	\begin{equation*}
		\dfrac{\sin \gamma_{11} \sin \delta_{11}}{\sin (\sigma_{11}-\gamma_{11})\sin (\sigma_{11}-\delta_{11})} = \dfrac{\sin \gamma_{21} \sin \delta_{21}}{\sin (\sigma_{21}-\gamma_{21})\sin (\sigma_{21}-\delta_{21})}
	\end{equation*}
	\begin{equation*}
		\dfrac{\sin \alpha_{11} \sin \gamma_{11}}{\sin (\sigma_{11}-\alpha_{11})\sin (\sigma_{11}-\gamma_{11})} = \dfrac{\sin \alpha_{21} \sin \gamma_{21}}{\sin (\sigma_{21}-\alpha_{21})\sin (\sigma_{21}-\gamma_{21})}
	\end{equation*}
	The ratio is:
	\begin{equation*}
		\begin{aligned}
			\dfrac{y_{21}}{y_{11}} & = \mathrm{sign}\left(\dfrac{\pi - \sigma_{21}}{\pi - \sigma_{11}}\right) \dfrac{p^y_{21}}{p^y_{11}} \\ 
			& = \left. \mathrm{sign}\left(\dfrac{\pi - \sigma_{21}}{\pi - \sigma_{11}}\right) \sqrt{\dfrac{\sin \beta_{21} \sin \gamma_{21}}{\sin (\sigma_{21} - \beta_{21}) \sin (\sigma_{21} - \gamma_{21})}-1} \middle/ \sqrt{\dfrac{\sin \beta_{11} \sin \gamma_{11}}{\sin (\sigma_{11} - \beta_{11}) \sin (\sigma_{11} - \gamma_{11})}-1} \right. 
		\end{aligned} 
	\end{equation*}
	\begin{equation*}
		\begin{gathered}
			\dfrac{w_{21}}{w_{11}} = \left. \mathrm{sign}\left(\dfrac{\pi - \sigma_{21}}{\pi - \sigma_{11}}\right) \sqrt{\dfrac{\sin \delta_{21} \sin \alpha_{21}}{\sin (\sigma_{21} - \delta_{21}) \sin (\sigma_{21} - \alpha_{21})}-1} \middle/ \sqrt{\dfrac{\sin \delta_{11} \sin \alpha_{11}}{\sin (\sigma_{11} - \delta_{11}) \sin (\sigma_{11} - \alpha_{11})}-1} \right. \\
			(\mathrm{let~} \alpha \rightarrow \beta, ~\beta \rightarrow \alpha, ~ \gamma \rightarrow \delta, ~ \delta \rightarrow \gamma)
		\end{gathered} 
	\end{equation*}
	\begin{equation*}
		\begin{gathered}
			\dfrac{y_{31}}{y_{21}} = \left. \mathrm{sign}\left(\dfrac{\pi - \sigma_{21}}{\pi - \sigma_{11}}\right) \sqrt{\dfrac{\sin \beta_{21} \sin \gamma_{21}}{\sin (\sigma_{21} - \beta_{21}) \sin (\sigma_{21} - \gamma_{21})}-1} \middle/ \sqrt{\dfrac{\sin \beta_{11} \sin \gamma_{11}}{\sin (\sigma_{11} - \beta_{11}) \sin (\sigma_{11} - \gamma_{11})}-1} \right. \\
			(\mathrm{let~} \alpha \rightarrow \delta, ~ \beta \rightarrow \gamma, ~\gamma \rightarrow \beta, ~\delta \rightarrow \alpha)
		\end{gathered} 
	\end{equation*}
	\begin{equation*}
		\begin{gathered}
			\dfrac{w_{31}}{w_{21}} = \left. \mathrm{sign}\left(\dfrac{\pi - \sigma_{21}}{\pi - \sigma_{11}}\right) \sqrt{\dfrac{\sin \delta_{21} \sin \alpha_{21}}{\sin (\sigma_{21} - \delta_{21}) \sin (\sigma_{21} - \alpha_{21})}-1} \middle/ \sqrt{\dfrac{\sin \delta_{11} \sin \alpha_{11}}{\sin (\sigma_{11} - \delta_{11}) \sin (\sigma_{11} - \alpha_{11})}-1} \right. \\
			(\mathrm{let~} \alpha \rightarrow \gamma, ~ \beta \rightarrow \delta, ~\gamma \rightarrow \alpha, ~\delta \rightarrow \beta)
		\end{gathered} 
	\end{equation*}
\end{description}

\section{Equimodular couplings of two-vertex systems} \label{section: equimodular coupling}

In Figure \ref{fig: flexibility condtion}(d), we described a two-vertex system from $y_{11}$ to $y_{21}$. An \textit{equimodular coupling} of such a degree-4 two-vertex system means that $y_{11}$ and $y_{21}$ are periodical functions over a parameter $t$ in the complexified configuration space, oscillating at the same frequency. $y_{11}$ and $y_{21}$ may differ in amplitude and phase shift. 

\begin{description}
	\item[Conic] \citep[Section 17]{he_real_2023} Vertices $_{11}$ and $_{21}$ share the same amplitude:
	\begin{equation*}
		p^{x}_{11} = p^{x}_{21}, ~ p^x=\sqrt{\dfrac{\sin \alpha \sin \beta}{\sin \gamma \sin \delta}-1}
	\end{equation*}
	This is also the condition for two conic II, III, or IV vertices to form an equimodular coupling.
	
	\item[Elliptic] \citep[Section 21]{he_real_2023} Vertices $_{11}$ and $_{21}$ share the same amplitude and moduli:
	\begin{equation*}
		\begin{gathered}
			M_{11} = M_{21}, ~ p^{x}_{11} = p^{x}_{21} \\ 
			M = \dfrac{\sin \alpha \sin \beta \sin \gamma \sin \delta}{\sin (\sigma-\alpha) \sin (\sigma-\beta) \sin (\sigma-\gamma) \sin(\sigma-\delta)} , ~ p^x=\sqrt{\dfrac{\sin \alpha \sin \beta}{\sin \gamma \sin \delta}-1}
		\end{gathered}
	\end{equation*}
\end{description}

Finally, for a conic equimodular Kokotsakis quadrilateral, the flexibility condition is:
\begin{description}
	\item[Equal amplitude] 
	\begin{equation*}
		\begin{gathered}
			p^{x}_{11} = p^{x}_{21} , ~ p^{x}_{12} = p^{x}_{22} , ~
			p^{y}_{11} = p^{y}_{12} , ~
			p^{y}_{21} = p^{y}_{22} \\
			p^x=\sqrt{\dfrac{\sin \alpha \sin \beta}{\sin \gamma \sin \delta}-1}, ~ p^y=\sqrt{\dfrac{\sin \beta \sin \gamma}{\sin \delta \sin \alpha}-1}
		\end{gathered}
	\end{equation*}
	\item[Equal phase shift]  (detailed expression in \citet[Section 17]{he_real_2023})
	\begin{equation*}
		\theta^a_{11} - \theta^a_{21} = \theta^a_{12} - \theta^a_{22}
	\end{equation*}
\end{description}
for an elliptic equimodular Kokotsakis quadrilateral, the flexibility condition is:
\begin{description}
	\item[Equal amplitude] 
	\begin{equation*}
		\begin{gathered}
			\begin{aligned}
				& M_{11} = M_{21} = M_{12} = M_{22} \\
				& p^{x}_{11} = p^{x}_{21} , ~ p^{x}_{12} = p^{x}_{22} , ~
				p^{y}_{11} = p^{y}_{12} , ~
				p^{y}_{21} = p^{y}_{22}
			\end{aligned} \\ 
			p^x=\sqrt{\dfrac{\sin \alpha \sin \beta}{\sin \gamma \sin \delta}-1}, ~ p^y=\sqrt{\dfrac{\sin \beta \sin \gamma}{\sin \delta \sin \alpha}-1}
		\end{gathered}
	\end{equation*}
	\item[Equal phase shift] (detailed expression in \citet[Section 21]{he_real_2023})
	\begin{equation*}
		\theta^a_{11} - \theta^a_{21} = \theta^a_{12} - \theta^a_{22}
	\end{equation*}
\end{description}

\section{Details for the examples in the main text} \label{section: examples}

Using the repetitive stitching method described in the main text and the information provided in Sections \ref{section: linear coupling} and \ref{section: equimodular coupling}, one can create a large library of sector-angle-periodic patterns formed by the linear couplings and equimodular couplings. Below we provide the constraints on the sector angles \textbf{within a unit} and \textbf{ensure the flexibility of the entire pattern} for the six examples presented in Figure 3 of the main text. The shape of each unit is provided in Figure \ref{fig: details for examples}. The flexibility of the entire pattern is guaranteed by the periodicity of sector angles, which ensures the flexibility of new Kokotsakis quadrilaterals generated in between units among the stitching process.

The exact solutions of each set of constraints is provided in the associated MATLAB application \citep{he_sector-angle-periodic_2024}. These numerical solutions are verified from plotting the folding motion in the 3-dimensional space. The pattern is plotted from one input folding angle, all the sector angles and uniform input crease lengths. The above parameters are fully adjustable, providing significant freedom in shaping the quad-mesh rigid origami. 

\begin{figure*}[htbp]
	\centering
	\includegraphics[width=\linewidth]{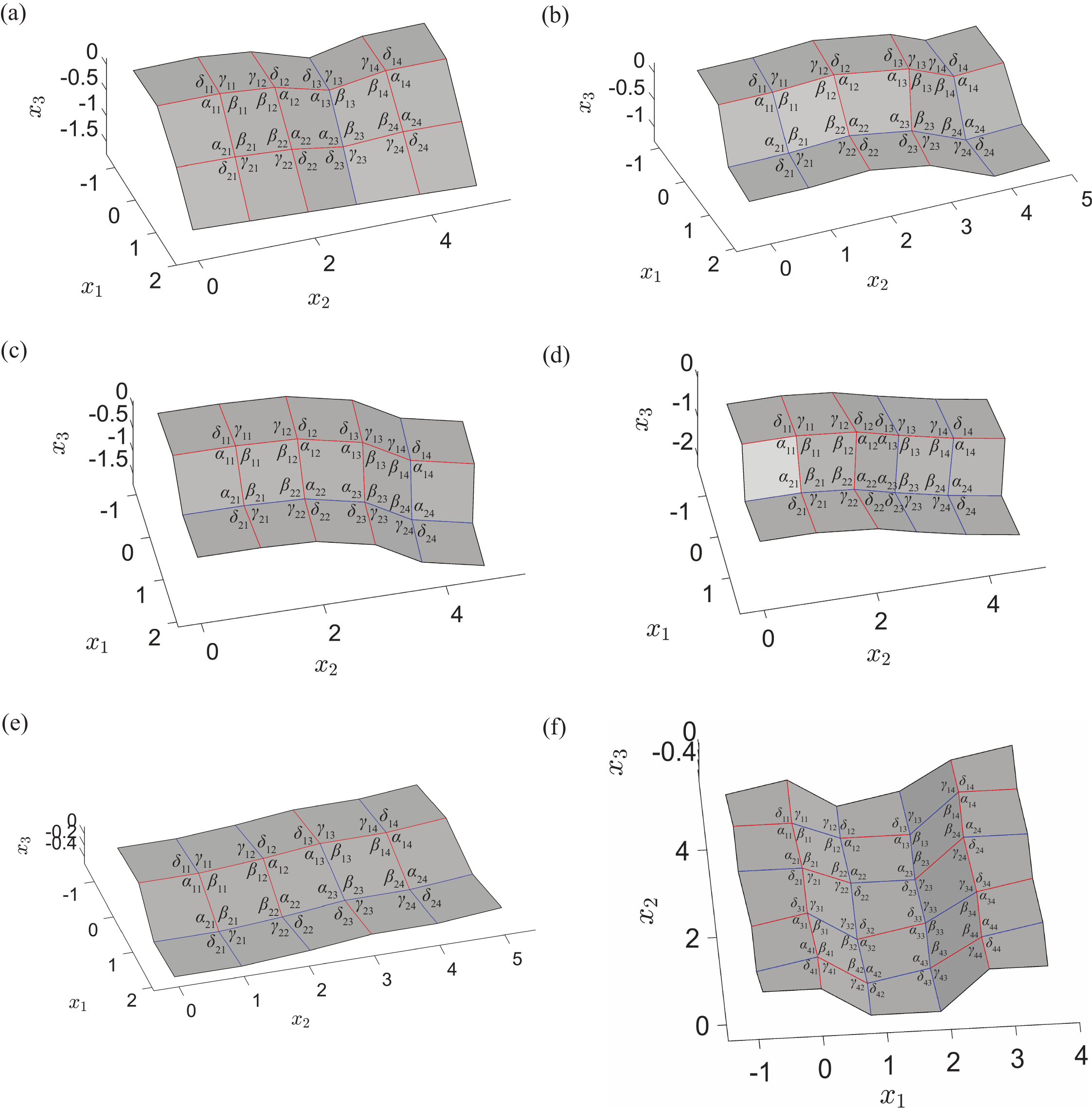}
	\caption{\label{fig: details for examples}Labelling of sector angles and shapes of single units from (a) to (f) in Figure 2 of the main text.}
\end{figure*} 

\subsection*{Example 1, Figure 3(a), Figure \ref{fig: details for examples}(a), non-developable, linear coupling}

The unit size is $3 \times 5$. A unit contains 8 interior vertices and 32 sector angles. The sector angles $\alpha_{ij}, ~\beta_{ij}, ~\gamma_{ij}, ~\delta_{ij}$, $i, ~j \in \mathbb{Z}^+, ~i \le 2, ~j \le 4$ meet the constraints below. There are 30 constraints for 32 sector angles, allowing two independent input sector angles. Columns 1 and 3 are isogram/eggbox vertices, columns 2 and 4 are deltoid I vertices.

Vertex type condition:
\begin{equation*}
	\begin{dcases}
		\gamma_{11} = \alpha_{11}, ~\delta_{11} = \beta_{11}, ~ \gamma_{12} = \beta_{12}, ~\delta_{12} =  \alpha_{12} \\
		\gamma_{13} = \alpha_{13}, ~\delta_{13} = \beta_{13}, ~ \gamma_{14} = \beta_{14}, ~\delta_{14} = \alpha_{14} \\
		\gamma_{21} = \alpha_{21}, ~\delta_{21} = \beta_{21}, ~\gamma_{22} = \beta_{22}, ~\delta_{22} = \alpha_{22} \\
		\gamma_{23} = \alpha_{23}, ~\delta_{23} = \beta_{23}, ~\gamma_{24} = \beta_{24}, ~\delta_{24} = \alpha_{24} \\
	\end{dcases}
\end{equation*} 
Planarity condition considering the periodicity of sector angles:
\begin{equation*}
	\begin{dcases}
		\beta_{11}+\beta_{21}+\beta_{12}+\beta_{12} = 2\pi, ~\gamma_{11}+\gamma_{21}+\gamma_{12}+\gamma_{12} = 2\pi \\
		\delta_{12}+\delta_{22}+\delta_{13}+\delta_{23} = 2\pi, ~\alpha_{12}+\alpha_{22}+\alpha_{13}+\alpha_{23} = 2\pi \\ \beta_{13}+\beta_{23}+\beta_{14}+\beta_{14} = 2\pi, ~\gamma_{13}+\gamma_{23}+\gamma_{14}+\gamma_{14} = 2\pi \\
		\delta_{14}+\delta_{24}+\delta_{11}+\delta_{21} = 2\pi, ~\alpha_{14}+\alpha_{24}+\alpha_{11}+\alpha_{21} = 2\pi \\
	\end{dcases}
\end{equation*}
Condition for being linear units:
\begin{equation*}
	\begin{dcases}
		\dfrac{\tan \beta_{12}}{\tan \alpha_{12}} = \dfrac{\tan \beta_{22}}{\tan \alpha_{22}} \\
		\dfrac{\tan \beta_{14}}{\tan \alpha_{14}} = \dfrac{\tan \beta_{24}}{\tan \alpha_{24}} \\
	\end{dcases}
\end{equation*}
Condition on equal ratio for linear units:
\begin{equation*}
	\begin{dcases}
		\dfrac{\cos \dfrac{\alpha_{21} + \beta_{21}}{2}\cos \dfrac{\alpha_{11} - \beta_{11}}{2}}{\cos \dfrac{\alpha_{21} - \beta_{21}}{2}\cos \dfrac{\alpha_{11} + \beta_{11}}{2}} =  \dfrac{\cos \beta_{22}}{\cos \beta_{12}} \\
		\dfrac{\cos \beta_{22}}{\cos \beta_{12}} = \dfrac{\cos \dfrac{\alpha_{23} + \beta_{23}}{2}\cos \dfrac{\alpha_{13} - \beta_{13}}{2}}{\cos \dfrac{\alpha_{23} - \beta_{23}}{2}\cos \dfrac{\alpha_{13} + \beta_{13}}{2}} \\
		\dfrac{\cos \dfrac{\alpha_{23} + \beta_{23}}{2}\cos \dfrac{\alpha_{13} - \beta_{13}}{2}}{\cos \dfrac{\alpha_{23} - \beta_{23}}{2}\cos \dfrac{\alpha_{13} + \beta_{13}}{2}} = \dfrac{\cos \beta_{24}}{\cos \beta_{14}} \\
		\dfrac{\cos \beta_{24}}{\cos \beta_{14}} = \dfrac{\cos \dfrac{\alpha_{21} + \beta_{21}}{2}\cos \dfrac{\alpha_{11} - \beta_{11}}{2}}{\cos \dfrac{\alpha_{21} - \beta_{21}}{2}\cos \dfrac{\alpha_{11} + \beta_{11}}{2}}
	\end{dcases}
\end{equation*}

\subsection*{Example 2, Figure 3(b), Figure \ref{fig: details for examples}(b), non-developable, linear coupling}

The unit size is $3 \times 5$. A unit contains 8 interior vertices and 32 sector angles. The sector angles $\alpha_{ij}, ~\beta_{ij}, ~\gamma_{ij}, ~\delta_{ij}$, $i, ~j \in \mathbb{Z}^+, ~i \le 2, ~j \le 4$ meet the constraints below. There are 29 constraints for 32 sector angles, allowing three independent input sector angles. Columns 1 and 3 are isogram/eggbox vertices, columns 2 and 4 are deltoid II vertices.

Vertex type condition:
\begin{equation*}
	\begin{dcases}
		\gamma_{11} = \alpha_{11}, ~\delta_{11} = \beta_{11}, ~ \gamma_{12} = \delta_{12}, ~\beta_{12} =  \alpha_{12} \\
		\gamma_{13} = \alpha_{13}, ~\delta_{13} = \beta_{13}, ~ \gamma_{14} = \delta_{14}, ~\beta_{14} = \alpha_{14} \\
		\gamma_{21} = \alpha_{21}, ~\delta_{21} = \beta_{21}, ~\gamma_{22} = \delta_{22}, ~\beta_{22} = \alpha_{22} \\
		\gamma_{23} = \alpha_{23}, ~\delta_{23} = \beta_{23}, ~\gamma_{24} = \delta_{24}, ~\beta_{24} = \alpha_{24} \\
	\end{dcases}
\end{equation*} 
Planarity condition considering the periodicity of sector angles:
\begin{equation*}
	\begin{dcases}
		\beta_{11}+\beta_{21}+\beta_{12}+\beta_{12} = 2\pi, ~\gamma_{11}+\gamma_{21}+\gamma_{12}+\gamma_{12} = 2\pi \\
		\delta_{12}+\delta_{22}+\delta_{13}+\delta_{23} = 2\pi, ~\alpha_{12}+\alpha_{22}+\alpha_{13}+\alpha_{23} = 2\pi \\ \beta_{13}+\beta_{23}+\beta_{14}+\beta_{14} = 2\pi, ~\gamma_{13}+\gamma_{23}+\gamma_{14}+\gamma_{14} = 2\pi \\
		\delta_{14}+\delta_{24}+\delta_{11}+\delta_{21} = 2\pi, ~\alpha_{14}+\alpha_{24}+\alpha_{11}+\alpha_{21} = 2\pi \\
	\end{dcases}
\end{equation*}
Condition for being linear units:
\begin{equation*}
	\begin{dcases}
		\dfrac{\sin \beta_{12}}{\sin \gamma_{12}} = \dfrac{\sin \beta_{22}}{\sin \gamma_{22}} \\
		\dfrac{\sin \beta_{14}}{\sin \gamma_{14}} = \dfrac{\sin \beta_{24}}{\sin \gamma_{24}} \\
	\end{dcases}
\end{equation*}
Condition on equal ratio for linear units:
\begin{equation*}
	\begin{dcases}
		\dfrac{\cos \dfrac{\alpha_{21} + \beta_{21}}{2}\cos \dfrac{\alpha_{11} - \beta_{11}}{2}}{\cos \dfrac{\alpha_{21} - \beta_{21}}{2}\cos \dfrac{\alpha_{11} + \beta_{11}}{2}} = \mathrm{sign}\left(\dfrac{\pi-\beta_{22}-\gamma_{22}}{\pi-\beta_{12}-\gamma_{12}}\right) \sqrt{\dfrac{\sin(\beta_{22}+\gamma_{22})\sin(\beta_{12}-\gamma_{12})}{\sin(\beta_{22}-\gamma_{22})\sin(\beta_{12}+\gamma_{12})}} \\
		\mathrm{sign} \left(\dfrac{\pi-\beta_{22}-\gamma_{22}}{\pi-\beta_{12}-\gamma_{12}}\right) \sqrt{\dfrac{\sin(\beta_{22}+\gamma_{22})\sin(\beta_{12}-\gamma_{12})}{\sin(\beta_{22}-\gamma_{22})\sin(\beta_{12}+\gamma_{12})}} = \dfrac{\cos \dfrac{\alpha_{23} + \beta_{23}}{2}\cos \dfrac{\alpha_{13} - \beta_{13}}{2}}{\cos \dfrac{\alpha_{23} - \beta_{23}}{2}\cos \dfrac{\alpha_{13} + \beta_{13}}{2}} \\
		\dfrac{\cos \dfrac{\alpha_{23} + \beta_{23}}{2}\cos \dfrac{\alpha_{13} - \beta_{13}}{2}}{\cos \dfrac{\alpha_{23} - \beta_{23}}{2}\cos \dfrac{\alpha_{13} + \beta_{13}}{2}} = \mathrm{sign} \left(\dfrac{\pi-\beta_{24}-\gamma_{24}}{\pi-\beta_{14}-\gamma_{14}}\right) \sqrt{\dfrac{\sin(\beta_{24}+\gamma_{24})\sin(\beta_{14}-\gamma_{14})}{\sin(\beta_{24}-\gamma_{24})\sin(\beta_{14}+\gamma_{14})}}
	\end{dcases}
\end{equation*}
Note that the equation below can be inferred from the above three equations for equal ratio:
\begin{equation*}
	\mathrm{sign} \left(\dfrac{\pi-\beta_{24}-\gamma_{24}}{\pi-\beta_{14}-\gamma_{14}}\right) \sqrt{\dfrac{\sin(\beta_{24}+\gamma_{24})\sin(\beta_{14}-\gamma_{14})}{\sin(\beta_{24}-\gamma_{24})\sin(\beta_{14}+\gamma_{14})}} = \dfrac{\cos \dfrac{\alpha_{21} + \beta_{21}}{2}\cos \dfrac{\alpha_{11} - \beta_{11}}{2}}{\cos \dfrac{\alpha_{21} - \beta_{21}}{2}\cos \dfrac{\alpha_{11} + \beta_{11}}{2}}
\end{equation*}

\subsection*{Example 3, Figure 3(c), Figure \ref{fig: details for examples}(c), non-developable, linear coupling}

The unit size is $3 \times 5$. A unit contains 8 interior vertices and 32 sector angles. The sector angles $\alpha_{ij}, ~\beta_{ij}, ~\gamma_{ij}, ~\delta_{ij}$, $i, ~j \in \mathbb{Z}^+, ~i \le 2, ~j \le 4$ meet the constraints below. There are 30 constraints for 32 sector angles, allowing two independent input sector angles. Columns 1 and 3 are deltoid I vertices, columns 2 and 4 are conic I vertices.

Vertex type condition:
\begin{equation*}
	\begin{dcases}
		\alpha_{11} = \delta_{11}, ~\beta_{11} = \gamma_{11}, ~\alpha_{13} = \delta_{13}, ~\beta_{13} = \gamma_{13} \\
		\alpha_{21} = \delta_{21}, ~\beta_{21} = \gamma_{21}, ~\alpha_{23} = \delta_{23}, ~\beta_{23} = \gamma_{23} \\
		\alpha_{12} + \gamma_{12} = \beta_{12} + \gamma_{12}, ~ \alpha_{14} + \gamma_{14} = \beta_{14} + \gamma_{14}\\
		\alpha_{22} + \gamma_{22} = \beta_{22} + \gamma_{22}, ~ \alpha_{24} + \gamma_{24} = \beta_{24} + \gamma_{24}\\
	\end{dcases}
\end{equation*} 
Planarity condition considering the periodicity of sector angles:
\begin{equation*}
	\begin{dcases}
		\beta_{11}+\beta_{21}+\beta_{12}+\beta_{12} = 2\pi, ~\gamma_{11}+\gamma_{21}+\gamma_{12}+\gamma_{12} = 2\pi \\
		\delta_{12}+\delta_{22}+\delta_{13}+\delta_{23} = 2\pi, ~\alpha_{12}+\alpha_{22}+\alpha_{13}+\alpha_{23} = 2\pi \\ \beta_{13}+\beta_{23}+\beta_{14}+\beta_{14} = 2\pi, ~\gamma_{13}+\gamma_{23}+\gamma_{14}+\gamma_{14} = 2\pi \\
		\delta_{14}+\delta_{24}+\delta_{11}+\delta_{21} = 2\pi, ~\alpha_{14}+\alpha_{24}+\alpha_{11}+\alpha_{21} = 2\pi \\
	\end{dcases}
\end{equation*}
Condition for being linear units:
\begin{equation*}
	\begin{dcases}
		\dfrac{\tan \beta_{11}}{\tan \alpha_{11}} = \dfrac{\tan \beta_{21}}{\tan \alpha_{21}} \\
		\dfrac{\tan \beta_{13}}{\tan \alpha_{13}} = \dfrac{\tan \beta_{23}}{\tan \alpha_{23}} \\
		\dfrac{\sin \beta_{12}}{\sin \gamma_{12}} = \dfrac{\sin \beta_{22}}{\sin \gamma_{22}}, ~ \dfrac{\sin \delta_{12}}{\sin \alpha_{12}} = \dfrac{\sin \delta_{22}}{\sin \alpha_{22}} \\
		\dfrac{\sin \beta_{14}}{\sin \gamma_{14}} = \dfrac{\sin \beta_{24}}{\sin \gamma_{24}}, ~ \dfrac{\sin \delta_{14}}{\sin \alpha_{14}} = \dfrac{\sin \delta_{24}}{\sin \alpha_{24}} \\
	\end{dcases}
\end{equation*}
Condition on equal ratio for linear units:
\begin{equation*}
	\begin{dcases}
		\dfrac{\cos \beta_{21}}{\cos \beta_{11}} = \left. \mathrm{sign}\left(\dfrac{\pi - \sigma_{22}}{\pi - \sigma_{12}}\right) \sqrt{\dfrac{\sin \beta_{22} \sin \gamma_{22}}{\sin \delta_{22} \sin \alpha_{22}}-1} \middle/ \sqrt{\dfrac{\sin \beta_{12} \sin \gamma_{12}}{\sin \delta_{12} \sin \alpha_{12}}-1} \right. \\
		\left. \mathrm{sign}\left(\dfrac{\pi - \sigma_{22}}{\pi - \sigma_{12}}\right) \sqrt{\dfrac{\sin \delta_{22} \sin \alpha_{22}}{\sin \beta_{22} \sin \gamma_{22}}-1} \middle/ \sqrt{\dfrac{\sin \delta_{12} \sin \alpha_{12}}{\sin \beta_{12} \sin \gamma_{12}}-1} \right. = \dfrac{\cos \alpha_{23}}{\cos \alpha_{13}} \\
		\dfrac{\cos \beta_{23}}{\cos \beta_{13}} = \left. \mathrm{sign}\left(\dfrac{\pi - \sigma_{24}}{\pi - \sigma_{14}}\right) \sqrt{\dfrac{\sin \beta_{24} \sin \gamma_{24}}{\sin \delta_{24} \sin \alpha_{24}}-1} \middle/ \sqrt{\dfrac{\sin \beta_{14} \sin \gamma_{14}}{\sin \delta_{14} \sin \alpha_{14}}-1} \right. \\
		\left. \mathrm{sign}\left(\dfrac{\pi - \sigma_{24}}{\pi - \sigma_{14}}\right) \sqrt{\dfrac{\sin \delta_{24} \sin \alpha_{24}}{\sin \beta_{24} \sin \gamma_{24}}-1} \middle/ \sqrt{\dfrac{\sin \delta_{14} \sin \alpha_{14}}{\sin \beta_{14} \sin \gamma_{14}}-1} \right. = \dfrac{\cos \alpha_{21}}{\cos \alpha_{11}} \\
	\end{dcases}
\end{equation*}

\subsection*{Example 4, Figure 3(d), Figure \ref{fig: details for examples}(d), non-developable, linear coupling}

The unit size is $3 \times 5$. A unit contains 8 interior vertices and 32 sector angles. The sector angles $\alpha_{ij}, ~\beta_{ij}, ~\gamma_{ij}, ~\delta_{ij}$, $i, ~j \in \mathbb{Z}^+, ~i \le 2, ~j \le 4$ meet the constraints below. There are 30 constraints for 32 sector angles, allowing two independent input sector angles. Columns 1 and 3 are deltoid II vertices, columns 2 and 4 are conic I vertices.

Vertex type condition:
\begin{equation*}
	\begin{dcases}
		\beta_{11} = \alpha_{11}, ~\gamma_{11} = \delta_{11}, ~\beta_{13} = \alpha_{13}, ~\gamma_{13} = \delta_{13} \\
		\beta_{21} = \alpha_{21}, ~\gamma_{21} = \delta_{21}, ~\beta_{23} = \alpha_{23}, ~\gamma_{23} = \delta_{23} \\
		\alpha_{12} + \gamma_{12} = \beta_{12} + \gamma_{12}, ~ \alpha_{14} + \gamma_{14} = \beta_{14} + \gamma_{14}\\
		\alpha_{22} + \gamma_{22} = \beta_{22} + \gamma_{22}, ~ \alpha_{24} + \gamma_{24} = \beta_{24} + \gamma_{24}\\
	\end{dcases}
\end{equation*} 
Planarity condition considering the periodicity of sector angles:
\begin{equation*}
	\begin{dcases}
		\beta_{11}+\beta_{21}+\beta_{12}+\beta_{12} = 2\pi, ~\gamma_{11}+\gamma_{21}+\gamma_{12}+\gamma_{12} = 2\pi \\
		\delta_{12}+\delta_{22}+\delta_{13}+\delta_{23} = 2\pi, ~\alpha_{12}+\alpha_{22}+\alpha_{13}+\alpha_{23} = 2\pi \\ \beta_{13}+\beta_{23}+\beta_{14}+\beta_{14} = 2\pi, ~\gamma_{13}+\gamma_{23}+\gamma_{14}+\gamma_{14} = 2\pi \\
		\delta_{14}+\delta_{24}+\delta_{11}+\delta_{21} = 2\pi, ~\alpha_{14}+\alpha_{24}+\alpha_{11}+\alpha_{21} = 2\pi \\
	\end{dcases}
\end{equation*}
Condition for being linear units:
\begin{equation*}
	\begin{dcases}
		\dfrac{\sin \beta_{11}}{\sin \gamma_{11}} = \dfrac{\sin \beta_{21}}{\sin \gamma_{21}} \\
		\dfrac{\sin \beta_{13}}{\sin \gamma_{13}} = \dfrac{\sin \beta_{23}}{\sin \gamma_{23}} \\
		\dfrac{\sin \beta_{12}}{\sin \gamma_{12}} = \dfrac{\sin \beta_{22}}{\sin \gamma_{22}}, ~ \dfrac{\sin \delta_{12}}{\sin \alpha_{12}} = \dfrac{\sin \delta_{22}}{\sin \alpha_{22}} \\
		\dfrac{\sin \beta_{14}}{\sin \gamma_{14}} = \dfrac{\sin \beta_{24}}{\sin \gamma_{24}}, ~ \dfrac{\sin \delta_{14}}{\sin \alpha_{14}} = \dfrac{\sin \delta_{24}}{\sin \alpha_{24}} \\
	\end{dcases}
\end{equation*}
Condition on equal ratio for linear units:
\begin{equation*}
	\begin{dcases}
	 \mathrm{sign}\left(\dfrac{\pi-\beta_{21}-\gamma_{21}}{\pi-\beta_{11}-\gamma_{11}}\right) \sqrt{\dfrac{\sin(\beta_{21}+\gamma_{21})\sin(\beta_{11}-\gamma_{11})}{\sin(\beta_{21}-\gamma_{21})\sin(\beta_{11}+\gamma_{11})}} \\
	= \left. \mathrm{sign}\left(\dfrac{\pi - \sigma_{22}}{\pi - \sigma_{12}}\right) \sqrt{\dfrac{\sin \beta_{22} \sin \gamma_{22}}{\sin \delta_{22} \sin \alpha_{22}}-1} \middle/ \sqrt{\dfrac{\sin \beta_{12} \sin \gamma_{12}}{\sin \delta_{12} \sin \alpha_{12}}-1} \right. \\
		\left. \mathrm{sign}\left(\dfrac{\pi - \sigma_{22}}{\pi - \sigma_{12}}\right) \sqrt{\dfrac{\sin \delta_{22} \sin \alpha_{22}}{\sin \beta_{22} \sin \gamma_{22}}-1} \middle/ \sqrt{\dfrac{\sin \delta_{12} \sin \alpha_{12}}{\sin \beta_{12} \sin \gamma_{12}}-1} \right. \\ = 		\mathrm{sign}\left(\dfrac{\pi-\beta_{23}-\gamma_{23}}{\pi-\beta_{13}-\gamma_{13}}\right) \sqrt{\dfrac{\sin(\beta_{23}+\gamma_{23})\sin(\beta_{13}-\gamma_{13})}{\sin(\beta_{23}-\gamma_{23})\sin(\beta_{13}+\gamma_{13})}} \\
		\mathrm{sign}\left(\dfrac{\pi-\beta_{23}-\gamma_{23}}{\pi-\beta_{13}-\gamma_{13}}\right) \sqrt{\dfrac{\sin(\beta_{23}+\gamma_{23})\sin(\beta_{13}-\gamma_{13})}{\sin(\beta_{23}-\gamma_{23})\sin(\beta_{13}+\gamma_{13})}} \\ = \left. \mathrm{sign}\left(\dfrac{\pi - \sigma_{24}}{\pi - \sigma_{14}}\right) \sqrt{\dfrac{\sin \beta_{24} \sin \gamma_{24}}{\sin \delta_{24} \sin \alpha_{24}}-1} \middle/ \sqrt{\dfrac{\sin \beta_{14} \sin \gamma_{14}}{\sin \delta_{14} \sin \alpha_{14}}-1} \right. \\
		\left. \mathrm{sign}\left(\dfrac{\pi - \sigma_{24}}{\pi - \sigma_{14}}\right) \sqrt{\dfrac{\sin \delta_{24} \sin \alpha_{24}}{\sin \beta_{24} \sin \gamma_{24}}-1} \middle/ \sqrt{\dfrac{\sin \delta_{14} \sin \alpha_{14}}{\sin \beta_{14} \sin \gamma_{14}}-1} \right. \\ = 	\mathrm{sign}\left(\dfrac{\pi-\beta_{21}-\gamma_{21}}{\pi-\beta_{11}-\gamma_{11}}\right) \sqrt{\dfrac{\sin(\beta_{21}+\gamma_{21})\sin(\beta_{11}-\gamma_{11})}{\sin(\beta_{21}-\gamma_{21})\sin(\beta_{11}+\gamma_{11})}} \\
	\end{dcases}
\end{equation*}

\subsection*{Example 5, Figure 3(e), Figure \ref{fig: details for examples}(e), developable, linear coupling}

The unit size is $3 \times 5$. A unit contains 8 interior vertices and 32 sector angles. The sector angles $\alpha_{ij}, ~\beta_{ij}, ~\gamma_{ij}, ~\delta_{ij}$, $i, ~j \in \mathbb{Z}^+, ~i \le 2, ~j \le 4$ meet the constraints below. There are 30 constraints for 32 sector angles, allowing two independent input sector angles. Columns 1 and 3 are anti-deltoid I vertices, columns 2 and 4 are conic IV vertices. This example is formed from switching certain strips of Example 3, transforming it from a non-developable pattern into a developable one. For a solution from Example 3, let
\begin{equation*}
	\alpha_{ij} \rightarrow \pi - \alpha_{ij}, ~~\gamma_{ij} \rightarrow \pi - \gamma_{ij}
\end{equation*} 

\subsection*{Example 6, Figure 3(f), Figure \ref{fig: details for examples}(f), developable, equimodular coupling}

Here we choose a $5 \times 5$ unit. The constraints below are listed assuming all the vertices are conic I, then transforming them to conic IV (developable) from switching certain strips --- let
\begin{equation*}
	\alpha_{ij} \rightarrow \pi - \alpha_{ij}, ~~\gamma_{ij} \rightarrow \pi - \gamma_{ij}
\end{equation*} 

A $5 \times 5$ unit contains 16 interior vertices and 64 sector angles, the sector angles $\alpha_{ij}, ~\beta_{ij}, ~\gamma_{ij}, ~\delta_{ij}$, $i, ~j \in \mathbb{Z}^+, ~i \le 4, ~j \le 4$ meet the constraints below.

Vertex type condition (all the vertices are conic I):
\begin{equation*}
	\begin{dcases}
		\alpha_{11} + \gamma_{11} = \beta_{11} + \gamma_{11}, ~ \alpha_{12} + \gamma_{12} = \beta_{12} + \gamma_{12}\\
		\alpha_{13} + \gamma_{13} = \beta_{13} + \gamma_{13}, ~ \alpha_{14} + \gamma_{14} = \beta_{14} + \gamma_{14}\\
		\alpha_{21} + \gamma_{21} = \beta_{21} + \gamma_{21}, ~ \alpha_{22} + \gamma_{22} = \beta_{22} + \gamma_{22}\\
		\alpha_{23} + \gamma_{23} = \beta_{23} + \gamma_{23}, ~ \alpha_{24} + \gamma_{24} = \beta_{24} + \gamma_{24}\\
		\alpha_{31} + \gamma_{31} = \beta_{31} + \gamma_{31}, ~ \alpha_{32} + \gamma_{32} = \beta_{32} + \gamma_{32}\\
		\alpha_{33} + \gamma_{33} = \beta_{33} + \gamma_{33}, ~ \alpha_{34} + \gamma_{34} = \beta_{34} + \gamma_{34}\\
		\alpha_{41} + \gamma_{41} = \beta_{41} + \gamma_{41}, ~ \alpha_{42} + \gamma_{42} = \beta_{42} + \gamma_{42}\\
		\alpha_{43} + \gamma_{43} = \beta_{43} + \gamma_{43}, ~ \alpha_{44} + \gamma_{44} = \beta_{44} + \gamma_{44}\\
	\end{dcases}
\end{equation*} 
Planarity condition considering the periodicity of sector angles:
\begin{equation*}
	\begin{dcases}
		\beta_{11}+\beta_{21}+\beta_{12}+\beta_{12} = 2\pi, ~\gamma_{21}+\gamma_{31}+\gamma_{22}+\gamma_{32} = 2\pi \\
		\beta_{31}+\beta_{41}+\beta_{32}+\beta_{42} = 2\pi, ~\gamma_{41}+\gamma_{11}+\gamma_{42}+\gamma_{12} = 2\pi \\
		\alpha_{12}+\alpha_{22}+\alpha_{13}+\alpha_{23} = 2\pi, ~\delta_{22}+\delta_{32}+\delta_{23}+\delta_{33} = 2\pi \\
		\alpha_{32}+\alpha_{42}+\alpha_{33}+\alpha_{43} = 2\pi, ~\delta_{42}+\delta_{12}+\delta_{43}+\delta_{13} = 2\pi \\
		\beta_{13}+\beta_{23}+\beta_{14}+\beta_{14} = 2\pi, ~\gamma_{23}+\gamma_{33}+\gamma_{24}+\gamma_{34} = 2\pi \\
		\beta_{33}+\beta_{43}+\beta_{34}+\beta_{44} = 2\pi, ~\gamma_{43}+\gamma_{13}+\gamma_{44}+\gamma_{14} = 2\pi \\
		\alpha_{14}+\alpha_{24}+\alpha_{11}+\alpha_{21} = 2\pi, ~\delta_{24}+\delta_{34}+\delta_{21}+\delta_{31} = 2\pi \\
		\alpha_{34}+\alpha_{44}+\alpha_{31}+\alpha_{41} = 2\pi, ~\delta_{44}+\delta_{14}+\delta_{41}+\delta_{11} = 2\pi \\
	\end{dcases}
\end{equation*}
Condition on equal amplitudes:
\begin{equation*}
	\begin{dcases}
		\dfrac{\sin \alpha_{11} \sin \beta_{11}}{\sin \gamma_{11} \sin \delta_{11}} = \dfrac{\sin \alpha_{21} \sin \beta_{21}}{\sin \gamma_{21} \sin \delta_{21}}, ~		\dfrac{\sin \alpha_{21} \sin \beta_{21}}{\sin \gamma_{21} \sin \delta_{21}} = \dfrac{\sin \alpha_{31} \sin \beta_{31}}{\sin \gamma_{31} \sin \delta_{31}}, ~		\dfrac{\sin \alpha_{31} \sin \beta_{31}}{\sin \gamma_{31} \sin \delta_{31}} = \dfrac{\sin \alpha_{41} \sin \beta_{41}}{\sin \gamma_{41} \sin \delta_{41}} \\
		\dfrac{\sin \alpha_{12} \sin \beta_{12}}{\sin \gamma_{12} \sin \delta_{12}} = \dfrac{\sin \alpha_{22} \sin \beta_{22}}{\sin \gamma_{22} \sin \delta_{22}}, ~		\dfrac{\sin \alpha_{22} \sin \beta_{22}}{\sin \gamma_{22} \sin \delta_{22}} = \dfrac{\sin \alpha_{32} \sin \beta_{32}}{\sin \gamma_{32} \sin \delta_{32}}, ~		\dfrac{\sin \alpha_{32} \sin \beta_{32}}{\sin \gamma_{32} \sin \delta_{32}} = \dfrac{\sin \alpha_{42} \sin \beta_{42}}{\sin \gamma_{42} \sin \delta_{42}} \\
		\dfrac{\sin \alpha_{13} \sin \beta_{13}}{\sin \gamma_{13} \sin \delta_{13}} = \dfrac{\sin \alpha_{23} \sin \beta_{23}}{\sin \gamma_{23} \sin \delta_{23}}, ~		\dfrac{\sin \alpha_{23} \sin \beta_{23}}{\sin \gamma_{23} \sin \delta_{23}} = \dfrac{\sin \alpha_{33} \sin \beta_{33}}{\sin \gamma_{33} \sin \delta_{33}}, ~		\dfrac{\sin \alpha_{33} \sin \beta_{33}}{\sin \gamma_{33} \sin \delta_{33}} = \dfrac{\sin \alpha_{43} \sin \beta_{43}}{\sin \gamma_{43} \sin \delta_{43}} \\
		\dfrac{\sin \alpha_{14} \sin \beta_{14}}{\sin \gamma_{14} \sin \delta_{14}} = \dfrac{\sin \alpha_{24} \sin \beta_{24}}{\sin \gamma_{24} \sin \delta_{24}}, ~		\dfrac{\sin \alpha_{24} \sin \beta_{24}}{\sin \gamma_{24} \sin \delta_{24}} = \dfrac{\sin \alpha_{34} \sin \beta_{34}}{\sin \gamma_{34} \sin \delta_{34}}, ~		\dfrac{\sin \alpha_{34} \sin \beta_{34}}{\sin \gamma_{34} \sin \delta_{34}} = \dfrac{\sin \alpha_{44} \sin \beta_{44}}{\sin \gamma_{44} \sin \delta_{44}} \\
		\dfrac{\sin \beta_{11} \sin \gamma_{11}}{\sin \delta_{11} \sin \alpha_{11}} = \dfrac{\sin \beta_{12} \sin \gamma_{12}}{\sin \delta_{12} \sin \alpha_{12}}, ~		\dfrac{\sin \beta_{12} \sin \gamma_{12}}{\sin \delta_{12} \sin \alpha_{12}} = \dfrac{\sin \beta_{13} \sin \gamma_{13}}{\sin \delta_{13} \sin \alpha_{13}}, ~		\dfrac{\sin \beta_{13} \sin \gamma_{13}}{\sin \delta_{13} \sin \alpha_{13}} = \dfrac{\sin \beta_{14} \sin \gamma_{14}}{\sin \delta_{14} \sin \alpha_{14}} \\
		\dfrac{\sin \beta_{21} \sin \gamma_{21}}{\sin \delta_{21} \sin \alpha_{21}} = \dfrac{\sin \beta_{22} \sin \gamma_{22}}{\sin \delta_{22} \sin \alpha_{22}}, ~		\dfrac{\sin \beta_{22} \sin \gamma_{22}}{\sin \delta_{22} \sin \alpha_{22}} = \dfrac{\sin \beta_{23} \sin \gamma_{23}}{\sin \delta_{23} \sin \alpha_{23}}, ~		\dfrac{\sin \beta_{23} \sin \gamma_{23}}{\sin \delta_{23} \sin \alpha_{23}} = \dfrac{\sin \beta_{24} \sin \gamma_{24}}{\sin \delta_{24} \sin \alpha_{24}} \\
		\dfrac{\sin \beta_{31} \sin \gamma_{31}}{\sin \delta_{31} \sin \alpha_{31}} = \dfrac{\sin \beta_{32} \sin \gamma_{32}}{\sin \delta_{32} \sin \alpha_{32}}, ~		\dfrac{\sin \beta_{32} \sin \gamma_{32}}{\sin \delta_{32} \sin \alpha_{32}} = \dfrac{\sin \beta_{33} \sin \gamma_{33}}{\sin \delta_{33} \sin \alpha_{33}}, ~		\dfrac{\sin \beta_{33} \sin \gamma_{33}}{\sin \delta_{33} \sin \alpha_{33}} = \dfrac{\sin \beta_{34} \sin \gamma_{34}}{\sin \delta_{34} \sin \alpha_{34}} \\
		\dfrac{\sin \beta_{41} \sin \gamma_{41}}{\sin \delta_{41} \sin \alpha_{41}} = \dfrac{\sin \beta_{42} \sin \gamma_{42}}{\sin \delta_{42} \sin \alpha_{42}}, ~		\dfrac{\sin \beta_{42} \sin \gamma_{42}}{\sin \delta_{42} \sin \alpha_{42}} = \dfrac{\sin \beta_{43} \sin \gamma_{43}}{\sin \delta_{43} \sin \alpha_{43}}, ~		\dfrac{\sin \beta_{43} \sin \gamma_{43}}{\sin \delta_{43} \sin \alpha_{43}} = \dfrac{\sin \beta_{44} \sin \gamma_{44}}{\sin \delta_{44} \sin \alpha_{44}} \\
	\end{dcases}
\end{equation*}
Regarding the condition on equal phase shifts for every Kokotsakis quadrilateral,
\begin{equation*}
	\begin{dcases}
		\theta^a_{11} - \theta^a_{21} =  \theta^a_{12} - \theta^a_{22}, ~\theta^a_{21} - \theta^a_{31} =  \theta^a_{22} - \theta^a_{32}, ~\theta^a_{31} - \theta^a_{41} =  \theta^a_{32} - \theta^a_{42} \\
		\theta^b_{12} - \theta^b_{22} =  \theta^b_{13} - \theta^b_{23}, ~\theta^b_{22} - \theta^b_{32} =  \theta^b_{23} - \theta^b_{33}, ~\theta^b_{32} - \theta^b_{42} =  \theta^b_{33} - \theta^b_{43} \\
		\theta^a_{13} - \theta^a_{23} =  \theta^a_{14} - \theta^a_{24}, ~\theta^a_{23} - \theta^a_{33} =  \theta^a_{24} - \theta^a_{34}, ~\theta^a_{33} - \theta^a_{43} =  \theta^a_{34} - \theta^a_{44} \\
	\end{dcases}
\end{equation*}
There are a total of 65 constraints on the 64 sector angles, and we find a solution that aligns with the `tiling' mentioned in \citet{dieleman_jigsaw_2020}. 

\subsection*{Calculation of crease lengths}

Plotting the entire pattern requires the calculation of all the crease lengths. As depicted in Figure \ref{fig: labelling discrete net}, suppose the input crease lengths are located in row $i_1 = 0$ and column $i_2 = 0$, the first step is to rearrange the sector angles $\alpha, ~\beta, ~\gamma, ~\delta$ to the angles $S^\mathrm{a}, ~S^\mathrm{b}, ~S^\mathrm{c}, ~S^\mathrm{d}$. Furthermore, for the right bottom region ($i_1 > 0$, $i_2 > 0$):
\begin{equation*}
	\begin{gathered}
		l_x(i,~j + 1) =
		\dfrac{l_x(i,~j) \sin S^{\mathrm{a}}(i,~j) - l_y(i,~j) \sin(S^{\mathrm{d}}(i,~j) + S^{\mathrm{a}}(i,~j))}{	\sin S^{\mathrm{b}}(i,~j)} \\
		l_y(i+1,~j) =
		\dfrac{l_y(i,~j) \sin S^{\mathrm{c}}(i,~j) - l_x(i,~j) \sin(S^{\mathrm{d}}(i,~j) + S^{\mathrm{c}}(i,~j))}{	\sin S^{\mathrm{b}}(i,~j)} \\
	\end{gathered}
\end{equation*}
for the right top region ($i_1 < 0$, $i_2 > 0$):
\begin{equation*}
	\begin{gathered}
		l_x(i,~j + 1) =
		\dfrac{l_x(i,~j) \sin S^{\mathrm{d}}(i,~j) - l_y(i+1,~j) \sin(S^{\mathrm{a}}(i,~j) + S^{\mathrm{d}}(i,~j))}{	\sin S^{\mathrm{c}}(i,~j)} \\
		l_y(i,~j) =
		\dfrac{l_y(i+1,~j) \sin S^{\mathrm{b}}(i,~j) - l_x(i,~j) \sin(S^{\mathrm{a}}(i,~j) + S^{\mathrm{b}}(i,~j))}{	\sin S^{\mathrm{c}}(i,~j)} \\
	\end{gathered}
\end{equation*}
for the left bottom region ($i_1 > 0$, $i_2 < 0$): 
\begin{equation*}
	\begin{gathered}
		l_x(i,~j) =
		\dfrac{l_x(i,~j+1) \sin S^{\mathrm{b}}(i,~j) - l_y(i,~j) \sin(S^{\mathrm{c}}(i,~j) + S^{\mathrm{b}}(i,~j))}{	\sin S^{\mathrm{a}}(i,~j)} \\
		l_y(i+1,~j) =
		\dfrac{l_y(i,~j) \sin S^{\mathrm{d}}(i,~j) - l_x(i,~j+1) \sin(S^{\mathrm{c}}(i,~j) + S^{\mathrm{d}}(i,~j))}{	\sin S^{\mathrm{a}}(i,~j)} \\
	\end{gathered}
\end{equation*}
for the left top region ($i_1 < 0$, $i_2 < 0$)
\begin{equation*}
	\begin{gathered}
		l_x(i,~j) =
		\dfrac{l_x(i,~j+1) \sin S^{\mathrm{c}}(i,~j) - l_y(i+1,~j) \sin(S^{\mathrm{b}}(i,~j) + S^{\mathrm{c}}(i,~j))}{	\sin S^{\mathrm{d}}(i,~j)} \\
		l_y(i,~j) =
		\dfrac{l_y(i+1,~j) \sin S^{\mathrm{a}}(i,~j) - l_x(i,~j+1) \sin(S^{\mathrm{b}}(i,~j) + S^{\mathrm{a}}(i,~j))}{	\sin S^{\mathrm{d}}(i,~j)} \\
	\end{gathered}
\end{equation*}

\bibliographystyle{plainnat}


\end{document}